\documentclass[10pt]{amsart}
\usepackage{amscd}
\usepackage{color}
\usepackage[symbol]{footmisc}
\usepackage{amssymb}
\usepackage{amsfonts}
\usepackage{latexsym}
\usepackage{verbatim}
\usepackage{bbm}
\usepackage[shortlabels]{enumitem}
\usepackage[backref=page]{hyperref}
\renewcommand*{\backref}[1]{}
\renewcommand*{\backrefalt}[4]{%
	\ifcase #1 (Not cited).%
	\or        (Cited on page~#2).%
	\else      (Cited on pages~#2).%
	\fi}
\hypersetup{colorlinks=true,linkcolor=blue,citecolor=blue,urlcolor=black}
\newcommand{\N}{\mathbb{N}}
\newcommand{\Z}{\mathbb{Z}}

\newcommand{\R}{\mathbb{R}}
\newcommand{\C}{\mathbb{C}}
\newcommand{\ad}{\operatorname{ad}}
\newcommand{\tr}{\operatorname{tr}}

\newcommand{\g}{\mathfrak{g}}
\newcommand{\h}{\mathfrak{h}}
\newcommand*{\f}[1]{\mathfrak{#1}}
\renewcommand{\epsilon}{\varepsilon}
\renewcommand{\phi}{\varphi}
\renewcommand{\Re}{\mathrm{Re}}
\renewcommand{\Im}{\operatorname{Im}}

\theoremstyle{plain}
\newtheorem{thm}{Theorem}[section]
\newtheorem{prop}[thm]{Proposition}
\newtheorem{lem}[thm]{Lemma}
\newtheorem{cor}[thm]{Corollary}

\theoremstyle{plain}
\newtheorem{maintheorem}{Theorem}

\theoremstyle{definition}
\newtheorem{defn}[thm]{Definition}
\newtheorem{rmk}[thm]{Remark}
\newtheorem{es}[thm]{Example}
\title{A Levi-type decomposition on two-step solvable Lie algebras with a complex structure}

\begin{document}
 
\thanks{This work was partly supported by GNSAGA of INdAM and by the ANR--FAPESP project ANR-21-CE40-0017 {\it Bridges}}
\subjclass[2020]{17B30, 22E25, 32M10, 53C30}

\address{(Elia Fusi) Dipartimento di Matematica G. Peano, Universit\`a di Torino, Via Carlo Alberto 10, 10123, Torino, Italy.}
\email{elia.fusi@unito.it}

\address{(Giovanni Gentili)  Université Paris-Saclay, CNRS, Laboratoire de mathématiques d’Orsay, 91405, Orsay, France.}
\email{giovanni.gentili@universite-paris-saclay.fr}

\author{Elia Fusi and Giovanni Gentili}

\date{\today}

\begin{abstract} 
% We define solvable, semisimple and simple complex structures on  Lie algebras.
We  prove  that a large class of $2$-step solvable Lie algebras equipped with a complex structure $J$ admits a Levi-Malcev type decomposition,  adapted to $J$. As an application, we prove that the Fino--Vezzoni conjecture holds true for $2$-step solvable unimodular Lie algebras. Finally, we give a structural characterisation of $2$-step, unimodular, completely solvable  Lie algebras admitting an SKT metric. 
\end{abstract}

\maketitle

\section{Introduction}

The Levi--Malcev decomposition, see \cite{Levi05, M42}, is a milestone in Lie  theory. Said decomposition asserts that any Lie algebra can be written as a semidirect product of  a semisimple Lie subalgebra, called \emph{Levi subalgebra}, and the maximal  solvable  ideal, called \emph{solvable radical}. Thus, the Levi--Malcev decomposition elevates  semisimple and solvable Lie algebras as building blocks of any  Lie algebra. These two classes have fundamental differences. While semisimple Lie algebras are classified due to the work of  Killing and Cartan, the class of solvable Lie algebras is far more vast and chaotic, resulting into fewer known structural results. On the other hand, whenever a complex structure $J$ is defined on a Lie algebra, it often fails to be compatible with the associated  Levi--Malcev decomposition, i.e. the Levi subalgebra and the solvable radical are usually not $J$-invariant. This naturally raises the question on whether one can find a $J$-adapted version of the Levi--Malcev decomposition on a Lie algebra endowed with a complex structure.  

The main goal of the  paper is to prove a $J$-adapted Levi--Malcev decomposition for a certain class of $2$-step solvable Lie algebras endowed with a complex structure. 

In order to achieve the desired decomposition, we introduce the notions of solvability, semisimplicity and simplicity for a complex structure $J$ on a Lie algebra $\g$. More precisely, we will say that $J$ is \emph{solvable} or that $\g$ is \emph{$J$-solvable} if the following descending series of $J$-invariant subalgebras terminates with $0$:
\[
\g^{[0]}(J):=\g\,, \qquad  \g^{[j+1]}(J):=[\g^{[j]}(J),\g^{[j]}(J)]+J[\g^{[j]}(J),\g^{[j]}(J)]\,,\quad  j \in \N\,.
\]
Using this notion, we can define the \emph{$J$-solvable radical} ${\rm Rad} (\f g, J)$ as the  maximal $J$-solvable ideal of $\f g$. Thus,   we will say that $J$ is \emph{semisimple}, and that $\g$ is \emph{$J$-semisimple}, if ${\rm Rad}(\f g, J)=0$, i.e. $\g$ admits no non-trivial $J$-solvable ideals. Finally, we will say that $J$ is \emph{simple}, and that $\g$ is \emph{$J$-simple}, if $\f g$ is not abelian and it does not have any proper $J$-invariant ideal. We remark that an analogue of $J$-solvability was already introduced and studied in the hypercomplex setting in  \cite{ABB26, Gor23, Gor24}.

With these definitions, it is very natural to wonder how far the analogy with the classical notions of solvability, semisimplicity, and simplicity for a Lie algebra can be carried. While some properties fail, other can still be established within our $J$-adapted framework, although very often there are differences that complicate the scenario. For instance, it is not in general true that a $J$-semisimple Lie algebra is the direct sum of $J$-simple ideals, see Example \ref{ssnonabelian}. However, we prove in Theorem \ref{Semisimple} that such a structural result does in fact hold true when the complex structure is \emph{abelian}. Recall that a complex structure $J$ on $\f g$ is called abelian if and only if 
$$
[JX, JY]=[X,  Y]\,, \qquad X, Y\in\f g\,.
$$
Abelian complex structures occur quite frequently, but only on $2$-step solvable Lie algebras, see \cite{Pet88}. Said complex structures were extensively studied in the literature, see for instance  \cite{ABD11, ABD12, AV16,Bar01,  BD04, FTV22}.
In this setting,   $J$-semisimplicity is equivalent to  the  non-degeneracy of $B^{1,1}$--the $(1,1)$-part of the Killing form--which is reminiscent of Cartan's  criterion for semisimplicity, see Theorem \ref{Semisimple}. 

Among other reasons, this motivates us to focus on $2$-step solvable Lie algebras. Within this setting a simple complex structure is necessarily abelian. We classify all $2$-step solvable $J$-simple Lie algebras. Up to equivalence, the only examples are the Lie algebras of affine motions over the real and complex numbers: $\f{aff}(\R)$ and $\f{aff}(\C)$, see Theorem \ref{affines}. Each of these is equipped with a unique simple and abelian complex structure. As a consequence, all $J$-semisimple Lie algebras with abelian complex structure are direct sums of  copies of $\f{aff}(\R)$ and $\f{aff}(\C)$. In particular they admit a $\Z_2$-module decomposition into two $J$-invariant subspaces which behaves like the Cartan decomposition of semisimple Lie algebras. Furthermore, there exists a holomorphic involution $s$ associated to such a decomposition that plays the role of the Cartan involution and thus, by twisting $B^{1,1}$ with $s$, we obtain a Hermitian metric $B^{1,1}_s$,  see Corollary \ref{corsymmetricdecom}.

With all this settled,  we are able to prove a Levi-type decomposition on any Lie algebra  endowed with an abelian complex structure. 
\begin{maintheorem}\label{mainabelian} Let $\f g$ be a Lie algebra endowed with an abelian complex structure $J$. Then,  there exists a unique $J$-semisimple subalgebra $ \f h $ of $\f g$ such that 
$$
\f g=\f h\ltimes {\rm Rad}(\f g, J)\,.
$$ 
\end{maintheorem}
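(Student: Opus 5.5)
We plan to mimic the classical Levi--Malcev argument, using the structure of $J$-semisimple Lie algebras with abelian complex structure established earlier (Theorem~\ref{Semisimple}, Theorem~\ref{affines}, Corollary~\ref{corsymmetricdecom}). Set $\f r={\rm Rad}(\f g,J)$.

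\textbf{Step 1: the quotient.} First we check that $\f r$ is well defined and $J$-invariant. For this we show that the sum of two $J$-solvable ideals is $J$-solvable: the series $\g^{[j]}(J)$ of $\f a+\f b$ eventually lands in $\f b$ modulo $\f a$. Hence $J$ descends to the quotient $\bar{\f g}=\f g/\f r$. The induced structure is still an abelian complex structure, and $\bar{\f g}$ is $J$-semisimple: the preimage of a $J$-solvable ideal of $\bar{\f g}$ is a $J$-solvable ideal of $\f g$, since it is an extension of a $J$-solvable ideal by the $J$-solvable ideal $\f r$. By Theorem~\ref{Semisimple} and Theorem~\ref{affines}, $\bar{\f g}$ is then a direct sum of copies of $\f{aff}(\R)$ and $\f{aff}(\C)$, each with its unique simple abelian complex structure.

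\textbf{Step 2: existence of a complement.} We need a $J$-invariant subalgebra $\f h$ with $\f h\cap\f r=0$ and $\f h+\f r=\f g$. Each summand of $\bar{\f g}$ has the form $\f{aff}=\langle A\rangle\ltimes\langle B\rangle$, where $[A,B]=B$, $JA=B$, and $A,B$ are complex in the $\f{aff}(\C)$ case. We aim to lift the generators one at a time. Choose a lift $\tilde A$ of $A$ and set $\tilde B=J\tilde A$. Because $J$ is abelian, $[\tilde A,J\tilde A]$ is determined up to $\f r$. We then try to correct $\tilde A$ by $r\in\f r$ so that $[\tilde A+r,J\tilde A+Jr]=J\tilde A+Jr$. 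Using $[Jr,J\tilde A]=[r,\tilde A]$ and $[Jr,Jr']=[r,r']$, this becomes an equation of the form $(\ad_{\tilde A}-{\rm id})$-type operator applied to $Jr$ equals an error term in $\f r$, plus quadratic terms. Since $\g$ is $2$-step solvable (\cite{Pet88}), $\f g'$ is abelian, and the error lies in $\f g'\cap\f r$. We plan to use the eigenvalue $1$ of $\ad_A$ on $\f{aff}$ to invert the relevant operator. Where it is not invertible, we instead filter $\f r$ by its $J$-derived series $\f r^{[j]}(J)$ and solve successively on the abelian $J$-invariant quotients, as in the cohomological ($H^2$-vanishing) proof of the classical Levi theorem.

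\textbf{Step 3: uniqueness.} For two complements $\f h,\f h'$, the projection gives an isomorphism $\f h\to\f h'$ of the form $x\mapsto x+\phi(x)$, where $\phi\colon\f h\to\f r$ is $J$-linear. Using the $\Z_2$-decomposition and the involution $s$ of Corollary~\ref{corsymmetricdecom}, we try to show that $\phi=0$. The key fact is $\f h'\ni J\tilde A'=[\tilde A',J\tilde A']$, which forces $\phi(B)$ to lie in $\f g'$, while $[\f h,\f h]=\langle B\rangle$ with abelianity gives $[\phi(A),JA]=\ldots$. We expect a rigidity argument to show that $\f h=\{X\in\f g : \ldots\}$ is canonically defined, for instance as a $J$-invariant complement of $\f r$ inside $\f g'+J\f g'$ determined by $B^{1,1}$.

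We expect the main obstacle to be Step 2, specifically the non-vanishing of the first-order operator when $\ad$ has eigenvalue $\pm1$ on $\f r$. To handle it, we would first try to show that $J$-solvability of $\f r$ rules out such eigenvalues compatible with $J$ on the relevant $J$-invariant quotients.
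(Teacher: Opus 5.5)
Your Step 1 is fine (it is Lemma~\ref{lemss}(3) together with Corollary~\ref{corsymmetricdecom}). Step 2, however, is the whole existence statement, and it is not carried out; the mechanism you propose for it is also wrong.

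\textbf{Step 2.} Since $J$ is abelian, $[r,J\tilde A]=[\tilde A,Jr]$, so your equation reads $(2\ad_{\tilde A}-{\rm Id})(Jr)+[r,Jr]=J\tilde A-[\tilde A,J\tilde A]$. The linear part is therefore $2\ad_{\tilde A}-{\rm Id}$, not $\ad_{\tilde A}-{\rm Id}$.

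Your plan to show that $J$-solvability of $\f r$ excludes the eigenvalue $1$ cannot succeed. Take $\f{aff}(A)$ with $A=\R[\epsilon]/(\epsilon^2)$. Then ${\rm Rad}(\f g,J)=\f{aff}(\R\epsilon)$ is abelian, and with $X=(0,1)$, $JX=(1,0)$, $[JX,X]=X$, the operator $\ad_{JX}$ acts on ${\rm Rad}$ with eigenvalues $0$ and $1$. Yet $\langle JX,X\rangle$ is a $J$-adapted Levi factor.

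What actually makes a single relation solvable is different. First reduce to an abelian $J$-invariant radical; this is possible because, for abelian $J$, $\f r'+J\f r'$ is an ideal of $\f g$, since $[Z,JW]=-[JZ,W]$. Then take $\tilde A=J\tilde X$ with $\tilde X\in\f g'$. Identity \eqref{penis}, together with $\ad_{\tilde X}^2=0$ and $\f r$ abelian, gives $\ad_{\tilde A}^2=\ad_{\tilde A}$ on $\f r$. Hence $2\ad_{\tilde A}-{\rm Id}$ is an involution of $\f r$ and is invertible.

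Your fallback of filtering by $\f r^{[j]}(J)$ only removes the quadratic term; it does nothing for a non-invertible linear part. Nor is there a $J$-adapted Whitehead lemma to appeal to: Example~\ref{noLevi} shows the $J$-adapted obstruction can be non-zero, so abelianness must enter essentially.

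Correcting one generator at a time also controls neither the brackets between different simple summands nor the several relations inside an $\f{aff}(\C)$ block. A later correction can destroy an earlier one, so you need a global argument on the whole quotient $\f{aff}(\R)^p\oplus\f{aff}(\C)^q$.

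\textbf{Step 3.} This is only a hope, and uniqueness cannot be borrowed from the classical picture, where Levi factors are merely conjugate. Once the radical is abelian, your $\phi$ is a $J$-linear $1$-cocycle with $\phi(X)=\phi([JX,X])=2[JX,\phi(X)]$, so the same idempotency kills $\phi$ on $\f{aff}(\R)$ blocks. But the proposal never gets there.

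\textbf{The paper's route, for comparison.} It reduces by induction on dimension to ${\rm Rad}(\f g)$ being a minimal $J$-invariant ideal and $\f g=\f g'+J\f g'$. Two cases then remain.
\begin{itemize}
\item If ${\rm Rad}(\f g)=\f z(\f g)$, the operator $\Phi(U)=\frac12(\ad_{JU}-J\ad_U)$ commutes with $J$ and with every $\ad_X$, and induces the $J$-adapted Casimir $\Omega^{1,1}={\rm Id}$ on the quotient. So $\Im\Phi(U)$ is the complement.
\item If $\f z(\f g)=0$, then $\f g'_J=0$ and $\f g\simeq\f{aff}(A)$. Wedderburn's principal theorem $A=S\oplus N$ gives $\f h=\f{aff}(S)$.
\end{itemize}
Uniqueness comes from Wedderburn--Malcev conjugacy being trivial in a commutative algebra, plus a projection argument in the central case.
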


Both the existence and uniqueness part of the proof of Theorem \ref{mainabelian} are obtained in a step-wise fashion, reducing the problem to simpler situations.  The most substantial part of the existence of the subalgebra $\f h$ is carried out using a holomorphic version of the \emph{Casimir operator}, a fundamental tool in the representation theory of semisimple Lie algebras, see Definition \ref{defn_Cas}.

Apart from giving strong information on the structure of Lie algebras with an abelian complex structures, Theorem \ref{mainabelian} has a pivotal role in the achievement of the desired $J$-adapted Levi--Malcev  decomposition. 
The key observation is that, for $2$-step solvable Lie algebras, the ideal $\g'+J\g'$ always acts holomorphically on $\g'_J:=\g'\cap J\g'$. In particular, the latter is an ideal in the former. Its quotient is then naturally equipped with an abelian complex structure and we can take advantage of our previous discoveries to study this case. Unfortunately, the sought-after decomposition cannot be achieved in full generality, see Example \ref{noLevi}. However, under a suitable additional mild assumption, this expectation can indeed be fulfilled. Such an assumption is formulated in terms of the \emph{mean curvature vector}, defined as follows. Let $\h$ be the $J$-adapted Levi subalgebra of the quotient $(\g'+J\g')/\g'_J$. Then the mean curvature vector $JU$ of $\h$ is defined by the implicit relation
\[
\tr(\ad^{\f h}_X)=B^{1,1}_{s_{\f h}}(JU,X)\,, \qquad X\in \h\,.
\]
The terminology is based on \cite{Einstein}. In general, the mean curvature vector has a fundamental role in the study of the homogeneous Ricci flow and its solitons, see \cite{BL18,FLS24, Heb98, LL14, Lau01}, due to its appearance in the formula for the Ricci curvature of a homogeneous metric, see \cite[Corollary 7.38]{Besse}. We will regard $JU$ as an element of $\g$ by identifying it with any representative of its equivalence class in $\h$, and we will call it the mean curvature vector of $\f g$. Note that, since $2$-step solvability implies that $\g_J'$ is abelian, the action $\ad_{JU}\vert_{\g_J'}$ is well-defined.

With these preparations, we are ready to state our main result, which is the announced $J$-adapted version of the Levi--Malcev  decomposition.

\begin{maintheorem}\label{Main1}
Let $\f g$ be a $2$-step solvable Lie algebra endowed with a complex structure $J$ and suppose that $2\notin \mathrm{Spec}(\ad_{JU}\vert_{\g'_J})$, where $JU$ is the mean curvature vector of $\g $. Then, there exists a unique $J$-semisimple subalgebra $\f h$  of $\f g$ such that
$$
\f g=\f h\ltimes {\rm Rad}(\f g, J)\,. 
$$
\end{maintheorem}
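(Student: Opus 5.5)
The plan is to reduce Theorem \ref{Main1} to Theorem \ref{mainabelian} through the chain of $J$-invariant ideals
\[
\g'_J\subseteq \g'+J\g'\subseteq \g .
\]
First I will identify ${\rm Rad}(\g,J)$. Since $\g$ is $2$-step solvable, $\g'$ is abelian, and so is $\g'_J=\g'\cap J\g'$, which is $J$-invariant. Hence any $J$-invariant ideal contained in $\g'_J$ is $J$-solvable. Moreover $\g/(\g'+J\g')$ is abelian with an induced complex structure. It follows that ${\rm Rad}(\g,J)$ contains $\g'_J$ and surjects onto $\g/(\g'+J\g')$ modulo the relevant pieces. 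Write $\f a:=(\g'+J\g')/\g'_J$. It carries an abelian complex structure, because $[JX,JY]-[X,Y]\in\g'_J$ for $X,Y\in\g'+J\g'$; this uses that $\g'+J\g'$ acts holomorphically on $\g'_J$, as noted in the introduction. By Theorem \ref{mainabelian} we get $\f a=\bar{\f h}\ltimes{\rm Rad}(\f a,J)$ with $\bar{\f h}$ a sum of copies of $\f{aff}(\R)$ and $\f{aff}(\C)$. I will then show that ${\rm Rad}(\g,J)$ is the preimage in $\g$ of a complement of $\bar{\f h}$. Concretely, it is the sum of the preimage of ${\rm Rad}(\f a,J)$ and a $J$-invariant complement of $\g'+J\g'$ chosen to commute suitably. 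This requires checking that $\bar{\f h}$ still acts $J$-semisimply, i.e.\ that no nonzero $J$-solvable ideal of $\g$ meets the lift of $\bar{\f h}$.

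Existence then amounts to lifting $\bar{\f h}\subset\f a$ to a subalgebra $\f h\subset\g'+J\g'\subseteq\g$ complementary to $\g'_J$. This is a classical extension problem. Pick any $J$-linear section $\sigma\colon\bar{\f h}\to\g'+J\g'$. Its failure to be a homomorphism,
\[
c(X,Y)=[\sigma X,\sigma Y]-\sigma[X,Y]\in\g'_J ,
\]
is a $2$-cocycle of $\bar{\f h}$ with values in the abelian $\bar{\f h}$-module $\g'_J$. We need $c$ to be a coboundary $d\phi$, with $\phi$ additionally $J$-linear so that $\f h=(\sigma-\phi)(\bar{\f h})$ is $J$-invariant. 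I would decompose $\g'_J$ into generalized eigenspaces of the action of the $J$-invariant abelian "Cartan part" of $\bar{\f h}$. In each copy of $\f{aff}$ this part is spanned by $X, JX$ with $[X,JX]=JX$ in the real case. The Casimir-type holomorphic operator of Definition \ref{defn_Cas}, built from $B^{1,1}_{s_{\f h}}$, will serve as a homotopy operator inverting $d$ away from its kernel, as in the proof of Whitehead's lemma. The main obstacle, and where the hypothesis enters, is this cohomology computation. For $\f{aff}$ the second cohomology with coefficients in a module $V$ is governed by the action of the mean curvature vector: the trace term $\tr\ad^{\f h}$ shifts the weights. I expect $H^2(\bar{\f h},\g'_J)$, restricted to $J$-compatible cochains, to be concentrated on the eigenvalue-$2$ part of $\ad_{JU}|_{\g'_J}$. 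A direct computation on $\f{aff}(\R)$ should show that $c$ can be killed precisely when $\ad_{JU}-2$ is invertible. Example \ref{noLevi} presumably realizes the obstruction. I would first verify this on one copy of $\f{aff}(\R)$ and then handle $\f{aff}(\C)$ and direct sums using commuting actions.

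Uniqueness follows the pattern of Theorem \ref{mainabelian}. Let $\f h_1,\f h_2$ be two $J$-semisimple complements. Their images in $\g/{\rm Rad}(\g,J)$ coincide, so $\f h_2$ is the graph of a $J$-linear map $\psi\colon\f h_1\to{\rm Rad}(\g,J)$ that is a $1$-cocycle. I would reduce step-wise, first modulo $\g'_J$ using the uniqueness in Theorem \ref{mainabelian}, and then on $\g'_J$. There the relevant $J$-compatible $1$-cocycles must vanish rather than merely be coboundaries. That is exactly the statement that a complement is forced to be unique: the weights of $JU$ exclude fixed directions, and the same eigenvalue analysis, together with $2\notin\mathrm{Spec}$, gives $\psi=0$.
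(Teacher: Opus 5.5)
\textbf{The gap: you assume ${\rm Rad}(\g,J)\supseteq\g'_J$.} You say ${\rm Rad}(\g,J)$ contains $\g'_J$ because $\g'_J$ is abelian and $J$-invariant. But $\g'_J$ is an ideal only of $\g'+J\g'$ (Lemma \ref{Lem2stepideal}), not of $\g$, and the $J$-invariant ideal of $\g$ that it generates need not be $J$-solvable.

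Example \ref{ssnonabelian} shows what goes wrong. Your lifting step succeeds trivially there: $\langle e_3,e_4\rangle$ is a $J$-invariant copy of $\f{aff}(\R)$ complementing $\g'_J=\langle e_5,e_6\rangle$ in $\g'+J\g'$. Yet ${\rm Rad}(\g)=0$, so this lift is not a $J$-adapted Levi subalgebra, and $\g/{\rm Rad}(\g)$ does not even carry an abelian complex structure. In that example $\ad_{JU}|_{\g'_J}=2\,{\rm Id}$. So the spectral hypothesis must be used a second time, independently of the extension problem, to prove $\g'_J\subseteq{\rm Rad}(\g)$.

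This is Proposition \ref{lastoria}, and it is the substantial part of the paper's argument. It runs as follows:
\begin{enumerate}
\item Theorem \ref{levihol} together with Corollary \ref{corobgprimo} gives a Levi decomposition of $\g'+J\g'$. This forces $\ad_{JU}$ to have eigenvalues in $\{0,2\}$ on ${\rm Rad}(\g'+J\g')/\g'_J$.
\item Lemma \ref{trigo} is applied inductively along a Lie-triangular basis of $(\g'_J)^{1,0}$ for $\ad_{JU}$, whose eigenvalues avoid $2$ by hypothesis.
\item This shows that $\f q=\{X\in{\rm Rad}(\g'+J\g')\cap\g'\mid [JU,X]\in\g'_J\}$ is an ideal of $\g$. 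Hence $\f k+J\f k$, where $\f k$ is the ideal of $\g$ generated by $\g'_J$, is a $J$-solvable ideal containing $\g'_J$.
\end{enumerate}
Only after this does Lemma \ref{Lemmino} give $\g=(\g'+J\g')+{\rm Rad}(\g)$ and ${\rm Rad}(\g'+J\g')={\rm Rad}(\g)\cap(\g'+J\g')$. That is what legitimises reducing both existence and uniqueness to $\g'+J\g'$. Your uniqueness argument, via a $J$-linear $1$-cocycle $\f h_1\to\g'_J$, is essentially the paper's, but it also needs this fact, to know that the $\f h_i$ have abelian complex structure and hence lie in $\g'+J\g'$.

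The check you flag, that no $J$-solvable ideal meets the lift, is automatic. What is missing is that ${\rm Rad}(\g)$ is large enough. The phrase ``a complement chosen to commute suitably'' cannot supply this, since ${\rm Rad}(\g)$ is canonical and is not chosen.

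\textbf{The lifting step is only conjectured, and your expected cohomology statement is wrong as phrased.} Take $\bar{\f h}=\f{aff}(\R)^2$ acting on $V=\C$ by $\rho(X_i)=0$ and $\rho(JX_i)={\rm Id}$. Then $\rho(JU)=4\,{\rm Id}$, yet the cochain with $c(X_1,X_2)=v$, vanishing on all other basis pairs, is a non-trivial $2$-cocycle. It is excluded only because of two constraints:
\begin{itemize}
\item $2$-step solvability, with a section satisfying $\sigma(\bar{\f h}')\subseteq\g'$, forces $c|_{\Lambda^2\bar{\f h}'}=0$;
\item integrability of $J$ then determines $c$ on $J\bar{\f h}'\times J\bar{\f h}'$.
\end{itemize}
With these constraints built in, for $\f{aff}(\R)^p$ the problem becomes, for each $i$, a Koszul complex for the commuting operators $\delta_{ij}{\rm Id}-\ad_{JX_j}|_{\g'_J}$. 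That complex is exact because the hypothesis excludes the joint eigenvalue $(\delta_{ij})_j$. The $\f{aff}(\C)$ summands still require work, and ``commuting actions'' alone does not suffice.

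The paper avoids this computation entirely. Step 1 of Theorem \ref{levihol} reduces to the case where $\g'_J$ contains no proper non-zero $J$-invariant ideals of $\g$. Then $\Phi(U)=\frac12(\ad_{JU}-J\ad_U)$ acts on $\g'_J$ as $a\,{\rm Id}+bJ$ with $(a,b)\neq(1,0)$. The paper shows directly that the image of $\Phi(U)-a\,{\rm Id}-bJ$ is a $J$-invariant subalgebra complementary to $\g'_J$.
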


The condition on the spectrum of  $JU$ is equivalent to  the vanishing of certain cohomology,   expressed via the adjoint representation induced by the mean curvature vector acting on $\g'_J$, see Definition \ref{cohomolob} and Lemma \ref{obstruction}. We emphasise that when the complex structure is abelian, such additional assumption is vacuous. Thus, Theorem \ref{Main1} is in fact a generalisation of Theorem \ref{mainabelian}. Although, more involved, the proof of Theorem \ref{Main1} follows a similar strategy to that of Theorem \ref{mainabelian}. In this case,  the most laborious situation is when $\f g$ acts holomorphically on $\f g_J'$. To deal with this, after further reduction to a simpler case, the result is obtained  using an operator that is completely determined by the mean curvature vector $JU$ and the eigenvalues of $\ad_{JU}|_{\f g_J'}$.

\medskip

Our main interest in applications of Theorem \ref{Main1} stems from Hermitian geometry. In particular, the $J$-adapted Levi--Malcev decomposition provides a fruitful structure for identifying obstructions to the existence of certain special Hermitian metrics, as well as incompatibilities among them. For instance, we shall  focus on the so-called Fino--Vezzoni conjecture. We recall here the statement:

\medskip

{\bf Fino--Vezzoni Conjecture \cite{FV15}:} Let $(M,J)$ be a compact complex manifold. If $M$ admits an SKT metric and a balanced metric both compatible with $J$, then $(M,J)$ admits a K\"ahler metric.

\medskip
Recall that a Hermitian metric $g$  on a complex manifold $(M^n, J)$ is called \emph{SKT} if its fundamental form $\omega(\cdot, \cdot):=g(J\cdot, \cdot )$ satisfies $dd^c\omega=0$, where $d^c:=J^{-1}dJ$, see \cite{Bis89}, while $ g $ is called \emph{balanced} if $d\omega^{n-1}=0$, see \cite{G77, M82}.  Many examples of SKT and balanced metrics were constructed on Lie algebras, see e.g. \cite{AV16,BF24, FP22,FP23,FP223,FPS04, FS21, Uga07}

The Fino-Vezzoni conjecture has attracted a lot of interest in the recent literature and there is plenty of partial confirmation, especially in the context of Lie algebras. Most notably, the aforementioned conjecture has been proved on nilpotent Lie algebras in \cite{FV16} and it was addressed and confirmed on certain subclasses of $2$-step solvable Lie algebras in \cite{FS22} and on almost-abelian Lie algebras in \cite{FP23}. In \cite{FS22} the authors also show that in the left-invariant setting, unimodularity is a necessary condition. In \cite{GP23, Joseph26}, the Fino--Vezzoni conjecture was proved to hold on semisimple Lie algebras endowed with \emph{regular} complex structures, while,  in \cite{CZ24}, the conjecture is proved to be true on Lie algebras admitting a codimension 2 abelian ideal. We mention, in particular, the very recent work \cite{FV26}, where the conjecture is established for all compact quotients of a Lie group equipped with a left-invariant complex structure that is Chern--Ricci flat. More validations of the aforementioned conjecture can be found in \cite{C14,CRS22,FGV19, FLY12, FG26}. 

While approaching this problem on solvable Lie algebras, it is quite natural to first restrict to the $2$-step solvable case. We emphasise that this framework is further particularly well motivated by the classical result of  Hano, which states that  left-invariant K\"ahler structures on unimodular Lie algebras can only be supported by $2$-step solvable ones \cite[Theorem II]{H57}.

Interestingly enough, the existence of a compatible SKT metric on a $2$-step solvable Lie algebra automatically guarantees that the hypothesis of Theorem \ref{Main1} is satisfied, see Theorem \ref{SKTLevi}, which thus yields a $J$-adapted Levi--Malcev decomposition. This piece of information enables us to confirm the Fino--Vezzoni conjecture on all $2$-step solvable unimodular Lie algebras.

\begin{maintheorem}\label{mainfv}
Let $\g$ be a $2$-step solvable unimodular Lie algebra endowed with a complex structure $J$. If there exist an SKT metric and a balanced metric both compatible with $J$, then $J$ is solvable. In particular there exists also a K\"ahler metric compatible with $J$.
\end{maintheorem}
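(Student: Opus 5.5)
Since $\g$ carries an SKT metric, Theorem \ref{SKTLevi} guarantees that the spectral hypothesis of Theorem \ref{Main1} holds. Theorem \ref{Main1} then gives a $J$-adapted Levi--Malcev decomposition
\[
\g=\f h\ltimes {\rm Rad}(\g,J)\,.
\]
Here $\f h$ is $J$-semisimple, and by the structure theory of the abelian case (Theorem \ref{affines} together with Theorem \ref{Semisimple} and Corollary \ref{corsymmetricdecom}) it is a direct sum of copies of $\f{aff}(\R)$ and $\f{aff}(\C)$, each with its unique simple abelian complex structure. The aim is to show that $\f h=0$, i.e.\ that $J$ is solvable. Kähler existence should then follow from known results on $J$-solvable (in particular, on $2$-step solvable unimodular Lie algebras with the corresponding structure), or from combining SKT and balanced on the radical directly.

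To show $\f h=0$, I would argue by contradiction and extract a nonzero element of $\f h$ that is detected by both metric conditions. The natural candidate is the mean curvature vector $JU$, or more concretely the $\f{aff}$-generators $X$ with $[X,Y]=Y$, $JX=Y$ in each $\f{aff}(\R)$ summand (and the analogous pair in $\f{aff}(\C)$).

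The two metric conditions enter through two known trace identities.
\begin{itemize}
\item[(a)] The balanced condition on a unimodular Lie algebra is equivalent to the vanishing of the Lee form. This gives $\tr(J\ad_{X})=0$ type identities, i.e.\ $\sum_i g([e_i,Je_i],X)=0$ for all $X$.
\item[(b)] The SKT condition, evaluated on the pair $(X,JX)$, gives a sum of squares of structure constants, weighted by signs coming from $J$. This is the standard Fino--Vezzoni style formula.
\end{itemize}
Using the decomposition, I would choose an adapted basis: a basis of $\f h$, extended by a $J$-adapted basis of the radical. The ideal ${\rm Rad}(\g,J)$ is $J$-invariant, and $\f h$ acts on it.

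Unimodularity forces $\tr\ad_X=0$ on $\g$. However, $\ad_X$ has trace $1$ on $\f h$ (from $[X,Y]=Y$), so $\ad_X$ must have trace $-1$ on the radical. Feeding this into the SKT identity for $(X,JX)$ should produce a sign contradiction with the balanced condition (a), for instance by comparing $g([X,JX],JX)>0$ with the Lee-form vanishing. I expect this step to be the main obstacle, because the SKT metric need not make $\f h$ orthogonal to the radical. One fix is to average: replace the metric by a Hermitian metric adapted to the decomposition, for example via the holomorphic involution $s$, while preserving the SKT and balanced conditions (both are preserved under certain symmetrizations in the unimodular setting). The other is to project $X$ orthogonally to the radical and control the error terms.

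Once $\f h=0$, $\g$ is $J$-solvable. The existence of a Kähler metric should then follow from a final argument on $\g'+J\g'$. Since $J$-solvability yields a $J$-invariant filtration, the SKT-plus-balanced argument used for nilpotent algebras \cite{FV16} can be adapted inductively along $\g^{[j]}(J)$, showing that $\g'_J$-type terms vanish and the balanced SKT metric is Kähler.
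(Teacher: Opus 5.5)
Your reduction to proving $\f h=0$ is the right one. By Theorem \ref{SKTLevi} you may already take $\f h\simeq\f{aff}(\R)^p$, since $\f h$ inherits the SKT metric and $\f{aff}(\C)$ carries none. This matters, because on $\f{aff}(\C)$ the form $-\eta$ is indefinite. However, the step $\f h=0$ is only announced. The ``sign contradiction'' between the SKT identity on $(X,JX)$, the trace $-1$ of $\ad_X$ on the radical, and the balanced condition is never derived. The proposed symmetrisation also has no content: here $s={\rm Id}$ because $\f h\simeq\f{aff}(\R)^p$, and in any case $s$ is an automorphism of $\f h$, not of $\g$. Nothing ensures that a metric adapted to $\g=\f h\ltimes{\rm Rad}(\g)$ stays SKT and balanced. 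The missing idea is to pass to the quotient instead of working inside $\f h$. The holomorphic projection $\pi\colon\g\to\g/{\rm Rad}(\g)\simeq\f{aff}(\R)^p$ pulls back the exact K\"ahler form $-\eta$ of $\f{aff}(\R)^p$ to an exact, non-zero, semi-positive $(1,1)$-form $-\pi^*\eta$ on $\g$. Corollary \ref{Michelsohn} forbids such a form in the presence of a balanced metric. This is in fact your identity (a) projected to the quotient. For a $g$-orthonormal basis $\{e_i\}$, the balanced condition on a unimodular algebra reads $\sum_i[e_i,Je_i]=0$. Write $\pi(e_i)=\sum_k(a_{ik}X_k+b_{ik}JX_k)$ in a standard basis with $[JX_k,X_k]=X_k$. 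Then $\pi\bigl(\sum_i[e_i,Je_i]\bigr)=-\sum_k\bigl(\sum_i(a_{ik}^2+b_{ik}^2)\bigr)X_k$, which is non-zero because $\pi$ is surjective. No orthogonality between $\f h$ and ${\rm Rad}(\g)$ and no SKT identity is needed at this point. SKT only serves to obtain the decomposition and to exclude $\f{aff}(\C)$ summands.

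The K\"ahler step is also a gap. An induction along $\g^{[j]}(J)$ adapting \cite{FV16} has nothing to run on. The balanced condition is not in general inherited by $J$-invariant subalgebras or ideals. Moreover, $J$-solvable algebras need not be nilpotent, so the nilpotent argument does not transfer as stated. The paper instead uses Proposition \ref{rhocore}: for $2$-step solvable $\g$, $J$-solvability is equivalent to $\eta=0$. Unimodularity gives $\rho=\eta$, so $J$ is Chern--Ricci flat. The K\"ahler metric then comes from \cite{FV26}, which establishes the Fino--Vezzoni conjecture for Chern--Ricci flat left-invariant complex structures. Your proposal contains neither this link between $J$-solvability and Chern--Ricci flatness nor a substitute for \cite{FV26}.
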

The theorem is obtained in two main steps. First, we show that the semisimple part of $\f g$ must vanish, hence we conclude that $\f g$ is $J$-solvable. This step is deduced by means of the characterisation of the existence of balanced metrics in terms of currents,  see \cite{M82}. Next, we show that a unimodular $2$-step solvable Lie algebra is $J$-solvable if and only if it is Chern--Ricci flat, see Proposition \ref{rhocore}. The theorem then follows from \cite{FV26}.
 
The connection between $J$-solvability and Chern--Ricci flatness is  interesting \emph{per se} and it is exclusive of the $2$-step solvable case, see Example \ref{exsimplerigid3step}.  On the other hand, Chern--Ricci flatness together with the existence of an SKT metric and some additional mild assumptions, yield information on the spectrum of the adjoint representation, see Proposition \ref{cazziarmati}.  With this in hand, we can prove the following. 
\begin{maintheorem}\label{mainsktcompletely}
Let $\f g$ be a $2$-step, unimodular,  completely solvable Lie algebra admitting an SKT metric. Then, there exists $p\ge 0$ such that 
$$
\f g=\f{aff}(\R)^p\ltimes \f n_J\,,
$$
where $\f n_J$ is the maximal $J$-invariant nilpotent ideal of $\f g$. 
\end{maintheorem}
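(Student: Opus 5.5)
We will deduce Theorem \ref{mainsktcompletely} from the $J$-adapted Levi--Malcev decomposition and the spectral information coming from Chern--Ricci flatness. First, since $\g$ is $2$-step solvable and carries an SKT metric, Theorem \ref{SKTLevi} guarantees that the spectral hypothesis of Theorem \ref{Main1} holds. We therefore obtain a unique $J$-semisimple subalgebra $\h$ with
\[
\g=\h\ltimes{\rm Rad}(\g,J)\,.
\]
It remains to identify the two factors: $\h$ should be $\f{aff}(\R)^p$, and ${\rm Rad}(\g,J)$ should be the maximal $J$-invariant nilpotent ideal $\f n_J$.

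For the semisimple part, $\h$ is a $J$-semisimple Lie algebra. It sits inside the abelian-complex-structure quotient $(\g'+J\g')/\g'_J$ (that is how $\h$ is produced in the proof of Theorem \ref{Main1}), so its complex structure is abelian. By Theorem \ref{Semisimple} and Theorem \ref{affines} (Corollary \ref{corsymmetricdecom}), $\h$ is then a direct sum of copies of $\f{aff}(\R)$ and $\f{aff}(\C)$. Complete solvability is inherited by subalgebras, because $\ad$ restricted to an invariant subspace keeps real eigenvalues. Since $\f{aff}(\C)$ regarded as a real Lie algebra has $\ad$ with non-real eigenvalues ($\pm i$ coming from multiplication by $i$), no $\f{aff}(\C)$ factor can occur. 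Hence $\h\cong\f{aff}(\R)^p$.

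For the radical, set $\f r={\rm Rad}(\g,J)$. Since $\g$ is unimodular and $2$-step solvable, Proposition \ref{rhocore} tells us that $J$-solvability is equivalent to Chern--Ricci flatness. However, $\g$ itself need not be $J$-solvable, so I would instead use the SKT condition and Proposition \ref{cazziarmati} directly on $\f r$. The goal is to show that every $\ad_X|_{\f r}$, for $X\in\f r$, has only zero eigenvalues. Complete solvability makes the eigenvalues real, and the SKT condition combined with trace/unimodularity identities should force them to vanish, which gives nilpotency of $\f r$.

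The main obstacle is this nilpotency step. Proposition \ref{cazziarmati} presumably requires Chern--Ricci flatness of the relevant algebra, while the Chern--Ricci form of $\g$ picks up contributions from $\h$. My first attempt would be to show that $\f r$, with the restricted metric, is still SKT, and that unimodularity of $\g$ together with $\tr\ad_X=0$ for $X\in\f h$-directions decouples the traces so that $\f r$ is unimodular and $J$-solvable. Proposition \ref{rhocore} would then make $\f r$ Chern--Ricci flat, and Proposition \ref{cazziarmati} would make $\f r$ nilpotent.

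Finally, for maximality, any $J$-invariant nilpotent ideal is $J$-solvable: its $J$-derived series sits inside the lower-central-type series of the $J$-invariant ideal it generates, and nilpotence makes this terminate. So such an ideal is contained in $\f r$. Conversely, $\f r$ is itself a $J$-invariant nilpotent ideal, so $\f r=\f n_J$.
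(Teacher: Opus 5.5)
\textbf{Verdict: there is a genuine gap.} The core of the theorem is that ${\rm Rad}(\g)$ is nilpotent, and your proposal does not prove it.

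Your reduction to $\g=\h\ltimes\f r$, with $\f r={\rm Rad}(\g)$, is fine. Theorem \ref{SKTLevi} already gives $\h\simeq\f{aff}(\R)^p$ directly, so your complete-solvability argument excluding $\f{aff}(\C)$ is optional, though valid. If you keep it, justify that $J|_\h$ is abelian by $\h'_J\subseteq\h\cap\g'_J\subseteq\h\cap{\rm Rad}(\g)=0$, using Proposition \ref{lastoria}, rather than saying $\h$ ``sits inside'' a quotient.

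You are right that $\f r$ is SKT, $2$-step, completely solvable and $J$-solvable (the last by definition). It is unimodular simply because it is an ideal of a unimodular algebra, so no trace ``decoupling'' is needed. Proposition \ref{rhocore} then gives $\rho_{\f r}=0$.

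The gap is the nilpotency of $\f r$ (Theorem \ref{completelysolvSKT1}), which you flag as the main obstacle and leave open. Your proposed fix, ``Proposition \ref{cazziarmati} would make $\f r$ nilpotent'', misreads that proposition, for two reasons:
\begin{enumerate}
\item It requires a $J$-invariant abelian ideal $\f a$ with $\f r'_J\subseteq\f a\subseteq C(\f r')$. Lemma \ref{Lem2stepideal} only makes $\f r'_J$ an ideal of $\f r'+J\f r'$, not of $\f r$.
\item Even when it applies, it only gives that $\f a$ is central and that $\f r'+J\f r'$ is $2$-step nilpotent. It says nothing about $\ad_X$ for $X\notin\f r'+J\f r'$.
\end{enumerate}
Your sentence about ``trace/unimodularity identities'' forcing the eigenvalues to vanish is precisely where the real argument has to go.

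The missing argument, as in the paper, runs as follows.
\begin{enumerate}
\item Apply Proposition \ref{cazziarmati} to the normaliser $N$ of $\f r'_J$ in $\f r$, with $\f a=\f r'_J$. By Remark \ref{normaliziamodai}, $N$ is a $J$-invariant ideal containing $\f r'+J\f r'$, and $\rho_N=0$ by Proposition \ref{rhocore}. Complete solvability then gives $\f r'_J\subseteq\f z(\f r'+J\f r')$.
\item Since $(\f r'+J\f r')/\f r'_J$ is nilpotent (Proposition \ref{rhocore}), $\f r'+J\f r'$ is nilpotent and SKT. Hence its centre is $J$-invariant by \cite{EFV12}.
\item Apply Proposition \ref{cazziarmati} a second time, to $\f r$ with $\f a=\f z(\f r'+J\f r')$. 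This yields $\f r'_J\subseteq\f z(\f r)$.
\item The quotient $\f r/\f r'_J$ is completely solvable and carries a solvable abelian complex structure. The key ingredient you are missing is Corollary \ref{abeliancompletely}(1). By Lemma \ref{b2,0+0,2} the Killing form is of type $(2,0)+(0,2)$, so $0\le\tr(\ad_X^2)=-\tr(\ad_{JX}^2)\le 0$. This forces every $\ad_X$ to be nilpotent.
\item So $\f r$ is a central extension of a nilpotent algebra, hence nilpotent.
\end{enumerate}
Only then does $\f r=\f n_J$ follow. For the inclusion $\f n_J\subseteq\f r$, cite Lemma \ref{Lem:basicnilsol}(1), which rests on Salamon's theorem that a nilpotent Lie algebra is never $J$-perfect. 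Your ``lower-central-type series'' justification is imprecise.
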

The core of the proof of Theorem \ref{mainsktcompletely} consists in showing that a $J$-solvable,  $2$-step, unimodular, completely solvable Lie algebra  admitting an SKT metric is in fact nilpotent, see Theorem \ref{completelysolvSKT1}. The result is then a consequence of Theorem \ref{Main1}.

For higher steps of solvability, the situation appears to be much more complicated, and from time to time we shall exhibit counterexamples to properties that hold in the $2$-step solvable scenario. Nevertheless, we shall collect in Subsection \ref{subsechigher} some interesting results that remain valid in this more general setting.

\medskip

The paper is organised as follows. In Section \ref{secprel}, after recalling some necessary preliminaries, we define and study  basic properties of solvable, semisimple and simple  complex structures. The remainder of the section is devoted to recalling  known facts about special Hermitian metrics and the Chern--Ricci form of a Lie algebra. In Section \ref{secabel}  we first  characterise $J$-semisimple Lie algebras with abelian complex structure as direct sums of $J$-simple Lie algebras, and we give a classification of the latter. We then provide the proof of Theorem \ref{mainabelian}.
In Section \ref{seclevi}, we discuss the  obstruction appearing in Theorem \ref{Main1}, together with some useful consequences. We then present the proof of Theorem \ref{Main1}.  Finally, Section \ref{secsolv} is devoted to the discussion of the proofs of Theorem \ref{mainfv} and Theorem \ref{mainsktcompletely}, as well as to other properties of $J$-solvable Lie algebras. 

\medskip

{\bf Acknowledgements. } The authors would like to express their deep gratitude to James Stanfield for numerous and insightful  conversations and for his interest in the  paper. The authors would  like to thank Asia Mainenti for many inspiring discussions over the preparation of this work. The authors would like also to thank A. Andrada, R. Arroyo, A. Fino, H. Kasuya, R. Lafuente, J. Lauret, and L. Vezzoni for their interest in the present paper.  The first named author would like to express his gratitude to Andrei Moroianu and the Laboratoire de Mathématiques d'Orsay for  the  warm hospitality  and  for the pleasant stay  during which the paper was concluded.

\medskip

{\bf Notations.} We collect here some notations that will be adopted throughout the text.
\begin{itemize}
    \item Let $\g$ be a Lie algebra, then we denote by ${\f z}(\g):=\{ X \in \g \mid \ad_X=0\}$ the centre of $\g$, and by  $\g':=[\g,\g]$ the commutator ideal of $\g$. Finally, $\f n(\f g)$ denotes the nilradical of $\f g$, i.e. the maximal nilpotent ideal of $\f g$. We will simply write  $\f n$ if no confusion is possible. 
    \item  Let $\f s$ be a subalgebra of a Lie algebra $\f g$. We denote by $C_{\f g}(\f s):=\{X\in\f g\,\, |\,\, \ad_X\f s=0\}$ the centraliser of $\f s$ in $\f g$, and by $N_{\f g}(\f s):=\{X \in \f g\, \,|\, \, \ad_X\f s\subseteq \f s\}$ the normaliser of $\f s $ in $\f g$.
    
    \item Let $\f s $ be a subalgebra of a Lie algebra $\g $ equipped with a complex structure $J$. We will use the notation $\f s_J:=\f s \cap J\f s$, in particular $\g'_J:=\g'\cap J\g'$ and $\f n_J:=\f n \cap J \f n$.
    \item Let $\f g_1, \f g_2$ be Lie algebras equipped with a complex structure. If $\g_1$ and $\f g_2$ are holomorphically isomorphic we write $\g_1 \simeq \g_2$.
    \item $B_\g(X,Y):=\tr(\ad_X\ad_Y)$, $X,Y\in \g$, is the Cartan--Killing form of the Lie algebra $\g$. If no confusion is possible we will simply write $B$ in place of $B_\g$.
    \item If $T$ is a tensor, $T^{p,q}$ denotes the $(p,q)$-part of $T$ with respect to the complex structure $J$.
    \item We will write the signature of a  symmetric bilinear form with the notation $(n_+,n_-,n_0)$, where $n_+$ and $n_-$ denote the number of negative and positive eigenvalues respectively, counted with multiplicities, while $n_0$ denotes the nullity.

\end{itemize}

\section{Preliminaries}\label{secprel}
Before discussing Lie algebras with a complex structure, we begin by recalling some standard definitions  and facts concerning  Lie algebras. 
\begin{defn}
Let $\f g$ be a  Lie algebra. We say that $\f g$ is 
\begin{enumerate}
\item \emph{completely solvable} if ${\rm Spec}(\ad_X)\subseteq \R$, for any $X\in \f g$;
\item \emph{of real type} if,    for any $X\in \f g$, $\ad_X$ is either nilpotent or has at least one eigenvalue with non-zero real part; 
\item \emph{of rigid type} if ${\rm Spec}(\ad_X)\subseteq \sqrt{-1}\R$, for any $X\in \f g$. 
\end{enumerate}
\end{defn}

 The following well-known result will be useful in what follows.  Its proof can be found in \cite[Theorem 3.8.3]{Var84}.
\begin{lem}\label{lem_nilideali}
Let $\f g$ be a  Lie algebra and $\f k$ be an ideal of $\f g$. Then $\f n(\f k)=\f n(\f g)\cap \f k.$
\end{lem}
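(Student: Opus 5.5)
The statement to prove is Lemma \ref{lem_nilideali}: for an ideal $\f k$ of $\f g$, $\f n(\f k)=\f n(\f g)\cap\f k$. We prove the two inclusions separately.

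\emph{Inclusion $\f n(\f g)\cap \f k\subseteq \f n(\f k)$.} The subspace $\f n(\f g)\cap\f k$ is an intersection of two ideals of $\f g$, so it is an ideal of $\f g$ and in particular an ideal of $\f k$. It is nilpotent, being a subalgebra of the nilpotent algebra $\f n(\f g)$. By maximality of the nilradical of $\f k$ (the sum of two nilpotent ideals is nilpotent), it is contained in $\f n(\f k)$.

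\emph{Inclusion $\f n(\f k)\subseteq \f n(\f g)$.} This direction carries the content. The plan is to show that $\f n(\f k)$ is itself an ideal of $\f g$; since it is nilpotent, maximality of $\f n(\f g)$ then gives the inclusion.

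\begin{enumerate}
\item Use that $\f n(\f k)$ is a characteristic ideal of $\f k$. For any derivation $D$ of $\f k$, the ideal $\f n(\f k)$ is mapped into itself by $D$.
\item Apply this to $D=\ad_X|_{\f k}$ for $X\in\f g$. This is a derivation of $\f k$ because $\f k$ is an ideal. Hence $[\f g,\f n(\f k)]\subseteq \f n(\f k)$, so $\f n(\f k)$ is an ideal of $\f g$.
\end{enumerate}

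The main obstacle is the derivation invariance in step 1. One route works over $\C$ (complexifying if needed, since nilradicals commute with complexification). Then $\exp(tD)$ is an automorphism of $\f k$, and automorphisms map the maximal nilpotent ideal to a nilpotent ideal, hence into itself. Differentiating at $t=0$ gives $D\f n(\f k)\subseteq\f n(\f k)$.

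A purely algebraic alternative avoids the exponential. Set $\f m=\f n(\f k)+D\f n(\f k)$. Check, using the Leibniz rule, that $\f m$ is an ideal of $\f k$, and that $\f m^{j+1}\subseteq \f n(\f k)^{j}+D(\f n(\f k)^{j+1})$, where $\f m^{j}$ and $\f n(\f k)^{j}$ denote the terms of the lower central series. Since $\f n(\f k)$ is nilpotent, these terms vanish for $j$ large, so $\f m$ is a nilpotent ideal of $\f k$ and is therefore contained in $\f n(\f k)$. This gives $D\f n(\f k)\subseteq\f n(\f k)$, which is step 1.

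The lower central series estimate is the only computation in the argument. It goes by induction on $j$, expanding $[\f n+D\f n,\f m^{j}]$ and rewriting each term of the form $[DA,B]$ as $D[A,B]-[A,DB]$.
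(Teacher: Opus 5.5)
Your reduction is correct. The inclusion $\f n(\f g)\cap\f k\subseteq\f n(\f k)$ is immediate. The reverse inclusion follows once $\f n(\f k)$ is known to be stable under the derivations $\ad_X|_{\f k}$, $X\in\f g$. Your exponential route for step 1 is valid, and it works directly over $\R$, so no complexification is needed: $e^{tD}$ is an automorphism of $\f k$, so it maps $\f n(\f k)$ into itself, and differentiating at $t=0$ gives $D\f n(\f k)\subseteq\f n(\f k)$. The paper gives no proof of this lemma; it only quotes \cite[Theorem 3.8.3]{Var84}. Your argument via derivation-invariance of the nilradical is the standard one, and with the exponential route the proof is complete.

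The ``purely algebraic alternative'', however, is wrong and should be dropped. Already for $j=1$ your estimate requires $[D\f n(\f k),D\f n(\f k)]\subseteq\f n(\f k)+D(\f n(\f k)^2)$. For $A,B\in\f n(\f k)$, the Leibniz rule gives only $[DA,DB]=D[A,DB]-[A,D^2B]$, where $[A,DB]$ and $[A,D^2B]$ lie merely in $\f n(\f k)$. The reason is that $DB$ is an arbitrary element of $\f k$, so there is no gain along the lower central series. Terms $[DA,B]$ with $B\in\f n(\f k)^{j-1}$ fail to gain degree in the same way.

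In fact no argument using only the Leibniz rule can work, because the estimate is false in characteristic $p>2$. Take $\f k=\mathbb F_p[t]/(t^p)\otimes\f{sl}_2$. Then $\f n(\f k)=t\,\mathbb F_p[t]/(t^p)\otimes\f{sl}_2$. The map $D=\partial_t\otimes\mathrm{id}$ is a derivation, since $\partial_t t^p=0$, and $D(t\otimes x)=1\otimes x$. Hence $\f n(\f k)+D\f n(\f k)=\f k$, which is perfect and not nilpotent. With $\f g=\langle D\rangle\ltimes\f k$ one even gets $\f n(\f g)\cap\f k=0\neq\f n(\f k)$, so the lemma itself requires characteristic zero. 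In your proof, characteristic zero enters exactly through $e^{tD}$. A purely algebraic substitute must also use it, for instance via the classical trace-form result $D(\mathrm{rad}\,\f k)\subseteq\f n(\f k)$.
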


In order to prove Theorem \ref{Main1}, we will need to define the cohomology of a Lie algebra with values in  a given representation. We will just define  the  second  degree cohomology, which will be the most relevant for us. We refer to \cite{Var84} for further details. 
Let $\tau$ be a representation of $\f g $ on a finite-dimensional vector space $V$.  We consider 
$$
V^1(\f g, \tau):=\f g^*\otimes V\,,  \qquad V^2(\f g, \tau):=\Lambda^{2}\f  g^*\otimes V\,,
$$
and, for any $L\in V^1(\f g, \tau)$ and $\alpha \in V^2(\f g, \tau)$,  we define the differential
$$
\begin{aligned}
(d_{\tau}L)(X, Y):=&\, \tau(X) L(Y)-\tau(Y)L(X)-L([X, Y])\,, \quad X,Y\in \f  g\,,\\
(d_{\tau}\alpha)(X, Y, Z):=&\, -\sum_{{\rm cyclic}}\alpha(X, [Y, Z])+ \tau(X)\alpha(Y, Z)\,, \quad X, Y,Z \in\f g\,. 
\end{aligned}
$$
We are now in the position to define the  cohomology of $\f g$ with values in $\tau$.

\begin{defn}\label{cohomologiagen}
Let $\tau$ be a representation of a Lie algebra $\f g$ on a finite-dimensional vector space $V$. Then, the second degree cohomology of $\f g $ with values in $\tau$ is 
 $$
H^2(\f g, \tau):=\frac{\ker d_{\tau}\cap V^2(\f g,\tau)} {{\rm Im}\, d_{\tau}\cap V^2(\f g,\tau)}\,.
 $$
 \end{defn}

Next, let $\f g$ be a Lie algebra endowed with a complex structure $J$. Recall that the integrability of $J$ is encoded by the vanishing of the Nijenhuis tensor:
\[
N_J(X,Y):=[X,Y]+J[JX,Y]+J[X,JY]-[JX,JY] \equiv 0\,, \qquad X,Y\in \g\,. 
\]
Equivalently,%  the integrability of $J$ can be rewritten as 
\begin{equation}\label{integrability}
[J, \ad_{JX}]=J[J, \ad_X]\,, \quad X \in \f g\,.
\end{equation}

We prove here the following basic lemma.
\begin{lem}\label{Lem2stepideal}
Let $\f g$ be a Lie algebra endowed with a complex structure $J$. Then
\begin{equation}\label{behaviourofJ2step}
[X, JY]+[JX, Y]\in \f g_{J}'\,, \qquad X,Y\in \g\,.
\end{equation}
Moreover, if $\g$ is $2$-step solvable, then $\f g_{J}'$ is a $J$-invariant abelian ideal of $\g'+J\g'$  and
\begin{equation}\label{adXg'_J}
J\operatorname{ad}_X=\operatorname{ad}_{JX}\,, \qquad X\in \g'_J\,.
\end{equation}
Furthermore,
$$
[\ad_{JY}, J]|_{\f g_{J}'}=0\,, \qquad Y\in \f g'+J\f g'\,,
$$
i.e. $\f g'+J\f g'$ acts holomorphically on $ \f g_J'$.
\end{lem}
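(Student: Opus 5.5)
The plan is to get everything from the Nijenhuis tensor and the $2$-step hypothesis $[\g',\g']=0$.

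For \eqref{behaviourofJ2step}, I would rewrite $N_J(X,Y)=0$ as
\[
[X,Y]-[JX,JY]=-J\big([JX,Y]+[X,JY]\big).
\]
The left-hand side lies in $\g'$, so $[JX,Y]+[X,JY]\in J\g'$. Since it is also a sum of brackets, it lies in $\g'$, hence in $\g'\cap J\g'=\g'_J$.

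Next assume $\g$ is $2$-step solvable, so $\g'$ is abelian. The subspace $\g'_J$ is $J$-invariant by construction. I would then show that $\g'_J$ is abelian and that \eqref{adXg'_J} holds, both via the following computation. Take $X\in\g'_J$, so that $X$ and $JX$ both lie in $\g'$, and let $Z\in\g'$. Apply \eqref{behaviourofJ2step} to the pair $(X,JZ)$:
\[
[X,J^2Z]+[JX,JZ]=-[X,Z]+[JX,JZ].
\]
The term $[X,Z]$ vanishes because $\g'$ is abelian. I would rather use the Nijenhuis identity directly with $X\in\g'_J$ and arbitrary $Y$:
\[
[X,Y]+J[JX,Y]+J[X,JY]-[JX,JY]=0.
\]
This is exactly the integrability form \eqref{integrability}, namely $[J,\ad_{JX}]=J[J,\ad_X]$.

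To reach $J\ad_X=\ad_{JX}$, I would evaluate on $Y\in\g'+J\g'$.
\begin{itemize}
\item If $Y\in\g'$, then $\ad_XY=0=\ad_{JX}Y$, so the Nijenhuis identity reduces to $J[X,JY]=[JX,JY]$.
\item If $Y=JW$ with $W\in\g'$, this says precisely $J\ad_X(Y)=\ad_{JX}(Y)$.
\item Combining the two cases gives \eqref{adXg'_J} on $\g'+J\g'$.
\end{itemize}

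To extend \eqref{adXg'_J} to all of $\g$, I would argue with the identity $\ad_X\circ J+J\circ\ad_X$ coming from \eqref{behaviourofJ2step} with $X\in\g'_J$ fixed. One checks that $\ad_{JX}-J\ad_X$ anticommutes suitably with $J$ via \eqref{integrability}, namely
\[
[J,\ad_{JX}-J\ad_X]=J[J,\ad_X]-J[J,\ad_X]=0.
\]
So $D:=\ad_{JX}-J\ad_X$ is $J$-linear. Its image lies in $\g'$, because $\ad_X$ and $\ad_{JX}$ map into $\g'$, and by $J$-linearity it lies in $\g'_J$. On $\g'$ we have $D=0$. For general $Y$, $D(Y)$ is computed from Nijenhuis: indeed $N_J(X,Y)=0$ gives $\ad_XY-\ad_{JX}JY=-J(\ad_{JX}Y+\ad_XJY)$, which rearranges to $D(JY)=J\,D(Y)$ again. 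I expect this to be the main obstacle: I would try to show $D=0$ using $2$-step solvability by exploiting $[\g',\g']=0$ together with the Jacobi identity applied to $D(Y)$. If that fails, I would settle for the statement restricted to $\g'+J\g'$, which is all that is needed for the ideal and holomorphicity claims below.

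Abelianity of $\g'_J$ follows from \eqref{adXg'_J} restricted to $\g'$. For $X,Z\in\g'_J$, we have $[X,Z]=0$ since both lie in $\g'$.

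To see that $\g'_J$ is an ideal of $\g'+J\g'$, take $Y\in\g'$ and $X\in\g'_J$. Then $[Y,X]=0$. For $JY$ with $Y\in\g'$, we get $[JY,X]=-[X,JY]=-J[X,Y]=0$ using \eqref{adXg'_J}. So $\g'+J\g'$ in fact centralises $\g'_J$.

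Finally, the holomorphicity $[\ad_{JY},J]|_{\g'_J}=0$ for $Y\in\g'+J\g'$ is then immediate, since $\ad_{JY}$ vanishes on $\g'_J$ (because $JY\in\g'+J\g'$). If instead only the weaker ideal property holds, I would use \eqref{integrability}, which gives $[J,\ad_{JY}]=J[J,\ad_Y]$, and run an induction on the two types of generators.
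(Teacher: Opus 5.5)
\textbf{The proof has a genuine gap in the ideal and holomorphicity steps.} Your proof of \eqref{behaviourofJ2step} is the paper's argument. So is your derivation of $J\ad_X=\ad_{JX}$ on $\g'+J\g'$: the Nijenhuis identity with $X\in\g'_J$, $Y\in\g'$ gives $J[X,JY]=[JX,JY]$.

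The gap is in the ideal step. From $J\ad_X=\ad_{JX}$ you conclude $[X,JY]=J[X,Y]$, but that identity says $[\ad_X,J]=0$, which is a different statement. What \eqref{adXg'_J}, evaluated on $JY$ with $Y\in\g'$, actually gives is
\[
[X,JY]=-J[JX,JY]\,,
\]
and this is in general non-zero. So your claim that $\g'+J\g'$ centralises $\g'_J$ is false. In Example \ref{noLevi}, $e_1=-Je_2\in J\g'$ and $e_3\in\g'_J$, yet $[e_1,e_3]=e_3$. (If the claim were true, the spectral hypothesis on $\ad_{JU}|_{\g'_J}$ in Theorem \ref{Main1} would be vacuous.) Both your ideal argument and your ``immediate'' holomorphicity argument rest on this false statement.

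The repair is short.
\begin{itemize}
\item Ideal property: $[X,JY]=-J[JX,JY]$ lies in $\g'\cap J\g'=\g'_J$. Together with $[\g',\g'_J]=0$ this gives the ideal property, which is exactly the paper's proof. Your abandoned computation would also have done it: applying \eqref{behaviourofJ2step} to $(X,JZ)$ gives $[JX,JZ]\in\g'_J$, and $J\g'_J=\g'_J$.
\item Holomorphicity: read $J[X,Z]=[JX,Z]$, valid for $X\in\g'_J$ and $Z\in\g'+J\g'$, as $J\ad_Z X=\ad_Z JX$. This is $[\ad_Z,J]|_{\g'_J}=0$ for every $Z\in\g'+J\g'$, including $Z=JY$.
\item Alternatively, your fallback via \eqref{integrability} works in one line, with no induction. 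It suffices to treat $\ad_W$ and $\ad_{JW}$ with $W\in\g'$. The former vanishes on $\g'_J$. For the latter, $[J,\ad_{JW}]|_{\g'_J}=J[J,\ad_W]|_{\g'_J}=0$.
\end{itemize}

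\textbf{The extension to all of $\g$ cannot succeed.} Your attempt to show $D:=\ad_{JX}-J\ad_X=0$ on all of $\g$ fails in general. So restricting \eqref{adXg'_J} to $\g'+J\g'$ is not a fallback but the only correct reading, and it is also all the paper proves. Indeed $D(Y)=-[\ad_Y,J]X$. Hence $D\equiv 0$ for all $X\in\g'_J$ exactly when all of $\g$ acts holomorphically on $\g'_J$, i.e.\ $\g=\mathcal O(\g'_J)$, and this fails in general.

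For a counterexample, take $\g=\langle e_1,e_2,f_1,\dots,f_4\rangle$ with
\[
[e_1,f_3]=f_1\,,\quad [e_1,f_4]=-f_2\,,\quad [e_2,f_1]=f_1\,,\quad [e_2,f_2]=f_2\,,\quad [e_2,f_3]=f_2+f_3\,,\quad [e_2,f_4]=f_1+f_4\,,
\]
and $Je_1=e_2$, $Jf_1=f_2$, $Jf_3=f_4$. One checks that Jacobi holds and $J$ is integrable, and that $\g'=\g'_J=\langle f_1,\dots,f_4\rangle$ is abelian. For $X=f_3$ one gets $J[X,e_1]=-f_2$ but $[JX,e_1]=f_2$.

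Your side claim that $D$ takes values in $\g'$ is also unjustified, since $J\ad_X$ takes values in $J\g'$.
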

\begin{proof}
The first statement is merely a consequence of the integrability of $J$. Since $\g$ is 2-step solvable, $\f g_J'$ is  clearly abelian. In order to see $\f g_J'$ is an ideal of $\g'+J\g'$,  we observe that, since $\f g $ is $2$-step solvable, $[\f g_J', \f g']=0$. Hence, it is sufficient to prove  $[\f g_J', J \f g']\subseteq \f g_J'$. On the other hand, using again $2$-step solvability and integrability of $J$, for any $Y\in \f g'$ and $X\in\f g_J'$ we have
$
[X, JY]=-J[JX, JY]. 
$
This proves the remaining claims.
\end{proof}

The next example shows that $\f g'_J$ need not be an ideal of $\g'+J\g'$ when the solvability step is greater than $2$.

\begin{es}\label{exsimplerigid3step}
Consider the $3$-step solvable rigid type Lie algebra $\f g=\langle e_1,\dots,e_4 \rangle $ with structure equations:
\[
[e_2,e_3]=-e_1\,, \qquad [e_2,e_4]=-e_3\,, \qquad [e_3,e_4]=e_2\,.
\]
In the notations of \cite{SW14} this Lie algebra is $\f s_{4,7}$. We equip $\g$ with the complex structure
\[
Je_1=e_4\,, \qquad Je_2=e_3\,.
\]
Note that $\g'_J=\langle e_2,e_3 \rangle$ is not an ideal.
\end{es}

\subsection{Types of complex structures on Lie algebras}
In this subsection, we define the types of complex structures mentioned in the introduction and studied throughout the paper. We begin with the notion of abelian complex structures. 

\begin{defn}
Let $\f g $ be a Lie algebra endowed with a complex structure $J$. $J$ is called \emph{abelian} if
$$
[JX, JY]=[X, Y]\,,\quad X, Y\in\f g\,.
$$
\end{defn}
We recall that a Lie algebra equipped with an abelian complex structure is necessarily 2-step solvable \cite{Pet88}, furthermore the centre is $J$-invariant. Finally, $\f g_J'\subseteq \f z(\f g'+J\f g')$. 
Using \eqref{behaviourofJ2step}, we notice  that if $\g'_J= 0$, then  $J$ is abelian. As we will see in what follows, abelian complex structures will play a key role in our study of complex structures on  $2$-step solvable Lie algebras.

\medskip
Let $ \g $ be a Lie algebra equipped with a complex structure $J$. We define recursively the following sequences of $J$-invariant subalgebras: for $j\in \N$, 
\[
\g^{[0]}(J):=\g\,, \qquad  \g^{[j+1]}(J):=[\g^{[j]}(J),\g^{[j]}(J)]+J[\g^{[j]}(J),\g^{[j]}(J)]\,.
\]
We call  $ \{ \g^{[j]}(J) \}_{j\in \N} $  the \emph{$J$-adapted derived series} of the Lie algebra $ \g $. 
As a first difference from the classical derived series, we note  that, in general, $\g^{[j]}(J)$ is only an ideal of $\g^{[j-1]}(J)$ and not of $\f g$, as shown by the next example.

\begin{es}
Let  $\g=\langle e_1,\dots,e_6 \rangle$ be the Lie algebra defined by
\[
[e_1,e_2]=e_1\,, \qquad [e_5,e_2]=e_3\,, \qquad [e_6,e_2]=e_4\,,
\]
with complex structure
\[
Je_1=e_2\,, \qquad Je_3=e_4\,, \qquad Je_5=e_6\,.
\]
In \cite{SW14} this Lie algebra is called $\f s_{6,1}$. Note that $\g^{[1]}(J)=\g'+J\g'=\langle e_1,e_2,e_3,e_4 \rangle$ and $\g^{[2]}(J)=\langle e_1,e_2 \rangle$, which is not an ideal of $\f g$.
\end{es}

 Once the $J$-adapted derived series is defined, we can define solvable complex structures as follows.
\begin{defn}\label{solv}
Let $\g $ be a Lie algebra equipped with a complex structure $J$. We say that $J$ is \emph{solvable} if $ \g^{[j]}(J)=0 $, for some $ j\in \N $. Moreover, we say that it is \emph{$ s $-step solvable}, or that $ s $ is the \emph{solvability step}, if $ \g^{[s]}(J)=0 $ but $ \g^{[s-1]}(J)\neq 0 $.
We will also say that a Lie algebra $\g$ is \emph{$J$-solvable} if it is equipped with a solvable complex structure $J$.
\end{defn}

\begin{rmk}
In \cite{CFGU00} it has been introduced a notion of nilpotency for complex structures. Using the characterisation given in \cite{GZZ23} it is easy to see that a nilpotent complex structure is solvable in the sense of Definition \ref{solv}.  Recall that a complex structure $J$ is bi-invariant if and only if $J[X, Y]=[X, JY]$, for any $X, Y\in \f g$.  It is fairly easy to  observe that bi-invariant complex structures on solvable Lie algebras are always solvable. 
\end{rmk}

Whenever the complex structure $J$ is not solvable, the $J$-adapted derived series stabilises to a non-zero subalgebra $\f s$ such that  $\f s=\f s^{[1]}(J)$.  In analogy with perfect Lie algebras, we define the following class of complex structures.

\begin{defn}
Let $\g$ be a Lie algebra with a complex structure $J$. If
\[
\f g= \f g'+J\f g'\,,
\]
we will say that $\g$ is \emph{$J$-perfect}. Equivalently, $\f g $ is $J$-perfect if and only if $\f g=\f g^{[1]}(J).$
\end{defn} 

Obviously $J$-perfect Lie algebras cannot be $J$-solvable. We also note that such a condition has already been investigated in the literature under different terminology. For instance in \cite{FS22}, $J$-perfect Lie algebras are called of {\em pure type III}.

We now prove some basic properties of $J$-solvability.

\begin{lem}\label{Lem:basicnilsol}
Let $\g$ be a Lie algebra with a complex structure $J$.
\begin{enumerate}
    \item If $ \g $ is nilpotent then $J$ is solvable;
    \item If $J$ is solvable then $\g$ is solvable.
\end{enumerate}
\end{lem}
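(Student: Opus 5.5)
For part (2) I will compare the $J$-adapted derived series with the ordinary derived series $\g^{(0)}:=\g$, $\g^{(j+1)}:=[\g^{(j)},\g^{(j)}]$. By induction on $j$ I will show $\g^{(j)}\subseteq \g^{[j]}(J)$. The case $j=0$ is trivial. If $\g^{(j)}\subseteq\g^{[j]}(J)$, then
\[
\g^{(j+1)}=[\g^{(j)},\g^{(j)}]\subseteq[\g^{[j]}(J),\g^{[j]}(J)]\subseteq \g^{[j+1]}(J).
\]
So if $\g^{[s]}(J)=0$, then $\g^{(s)}=0$ and $\g$ is solvable. This part is purely formal.

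For part (1) the natural strategy is a dimension argument. Each $\g^{[j]}(J)$ is a $J$-invariant subalgebra, and $\g^{[j+1]}(J)\subseteq\g^{[j]}(J)$. It therefore suffices to show that for any nonzero $J$-invariant nilpotent Lie algebra $\f s$ with integrable $J$, the subalgebra $\f s^{[1]}(J)=\f s'+J\f s'$ is a proper subspace of $\f s$. Applying this to $\f s=\g^{[j]}(J)$, which is a subalgebra of the nilpotent $\g$ and hence nilpotent, the dimension drops strictly at every step until the series reaches $0$. Integrability of $J$ restricted to $\f s$ is inherited, since the Nijenhuis tensor restricts.

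The key step is therefore to rule out $\f s=\f s'+J\f s'$ for a nonzero nilpotent $\f s$. I will argue by contradiction. Let $\f s^{2}:=\f s'$ be the second term of the lower central series. Since $\f s$ is nilpotent and nonzero, $\f s'\ne \f s$, and in fact any subspace $V$ with $V+\f s'=\f s$ generates $\f s$. The difficulty is that $J\f s'$ need not lie in $\f s'$, so $\f s'+J\f s'=\f s$ is not immediately contradictory.

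Here I will use \eqref{behaviourofJ2step}: $[X,JY]+[JX,Y]\in\f s'_J$. The plan is to show that $\f s'+J\f s'=\f s$ forces $\f s'\subseteq \f s^{3}+J\f s^{3}$, where $\f s^3=[\f s,\f s']$, and then to iterate down the lower central series to reach $0$. Write $X\in\f s$ as $X=A+JB$ with $A,B\in\f s'$. Then a bracket $[X,Y]$ expands into terms $[A,Y]$, which lie in $\f s^3$-type terms, together with terms $[JB,Y]$. Using \eqref{behaviourofJ2step}, I rewrite $[JB,Y]=-[B,JY]+(\text{element of }\f s'_J)$, where the first term again lies in $[\f s',\f s]$. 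The remaining obstacle is to control the $\f s'_J$ correction term. I will handle it by an inductive filtration argument, showing that it lands in a lower term of the series up to $J$.

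If this bookkeeping becomes too messy, a fallback is to invoke the characterisation of \cite{GZZ23} quoted in the remark. Alternatively, I would use Salamon's result that nilpotent Lie algebras with complex structure admit a $J$-invariant flag $\f a_k$ with $d\Lambda^{1,0}_k\subseteq \Lambda^2(\Lambda^{1,0}_{k-1})$. This flag gives a nonzero $J$-invariant subspace $(\f s/\f a)^*$ of forms that vanish on $\f s'$ and $J$-commute, which contradicts $\f s'+J\f s'=\f s$. I expect that either the correction-term control or the appeal to this flag result will be the main obstacle.
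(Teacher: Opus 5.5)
Part (2) and your reduction of (1) to the claim that a nonzero nilpotent Lie algebra with integrable $J$ is never $J$-perfect are exactly the paper's argument. The paper settles that claim by citing \cite{Sal01}, which is your second fallback.

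The gap is in your primary, self-contained route. Decomposing only $X=A+JB$ and invoking \eqref{behaviourofJ2step} leaves a correction term in $\f s'_J$, about which \eqref{behaviourofJ2step} says nothing. The promised ``inductive filtration argument'' is never supplied, so as written the key step is unproved.

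The missing idea is to decompose \emph{both} arguments and use the full vanishing of the Nijenhuis tensor. Set $\f s^1=\f s$ and $\f s^{k+1}=[\f s,\f s^k]$. Suppose $\f s=\f s^k+J\f s^k$, and write $X=A+JB$, $Y=C+JD$ with $A,B,C,D\in\f s^k$. Then $[X,Y]=[A,Y]+[JB,C]+[JB,JD]$. The first two terms lie in $\f s^{k+1}$, and
\[
[JB,JD]=[B,D]+J[JB,D]+J[B,JD]\in \f s^{k+1}+J\f s^{k+1}\,.
\]
Hence $\f s'\subseteq\f s^{k+1}+J\f s^{k+1}$, and $J$-perfectness gives $\f s=\f s'+J\f s'\subseteq \f s^{k+1}+J\f s^{k+1}$. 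Starting from the trivial case $k=1$, this yields $\f s=\f s^k+J\f s^k$ for all $k$, so $\f s=0$ by nilpotency. This closes your gap and gives a fully elementary proof of (1), independent of Salamon's structure theorem. That self-containedness is a real gain over the paper's one-line citation.

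As for the fallbacks, the one via \cite{GZZ23} does not work. The remark only concerns complex structures that are nilpotent in the sense of \cite{CFGU00}, and a complex structure on a nilpotent Lie algebra need not be nilpotent in that sense; examples already exist in real dimension $6$.

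The Salamon fallback is valid but misquoted. The condition is that $d\omega^{k}$ lies in the ideal generated by $\omega^1,\dots,\omega^{k-1}$, not in $\Lambda^2$ of their span. The latter would force $d\Lambda^{1,0}\subseteq\Lambda^{2,0}$, i.e.\ $J$ bi-invariant. All you actually need is $d\omega^1=0$ with $\omega^1\neq 0$:
\begin{itemize}
\item $\alpha:=\Re\,\omega^1$ is nonzero and closed, so it vanishes on $\f s'$;
\item $\alpha\circ J=\pm\Im\,\omega^1$ is also closed, so $\alpha$ vanishes on $J\f s'$;
\item therefore $\f s'+J\f s'\subseteq\ker\alpha\neq\f s$.
\end{itemize}
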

\begin{proof}
The assertion \emph{(2)} is clear because  the derived series of $\g$ is contained in the $J$-adapted one. Finally, \emph{(1)} follows from the results in \cite{Sal01}, according to which a nilpotent Lie algebra is never $J$-perfect implying that the $J$-adapted derived series must terminate with $0$.
\end{proof}

It is not difficult to show that $J$-invariant subalgebras, as well as images of $J$-solvable Lie algebras under holomorphic Lie algebra homomorphisms, are again $J$-solvable. In particular quotients of $J$-solvable Lie algebras by $J$-invariant ideals are $J$-solvable. Similarly to the case of solvable Lie algebras, the following holds. 
\begin{lem}\label{lemmagrullo}
Let $ \f k\subseteq \g $ be a $J$-solvable ideal such that  $ \g/\f k $ is $J$-solvable, then  $ \g $ is $J$-solvable. Finally, the intersection and sum of $J$-solvable ideals is $J$-solvable.
\end{lem}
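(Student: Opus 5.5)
Let $\pi\colon\g\to\g/\f k$ be the projection. Since $\f k$ is an ideal, $\f k$ is $J$-invariant (this is implicit: otherwise $\g/\f k$ carries no complex structure), so $J$ descends to $\g/\f k$ and $\pi$ is holomorphic. The plan is the classical argument: compare the $J$-adapted derived series of $\g$ with that of the quotient via $\pi$, and then use the series of $\f k$.

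\emph{Step 1.} I will show by induction on $j$ that
\[
\pi(\g^{[j]}(J))=(\g/\f k)^{[j]}(J).
\]
The case $j=0$ is trivial. For the inductive step, $\pi$ is a Lie algebra homomorphism commuting with $J$, so
\[
\pi\big([\g^{[j]},\g^{[j]}]+J[\g^{[j]},\g^{[j]}]\big)=[\pi\g^{[j]},\pi\g^{[j]}]+J[\pi\g^{[j]},\pi\g^{[j]}].
\]
Since $\g/\f k$ is $J$-solvable, some $m$ has $(\g/\f k)^{[m]}(J)=0$, and hence $\g^{[m]}(J)\subseteq\f k$.

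\emph{Step 2.} Next I check that the $J$-adapted series is monotone: if $\f a\subseteq\f b$ are $J$-invariant subalgebras, then $\f a^{[i]}(J)\subseteq \f b^{[i]}(J)$ for all $i$, by a trivial induction. Moreover $\g^{[m+i]}(J)=(\g^{[m]}(J))^{[i]}(J)$ by definition. With $\f a=\g^{[m]}(J)$ and $\f b=\f k$, this gives $\g^{[m+i]}(J)\subseteq \f k^{[i]}(J)$. The latter vanishes for large $i$ since $\f k$ is $J$-solvable, so $\g$ is $J$-solvable.

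\emph{Step 3.} For the final claims, let $\f a,\f b$ be $J$-solvable ideals, both $J$-invariant. The intersection $\f a\cap\f b$ is a $J$-invariant subalgebra of $\f a$, so its series is contained in that of $\f a$ by monotonicity, and it is therefore $J$-solvable.

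For the sum, $\f a+\f b$ is a $J$-invariant ideal and $\f a$ is a $J$-solvable ideal of it. The quotient $(\f a+\f b)/\f a\cong \f b/(\f a\cap\f b)$ is holomorphically isomorphic to a quotient of $\f b$ by a $J$-invariant ideal. By Step 1 applied to $\f b$, the image of a $J$-solvable algebra under a holomorphic surjective homomorphism is $J$-solvable, so this quotient is $J$-solvable. The first part of the lemma then gives that $\f a+\f b$ is $J$-solvable.

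The only delicate point is that the terms $\g^{[j]}(J)$ are generally not ideals of $\g$. The argument above avoids needing that: it only uses the surjectivity identity of Step 1 and monotonicity, which hold for subalgebras. So I expect no real obstacle.
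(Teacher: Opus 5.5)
Your proposal is correct and follows the paper's proof essentially verbatim. You push the $J$-adapted derived series forward along $\pi$ to get $\g^{[m]}(J)\subseteq\f k$, conclude from $\g^{[m+i]}(J)=(\g^{[m]}(J))^{[i]}(J)\subseteq\f k^{[i]}(J)$, and handle sums via $(\f a+\f b)/\f a\simeq\f b/(\f a\cap\f b)$; the only difference is that you make explicit the monotonicity of the series and the fact that holomorphic surjections preserve $J$-solvability, both of which the paper uses tacitly.
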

\begin{proof}
Denoting with $\pi\colon \f g \to \f g/\f k $ the canonical projection, we have that  $ \pi(\g^{[j]}(J))=(\g/\f k )^{[j]}(J) $ vanishes for  some  $ j\in \N $. Therefore,  $ \g^{[j]}(J)\subseteq \ker\pi=\f k  $ and thus $ \g^{[l+j]}(J)=(\g^{[j]}(J))^{[l]}(J)\subseteq \f k ^{[l]}(J)=0 $,  for  some $ l\in \N $. Finally, if $ \f k_1,\f k_2 $ are $J$-solvable ideals it is obvious that $ \f k_1 \cap \f k_2 $ is $J$-solvable. From the isomorphism $ (\f k_1+\f k_2)/\f k_1\simeq \f k_2/(\f k _1\cap \f k_2) $ we note that $ (\f k_1+\f k_2)/\f k_1 $ is solvable and thus also $ \f k_1+\f k_2 $ is.
\end{proof}

Since $J$-solvable ideals are closed under the sum the following definition is well-given.

\begin{defn}
Let $\g$ be a Lie algebra equipped with a complex structure $J$. Define ${\rm Rad}(\f g, J)$ as the maximal $J$-solvable ideal of $\g$. This ideal will be called the \emph{$J$-solvable radical of} $\f g$. 
\end{defn}
Since we are interested in working on solvable Lie algebras, we will never take into account the usual solvable radical, that is the maximal solvable ideal of a Lie algebra, therefore we shall denote ${\rm Rad}(\f g)$ the $J$-solvable radical of $\f g$ with no ambiguity. 

\begin{defn}
Let $\g $ be a Lie algebra equipped with a complex structure $J$. We say that $J$ is \emph{semisimple} if $\g$ admits no non-zero $J$-solvable ideals, i.e. ${\rm Rad}(\f g)=0$. Furthermore, we will say that $J$ is \emph{simple} if $\g $ is not abelian and admits no  proper $J$-invariant ideals. A Lie algebra $\g$ equipped with a simple or semisimple complex structure will be called \emph{$J$-simple} or \emph{$J$-semisimple}, respectively.
\end{defn}

Let us now establish some basic properties that follow from these definitions.

\begin{lem}\label{lemss}
Let $\g$ be a  Lie algebra with a  complex structure $J$. Then
\begin{enumerate}
\item if $J$ is simple,   it is semisimple and  $\f g$ is $J$-perfect;
\item if $\f g$ is $2$-step solvable  and $J$ is simple, then $J$ is abelian and $\f g=\f g'\rtimes J \f g'$;
\item $ \g / {\rm Rad}(\f g) $ is $J$-semisimple.
\end{enumerate}
\end{lem}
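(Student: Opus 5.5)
We treat the three items separately. Each reduces to showing that a suitable $J$-invariant ideal is either $0$ or all of $\g$ (or of its quotient). Throughout we use that ${\rm Rad}(\g)$ and $\g'+J\g'$ are $J$-invariant ideals of $\g$. For ${\rm Rad}(\g)$ this holds because $J$-solvability is formulated for $J$-invariant subalgebras. For $\g'+J\g'$ it holds because this subspace contains $\g'$, so $[\g,\g'+J\g']\subseteq \g'$.

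\emph{Item (1).} Suppose $J$ is simple. Since $\g'+J\g'$ is a $J$-invariant ideal, it equals $0$ or $\g$. If it were $0$, then $\g'=0$ and $\g$ would be abelian, which is excluded. Hence $\g=\g'+J\g'=\g^{[1]}(J)$, so $\g$ is $J$-perfect. Consequently the $J$-adapted derived series is constant equal to $\g\neq 0$, and $\g$ is not $J$-solvable. Now ${\rm Rad}(\g)$ is a $J$-invariant ideal, so by simplicity it is $0$ or $\g$. It cannot be $\g$, since $\g$ is not $J$-solvable. Therefore ${\rm Rad}(\g)=0$ and $J$ is semisimple.

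\emph{Item (2).} Assume in addition that $\g$ is $2$-step solvable. By (1), $\g=\g'+J\g'$. By Lemma \ref{Lem2stepideal}, $\g'_J$ is a $J$-invariant ideal of $\g'+J\g'=\g$, so $\g'_J\in\{0,\g\}$.
\begin{itemize}
\item If $\g'_J=\g$, then $\g'=\g$. This is impossible for a nonzero solvable Lie algebra.
\item Hence $\g'_J=0$. By \eqref{behaviourofJ2step}, $[X,JY]+[JX,Y]=0$ for all $X,Y\in\g$. Replacing $Y$ by $JY$ gives $[X,Y]=[JX,JY]$, so $J$ is abelian.
\end{itemize}
Moreover $\g'\cap J\g'=0$ and $\g'+J\g'=\g$, so $\g=\g'\oplus J\g'$ as vector spaces. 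Here $\g'$ is an ideal. The subspace $J\g'$ is an (abelian) subalgebra: for $X,Y\in\g'$ we have $[JX,JY]=[X,Y]=0$, using that $J$ is abelian and $\g'$ is abelian by $2$-step solvability. This yields the semidirect product $\g=\g'\rtimes J\g'$.

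\emph{Item (3).} Since ${\rm Rad}(\g)$ is $J$-invariant, $J$ descends to a complex structure on $\g/{\rm Rad}(\g)$, and the projection $\pi$ is holomorphic. Let $\bar{\f k}$ be a $J$-solvable ideal of the quotient, and set $\f k:=\pi^{-1}(\bar{\f k})$. Then $\f k$ is a $J$-invariant ideal of $\g$ containing ${\rm Rad}(\g)$. The ideal ${\rm Rad}(\g)$ is $J$-solvable, and so is $\f k/{\rm Rad}(\g)=\bar{\f k}$. By Lemma \ref{lemmagrullo}, applied to the pair ${\rm Rad}(\g)\subseteq\f k$, the ideal $\f k$ is $J$-solvable. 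By maximality $\f k={\rm Rad}(\g)$, so $\bar{\f k}=0$.

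None of these steps is a serious obstacle. The only point requiring care is in (2): Lemma \ref{Lem2stepideal} only makes $\g'_J$ an ideal of $\g'+J\g'$, not of $\g$. It is precisely the $J$-perfectness from (1) that upgrades $\g'_J$ to an ideal of $\g$.
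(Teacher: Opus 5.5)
Your proof is correct and follows the paper's argument item by item. For (1) you force $\g'+J\g'=\g$; for (2) you use $J$-perfectness to make $\g'_J$ an ideal of $\g$ and hence zero; for (3) you pass to the preimage and apply Lemma \ref{lemmagrullo}. In (1) you derive semisimplicity from $J$-perfectness rather than calling it obvious, which is slightly more careful than the paper's wording.
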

\begin{proof}
To prove \emph{(1)}, we note that if $J$ is simple it is obviously semisimple. Moreover,   $\g'+J\g'$ is a $J$-invariant ideal of $\g$, hence we either have $\g'+J\g'=0$ or $\g'+J\g'=\g$. In the first case we would have that $\g$ is abelian, which is not possible, deducing that  $\g$ must be $J$-perfect.

As for \emph{(2)}, thanks to \emph{(1)}, and Lemma \ref{Lem2stepideal} we know that $\f g_J'$ is an ideal of $\f g$. Now, simplicity forces either $\f g_J'=0$ or $\f g_J'=\f g$. However, the latter cannot happen since $\f g $ is solvable. Therefore, $\f g_J'=0$  implying that $J$ is  abelian and $\f g=\f g'\rtimes J \f g'$.

Finally, we prove \emph{(3)}. To any $J$-solvable ideal $ \f k $ of the quotient $ \g/ {\rm Rad}(\f g) $ corresponds a $J$-solvable ideal  of $ \g $. Therefore, the latter ideal must be contained in $ {\rm Rad}(\f g) $, implying that $ \f k $ is trivial in the quotient.
\end{proof}

\subsection{Special Hermitian metrics.}
In view of applications of  Theorem \ref{Main1} to special Hermitian metrics, we recall here some essential notions regarding them. We start by giving the definition of the two kinds of special metrics we shall be interested in.

\begin{defn}
Let $(M^n, J)$ be a complex manifold. A Hermitian metric $g$ is called \emph{SKT} or \emph{pluriclosed} if and only if $dd^c\omega=0$, where $\omega$ is the associated fundamental form. On the other hand $g$ is called \emph{balanced} if and only if $d\omega^{n-1}=0$.
\end{defn}

In order to address the Fino--Vezzoni conjecture, it is useful to have at our disposal characterisations of the existence of balanced and SKT metrics. In particular, we will use the following criterion for the existence of balanced metrics expressed in terms of currents due to Michelsohn \cite[Theorem A]{M82}.

\begin{thm}\label{Michelsohn1}
Let $(M, J)$ be a compact complex manifold. Then, $(M, J)$ does not admit any balanced metrics if and only if there exists a $d$-exact current with non-zero and positive $(1,1)$-part.
\end{thm}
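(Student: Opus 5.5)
The plan is to prove the two implications separately: the first by a Stokes-type pairing, the second by a Hahn--Banach separation argument, using the duality between $(1,1)$-currents and smooth $(n-1,n-1)$-forms. Let $n=\dim_\C M$. Throughout we use the fact that a Hermitian metric $g$ is balanced if and only if $\Omega:=\omega^{n-1}$ is a $d$-closed, strictly positive real $(n-1,n-1)$-form. Conversely, every strictly positive real $(n-1,n-1)$-form is $\omega^{n-1}$ for a unique positive $(1,1)$-form $\omega$; this is the pointwise linear-algebra ``$(n-1)$-st root'' lemma. Hence the existence of a balanced metric is equivalent to
\[
\mathcal C\cap \mathcal L\neq\emptyset\,,
\]
where $\mathcal C$ is the open convex cone of strictly positive real $(n-1,n-1)$-forms and $\mathcal L$ is the closed subspace of $d$-closed real $(n-1,n-1)$-forms. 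Both live in the Fréchet space $E$ of smooth real $(n-1,n-1)$-forms.

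\emph{Easy direction.} Suppose a balanced metric exists, with $\Omega=\omega^{n-1}$ closed, and suppose $T=(dS)^{1,1}$ is positive and non-zero for some real current $S$ of degree $1$. Since $\Omega$ has pure bidegree $(n-1,n-1)$, only the $(1,1)$-part of $dS$ pairs with it, and Stokes' theorem gives
\[
T(\Omega)=dS(\Omega)=\pm S(d\Omega)=0\,.
\]
On the other hand, a non-zero positive $(1,1)$-current pairs strictly positively with a strictly positive $(n-1,n-1)$-form: locally $T\wedge\Omega$ dominates $c\,T\wedge\omega_0^{n-1}$, which is a non-zero positive measure, i.e. the trace measure of $T$. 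This is a contradiction.

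\emph{Hard direction.} Suppose no balanced metric exists, so $\mathcal C\cap\mathcal L=\emptyset$. The cone $\mathcal C$ is open and convex and $\mathcal L$ is a linear subspace, so the geometric Hahn--Banach theorem in the locally convex space $E$ yields a continuous linear functional $T$ with $T|_{\mathcal L}=0$ and $T>0$ on $\mathcal C$. Such a $T$ is a real current of bidimension $(n-1,n-1)$, i.e. of bidegree $(1,1)$. Positivity on $\mathcal C$, together with density and taking limits, gives that $T$ is a positive current, and $T\neq 0$.

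The remaining step, which I expect to be the main obstacle, is to show that a functional vanishing on $\ker(d|_E)$ is the $(1,1)$-part of an exact current. Consider the operator
\[
D:=d|_E\colon E\to \mathcal A^{2n-1}_{\R}(M)\,.
\]
I would argue that $D$ has closed range: via elliptic theory, $d$ on smooth forms is a map between Fréchet spaces with closed range, using Hodge decomposition with respect to an auxiliary metric, and one checks that the restriction to the subspace $E$ still has closed range. Granting this, the closed range theorem for Fréchet spaces shows that any continuous functional vanishing on $\ker D$ factors through $\operatorname{Im}D$. Extending by Hahn--Banach to all $(2n-1)$-forms, we obtain a current $S$ with $T=S\circ D$, i.e. $T(\phi)=S(d\phi)$ for all $\phi\in E$. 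Up to sign this says $T=(dS)^{1,1}$, since pairing with $(n-1,n-1)$-forms sees only the $(1,1)$-component. Thus $T$ is a non-zero positive $(1,1)$-part of the $d$-exact current $dS$, as required. If the closed-range argument on $E$ proves delicate, I would instead carry out the separation in the space of all real $(2n-2)$-forms $\Phi$ with $\Phi^{n-1,n-1}\in\mathcal C$, intersected with the closed forms, where closedness of $\operatorname{Im} d$ is classical.
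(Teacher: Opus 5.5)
The paper gives no proof of this theorem. It quotes it from \cite[Theorem A]{M82} and proves only the implication it needs, for unimodular Lie algebras (Corollary~\ref{Michelsohn}), using the same Stokes pairing as your easy direction. Your easy direction is correct.

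Your hard direction follows the standard duality scheme: the $(n-1)$-st root lemma, Hahn--Banach separation of $\mathcal C$ from $\mathcal L$, and identification of the annihilator of $\mathcal L$. The gap is the step you flag yourself, and your justification for it does not work. Closed range of $d$ on all forms is not inherited by restriction to a closed subspace. For example, on $T^2$ with $\alpha$ a Liouville number, $d$ restricted to $\{f\,(dx-\alpha\,dy)\}$ is $f\mapsto-(\partial_y+\alpha\partial_x)f\,dx\wedge dy$, and its range is not closed. Without closed range, the bipolar theorem only puts $T$ in the weak closure of $\{(dS)^{1,1}\}$, which is not the statement.

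The missing input is Bott--Chern/Aeppli Hodge theory:
\begin{itemize}
\item Since $d$ commutes with conjugation, $d(E)=d(\mathcal A^{n-1,n-1})\cap\mathcal A^{2n-1}_{\R}$. So it suffices that $\phi\mapsto(\partial\phi,\bar\partial\phi)$ has closed range on complex $(n-1,n-1)$-forms.
\item The Kodaira--Spencer Laplacian $\Delta_{BC}$ is elliptic and gives $\mathcal A^{n-1,n-1}=\ker\Delta_{BC}\oplus\operatorname{Im}\partial\bar\partial\oplus(\operatorname{Im}\partial^*+\operatorname{Im}\bar\partial^*)$. Here the first two summands together equal $\ker\partial\cap\ker\bar\partial$.
\item On the third summand $\bar\partial^*\partial^*\phi=0$. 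Hence $\langle\Delta_{BC}\phi,\phi\rangle$ is controlled by $(\partial\phi,\bar\partial\phi)$, and the elliptic estimate for $\Delta_{BC}$ bounds $\phi$ in Sobolev norms by $(\partial\phi,\bar\partial\phi)$. This gives closed range.
\end{itemize}
Dually, this says the space of $(1,1)$-currents of the form $\partial S_1+\bar\partial S_2$ is closed, i.e.\ Aeppli cohomology $H^{1,1}_A$ computed with currents is finite-dimensional. So the claim you need is true, but it requires this ellipticity and is not a consequence of de Rham--Hodge theory for $d$.

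Your fallback would not work. A $d$-closed real $(2n-2)$-form $\Phi$ with $\Phi^{n-1,n-1}>0$ need not have $d$-closed $(n-1,n-1)$-part, so it only yields a strongly Gauduchon metric. Hence ``no balanced metric'' does not give the disjointness that separation in all real $(2n-2)$-forms requires. The obstruction that separation would produce, a non-zero positive $(1,1)$-current which is entirely $d$-exact, is Popovici's criterion for strongly Gauduchon metrics, not Michelsohn's criterion for balanced ones.
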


We will exclusively be concerned in the case in which our manifold $M$ is a quotient of a Lie group by a co-compact lattice endowed with a left-invariant complex structure. In this setting, we can adopt the symmetrisation technique \cite{Bel00} and infer that  the existence of balanced or SKT metrics on $M$ is equivalent to the existence of a left-invariant one \cite{FG04,Uga07}. This allows to reduce the problem of studying such metrics at the Lie algebra level. For this reason, unless otherwise stated, we will henceforth work exclusively on Lie algebras. Within this framework, in Theorem \ref{Michelsohn1} compactness can be replaced with unimodularity. Furthermore, we will only need the following simplified version of one implication. For the readers' convenience we provide a proof.

\begin{cor}\label{Michelsohn}
Let $\g$ be a unimodular Lie algebra equipped with a complex structure $J$. If $\g$ admits a $d$-exact, non-zero, and semi-positive $(1,1)$-form, then it does not carry any balanced metrics.
\end{cor}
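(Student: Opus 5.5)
We argue directly at the Lie algebra level by pairing against the $(n-1,n-1)$-form of a hypothetical balanced metric, which is the invariant analogue of Michelsohn's current argument. Let $\g$ have real dimension $2n$ and suppose, by contradiction, that $g$ is a balanced metric with fundamental form $\omega$, so that $d\omega^{n-1}=0$. Let $\alpha=d\beta$, with $\beta\in\Lambda^1\g^*$, be the given exact, non-zero, semi-positive $(1,1)$-form. Consider the top-degree form $\alpha\wedge\omega^{n-1}\in\Lambda^{2n}\g^*$.

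First we show that $\alpha\wedge\omega^{n-1}$ vanishes. By the Leibniz rule,
\[
d\beta\wedge\omega^{n-1}=d(\beta\wedge\omega^{n-1})+\beta\wedge d\omega^{n-1}=d(\beta\wedge\omega^{n-1}).
\]
It therefore suffices to know that $d\colon\Lambda^{2n-1}\g^*\to\Lambda^{2n}\g^*$ is the zero map whenever $\g$ is unimodular. This is standard. Let $e_1,\dots,e_{2n}$ be a basis and write $\hat e^i$ for the wedge of all dual covectors except $e^i$. Then $d\hat e^i$ equals, up to sign, $\tr(\ad_{e_i})\,e^1\wedge\cdots\wedge e^{2n}$, since only the terms of $de^j$ containing $e^i\wedge e^j$ survive, and their coefficients sum to the trace. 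Unimodularity gives $\tr\ad=0$, so every exact top form vanishes, and in particular $\alpha\wedge\omega^{n-1}=0$.

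Next we show, by pointwise linear algebra, that $\alpha\wedge\omega^{n-1}$ is non-zero. Choose a $g$-unitary basis that simultaneously diagonalises the semi-positive Hermitian form associated with $\alpha$, so that
\[
\alpha=\sum_k\lambda_k\,\tfrac{\sqrt{-1}}{2}\,\zeta^k\wedge\bar\zeta^k,\qquad \lambda_k\ge0,
\]
with $\omega=\sum_k\tfrac{\sqrt{-1}}{2}\zeta^k\wedge\bar\zeta^k$. Then
\[
\alpha\wedge\omega^{n-1}=(n-1)!\Big(\sum_k\lambda_k\Big)\,\mathrm{vol},
\]
where $\mathrm{vol}$ denotes the corresponding volume form. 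Because $\alpha\neq0$, some $\lambda_k$ is positive, and hence $\sum_k\lambda_k>0$. This contradicts the previous step, so $\g$ carries no balanced metric.

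The only step with real content is the vanishing of exact top forms under unimodularity; the rest is routine. That step is where the hypothesis enters, and it plays the role that compactness and Stokes' theorem play in Theorem \ref{Michelsohn1}.
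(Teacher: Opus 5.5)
Your proof is correct and follows essentially the same route as the paper. Both arguments use the Leibniz rule together with $d\omega^{n-1}=0$ and the closedness of all $(2n-1)$-forms under unimodularity to get $\alpha\wedge\omega^{n-1}=0$, and then conclude $\alpha=0$ by positivity. You additionally spell out two steps the paper only asserts: the trace computation showing that exact top forms vanish, and the simultaneous diagonalisation giving $\alpha\wedge\omega^{n-1}=(n-1)!\bigl(\sum_k\lambda_k\bigr)\mathrm{vol}$.
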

\begin{proof}
Let $\alpha=d\gamma$ be a non-zero and semi-positive $(1,1)$-form, and assume by contradiction that $\g$ admits a compatible balanced metric, with fundamental form $\omega$. Then, if $\dim(\g)=2n$,
\[
0=d\omega^{n-1}\wedge \gamma =d(\omega^{n-1}\wedge \gamma)+\omega^{n-1}\wedge d\gamma=\omega^{n-1}\wedge \alpha\,,
\]
where we used that every $(2n-1)$-form is closed, thanks to unimodularity. This would imply that $\alpha=0$, against our assumption, hence $\g$ cannot carry balanced metrics.
\end{proof}

Also, for future purposes, it will be useful to write down explicitly the SKT condition. The identity is well-known, see e.g. \cite{AN22,EFV12}, therefore we omit the proof.

\begin{lem}\label{SKTmaledetto}
Let $ \f g$ be a Lie algebra with a complex structure $J$. A Hermitian metric $g$ on $\f g$ is SKT if and only if
\[
\begin{aligned}
0&=g([J[X, JX], JY], JY)+g([J[X, JX], Y],Y)+2g([Y,JY],[X, JX])\\
&\quad +g([J[X, Y], X], JY)-g([J[X, Y], Y], JX)-g([J[X, JY], X], Y)\\
&\quad -g([J[X, JY], JY],JX)+g([J[JX, Y], JX], JY)+g([J[JX, Y], Y],X)\\
&\quad -g([J[JX, JY], JX], Y)+g([J[JX, JY], JY], X)+g([J[Y, JY], JX], JX)\\
&\quad +g([J[Y, JY], X], X)-|[X, Y]|^2-|[X, JY]|^2-|[JX, Y]|^2-|[JX, JY]|^2\,,
\end{aligned}
\]
for all $X,Y\in \g$.
\end{lem}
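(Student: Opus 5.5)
The plan is to compute $dd^c\omega$ directly with the Chevalley--Eilenberg differential on $\Lambda^\bullet\g^*$, and then to test the resulting $4$-form on quadruples of the form $(X,JX,Y,JY)$.

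First we reduce to such quadruples. $\omega$ is a real $(1,1)$-form, so $dd^c\omega=2\sqrt{-1}\,\partial\bar\partial\omega$ is a real $(2,2)$-form. A real $(2,2)$-form $\Phi$ is completely determined by the biquadratic expression $\Phi(Z,\bar Z,W,\bar W)$, for $Z,W\in\g^{1,0}$: by polarization in $Z$ and in $W$ one recovers every component $\Phi(Z_1,\bar Z_2,W_1,\bar W_2)$. Now write $Z=X-\sqrt{-1}JX$ and $W=Y-\sqrt{-1}JY$. Then $\Phi(Z,\bar Z,W,\bar W)$ is a non-zero constant multiple of $\Phi(X,JX,Y,JY)$, up to terms that cancel because $\Phi$ is of type $(2,2)$. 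Hence the SKT condition is equivalent to
\[
dd^c\omega(X,JX,Y,JY)=0\,, \qquad X,Y\in\g\,.
\]

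Next comes the explicit expansion. With $d^c=J^{-1}dJ$ we have $d^c\omega=-J\,d\omega$ up to the sign convention for $J$ acting on forms. Here
\[
d\omega(A,B,C)=-\omega([A,B],C)+\omega([A,C],B)-\omega([B,C],A)\,,
\]
so $d^c\omega(A,B,C)$ is a sum of three terms of the form $g(J[\,\cdot\,,\cdot\,],\cdot)$, evaluated on $JA,JB,JC$. We then apply
\[
d\eta(X_0,\dots,X_3)=\sum_{i<j}(-1)^{i+j}\eta([X_i,X_j],\ldots)
\]
with $(X_0,X_1,X_2,X_3)=(X,JX,Y,JY)$. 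This gives six brackets $[X_i,X_j]$, each inserted into a three-term expression, so $18$ terms in all. These split into two families:
\begin{itemize}
\item terms where the outer bracket is applied to $[X_i,X_j]$, producing double brackets $g([J[X_i,X_j],X_k],X_l)$;
\item terms where $J$ acts on $[X_i,X_j]$ and on another bracket, producing $g([X_i,X_j],[X_k,X_l])$ after using $g(J\cdot,J\cdot)=g$.
\end{itemize}
The second family should assemble into $2g([Y,JY],[X,JX])$ and the four negative squared norms $-|[X,Y]|^2,\dots,-|[JX,JY]|^2$. The first family should give the twelve double-bracket terms in the statement.

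The main obstacle is the bookkeeping. Some raw terms contain $[JA,JB]$ or $J$ applied to a bracket in a form different from the one displayed. To match the statement, we will rewrite these using the integrability identity \eqref{integrability} (equivalently $N_J=0$), the $J$-invariance of $g$, and antisymmetry of the bracket. The cross terms of the type $g([X,JY],[JX,Y])$ must also cancel or combine into squares. I expect to handle this by pairing terms under the involution $(X,Y)\mapsto(JX,JY)$. This involution preserves the quadruple up to sign and reordering, so it organizes the $18$ terms into symmetric pairs that can be compared directly. Since the identity is standard (cf. \cite{AN22,EFV12}), the verification is ultimately a routine though lengthy check.
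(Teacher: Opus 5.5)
Your proposal is correct. The paper omits the proof and cites \cite{AN22,EFV12}, where the identity is obtained by exactly this direct evaluation of $dd^c\omega(X,JX,Y,JY)$; there, integrability enters only to guarantee that $dd^c\omega$ is of type $(2,2)$, so that these values determine it.

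The bookkeeping is easier than you expect. Write $d^c\omega(A,B,C)=\pm\bigl(g([JA,JB],C)-g([JA,JC],B)+g([JB,JC],A)\bigr)$. Then the $18$ terms of $dd^c\omega(X,JX,Y,JY)$ match those in the statement one by one, up to a global sign, using only $J^2=-{\rm Id}$ and the $J$-invariance of $g$. No cross terms like $g([X,JY],[JX,Y])$ arise, and $N_J=0$ is not needed in the expansion itself.
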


Along our treatment it will become apparent that many structural properties of the Lie algebras taken into account are strictly tied to the curvature of the Chern connection. For this reason, we recall the expression of the \emph{first Chern--Ricci form} for  Hermitian metrics on a given Lie algebra.

\begin{prop}[\cite{V13}]\label{chernrigido}
Let $\f g$ be a Lie algebra with a complex structure $J$. The Chern--Ricci form $\rho$ of any left-invariant metric on $\f g$ can be expressed as
$$
\rho(X, Y)=-\frac12{\rm tr}(J\ad_{[X, Y]})+\frac12{\rm tr}(\ad_{J[X, Y]})\,, \qquad X, Y\in \f g\,.
$$
In particular,  $\rho$ does not depend on the choice of the metric, and $\rho=d\sigma$, where $\sigma $ is the \emph{Koszul form} defined as 
\[
\sigma(X):=\frac{1}{2}\mathrm{tr}(J \ad_X)-\frac{1}{2}\mathrm{tr}(\ad_{JX})\,, \qquad X\in \f g\,.
\] 
\end{prop}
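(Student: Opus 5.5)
The plan is to realise $\rho$ as the curvature of the Chern connection $\nabla$ on the determinant line bundle of $T^{1,0}$. Everything is left-invariant, so this becomes a trace computation on $\f g$. I will use the convention $\rho(X,Y)=\tfrac12\tr\big(J R(X,Y)\big)$, where
\[
R(X,Y)=[\nabla_X,\nabla_Y]-\nabla_{[X,Y]}
\]
and each $\nabla_X$ is regarded as an endomorphism of $\f g$. The overall sign will be fixed at the end by comparing with a known example, e.g.\ $\f{aff}(\R)$.

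\textbf{Step 1: curvature term.} Since $\nabla J=0$, each $\nabla_X$ commutes with $J$. Hence
\[
J[\nabla_X,\nabla_Y]=[J\nabla_X,\nabla_Y],
\]
and this is a commutator, so it has zero trace. Therefore
\[
\rho(X,Y)=-\tfrac12\tr(J\nabla_{[X,Y]}).
\]
Set $\sigma(Z):=\tfrac12\tr(J\nabla_Z)$. For left-invariant $1$-forms we have $d\sigma(X,Y)=-\sigma([X,Y])$, so $\rho=d\sigma$ holds provided $\sigma$ has the claimed form. It thus remains to compute $\tr(J\nabla_Z)$ and to see that it involves only $\ad$.

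\textbf{Step 2: computing $\tr(J\nabla_Z)$.} Complexify and work on $T^{1,0}\f g$, using that $\tr_\R(J A)=-2\,\Im\,\tr_\C(A|_{T^{1,0}})$ for any $A$ commuting with $J$. The Chern connection is determined by two properties. First, its $(0,1)$-part is the $\bar\partial$-operator, which on left-invariant fields reads
\[
\nabla_{\bar W}V=[\bar W,V]^{1,0},\qquad V\in T^{1,0},\ \bar W\in T^{0,1},
\]
by integrability. Second, it is $g$-unitary. In a $g$-unitary frame, unitarity makes the connection matrix anti-Hermitian, which gives
\[
\tr_\C\big(\nabla_W|_{T^{1,0}}\big)=-\overline{\tr_\C\big(\nabla_{\bar W}|_{T^{1,0}}\big)}.
\]
So the whole trace is determined by $\tr_\C\big(\pi^{1,0}\circ\ad_{\bar W}|_{T^{1,0}}\big)$, which involves no metric. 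Writing $Z=W+\bar W$ with $W=\tfrac12(Z-\sqrt{-1}JZ)$ and expanding $\ad_{\bar W}$ restricted to $T^{1,0}$ in terms of the real operators $\ad_Z$, $\ad_{JZ}$ and $J$, the expected outcome is
\[
\tfrac12\tr(J\nabla_Z)=\tfrac12\tr(J\ad_Z)-\tfrac12\tr(\ad_{JZ}).
\]
Combined with Step 1, this yields $\rho(X,Y)=-\sigma([X,Y])$, which is the stated formula, together with metric independence.

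\textbf{Main obstacle.} The delicate point is the bookkeeping in Step 2, namely converting the complex trace of $\pi^{1,0}\ad_{\bar W}$ into real traces. One uses that $\tfrac12(1-\sqrt{-1}J)$ projects onto $T^{1,0}$, and that $\tr(J\ad_Z J)=-\tr(\ad_Z)$ cancels some terms. One must also track the factors of $\tfrac12$ and the sign conventions for $\rho$ and for $d$; I expect this is where errors arise, hence the final check against $\f{aff}(\R)$ with $Je_1=e_2$.
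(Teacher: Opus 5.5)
The paper does not prove this proposition; it quotes it from \cite{V13}, so there is no in-paper argument to compare with. Your proof is correct on its own terms. In Step 1, $J[\nabla_X,\nabla_Y]=[J\nabla_X,\nabla_Y]$ holds because $\nabla_Y$ commutes with $J$. Characterising the Chern connection by $\nabla_{\bar W}V=[\bar W,V]^{1,0}$ together with unitarity is the standard description. The bookkeeping you defer in Step 2 closes with no sign adjustment. Set $c:=\tr_\C(\pi^{1,0}\ad_{\bar W}|_{T^{1,0}})$. By cyclicity and $(\pi^{1,0})^2=\pi^{1,0}$, $c=\tr(\ad_{\bar W}\pi^{1,0})$, so
\[
c=\tfrac14\big(\tr(\ad_Z)+\tr(\ad_{JZ}J)\big)+\tfrac{\sqrt{-1}}{4}\big(\tr(\ad_{JZ})-\tr(J\ad_Z)\big).
\]
Unitarity gives $\tr_\C(\nabla_Z|_{T^{1,0}})=c-\bar c=2\sqrt{-1}\,\Im c$. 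Hence
\[
\tr(J\nabla_Z)=-2\Im\big(2\sqrt{-1}\,\Im c\big)=-4\Im c=\tr(J\ad_Z)-\tr(\ad_{JZ}),
\]
which is exactly twice the Koszul form.

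Your normalisation $\rho=\tfrac12\tr(JR)=\sqrt{-1}\,\tr_\C(R|_{T^{1,0}})$ is the usual one, and it yields the stated formula directly. The planned check on $\f{aff}(\R)$, with $[e_2,e_1]=e_1$ and $Je_1=e_2$, gives $\rho(e_1,e_2)=-1$. This is the curvature of the hyperbolic plane, so the check confirms the result rather than being needed to fix a sign.

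What your route buys is that metric independence is explained rather than found by cancellation. The metric enters only through unitarity, which ties the trace of the $(1,0)$-part of the connection to that of the $(0,1)$-part. The $(0,1)$-part is the metric-free $\bar\partial$-operator. Equivalently, $\sigma=\sqrt{-1}\,\tr\theta$, where $\theta$ is the Chern connection matrix in a left-invariant frame. So $\sigma$ is essentially the connection form of the induced connection on $\Lambda^{n}T^{1,0}$, and $\rho=d\sigma$ is simply its curvature.
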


For future reference, we  also denote with $\eta$ the following form: 
\begin{equation}\label{defn_eta}
\eta(X, Y):=-\frac12\tr(J\ad_{[X, Y]})\,, \qquad X,Y\in\f g\,.
\end{equation}
It is clear that if $\f g$ is unimodular then $\rho=\eta$.  We observe that, as for $\rho$, the form $\eta $ is again $d$-exact. Indeed, $\eta=d\beta$ with 
\begin{equation}\label{eqn_beta}
\beta(X):=\frac12{\rm tr}(J\ad_X)\,, \qquad X\in \f g\,.
\end{equation}
It will also be useful for future purposes to have the following alternative expression for $\eta$. 
\begin{lem}\label{riccihol}
Let $\f g$ be a Lie algebra endowed with a complex structure $J$. Then, 
\begin{equation}\label{form_ricci}
\eta(X, JX)=-\frac14{\rm tr}([J,{\rm ad}_{JX}]^2)\,, \qquad X\in \f g\,.
\end{equation}
\end{lem}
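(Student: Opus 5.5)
The identity \eqref{form_ricci} follows from a direct trace computation. The ingredients are the definition \eqref{defn_eta} of $\eta$, the Jacobi identity, and the integrability condition \eqref{integrability}, which lets us trade $[J,\ad_X]$ for $[J,\ad_{JX}]$.

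First, by \eqref{defn_eta} and $\ad_{[X,JX]}=[\ad_X,\ad_{JX}]$, I would write $\eta(X,JX)=-\frac12\tr(J[\ad_X,\ad_{JX}])$. Set $A:=\ad_X$ and $B:=\ad_{JX}$. Using cyclicity of the trace, $\tr(JAB)-\tr(JBA)=\tr(JAB)-\tr(AJB)$, so
\[
\tr(J[A,B])=\tr([J,A]B)\,.
\]

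Next, I would apply \eqref{integrability} in the form $[J,\ad_{JX}]=J[J,\ad_X]$. Multiplying on the left by $-J$ gives $[J,A]=-J[J,B]$. Writing $C:=[J,B]=[J,\ad_{JX}]$, we obtain
\[
\tr(J[A,B])=-\tr(JCB)\,.
\]

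It remains to relate $\tr(JCB)$ to $\tr(C^2)$. The key observation is that $C$ anticommutes with $J$. Indeed, since $J^2=-1$, we have $JC=-B-JBJ$ and $CJ=JBJ+B$. Expanding one factor of $C^2$ and using this anticommutation together with cyclicity, we get
\[
\tr(C^2)=\tr(CJB)-\tr(CBJ)=\tr(CJB)-\tr(JCB)=-2\tr(JCB)\,.
\]
Hence $\tr(JCB)=-\frac12\tr(C^2)$, which gives $\tr(J[\ad_X,\ad_{JX}])=\frac12\tr(C^2)$. Substituting into the expression for $\eta$,
\[
\eta(X,JX)=-\frac14\tr([J,\ad_{JX}]^2)\,.
\]

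There is no real obstacle in this argument. The only points needing care are the signs in the integrability relation and the anticommutation $JC=-CJ$, which is what makes the trace of $JCB$ reduce to $\tr(C^2)$.
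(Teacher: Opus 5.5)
Your proof is correct and is essentially the paper's argument: a direct trace computation using cyclicity, $J^2=-1$ and the integrability relation \eqref{integrability}. The only difference is the order of steps. The paper splits $-\ad_X=J[J,\ad_X]+J\ad_XJ$, solves the resulting linear equation for $\tr(J\ad_{[X,JX]})$, and only then invokes integrability; you invoke integrability first and then use that $[J,\ad_{JX}]$ anticommutes with $J$.
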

\begin{proof}
We have  
$$
{\rm tr}(J{\rm ad}_{[X, JX]})=
-{\rm tr}([J, {\rm ad}_{JX}]{\rm ad}_{X})={\rm tr}([J, {\rm ad}_{JX}]J[J,{\rm ad}_{X}])+ {\rm tr}([J, {\rm ad}_{JX}]J{\rm ad}_XJ)\,.
$$
On the other hand,
$$
{\rm tr}([J, {\rm ad}_{JX}]J{\rm ad}_XJ)={\rm tr}(-{\rm ad}_{JX}J{\rm ad}_{X}+ {\rm ad}_{JX}{\rm ad}_{X}J )={\rm tr}(J[{\rm ad}_{JX},{\rm ad}_{X}])=-\tr(J\ad_{[X,JX]})\,,
$$
which in particular, using \eqref{integrability}, gives 
$$
\eta(X,JX)=-\frac{1}{4}{\rm tr}([J, {\rm ad}_{JX}]J[J,{\rm ad}_{X}])=-\frac14{\rm tr}([J, {\rm ad}_{JX}]^2)\,,
$$
as claimed.
\end{proof}
 We conclude the section with the following interesting fact.
\begin{lem}\label{Lem:balancedcsf}
Let $\f g$ be a unimodular Lie algebra endowed with a complex structure $J$. Then, any balanced Hermitian metric  is Chern-scalar flat.
\end{lem}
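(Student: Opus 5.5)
We have to show that, on a unimodular Lie algebra $\g$ with complex structure $J$, every balanced metric $g$ has vanishing Chern scalar curvature. The plan is to express the Chern scalar curvature as a contraction of the Chern--Ricci form $\rho$ with $\omega$, and then use that $\rho$ is $d$-exact together with unimodularity, exactly as in the proof of Corollary \ref{Michelsohn}.

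First, recall the standard identity relating the Chern scalar curvature $s_C$ to the first Chern--Ricci form. If $\dim\g=2n$ and $\omega$ is the fundamental form of $g$, then up to a positive normalising constant $c_n$,
\[
s_C\,\omega^n = c_n\,\rho\wedge\omega^{n-1}\,.
\]
This holds pointwise for any Hermitian metric, because $s_C=\operatorname{tr}_\omega\rho$ and $\operatorname{tr}_\omega\alpha\,\omega^n=n\,\alpha\wedge\omega^{n-1}$ for a $(1,1)$-form $\alpha$. Since $\rho$ is of type $(1,1)$, only its trace appears. In the left-invariant setting $\rho$ is the form of Proposition \ref{chernrigido}, so $s_C$ is a constant.

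Next, Proposition \ref{chernrigido} gives $\rho=d\sigma$, where $\sigma$ is the Koszul form. Hence
\[
\rho\wedge\omega^{n-1}=d\sigma\wedge\omega^{n-1}=d(\sigma\wedge\omega^{n-1})+\sigma\wedge d\omega^{n-1}\,.
\]
The second term vanishes because $g$ is balanced. The first term is the differential of a $(2n-1)$-form. On a unimodular Lie algebra the differential $d\colon\Lambda^{2n-1}\g^*\to\Lambda^{2n}\g^*$ is zero, so $d(\sigma\wedge\omega^{n-1})=0$ as well. Therefore $\rho\wedge\omega^{n-1}=0$, which gives $s_C\,\omega^n=0$, and since $\omega^n\neq0$ we conclude $s_C=0$.

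The only point needing care is the first step, namely that $s_C$ agrees up to a constant with $\operatorname{tr}_\omega\rho$ for the Chern connection. This is standard: the first Chern--Ricci form is the trace of the curvature over the endomorphism part, and the Chern scalar curvature is the trace over the form part, so both give the full trace of the curvature. The vanishing of $d$ on top-minus-one forms is the usual characterisation of unimodularity, since $d\iota_X\mathrm{vol}$ is proportional to $\operatorname{tr}(\ad_X)\,\mathrm{vol}$. Neither step should be a serious obstacle.
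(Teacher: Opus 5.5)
Your proposal is correct and follows essentially the same argument as the paper. Both write $s^{\rm Ch}\,\omega^n$ as a positive multiple of $\rho\wedge\omega^{n-1}$, use $\rho=d\sigma$ from Proposition \ref{chernrigido}, and kill $d\sigma\wedge\omega^{n-1}$ by combining the balanced condition $d\omega^{n-1}=0$ with the vanishing of $d$ on $(2n-1)$-forms, which is where unimodularity enters.
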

\begin{proof}
By Proposition \ref{chernrigido} we know that $\rho=d \sigma$. 
Now, given  a balanced metric $\omega$, we have 
$$
s^{{\rm Ch}}(\omega)\frac{\omega^n}{n!}=\rho\wedge \frac{\omega^{n-1}}{(n-1)!}=d\sigma\wedge \frac{\omega^{n-1}}{(n-1)!}=0
$$ using that $d\omega^{n-1}=0$ and unimodularity.
\end{proof}

\section{Abelian complex structures}\label{secabel}
 
Given  a $2$-step solvable  Lie algebra $\g $,  we have seen in Lemma \ref{Lem2stepideal} that $\g'_J:=\f g'\cap J\f g'$ is always an ideal of $\f g'+J\f g'$. Hence,  we can consider the quotient $(\f g'+J\f g')/\f g_J'$ which yields a  Lie algebra with abelian complex structure. With this in mind, we proceed with our investigation by studying abelian complex structures.

The following simple lemma supplies us with a very useful identity that we will use repeatedly.

\begin{lem}
Let $\f g $ be a  Lie algebra endowed with an abelian complex structure $J$. Then 
\begin{equation}\label{penis}
\ad_Y\ad_X+\ad_{JX}\ad_{JY}=\ad_{J[JX, Y]}\,, \qquad X, Y \in \f g\,.
\end{equation}
\end{lem}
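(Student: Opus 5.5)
The plan is to evaluate both sides of \eqref{penis} on an arbitrary $Z\in\g$, and to reduce everything to two facts: the Jacobi identity and the two identities that an abelian complex structure satisfies. The first of these is the defining relation $[JX,JY]=[X,Y]$. The second comes from the Nijenhuis tensor. For abelian $J$, the condition $N_J=0$ reduces to $J([JX,Y]+[X,JY])=0$, that is,
\[
[JX,Y]=-[X,JY]\,, \qquad\text{equivalently}\qquad \ad_{JX}=-\ad_X\circ J\,, \qquad X,Y\in\g\,.
\]
No $2$-step solvability is needed beyond what these identities encode.

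First I would rewrite the right-hand side. Put $W:=[JX,Y]$. By the second identity,
\[
[JW,Z]=-[W,JZ]=-[[JX,Y],JZ]\,.
\]
Applying Jacobi to $[[JX,Y],JZ]$ and then using $[JX,JZ]=[X,Z]$, I expect
\[
\ad_{J[JX,Y]}Z=-[JX,[Y,JZ]]+[Y,[X,Z]]\,.
\]
The term $[Y,[X,Z]]=\ad_Y\ad_XZ$ cancels the first summand on the left-hand side. It then remains to show
\[
[JX,[JY,Z]]=-[JX,[Y,JZ]]\,.
\]
This is immediate, because $[JY,Z]=-[Y,JZ]$ by the second identity applied inside the bracket.

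Written out, the verification is just a sign check. The left-hand side is
\[
[Y,[X,Z]]+[JX,[JY,Z]]\,,
\]
and the right-hand side is
\[
[Y,[X,Z]]-[JX,[Y,JZ]]=[Y,[X,Z]]+[JX,[JY,Z]]\,,
\]
so the two agree. There is no real obstacle here. The only step requiring care is the Jacobi expansion of $[[JX,Y],JZ]$, where the sign conventions must be tracked correctly: it gives $[JX,[Y,JZ]]-[Y,[JX,JZ]]$. Everything else is a direct substitution of the two abelian identities.
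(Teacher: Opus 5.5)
Your proof is correct and is essentially the paper's argument written out on a test vector $Z$. The paper works at the operator level: it rewrites $\ad_{JY}=-\ad_YJ$, commutes $\ad_{JX}$ past $\ad_Y$ via Jacobi, and then uses $\ad_{JW}=-\ad_WJ$. These are the same ingredients you use. As a small aside, the identity $[JX,Y]=-[X,JY]$ already follows from abelianity applied to the pair $(JX,Y)$, so you do not need to go through the Nijenhuis tensor.
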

\begin{proof}
By simply using Jacobi identity and abelianity of $J$,  we have that 
$$
\ad_{JX}\ad_{JY}=-\ad_{JX}\ad_YJ=-\ad_{[JX, Y]}J-\ad_Y\ad_{JX}J=\ad_{J[JX,Y]}-\ad_Y\ad_X\,, 
$$
as claimed.
\end{proof}

By means of \eqref{penis}, we are able to prove the following result, whose importance will become clear quickly.
\begin{prop}\label{propKilling}
Let $\f g $ be a Lie algebra endowed with an abelian complex structure $J$. Then,
\begin{enumerate}
\item $\f n_J$ is a $J$-invariant ideal, i.e.  $\f n_J$ is the maximal $J$-invariant nilpotent ideal of $\f g$;
\item $B^{1,1}(JX , Y)=-\eta(X, Y)$, for any $X, Y\in\f g $, and so it is $d$-exact; 
    \item  for any $Z\in \f n , X, Y\in\f g$, we have 
    $$
    B^{1,1}([JZ, X], Y)=B^{1,1}([JZ, Y], X)\,, \qquad B^{1,1}([Z, X], Y)=B^{1,1}(J[Z, JY], X)\,;
    $$
    \item if $\f g=\f n +J\f n$, then $\ker \eta=\ker B\cap J \ker B =\f n_J$;
    \item if $\f g=\f n +J\f n$ and $\f k $ is a $J$-invariant ideal of $ \f g$, then $\f k ^{\perp}$ is a $J$-invariant  ideal, where $(\cdot)^{\perp}$ is the orthogonal complement with respect to $B^{1,1}$. Furthermore, $B_{\f k}^{1,1}=B^{1,1}\vert_{\f k\times \f k}$.
\end{enumerate}
\end{prop}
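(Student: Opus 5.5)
The plan is to work throughout with two elementary consequences of abelianity. First, $[JX,JY]=[X,Y]$ gives
\[
\ad_{JX}J=\ad_X\,,\qquad\text{equivalently}\qquad \ad_{JX}=-\ad_XJ\,,\qquad X\in\g\,.
\]
Second, $\g$ is $2$-step solvable (\cite{Pet88}), so $\g'$ is abelian. Since every $\ad_V$ takes values in $\g'$ and $\ad_W$ kills $\g'$ for $W\in\g'$, we get $\ad_W\ad_V=0$ for $W,V\in\g'$. Together with \eqref{penis} and ad-invariance of $B$, these should drive all five items.

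\emph{Item (1).} It suffices to show $[\g,\f n_J]\subseteq \f n_J$; the maximality claim is then immediate, since any $J$-invariant nilpotent ideal lies in $\f n\cap J\f n$. For $Z\in\f n_J$ and $X\in\g$ we have $[X,Z]\in\f n$, because $\f n$ is an ideal. So the point is to show $J[X,Z]\in\f n$. The approach is to check that $\f n+J[\g,\f n_J]$ is an ideal on which $\ad$ acts nilpotently. For this we rewrite $\ad_{J[X,Z]}=-\ad_{[X,Z]}J$, use $[X,Z]\in\g'$, and use $\ad_W\ad_V=0$ on $\g'$ to control products of such operators. Engel's theorem should then give nilpotency, and maximality of $\f n$ yields $J[X,Z]\in\f n$.

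\emph{Item (2).} This is a trace computation. We have
\[
B^{1,1}(JX,Y)=\tfrac12\bigl(B(JX,Y)-B(X,JY)\bigr)\,.
\]
Taking traces in \eqref{penis} gives
\[
\tr(\ad_Y\ad_X)+\tr(\ad_{JX}\ad_{JY})=\tr(\ad_{J[JX,Y]})\,.
\]
Substituting $\ad_{JW}=-\ad_WJ$ turns the right-hand side into $-\tr(\ad_{[JX,Y]}J)$. Replacing $X$ by $JX$ and combining with the definition \eqref{defn_eta} of $\eta$ should give $B^{1,1}(JX,Y)=-\eta(X,Y)$. Exactness then follows from $\eta=d\beta$ with $\beta$ as in \eqref{eqn_beta}.

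\emph{Item (3).} We use $d\eta=0$ in its cyclic form $\eta([A,B],C)+\text{cyclic}=0$, transported to $B^{1,1}$ via (2). The first identity is the cyclic relation with $A=JZ$, in which the third term $\eta([X,Y],JZ)$ is killed. Killing that term should follow from $Z\in\f n$: the operators $J\ad_{[X,Y]}\ad_{JZ}$ are nilpotent-type and hence traceless. The second identity then follows from the first by substituting $JY$ for $Y$ and using $\ad_{JZ}=-\ad_ZJ$.

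\emph{Items (4) and (5).} Assume $\g=\f n+J\f n$. For (4), item (3) says that $\ad_Z$ and $\ad_{JZ}$, for $Z\in\f n$, are $B^{1,1}$-symmetric or skew up to $J$. Combining this with (2), the radical of $\eta$ is $J$-invariant and coincides with $\ker B\cap J\ker B$. We then compare it with $\f n_J$. The inclusion $\f n_J\subseteq\ker\eta$ comes from nilpotency of the relevant traces. The reverse inclusion comes from Cartan's criterion applied to the ideal $\ker B$. For (5), $\g=\f n+J\f n$ means $\ad_\g$ is spanned by $\ad_Z$ and $\ad_{JZ}$ with $Z\in\f n$. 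Item (3) shows each such operator maps $\f k^\perp$ into $\f k^\perp$ whenever it preserves $\f k$, and $J$-invariance of $\f k^\perp$ is clear since $B^{1,1}$ is $J$-invariant.

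\emph{Main obstacle.} I expect the hardest step to be the restriction statement $B^{1,1}_{\f k}=B^{1,1}|_{\f k\times\f k}$. The plan there is to split $\tr_\g$ into $\tr_{\f k}$ plus $\tr_{\g/\f k}$ and to show that the quotient contribution has vanishing $(1,1)$-part. This should follow from $[\f k,\g]\subseteq\f k$: for $X,Y\in\f k$ the operators $\ad_X,\ad_Y$ induce zero on $\g/\f k$, so $\ad_X\ad_Y$ does too, and the quotient trace is $0$.
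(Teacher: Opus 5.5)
Your item (2) is the paper's computation and is correct, and your argument for (5) matches the paper's once (3) is available. However, (3) fails as written. In (3) you take the cyclic identity with $A=JZ$ and discard $\eta([X,Y],JZ)$, but that term does not vanish. In $\mathfrak{aff}(\R)$, with $[JX,X]=X$ and $\mathfrak n=\langle X\rangle$, take $Z=X$ and the pair $(JX,X)$. Then $\eta([JX,X],JZ)=\eta(X,JX)=-\tfrac12$. Your justification breaks for two reasons: $\ad_{JZ}$ need not be nilpotent for $Z\in\mathfrak n$ (here $\ad_{JX}$ has eigenvalue $1$), and composing with $J$ destroys triangularity anyway. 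What does vanish is $\eta$ on $\mathfrak n\times\mathfrak n$, because $\eta(W,Z)=-\tfrac12\bigl(B(JW,Z)-B(W,JZ)\bigr)$ and $\mathfrak n\subseteq\ker B$. So $Z$, not $JZ$, must sit in the distinguished slot; the paper uses $d\eta(JX,JY,Z)=0$. The cyclic identity with $A=Z$ drops $\eta([X,Y],Z)$ and gives $B^{1,1}([Z,X],JY)=B^{1,1}([Z,Y],JX)$. The substitutions $X\mapsto JX$, $Y\mapsto -JY$, together with $\ad_{JZ}=-\ad_ZJ$, turn this into the first identity. With your choice, what you actually get is $B^{1,1}([JZ,X],JY)-B^{1,1}([JZ,Y],JX)=B^{1,1}([X,Y],Z)$, whose right-hand side equals $\tfrac12$ in the example above.

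In (1) and (4) the step that needs an idea is missing. In (1), everything you list holds for every $Z$: $\ad_{J[X,Z]}=-\ad_{[X,Z]}J$, $[X,Z]\in\mathfrak g'$, and $\ad_W\ad_V=0$ for $W,V\in\mathfrak g'$. Yet $J[X,Z]\in\mathfrak n$ is false for $Z\in\mathfrak n\setminus\mathfrak n_J$: in $\mathfrak{aff}(\R)$, $J[JX,X]=JX\notin\mathfrak n$. So nilpotency of $\ad_{J[X,Z]}$ must come from $JZ\in\mathfrak n$, which your sketch never uses. The paper gets it from \eqref{penis}, namely $\ad_{J[X,Z]}=\ad_X\ad_{JZ}-\ad_Z\ad_{JX}$. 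It then uses Lie's theorem to make this strictly upper triangular; Engel alone does not control such sums of products. Finally, in a solvable Lie algebra the ad-nilpotent elements are exactly $\mathfrak n$. In (4), Cartan's criterion applied to $\ker B$ gives only solvability, which you already have. What is needed is that every element of $\ker B\cap J\ker B$ is ad-nilpotent, i.e. $\tr(\ad^k)=0$ for all $k$, not just $k=2$. The paper writes such an element as $Z+JY$ with $Y,Z\in\mathfrak n$ and notes $JY\in\ker B$. It then iterates \eqref{penis}, using nilpotency of $\ad_Y$, to reduce $\tr(\ad_{JY}^{k+1})$ to $B(JY,JW)=0$ for an iterated bracket $W$. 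Also, $\ker\eta=\ker B\cap J\ker B$ comes from (2), $\mathfrak n\subseteq\ker B$ and $\mathfrak g=\mathfrak n+J\mathfrak n$, not from (3).
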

\begin{proof}
Most of the proposition hinges on the identity \eqref{penis}.
Up to complexification, we can use Lie's Theorem and assume that $\ad_Z$ is upper triangular for all  $Z\in\f g$, after choosing a suitable basis of $\f g$. Now, take $X \in\f n_J$. Then $\ad_X $ and $ \ad_{JX}$ are strictly upper triangular. For any $Y\in \g $, this forces $\ad_Y\ad_X$, $\ad_{JX}\ad_{JY}$, and hence also $\ad_{J[JX, Y]}$, to be strictly upper triangular, and thus nilpotent. Thanks to this $J[JX, Y]\in \f n$ and consequently $[JX, Y]\in\f n_J$,  proving  that $\f n_J$ is an ideal. The last part of \emph{(1)} is straightforward.

As for \emph{(2)}, we use \eqref{penis} again.  Indeed, for any $X,Y\in \g$, 
$$
B^{1,1}(JX , Y)=\frac12{\rm tr}(\ad_{JX}\ad_Y-\ad_{X}\ad_{JY})=-\frac12{\rm tr}(\ad_{J[X, Y]})=\frac12{\rm tr}(J\ad_{[X, Y]})=-\eta(X,Y)=-d\beta(X,Y)\,,
$$
where  $\beta$  is the form defined in \eqref{eqn_beta}, which proves the claim.

To show \emph{(3)}, we exploit the closure of $\eta$. First of all, we notice that $B^{1,1}(\f n, J \f n)=0$, since $\f n \subseteq \ker B$. Now,  let $X, Y\in\f g$ and $Z\in\f n$, we have 
$$
0=d\eta(JX,JY, Z)=B^{1,1}(J[JX, JY], Z)-B^{1,1}(J[JX, Z], JY)+B^{1,1}(J[JY, Z], JX)\,.
$$
Using that $[JX, JY]\in\f n$, we get
$$
0=-B^{1,1}([JX, Z], Y)+B^{1,1}([JY, Z], X)=B^{1,1}([X, JZ], Y)-B^{1,1}([Y, JZ], X)\,,
$$
which implies  
$$
B^{1,1}([JZ, X], Y)=B^{1,1}([JZ, Y], X)\,, 
$$
as claimed. On the other hand,
$$
B^{1,1}([Z, X], Y)=B^{1,1}([JZ, JX], Y)=B^{1,1}([JZ, Y], JX)=B^{1,1}(J[Z, JY], X)\,, 
$$
concluding the proof of \emph{(3)}.

As for \emph{(4)}, we observe that 
$
\ker B\cap J \ker B\subseteq \ker \eta\,.
$
Viceversa, fix  $X\in \ker \eta$ and $Y\in \f n$. By expanding the relation $\eta(X, Y)=0$ in terms of $B$, using part \emph{(2)}, we obtain $ B(X, JY)=0.$ Together with $\f n \subseteq \ker B$, this implies that $X\in \ker B$. On the other hand, expanding $\eta(X, JY)=0$, we get 
$
B(JX, JY)=0\,,  
$
which yields $JX\in \ker B$. Thus $X \in \ker B \cap J \ker B$, giving $\ker B \cap J \ker B =\ker \eta$. 
% Now, we want to show that $\ker \eta = \f n_J$.
Finally, to show that $\ker \eta = \f n_J$, let  $X\in \ker \eta$ and write $X=Z+JY$, where $Y, Z\in \f n$. It follows that $JY\in \ker B$. Thus, we have
$$
{\rm tr}(\ad_{JY}^2)=0\,.
$$
We shall prove that ${\rm tr}(\ad_{JY}^{k+1})=0 $ for all $k\ge 2$.  Using \eqref{penis}, we infer
$$
\begin{aligned}
\ad_{JY}^{k+1}=\ad_{JY}^{k-1}\ad_{J[JY, Y]}-\ad_{JY}^{k-1}\ad_Y^2\,.
\end{aligned}
$$
Now, $\ad_Y$ is nilpotent. Hence, as above, $\ad_{JY}^{k-1}\ad_Y^2$ is nilpotent and therefore 
$$
{\rm tr}(\ad_{JY}^{k+1})={\rm tr}(\ad_{JY}^{k-1}\ad_{J[JY, Y]})\,.
$$
Applying \eqref{penis} once again, we obtain 
$$
\ad_{JY}^{k-1}\ad_{J[JY, Y]}=\ad_{JY}^{k-2}\ad_{J[JY,[JY, Y]]}-\ad_{JY}^{k-2}\ad_{[JY, Y]}\ad_Y\,,
$$
and thus
$$
{\rm tr}(\ad_{JY}^{k+1})
={\rm tr}(\ad_{JY}^{k-2}\ad_{J[JY,[JY, Y]]})\,.
$$
Iterating this process, we eventually obtain 
$$
{\rm tr}(\ad_{JY}^{k+1})={\rm tr}(\ad_{JY}\ad_{J[JY,[JY, \cdots [JY, Y]]\cdots]})=0\,,
$$
since $ JY\in \ker B$. This ultimately leads to the fact that $\ad_{JY}$ is nilpotent, and  hence $\ad_{X}$ is nilpotent, proving that $\ker \eta \subseteq \f n$. On the other hand, since clearly $J \ker \eta =\ker \eta $, we deduce that $\ker \eta \subseteq \f n_J$. Conversely, it is well-known that $\f n \subseteq \ker B$ and so $\f n_J\subseteq \ker B \cap J \ker B=\ker \eta$.

Finally, we prove \emph{(5)}. Let $\f k $ be a $J$-invariant ideal of $\f g$. Being $B^{1,1}$ of type $(1,1)$ guarantees straightforwardly that $\f k ^{\perp}$ is $J$-invariant. Let $X\in \f k $, $Y\in \f k ^{\perp}$ and $Z\in \f g$ and  write $Z=Z'+JZ''$ with $Z',Z''\in \f n$. Thus, applying part \emph{(3)}, we get
\[
\begin{split}
B^{1,1}([Z,Y ], X)&=B^{1,1}([Z',Y ], X)+B^{1,1}([JZ'',Y], X)=B^{1,1}(J[Z',JX],Y)+B^{1,1}([JZ'', X],Y)=0\,,
\end{split}
\]
since $\f k $ is a $J$-invariant ideal. This concludes the first part of \emph{(5)}. The second part is a trivial consequence of the facts that $B_{\f k }=B_{\f g }|_{\f k\times \f k}$ and  that $J\f k =\f k $.
\end{proof}

An important class of Lie algebras admitting  abelian complex structures is that of affine Lie algebras, see \cite{ABD12, BD04}. In what follows, we will be interested only in those affine Lie algebras constructed from commutative and associative $\R$-algebras.

\begin{defn}\label{defiaffi}
Let $A$  be a  finite-dimensional, commutative and associative $\R$-algebra.  The affine Lie algebra $\f{aff}(A)$ is $A\oplus A$ with Lie bracket given by 
$$
[(a, b), (a', b')]:=(0, ab'-a'b)\,, \quad a, b, a', b'\in A\,,
$$
and complex structure defined by $J(a, b):=(b, -a)$.
\end{defn}

It is straightforward to notice that if $\f g=\f{aff}(A)$, for some finite-dimensional, commutative and associative $\R$-algebra $A$, then $\f g_J'=0$.
 
We prove the following basic lemma which will be useful later. 
\begin{lem}\label{lemmaffini}
Let $A$ and $A'$ be  finite-dimensional, commutative and associative $\R$-algebras.
\begin{enumerate}
\item  Any subalgebra of $A$ induces a $J$-invariant subalgebra of $\f{aff}(A)$. Any ideal of $A$ induces a $J$-invariant ideal of $\f{aff}(A).$
\item Any $J$-perfect subalgebra of $\f{aff}(A)$ corresponds to a subalgebra of $A$. Any $J$-perfect ideal of $\f{aff}(A)$ corresponds to an ideal of $A$.
\item  Assume $A=S\oplus I$, where  $I$ is an ideal and $S$ is a subalgebra of $A$. Then $\f{aff}(A)= \f{aff}(S)\ltimes \f{aff}(I)$.
\item Let  $\varphi\colon A \to A'$  be a homomorphism of $\R$-algebras. Then there exists $\f{aff}( \varphi )\colon \f{aff}(A)\to \f{aff}(A')$ holomorphic Lie algebra homomorphism such that $\ker \f{aff}(\varphi)=\f{aff}(\ker \varphi)$. In particular, if $\varphi$ is an isomorphism, then $\f{aff}( \varphi)$ is a holomorphic isomorphism.
\item  Let $I$ be an ideal of $A$. Then $\f{aff}(A/I)\simeq \f{aff}(A)/\f{aff}(I).$
\end{enumerate}
\end{lem}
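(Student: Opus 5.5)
The plan is to verify each item directly from Definition \ref{defiaffi}, using that everything is determined by the bracket $[(a,b),(a',b')]=(0,ab'-a'b)$ and by $J(a,b)=(b,-a)$. Throughout, a subspace $V\subseteq A$ induces $\f{aff}(V):=V\oplus V\subseteq \f{aff}(A)$. This subspace is automatically $J$-invariant, since $J(V\oplus V)=V\oplus V$.

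For \emph{(1)}, let $S\subseteq A$ be a subalgebra. The bracket of two elements of $S\oplus S$ is $(0,ab'-a'b)$, which lies in $0\oplus S$, so $S\oplus S$ is a subalgebra. If $I$ is an ideal, take $(a,b)\in I\oplus I$ and $(a',b')\in A\oplus A$; then $ab'-a'b\in I$, so $I\oplus I$ is an ideal.

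For \emph{(2)}, let $\f s$ be a $J$-perfect subalgebra and set $S:=\{b\in A : (0,b)\in \f s'\}$. Since $\f{aff}(A)'\subseteq 0\oplus A$, we have $\f s'=0\oplus S$ and hence $\f s=\f s'+J\f s'=S\oplus S$. It remains to show $S$ is closed under multiplication. For $s,t\in S$, both $(s,0)=J^{-1}(0,s)$ and $(0,t)$ lie in $\f s$, and their bracket is $(0,st)\in\f s'$, so $st\in S$. For an ideal, replace $(s,0)$ by $(a,0)$ with $a\in A$ arbitrary; the same computation gives $at\in S$.

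For \emph{(3)}, parts \emph{(1)} show that $\f{aff}(S)$ is a $J$-invariant subalgebra and $\f{aff}(I)$ a $J$-invariant ideal. Their sum is direct and fills $\f{aff}(A)$ because $A=S\oplus I$, which gives the semidirect product decomposition.

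For \emph{(4)}, define $\f{aff}(\varphi)(a,b):=(\varphi(a),\varphi(b))$. Since $\varphi$ is multiplicative and linear, this map preserves the bracket, and it clearly commutes with $J$. Its kernel is $\ker\varphi\oplus\ker\varphi=\f{aff}(\ker\varphi)$. If $\varphi$ is an isomorphism, $\f{aff}(\varphi^{-1})$ provides a holomorphic inverse.

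For \emph{(5)}, apply \emph{(4)} to the projection $A\to A/I$. The induced map $\f{aff}(A)\to\f{aff}(A/I)$ is surjective with kernel $\f{aff}(I)$, and the first isomorphism theorem, in its holomorphic form, gives the claim.

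There is no real obstacle here. The only point needing care is \emph{(2)}: one must use $J$-perfectness to identify $\f s$ with $S\oplus S$, and then realise $(s,0)$ as $J^{-1}(0,s)$ inside $\f s$.
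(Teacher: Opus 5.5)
Your proof is correct and follows the paper's argument item by item: direct verification from the bracket and $J$, the computation $[(s,0),(0,t)]=(0,st)$ in (2), and the first isomorphism theorem applied to $\f{aff}(\pi)$ in (5). In (2) you are slightly more complete than the paper, since you verify $\f s=S\oplus S$ and handle the ideal case; the one harmless slip is that $J(a,b)=(b,-a)$ gives $(s,0)=J(0,s)$, whereas $J^{-1}(0,s)=(-s,0)$, which changes nothing.
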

\begin{proof}
We start with \emph{(1)}. 
Let $S$ be a subalgebra of $A$ and consider  $\f{aff}(S)\subseteq \f{aff}(A)$. Since $S$ is a subalgebra,  we have that, for any $X=(a, b), Y=(a', b')\in\f{aff}(S) $ with $a,b,a',b'\in S$, 
$$
 [X, Y]=(0, ab'-a'b)\in \f{aff}(S)\,, 
$$
which implies that $\f{aff}(S)$ is a subalgebra of $\f{aff}(A)$, which is clearly $J$-invariant.  The statement for ideals follows from the same arguments. 

As for \emph{(2)}, we fix  a $J$-perfect subalgebra $\f s$ of $\f{aff}(A)$. We consider $S:=p_2\f s'\subseteq A$, where $p_2\colon \f{aff}(A)\to A$ is the projection onto the second factor.  Now, fix $a, b\in S$, then there exist $X,Y\in \f s'$ such that $X=(0,a)$ and $Y=(0,b)$. Then, 
$$
\f s'\ni[JY,X]=(0,ab)\,,
$$
which shows that $ab\in S$. We conclude that $S$ is a subalgebra of $A$.

Now, we prove \emph{(3)}. 
Using \emph{(1)}, $\f{aff}(S)$ and $\f{aff}(I)$ are, respectively, a $J$-invariant subalgebra and a $J$-invariant ideal of $\f{aff}(A)$. The claim then follows from the assumption that $A=S\oplus I $.

To prove \emph{(4)},   we  consider 
$$
\f{aff}( \varphi):=\varphi \oplus \varphi\colon \f{aff}(A)\to \f{aff}(A')\,, \quad \f{aff}(\varphi)(a, b):=(\varphi(a), \varphi(b))\,.
$$
Using the definition of the  bracket and the complex structure on affine Lie algebras, it is straightforward to check that $\f{aff}(\varphi)$ is a holomorphic Lie algebra homomorphism. 
Now, $\ker \f{aff}( \varphi)=\ker \varphi\oplus \ker \varphi= \f{aff}(\ker \varphi)$, as claimed. 

Finally, by \emph{(4)}, the projection onto the quotient $\pi\colon A\to A/I$ induces a holomorphic Lie algebra homomorphism $\f{aff}( \pi)\colon \f{aff}(A)\to \f{aff}(A/I)$ such that $\ker \f{aff}(\pi)=\f{aff}(\ker \pi)=\f{aff}(I)$. On the other hand, since $I$ is an ideal, \emph{(1)} implies that $\f{aff}(I) $ is a $J$-invariant ideal of $\f{aff}(A)$, hence we may consider the canonical projection onto the quotient $\pi'\colon \f{aff}(A)\to \f{aff}(A)/\f{aff}(I)$. Since $\ker \pi'=\ker\f{aff}( \pi)$ and $\f{aff}( \pi)$ is surjective, we can find a holomorphic isomorphism $f\colon \f{aff}(A)/\f{aff}(I)\to \f{aff}(A/I)$ such that $\f{aff}( \pi)=f\pi'$, which proves the claim.
\end{proof}

\subsection{Semisimple abelian complex structures}
We discovered in Proposition \ref{propKilling} that, on Lie algebras of the form $\f g=\f n+ J\f n$  endowed with an abelian complex structure, the kernel of $B^{1,1}$ is an ideal. With this piece of knowledge it becomes possible to characterise non-degeneracy of $B^{1,1}$ as follows.

\begin{thm}\label{Semisimple}
Let $\f g$ be a Lie algebra with an abelian complex structure $J$. Then the following are equivalent
\begin{enumerate}
    \item $J$ is semisimple;
    \item $B^{1,1}$ is non-degenerate and $\f g=\f n(\f g)\oplus J\f n(\f g)$  as vector spaces; 
    \item $\f g$ is the direct sum of  $J$-simple  ideals $\f k_i$. 
\end{enumerate}
In particular, if any of the above holds, $\f g=\g'\rtimes J\g'$ and $J$-invariant ideals and quotients of $\f g$ are $J$-semisimple. 
\end{thm}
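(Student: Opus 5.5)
We would prove the cycle (1)$\Rightarrow$(2)$\Rightarrow$(3)$\Rightarrow$(1).

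\emph{(1)$\Rightarrow$(2).} Since $J$ is abelian, $\g$ is $2$-step solvable, so $\g'$ is abelian and $\g'\subseteq\f n$. By Proposition~\ref{propKilling}(1), $\f n_J$ is a $J$-invariant nilpotent ideal. By Lemma~\ref{Lem:basicnilsol}(1) it is $J$-solvable, hence $\f n_J\subseteq{\rm Rad}(\g)=0$.

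Next consider $\f n+J\f n$. It is $J$-invariant. Using $[JX,JY]=[X,Y]$ and $[\g,\g]\subseteq\f n$, for $Y\in\f n$ we get $[X,JY]=-[JX,Y]\cdot$ wait: $[X,JY]=[J X,J^2Y]\cdot(-1)$, i.e.\ $[X,JY]=-[JX,Y]\in\f n$, so $\f n+J\f n$ is an ideal. It is $J$-solvable, because $(\f n+J\f n)'+J(\f n+J\f n)'\subseteq \f n+J\f n$ shrinks along the derived series of the nilpotent $\f n$. We should check this shrinking carefully; the clean route is to note that $[\f n+J\f n,\f n+J\f n]\subseteq[\f n,\f n]+[J\f n,\f n]$, and that ${\rm ad}_{J\f n}$ preserves the lower central series of $\f n$ by the identity \eqref{penis}.

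The step we expect to be the main obstacle is showing $\f n+J\f n=\g$ from this. Our first attempt: $\g/(\f n+J\f n)$ is abelian, because $\g'\subseteq\f n$. Then $\g$ is $J$-solvable by Lemma~\ref{lemmagrullo}, so $\g={\rm Rad}(\g)=0$ unless $\g=\f n+J\f n$. But that argument proves too much, so $J$-solvability of $\f n+J\f n$ must fail in general. This is consistent with $\f{aff}(\R)=\f n+J\f n$ being $J$-perfect.

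So we instead argue as follows. Let $\f a:=\f n+J\f n$, an ideal. With $\f n_J=0$ we have $\f a=\f n\oplus J\f n$. If $\f a\ne\g$, we look for a nonzero $J$-invariant ideal in $\f z(\g)$ or in $\f n$. Since $\g/\f a$ is abelian, the centre of $\g$ is $J$-invariant, and $\g'\subseteq\f a$, we aim to show that $\f a^{\perp}$ (orthogonal with respect to $B^{1,1}$) contains central elements. Those would form an abelian $J$-invariant ideal, contradicting semisimplicity.

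Nondegeneracy of $B^{1,1}$ follows once $\g=\f a$. By Proposition~\ref{propKilling}(4), $\ker\eta=\f n_J=0$, and Proposition~\ref{propKilling}(2) gives $\ker B^{1,1}=J\ker\eta=0$.

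\emph{(2)$\Rightarrow$(3).} Induct on dimension. Take a minimal nonzero $J$-invariant ideal $\f k$. By Proposition~\ref{propKilling}(5), $\f k^\perp$ is a $J$-invariant ideal. Then $\f k\cap\f k^\perp$ is a $J$-invariant ideal on which $B^{1,1}$ vanishes. Its restriction has zero Killing form by Proposition~\ref{propKilling}(5), so the ideal is $J$-solvable by the Cartan-type argument of Proposition~\ref{propKilling}(4), which forces it to lie in $\f n_J=0$. Hence $\g=\f k\oplus\f k^\perp$, both summands are ideals, and $B^{1,1}$ restricts nondegenerately to each. Since $\f n(\f k)=\f n\cap\f k$ by Lemma~\ref{lem_nilideali}, hypothesis (2) passes to $\f k$ and $\f k^\perp$. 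The ideal $\f k$ is non-abelian, since an abelian $\f k$ would lie in $\ker B^{1,1}$. Its $J$-invariant ideals are ideals of $\g$ (because $[\f k^\perp,\f k]=0$), so minimality makes $\f k$ $J$-simple. Induction then handles $\f k^\perp$.

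\emph{(3)$\Rightarrow$(1).} Each $J$-simple summand $\f k_i$ is $J$-perfect by Lemma~\ref{lemss}. The projection of ${\rm Rad}(\g)$ to $\f k_i$ is a $J$-solvable, $J$-invariant ideal of $\f k_i$, hence either $0$ or all of $\f k_i$. The latter is impossible because $\f k_i$ is $J$-perfect, so ${\rm Rad}(\g)=0$.

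For the final claims: $\g=\g'\rtimes J\g'$ follows from Lemma~\ref{lemss}(2) applied to each summand. $J$-invariant ideals and quotients are sums of some of the $\f k_i$, by the splitting argument above, and are therefore $J$-semisimple.
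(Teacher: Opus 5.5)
Your (3)$\Rightarrow$(1) is correct, and slightly more direct than the paper's route: you project ${\rm Rad}(\g)$ onto each $J$-perfect $\f k_i$. Your (2)$\Rightarrow$(3) has the paper's structure. However, (1)$\Rightarrow$(2) has a genuine gap. After obtaining $\f n_J=0$, you rightly drop the claim that $\f n+J\f n$ is $J$-solvable; it fails already for $\f{aff}(\R)=\f n\oplus J\f n$ and should be deleted. You then only state that you \emph{aim} to show $\f a^\perp$ consists of central elements. That is the whole content of the implication, and nothing in the proposal proves it.

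The missing argument (the paper's) runs as follows, with $\f a=\f n\oplus J\f n$, which is an ideal since $\g'\subseteq\f n$.
\begin{enumerate}
\item $\f n(\f a)=\f n$ by Lemma~\ref{lem_nilideali}, so Proposition~\ref{propKilling}(2),(4) apply to $\f a$ itself. Since $B_{\f a}=B|_{\f a\times\f a}$, they give $\f a\cap\f a^\perp=\ker B^{1,1}_{\f a}=\ker\eta_{\f a}=\f n_J=0$, whence $\g=\f a\oplus\f a^\perp$.
\item $\eta=-B^{1,1}(J\cdot,\cdot)$ is closed. For $X,Y\in\f a^\perp$ and $Z\in\f a$ this yields $0=d\eta(X,Y,Z)=B^{1,1}(J[X,Y],Z)$; the other two terms vanish because $J[X,Z],J[Y,Z]\in J\g'\subseteq\f a$. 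So $J[X,Y]\in\f a\cap\f a^\perp=0$.
\item Proposition~\ref{propKilling}(3) gives $B^{1,1}([W,X],\f n)=0$ for $W\in\f a$ and $X\in\f a^\perp$. Since $\f n\subseteq\ker B$, $B^{1,1}(\f n,J\f n)=0$, so $[W,X]\in\f n$ is also orthogonal to $J\f n$. Hence $[\f a,\f a^\perp]\subseteq\f a\cap\f a^\perp=0$.
\end{enumerate}
So $\f a^\perp$ is a central $J$-invariant ideal, hence zero by semisimplicity, and $\g=\f a$. Only now does your nondegeneracy step apply.

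There is a smaller gap in (2)$\Rightarrow$(3). You claim $\f k\cap\f k^\perp$ is $J$-solvable ``by the Cartan-type argument of Proposition~\ref{propKilling}(4)'' and hence lies in $\f n_J$; this is unsupported, for three reasons.
\begin{itemize}
\item Part (4) needs the algebra to be of the form $\f n+J\f n$, which you have not checked for $\f k\cap\f k^\perp$.
\item Part (5) gives only the vanishing of its $B^{1,1}$, not of its Killing form.
\item A $J$-solvable ideal need not be nilpotent, e.g.\ $\f{aff}(\C)$ with its solvable abelian complex structure.
\end{itemize}
Use minimality instead, as the paper does. Since $[X,JY]=-[JX,Y]$, $(\f n\cap\f k)+J(\f n\cap\f k)$ is a $J$-invariant ideal of $\g$ inside $\f k$. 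It is nonzero: otherwise $\f k'\subseteq\f n\cap\f k=0$, so $\f k$ would be an abelian, hence nilpotent, ideal with $\f k\subseteq\f n\cap\f k=0$. Thus $\f k=\f n(\f k)\oplus J\f n(\f k)$, and Proposition~\ref{propKilling}(2),(4),(5) applied to $\f k$ give $\f k\cap\f k^\perp=\ker B^{1,1}_{\f k}=\f n(\f k)_J=0$.

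Likewise, deduce the closing claim about arbitrary $J$-invariant ideals from (3), as in the paper. Each projection $\pi_i(\f k)$ is $0$ or $\f k_i$; in the latter case $\f k_i'=[\f k_i,\f k]\subseteq\f k$, so $\f k_i=\f k_i'\oplus J\f k_i'\subseteq\f k$. Your splitting argument cannot be used here, since for non-minimal ideals it rests on the same unjustified step.
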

\begin{proof}
Assume that \emph{(1)} holds. By Proposition \ref{propKilling}, part \emph{(1)}, we have $\f n(\f g)_J=0$. Set $\f k=\f n(\f g)\oplus J \f n(\f g)$ and consider $\f k^{\perp}$, where $(\cdot)^{\perp} $ denotes the orthogonal complement with respect to $B^{1,1}$. Clearly, $\f k^{\perp}$ is $J$-invariant. Applying Lemma \ref{lem_nilideali}, we obtain $\f k=\f n(\f k)\oplus J\f n(\f k)$  and, Proposition \ref{propKilling}, part \emph{(4)}, gives  $\f k \cap \f k^{\perp}=\ker B^{1,1}_{\f k}= \f n(\f k)_J=0$. Now, since $d\eta=0$ and $\f k$ is an ideal, for any $X, Y \in\f k^{\perp}$ and $ Z\in\f k$ we have
\begin{equation}\label{detauguale0}
0=d\eta(X, Y, Z)=B^{1,1}(J[X, Y], Z)-B^{1,1}(J[X, Z], Y)+B^{1,1}(J[Y, Z], X)=B^{1,1}(J[X, Y], Z)\,,
 \end{equation}
which implies $[X, Y]\in\f k^{\perp}\cap \f k =0$.  
Hence $\f  k^{\perp}$ is abelian. Instead, choosing $X\in\f k^{\perp}, Y, Z \in\f n(\f g)  $ and  using Proposition \ref{propKilling}, part \emph{(3)}, we deduce 
$$
B^{1,1}([JZ, X], Y)=B^{1,1}([JZ, Y], X)=0\,, \quad B^{1,1}([Z, X], Y)=B^{1,1}(J[Z, JY], X)=0\,,
$$
since $[JZ, Y], J[Z, JY]\in\f k $.  This allows us to infer that $[\f k^{\perp}, \f k ]\subseteq \f k^{\perp}\cap \f k=0.$  Using \eqref{detauguale0} again, but now with $X\in\f k ^{\perp}$, $Y\in\f g$ and $Z\in\f k $, we conclude that $\f k^{\perp}$ is a $J$-invariant abelian ideal of $\f g$. By semisimplicity, this forces $\f k^{\perp}=0$. In particular, $\ker B^{1,1}\subseteq \f k^{\perp}=0$, which implies that $B^{1,1}$ is non-degenerate. Since $B^{1,1}_{\f k}$ is also non-degenerate we deduce that $\f g=\f n(\f g)\oplus J\f n(\f g)$ and \emph{(2)} follows. 

Assume now that \emph{(2)} holds. We  adapt the proof of \cite[Theorem 1.51]{K02} to prove \emph{(3)}. If $\f g $ has no proper $J$-invariant ideals, the claim is immediate. Let us assume that $\f g $ has a proper $J$-invariant ideal, hence we can consider the minimal non-zero $J$-invariant ideal $\f k $. By Lemma \ref{lem_nilideali}  and  the minimality of $\f k $, we obtain $\f k=\f n(\f k )\oplus J  \f n(\f k )$.
By Proposition \ref{propKilling}, part \emph{(5)}, we know that $\f k ^{\perp} $ is a $J$-invariant ideal. Moreover, $\f k\cap \f k^{\perp}=\ker B_{\f k}^{1,1}=\f n(\f k)_J=0$, thanks to Proposition \ref{propKilling}, part \emph{(4)}, and the fact that $\f n(\f g)_J=0$. In particular, $\f g =\f k \oplus \f k^{\perp}$  as a Lie algebra.
Now observe that, if $\f l \subseteq \f k  $ is a $J$-invariant ideal, then $[\f l , \f k^{\perp}]\subseteq \f k\cap \f k ^{\perp}=0$, so $\f l$ is also a $J$-invariant ideal  of $\f g$. By the minimality of $\f k $,  it follows that $ \f l $ is either $0$ of $\f k $. Hence, $ \f k $ is $J$-simple. For the same reason,  any $J$-invariant ideal of $\f k^{\perp}$ is also a $J$-invariant ideal of $\f g $. Therefore, $\f n(\f k^{\perp})_J=0$  and $\f k^{\perp}=\f n( \f k^{\perp})\oplus J\f n( \f k^{\perp})$. To see the last equality,  fix $X=N+JN'\in \f k^{\perp}$, for some  $N, N'\in\f n(\f g)$ and  $Y\in \f n(\f k )$. Since  $B^{1,1}(\f n(\f g), J\f n(\f g ))=0$, we infer  
$$
0=B^{1,1}(X, Y)=B^{1,1}(N, Y)+ B^{1,1}(JN', Y)=B^{1,1}(N, Y)\,.
$$
Thus,  $N\in (\f n(\f k )\oplus J\f n(\f k))^{\perp}=\f k^{\perp} $. From this, it follows that   $\f k^{\perp}\subseteq \f n(\f k^{\perp})\oplus J\f n(\f k^{\perp}) $. Therefore, $B^{1,1}_{\f k ^{\perp}}$ is non-degenerate, using Proposition \ref{propKilling} part \emph{(1)} and \emph{(4)}. We can then repeat the procedure on $\f k^\perp$ and conclude after finitely many steps.

Finally, assume \emph{(3)}, namely  
$$
\f g=\bigoplus_{i=1}^{k}\f k_i\,,
$$
where $\f k_i$ is  a $J$-simple ideal of $\f g$. Of course, for any $i=1, \ldots, k$, the projection $\pi_i\colon \f g \to \f k_i$ is a holomorphic Lie algebra homomorphism. Hence, if $\f k\subseteq \f g $ is a $J$-invariant ideal of $\f g $, then $\pi_i(\f k )$ is a $J$-invariant ideal of $\f k_i$. Therefore, $\pi_i(\f k ) $ is either $0$ or $\f k_i$. If $\pi_i(\f k)=\f k_i$ then $\f k_i\subseteq \f k .$ Indeed, 
$$
\f k_i'=[\f k_i, \f k_i]_{\f k_i}=[\f k_i, \pi_i(\f k )]_{\f k_i}=[\f k_i, \f k ]\subseteq \f k\,. 
$$
Since $\f k $ is $J$-invariant  and $\f k_i$ is $J$-simple, hence $J$-perfect, we also have 
$\f k_i=\f k_i'\rtimes J\f k_i'\subseteq \f k $. Consequently,
$$
\f k=\bigoplus_{\f k_i\subseteq \f k }\f k_i\,.
$$
But now, it is clear that $\f k$ cannot be $J$-solvable unless $\f k=0$. 

In addition,
$$
\f g'=\left[\f g, J\f g \right]=\bigoplus_{i=1}^k[\f k_i, J \f k_i]=\bigoplus_{i=1}^k \f k_i'\,.
$$
Therefore, 
$$
\f g'\rtimes J \f g'=\bigoplus_{i=1}^k\f k_i'\rtimes J\f k_i'=\f g \,,
$$
showing that $\f g$ is $J$-perfect.
\end{proof}

\begin{prop}
Let $\f g$ be a  Lie algebra endowed with a semisimple abelian complex structure $J$. Then the group ${\rm Aut}(\f g , J)$ of holomorphic automorphisms of $\f g$ is discrete. As a consequence, $\f g $ has no holomorphic vectors.
\end{prop}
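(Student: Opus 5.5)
The plan is to show that the Lie algebra $\mathrm{Der}(\g,J)$ of holomorphic derivations, i.e. derivations $D$ with $DJ=JD$, is zero. Since ${\rm Aut}(\g,J)$ is a closed subgroup of $GL(\g)$ with Lie algebra $\mathrm{Der}(\g,J)$, this makes it discrete. A holomorphic vector (an $X$ with $[\ad_X,J]=0$) gives the holomorphic derivation $\ad_X$. Hence $\ad_X=0$, so $X\in\f z(\g)$. The centre is a $J$-invariant abelian ideal, hence $J$-solvable, so it vanishes by semisimplicity, and therefore $X=0$.

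To show $\mathrm{Der}(\g,J)=0$, I would first reduce to the $J$-simple case. By Theorem \ref{Semisimple}, $\g=\bigoplus_i \f k_i$ with $J$-simple ideals $\f k_i$, each satisfying $\f k_i=\f k_i'\oplus J\f k_i'$. For $D\in \mathrm{Der}(\g,J)$, the space $D(\f k_i)=D([\f k_i,\f k_i]+J[\f k_i,\f k_i])$ lies in $[\f k_i,\g]+J[\f k_i,\g]\subseteq \f k_i$, so $D$ preserves each ideal. It therefore suffices to treat $\g$ $J$-simple. In that case $\g=\g'\oplus J\g'$ with $\g'$ abelian and $J$ abelian.

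In the $J$-simple case I would use the bracket structure directly. For $a,b\in\g'$ we have $[Ja,Jb]=[a,b]=0$, so $J\g'$ is abelian too. Define $a\cdot b:=[Ja,b]\in\g'$. Abelianity of $J$ gives $[Ja,b]=-[a,Jb]=[Jb,a]$, so this product is commutative, and Jacobi makes it associative. This identifies $\g\cong\f{aff}(A)$ with $A=\g'$. A holomorphic derivation preserves $\g'=[\g,\g]$ and commutes with $J$, so it is $\delta\oplus\delta$ for some $\delta\in\mathrm{End}(A)$. The derivation identity on $[Ja,b]$ then says exactly that $\delta$ is a derivation of the associative algebra $A$.

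Next I would show $A$ has a unit and that every derivation of $A$ vanishes, using $J$-simplicity. The key input is that ideals of $A$ give $J$-invariant ideals of $\f{aff}(A)$ (Lemma \ref{lemmaffini}). So $A$ has no ideals besides $0$ and $A$, and $A\cdot A\neq 0$ because $\g$ is not abelian. Then $A$ is a field, namely $\R$ or $\C$, in agreement with the announced classification $\f{aff}(\R)$, $\f{aff}(\C)$. Any derivation of $\R$ or $\C$ as an $\R$-algebra vanishes, since $\delta(1)=2\delta(1)$ and $\delta(i)\cdot 2i=\delta(-1)=0$. Hence $\delta=0$ and $D=0$.

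The step I expect to be the main obstacle is the identification of $\g$ with $\f{aff}(A)$, in particular checking associativity carefully via Jacobi. A related point is that $D$ being holomorphic forces $D$ to have the diagonal form $\delta\oplus\delta$ with respect to $\g'\oplus J\g'$. Both follow from $D\g'\subseteq\g'$ together with $DJ=JD$.
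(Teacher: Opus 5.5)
Your proof is correct, but it takes a different and heavier route than the paper. The paper never decomposes $\g$. It uses the classical fact that every derivation $D$ of a solvable Lie algebra satisfies $D\g\subseteq\f n$. If moreover $DJ=JD$, then $D\g$ is $J$-invariant, so $D\g\subseteq\f n\cap J\f n=\f n_J$. Since $J$ is abelian, $\f n_J$ is a $J$-invariant nilpotent ideal (Proposition \ref{propKilling}, part \emph{(1)}), hence $J$-solvable, hence zero by semisimplicity. The holomorphic-vector consequence is then handled exactly as you do it.

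Your argument instead uses the full structure theory: the splitting into $J$-simple ideals from Theorem \ref{Semisimple}, the identification of each summand with $\f{aff}(A)$, and the fact that $A$ is $\R$ or $\C$. Everything then reduces to the vanishing of derivations of a field. What your route buys is an elementary, self-contained argument that avoids the derivation--nilradical theorem. It also gives a concrete description of holomorphic derivations of each summand as derivations of the associative algebra $A$. The cost is length and dependence on the classification. The paper's argument is more general: it shows that holomorphic derivations of \emph{any} solvable Lie algebra with a complex structure take values in $\f n_J$, so discreteness follows as soon as $\f n_J=0$.

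One step deserves a line of justification: passing from ``$A$ has no ideals other than $0$ and $A$, and $A\cdot A\neq 0$'' to ``$A$ is a field''. The annihilator of $A$ is an ideal different from $A$, hence zero. So $aA=A$ for every $a\neq 0$. Choosing $e$ with $ae=a$ and writing any $b$ as $ac$ shows that $e$ is a unit, and $aA\ni e$ then gives inverses. Alternatively, you can skip re-deriving the $\f{aff}(A)$ structure altogether by citing Corollary \ref{corsymmetricdecom}, which already gives $\g\simeq\f{aff}(\R)^p\oplus\f{aff}(\C)^q$.
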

\begin{proof}
It is known that every $D\in {\rm Der}(\f g )$ satisfies $D\f g\subseteq \f n $. If $D\in {\rm Der}(\f g , J):=\{D\in {\rm Der}(\f g) \,\, |\, \,  DJ =JD\}$, then $D\f g \subseteq \f n _J=0$, due to semisimplicity of $\f g$. 
Hence, $D=0$, meaning that ${\rm Der}(\f g, J)=\{0\}$. The claim now follows from the well-known fact that ${\rm Der}(\f g, J)={\rm Lie}({\rm Aut}(\f g, J))$.  Finally, if $X \in \f g $ is a holomorphic vector field, then $\ad_X\in {\rm Der}(\f g, J)$, which implies $X\in \f z(\f g)$. On the other hand, the centre of $\f g$ is a $J$-invariant abelian ideal, so $X=0$.
\end{proof}

Let $\f g$ be a $J$-perfect and $J$-semisimple,  2-step solvable Lie algebra. Then either $\g'_J=0$ or $\g'_J=\f g$. The second case cannot occur, because $\f g$ is solvable. Therefore $\f g$ is a direct sum of $J$-simple ideals by Theorem \ref{Semisimple}. However, the next example shows that if $\f g$ is not $J$-perfect then it might not have such structure. This is in striking contrast with the classical fact that a semisimple Lie algebra is always the direct sum of simple ideals.

\begin{es}\label{ssnonabelian}
Consider the $2$-step solvable Lie algebra $\g=\langle e_1,e_2,e_3,e_4,e_5,e_6\rangle$ with only non trivial brackets:
\[
[e_1,e_5]=[e_2,e_6]=[e_3,e_4]=e_4\,, \qquad [e_3,e_5]=e_5\,, \qquad [e_3,e_6]=e_6\,.
\] and complex structure given by $$Je_1=e_2\,, \quad Je_3=e_4\,,\quad Je_5=e_6\,.
$$
As a Lie algebra, $\g$ is isomorphic to $\f s_{6,182}$ in the notation of \cite{SW14}.
It is fairly easy to see that $\g'+J\g'=\langle e_3,e_4,e_5,e_6 \rangle$ is the only non-trivial $J$-invariant ideal of $\g$. However, $\g'+J\g'$ is $J$-perfect and therefore cannot be $J$-solvable, implying that $\g$ is $J$-semisimple, even though $J$ is not abelian. 
\end{es}

Also, let us observe that when the solvability step is higher than $2$ a $J$-simple Lie algebra might have degenerate $B^{1,1}$. Indeed, Example \ref{exsimplerigid3step} is easily checked to be $J$-simple, however
\[
B^{1,1}=-\left( e^1\otimes e^1 + e^4 \otimes e^4\right)\,,
\]
in particular, $\ker(B^{1,1})=\langle e_2,e_3 \rangle$.

\subsection{Simple complex structures}
In view of Theorem \ref{Semisimple}, we now turn our attention to the study of $J$-simple $2$-step solvable Lie algebras $\f g$. We will fully classify them, with two different proofs: the first of a more Lie-theoretic flavour while the second of a more algebraic flavour. Let us introduce the candidates before stating the theorem itself.

\begin{es}\label{aff}
The Lie algebra of affine motions of $\R$ is denoted by $\f{aff}(\R)$ and it is the unique non-abelian, solvable $2$-dimensional Lie algebra: 
$$
[JX, X]=X\,.
$$
One can easily see that, being $\f{aff}(\R)$ completely solvable, $B^{1,1}>0$ and hence $-\eta$ is an exact K\"ahler metric on $\f{aff}(\R)$.

The Lie algebra of affine motions of $\C$ will be denoted by $\f{aff}(\C)$ and it is the $4$-dimensional Lie algebra defined by the following structure equations:
$$
[JX, X]=X, \quad [JX, Y]=[JY, X]=Y, \quad [JY, Y]=-X.
$$
Thanks to the classification in \cite{S90}, $\f{aff}(\C)$ is the unique $4$-dimensional non-completely solvable Lie algebra admitting an abelian complex structure. Moreover, one can easily verify that $B^{1,1}$ in this case has signature $(2, 2,0)$. We note that $\f{aff}(\C)$ also carries another, non-equivalent, abelian complex structure which is solvable, see e.g. \cite[Corollary 2.7]{ABD11}. Finally, it is easy to see that $\f{aff}(\C)$ cannot carry any SKT metric. 

Throughout the paper, whenever we write $\f{aff}(\R)$ and $\f{aff}(\C)$ we shall intend them as Lie algebras equipped with the respective abelian complex structures defined here.
\end{es}
 The next theorem gives the full classification of $J$-simple $2$-step solvable Lie algebras.

\begin{thm}\label{affines}
Let $\f g $ be a $2$-step solvable Lie algebra endowed with a simple complex structure $J$. Then, either $\f g \simeq \f{aff}(\R)$ or $\f g \simeq \f{aff}(\C)$.
\end{thm}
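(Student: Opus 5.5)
By Lemma \ref{lemss}, part \emph{(2)}, $J$ is abelian and $\f g=\f g'\rtimes J\f g'$ with $\f a:=\f g'$ abelian, and by Theorem \ref{Semisimple} $B^{1,1}$ is non-degenerate and $\f g=\f n\oplus J\f n$. The plan is to identify $\f g$ with $\f{aff}(A)$ for a commutative associative algebra $A$ on the vector space $\f a$. After that, Lemma \ref{lemmaffini} turns the classification into a purely algebraic one.

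\textbf{Building $A$.} For $a,b\in\f a$ I define
\[
a\cdot b:=[Ja,b].
\]
This lies in $\f a$, and the product is commutative because $[Ja,b]-[Jb,a]=[Ja,b]+[a,Jb]\in \f g'_J=0$ by \eqref{behaviourofJ2step}. For associativity I will use the Jacobi identity on $Ja,Jb,c$, together with $[Ja,Jb]=[a,b]=0$ (abelianity of $J$ and of $\f a$). This gives $[Ja,[Jb,c]]=[Jb,[Ja,c]]$, i.e. $a\cdot(b\cdot c)=b\cdot(a\cdot c)$, and with commutativity this yields associativity. Identifying $(x,y)\in A\oplus A$ with $Jx+y$ should then reproduce the bracket of Definition \ref{defiaffi}, possibly up to a sign convention on $J$, which can be absorbed by $a\mapsto -a$. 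So $\f g\simeq\f{aff}(A)$ holomorphically.

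\textbf{Classifying $A$.} By Lemma \ref{lemmaffini}, part \emph{(1)}, every ideal $I$ of $A$ gives a $J$-invariant ideal $\f{aff}(I)$ of $\f g$, so $J$-simplicity forces $A$ to have no ideals other than $0$ and $A$. The algebra $A$ may lack a unit a priori, so two cases arise.
\begin{enumerate}
\item If $A\cdot A\neq A$, then $A\cdot A$ is a proper ideal, hence $A\cdot A=0$. In that case $\f g$ is abelian, which contradicts simplicity.
\item If $A\cdot A=A$, the nilradical of the finite-dimensional commutative algebra $A$ is a proper ideal, since a nilpotent $A$ would satisfy $A^k=0$ and hence $A\cdot A\neq A$. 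So the nilradical is $0$ and $A$ is semisimple, hence a product of fields $\R$ and $\C$. Simplicity then leaves a single field factor: $A=\R$ or $A=\C$.
\end{enumerate}
This gives $\f{aff}(\R)$ or $\f{aff}(\C)$, matching the structure equations of Example \ref{aff} after checking the sign conventions.

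\textbf{Main obstacle.} The delicate step is keeping the sign and convention bookkeeping consistent so that the identification with $\f{aff}(A)$ is genuinely holomorphic. I will also need to make sure the argument in case (2) that $A$ has no nilradical is airtight without assuming a unit. For the latter I would use the Wedderburn principal theorem or a direct idempotent argument: a non-nilpotent finite-dimensional commutative algebra contains a nonzero idempotent $e$, and then $eA$ is a nonzero ideal, hence $eA=A$ and $e$ is a unit.
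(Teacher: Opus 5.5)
Your argument is correct, but it is not the route of the paper's proof of Theorem \ref{affines}. It is instead a detailed version of the algebraic alternative sketched in the Remark right after the theorem.

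The paper argues Lie-theoretically. Since $\f g$ is $2$-step solvable, the operators $\ad_{JW}|_{\f g'}$, $W\in\f g'$, commute, so they have a common eigenvector $Z\in\f g'\otimes\C$.
\begin{itemize}
\item If $Z$ is real, $\langle Z,JZ\rangle$ is a nonzero $J$-invariant ideal, hence equal to $\f g$. Normalising $\lambda(Z)=1$ gives $\f{aff}(\R)$.
\item If $Z=X+\sqrt{-1}Y$ is genuinely complex, then $\langle X,Y,JX,JY\rangle=\f g$. Abelianity ($[JY,X]=[JX,Y]$) forces $\lambda_2(X)=\lambda_1(Y)$ and $\lambda_1(X)=-\lambda_2(Y)$. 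After normalisation these are exactly the structure equations of $\f{aff}(\C)$.
\end{itemize}

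You instead construct $a\cdot b=[Ja,b]$ on $\f g'$ by hand. Your commutativity and associativity checks are right, and $(x,y)\mapsto Jx+y$ is a holomorphic isomorphism $\f{aff}(A)\to\f g$ with no sign correction needed. You then reduce to classifying simple commutative finite-dimensional $\R$-algebras with nonzero product. The Remark handles the same two steps by citing \cite[Theorem 3.5]{ABD12}, Artin--Wedderburn and Frobenius. Your idempotent argument ($eA$ is a nonzero ideal, so $eA=A$, $e$ is a unit and $A$ is a field) closes the non-unital gap that the Remark passes over. Also, your case (1) never occurs, since $A\cdot A$ spans exactly $\f g'=A$.

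The trade-off between the two routes:
\begin{itemize}
\item The paper's proof is self-contained linear algebra and outputs the structure constants directly.
\item Yours relies on standard structure theory of finite-dimensional algebras (nil implies nilpotent, existence of idempotents, Frobenius). In return it explains conceptually why only $\R$ and $\C$ arise, and it meshes with the $\f{aff}(A)$ framework the paper later uses in Theorem \ref{propsemidir}.
\end{itemize}
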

\begin{proof}
Let us begin observing that, since $\f g $ is $2$-step solvable, the family $\{\ad_{JV}|_{\f g'}\}_{V\in \f g'}$ consists of commuting operators. Therefore, we can find $Z\in\f g'\otimes \C$ which is a common eigenvector of  $\{\ad_{JW}|_{\f g'\otimes \C}\}_{W\in\f g' \otimes \C}$, i.e., for any $W\in \f g' \otimes \C $, 
\begin{equation}\label{common}
[JW,Z ]=\lambda(W)Z\,,  \qquad \lambda\in(\f g'\otimes \C)^*\,.
\end{equation} 
Hence, in particular, we obtain 
\begin{equation}\label{commonZ}
[JZ, Z]=\lambda(Z)Z\,.
\end{equation}
Now, we have two possibilities: either $Z\in\f g'$ or $Z$ is genuinely complex. Let us assume that $Z\in\f g' $. This allows us to infer that 
$$
\f k_1 := \langle Z, JZ\rangle
$$
is a non-zero $J$-invariant ideal of $\f g $. By hypothesis, $\f k_1 = \f g.$ Moreover, $\lambda(Z)\ne 0 $ otherwise $\f g$ would be abelian against simplicity of  $J$. Hence, up to renormalisation, we can suppose that $\lambda (Z)=1$ which gives straightforwardly that $\f g \simeq \f{aff}(\R )$.
It remains to understand the case in which $Z$ is complex. Let us write $Z=X+\sqrt{-1}Y$, with $X, Y\in\f g'$. From \eqref{common}, it follows that, for any $V\in \f g' $, 
  \begin{equation}\label{affc}
  [JV, X]=\lambda_1(V)X-\lambda_2(V)Y\,, \qquad [JV, Y]=\lambda_2(V)X+ \lambda_1(V)Y\,,
  \end{equation}
  where $\lambda_1=\Re(\lambda)$ and $\lambda_2=\Im(\lambda)$. This, in particular, guarantees that
  \[
  \f k_2:= \langle X, Y, JX, JY \rangle
  \]
  is a non-zero $J$-invariant ideal of $\f g$ forcing $\f k_2 =\f g$, by $J$-simplicity. Next, we show that  $\f g\simeq \f{aff}(\C).$ Applying \eqref{affc} to $X$ and $Y$, we get 
  \begin{equation}\label{hiddeneqaffC}
  \begin{aligned}
  \relax
  [JY, Y]=&\,\lambda_2(Y)X+\lambda_1(Y)Y\,, \qquad [JY, X]=\lambda_1(Y)X- \lambda_2(Y)Y\\
  [JX, Y]=&\, \lambda_2(X)X+\lambda_1(X)Y\,, \qquad [JX, X]=\lambda_1(X)X- \lambda_2(X)Y\,.
  \end{aligned}
\end{equation}

Now, since $J$ is abelian then $[JY, X]=[JX, Y]$, hence $\lambda_2(X)=\lambda_1(Y)$ and $\lambda_1(X)=-\lambda_2(Y)$, guaranteeing that $[JY, Y]=-[JX, X].$ Rewriting \eqref{commonZ} using these relations shows that $\lambda(Z)= 2\lambda(X)$. Therefore, $\lambda(Z)\ne 0 $, since otherwise $\lambda_1(X)=\lambda_2(X)=0$ and $\f g $ would be abelian, against the fact that $J$ is simple. Then, up to renormalisation, we can assume $\lambda(X)=1$. Thus, we can rewrite \eqref{hiddeneqaffC} as
$$
  [JY, Y]=-X\,, \quad [JY, X]=Y\,, \quad 
  [JX, Y]=Y\,, \quad [JX, X]=X\,,
$$
which yields the desired isomorphism.
\end{proof}

\begin{rmk}
As anticipated, Theorem \ref{affines} can be proved in a more algebraic manner.  Thanks to \cite[Theorem 3.5]{ABD12}, any Lie algebra $\f g = \f g'\rtimes J\f g' $ endowed with an abelian complex structure is holomorphically isomorphic to $\f{aff}( A)$, where $ A$ is a finite-dimensional, associative, commutative $\R$-algebra, see Definition \ref{defiaffi}.  In our hypotheses, the algebra $A$ is  given by  $ A=(\f g', \cdot)$ with 
$$
X\cdot Y:=[JX, Y]\,, \qquad X, Y\in \f g'\,.
$$
Hence, if $\f g=\f g'\rtimes J \f g' $  is $J$-simple, then the algebra $ A$ is simple, using Lemma \ref{lemmaffini}, part \emph{(1)}. Therefore, we can use Artin-Wedderburn Theorem, see \cite[Theorem, Section 3.5]{Pie82}, and Frobenius' Theorem, see for instance \cite{Palais68}, which imply that the only simple, commutative and associative $\R$-algebras are either $\R$ or $\C$, yielding the desired claim.
\end{rmk}

The next result provides an analogue of the Cartan decomposition of a real semisimple Lie algebra in our context, see \cite[Chapter VI]{K02}. 

\begin{cor}\label{corsymmetricdecom}
Let $\f g$ be a  Lie algebra endowed with a semisimple abelian complex structure $J$. Then, there exist $p, q\ge 0,$ $p+q\ge 1$ such that
\begin{equation}\label{sssss}
\f g=\f{aff}(\R)^p\oplus \f{aff}(\C)^q\,.
\end{equation}  
In particular
\begin{enumerate}
\item there exists a $J$-invariant subalgebra $\f g_+$ and a $J$-invariant subspace $\f g_-$ such that 
$$
\f g=\f g_+\oplus \f g_-\,,\quad [\f g_+, \f g_+]\subseteq \f g_+ \,, \quad [\f g_+, \f g_-]\subseteq \f g_-\,, \quad [\f g_-, \f g_-]\subseteq \f g_+\,.
$$
As a consequence, there exists an involution $s \in {\rm Aut}(\f g , J)$ such that $B_s^{1,1}(X, Y):=B^{1,1}(X, sY)$ defines a Hermitian metric.
\item $\ad_{JX}^{t_s}=\pm \ad_{JX}$, for all  $X\in \f g_{\pm}\cap \f g'$, where $(\cdot)^{t_s}$ denotes the transpose with respect to $B^{1,1}_s$.
\end{enumerate}
\end{cor}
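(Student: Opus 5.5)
The decomposition \eqref{sssss} follows by combining two earlier results. By Theorem \ref{Semisimple}, part \emph{(3)}, $\f g=\bigoplus_i \f k_i$ with each $\f k_i$ a $J$-simple ideal. Each $\f k_i$ is $2$-step solvable, because $\f g$ carries an abelian complex structure. Theorem \ref{affines} then identifies each $\f k_i$ holomorphically with $\f{aff}(\R)$ or $\f{aff}(\C)$, and $p+q\ge 1$ since $\f g\neq 0$. Because the sum is a direct sum of ideals, $B^{1,1}$ splits as the orthogonal sum of the $B^{1,1}_{\f k_i}$. So every remaining claim reduces to checking it on the two building blocks separately and then taking direct sums.

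For \emph{(1)}, I will define $\f g_\pm$ blockwise. On $\f{aff}(\R)=\langle X,JX\rangle$ with $[JX,X]=X$, I set $\f g_+=\f{aff}(\R)$, $\f g_-=0$ and $s=\mathrm{id}$. Example \ref{aff} already notes $B^{1,1}>0$ there. On $\f{aff}(\C)=\langle X,Y,JX,JY\rangle$ I set $\f g_+=\langle X,JX\rangle$, which is a $J$-invariant subalgebra isomorphic to $\f{aff}(\R)$, and $\f g_-=\langle Y,JY\rangle$. The structure equations give $[JX,Y]=Y$ and $[JY,X]=Y$, so $[\f g_+,\f g_-]\subseteq\f g_-$. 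They also give $[JY,Y]=-X$, which yields $[\f g_-,\f g_-]\subseteq\f g_+$; here $[Y,Y]=0$ and $[JY,JY]=0$ trivially. Let $s$ be $+1$ on $\f g_+$ and $-1$ on $\f g_-$. The $\Z_2$-grading makes $s$ an automorphism, and $s$ commutes with $J$ because both summands are $J$-invariant. Summing over blocks gives the global $\f g_\pm$ and $s$.

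The Hermitian property is a direct computation of $B^{1,1}$ on $\f{aff}(\C)$. $B^{1,1}$ is $J$-invariant by construction. It also makes $\f g_+$ and $\f g_-$ orthogonal: $s$ is an automorphism preserving $J$, so $B^{1,1}(sU,sV)=B^{1,1}(U,V)$. I expect $B^{1,1}$ to be positive on $\f g_+$ and negative on $\f g_-$, matching the signature $(2,2,0)$ from Example \ref{aff}. Concretely, I would compute $\ad_{JX}$ and $\ad_{JY}$ on the basis. $\ad_{JX}$ acts as the identity on $\langle X,Y\rangle$, while $\ad_{JY}$ acts as a rotation, $X\mapsto Y$, $Y\mapsto -X$. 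Hence $\tr\ad_{JX}^2>0$ and $\tr\ad_{JY}^2<0$; the cross term $\tr(\ad_{JX}\ad_{JY})$ is the trace of a rotation and vanishes. Together with $J$-invariance this gives $B^{1,1}_s=B^{1,1}(\cdot,s\cdot)>0$. This sign computation is the only place with any real content, and the main point to check is that the mixed terms vanish.

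For \emph{(2)}, it again suffices to work blockwise, where $\f g'\cap\f g_+=\langle X\rangle$ and $\f g'\cap\f g_-=\langle Y\rangle$. In the $B^{1,1}_s$-orthogonal basis $\{X,JX,Y,JY\}$ (after normalising), $\ad_{JX}$ is diagonal: it is the identity on $X$ and $Y$ and sends $JX$ and $JY$ to $\f g'$-components. I will verify symmetry directly from the brackets: the only nonzero brackets of $JX$ are $[JX,X]=X$ and $[JX,Y]=Y$. For $\ad_{JY}$, the rotation part on $\langle X,Y\rangle$ is skew for a positive inner product. I still need to confirm that the components hitting $JX$ and $JY$ behave consistently with that skewness, i.e.\ that $\ad_{JY}^{t_s}=-\ad_{JY}$. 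The $\f{aff}(\R)$ case of the symmetry claim is immediate since there the space is one-dimensional.
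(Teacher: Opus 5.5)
Your treatment of the decomposition and of part (1) is the paper's proof: the same blockwise choice of $\g_\pm$ and $s$, orthogonality of the $J$-simple summands, and a sign check on $\f{aff}(\C)$. Deriving $\g_+\perp\g_-$ from $B^{1,1}(sU,sV)=B^{1,1}(U,V)$ is a clean way to supply what the paper calls ``easy to see''. In the sign check, note that $B^{1,1}(X,X)=\frac12(\tr\ad_X^2+\tr\ad_{JX}^2)$. So you also need $\tr\ad_X^2=\tr\ad_Y^2=0$, which holds because $\ad_X$ and $\ad_Y$ kill $\g'$ and take values in $\g'$, hence square to zero. This gives $B^{1,1}(X,X)=1$ and $B^{1,1}(Y,Y)=-1$.

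For part (2) you take a different route. The paper argues without coordinates. By Proposition~\ref{propKilling}, part (3), $\ad_{JX}$ is $B^{1,1}$-self-adjoint for every $X\in\f n\supseteq\g'$. Since $s$ is a holomorphic involutive automorphism, $\ad_{JX}s=s\,\ad_{JsX}=\pm s\,\ad_{JX}$ for $X\in\g_\pm$. Hence $B^{1,1}_s(\ad_{JX}U,V)=B^{1,1}(U,\ad_{JX}sV)=\pm B^{1,1}_s(U,\ad_{JX}V)$.

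Your block computation also works, but the points you left open must be closed.
\begin{itemize}
\item There are no ``components hitting $JX$ and $JY$''. Both $\ad_{JX}$ and $\ad_{JY}$ take values in $\g'$ and vanish on $J\g'$, because $[JX,JY]=[X,Y]=0$ (and you wrote that $\ad_{JX}$ sends $JX,JY$ to $\g'$-components, when in fact it sends them to $0$).
\item Consequently $\ad_{JX}$ is the $B^{1,1}_s$-orthogonal projection onto $\langle X,Y\rangle$, hence symmetric. In $\f{aff}(\R)$ it is the orthogonal projection onto $\langle X\rangle$, symmetric because $X\perp JX$ (it is $\g'$, not the algebra, that is one-dimensional).
\item For $\ad_{JY}$, the rotation $X\mapsto Y$, $Y\mapsto -X$ is skew only if $B^{1,1}_s(X,X)=B^{1,1}_s(Y,Y)$; positivity alone is not enough. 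You cannot normalise $X$ and $Y$ separately without changing the structure constants. The values $B^{1,1}_s(X,X)=1=-B^{1,1}(Y,Y)=B^{1,1}_s(Y,Y)$ computed above settle this.
\end{itemize}

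Your approach is fully elementary once the classification is in hand. The paper's argument avoids this bookkeeping and shows conceptually that the sign comes solely from $s$, with self-adjointness of $\ad_{JX}$ on $\f n$ holding for any abelian $J$.
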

\begin{proof}
By Theorem \ref{Semisimple} and Theorem \ref{affines} we immediately infer \eqref{sssss}.\\
To prove \emph{(1)}, we observe that for $\f{aff}(\R)$ and $\f{aff}(\C)$ we may choose 
\begin{align*}
\qquad  \qquad   \f{aff}(\R)_+&= \f{aff}(\R) \,, &\f{aff}(\R)_-&= 0 \,,\qquad \qquad   \\
\f{aff}(\C)_+&= \langle X,JX \rangle \,, & \f{aff}(\C)_-&= \langle Y, JY \rangle \,,
\end{align*}
where we retain the notations of Example \ref{aff}. It is easy to check that these subspaces satisfy the desired Lie bracket relations. Hence, thanks to \eqref{sssss}, it suffices to choose 
$$
\f g_+:=\f{aff}(\R)^p\oplus \f{aff}(\C)_+^q\,, \qquad \f g_-:=\f{aff}(\C)_-^q\,.
$$
Furthermore, we define 
$$
s|_{\f g_+}={\rm Id}_{\f g_+}\,, \quad s|_{\f g_-}=-{\rm Id}_{\f g_-}\,.
$$
Due to the bracket relations above and that $J\f g_{\pm}=\f g_{\pm}$, we obtain $s\in {\rm Aut}(\f g, J)$, and clearly $s^2={\rm Id}$. 
Next, it is easy to see that $B^{1,1}_s$ is of type $(1,1)$, since $[s, J]=0$. Moreover, using again that $[s, J]=0$ and $s\in \mathrm O(B)$, we have, for any $X, Y\in\f g$, 
$$
\begin{aligned}
B^{1,1}_s(X, Y)= \frac12\left(B(X, sY)+ B(JX, JsY)\right)
=\frac12\left(B(Y, sX)+ B(JY, JsX)\right)=B^{1,1}_s(Y, X)\,,
\end{aligned}
$$
giving us that $B_s^{1,1}$ is symmetric. 
To see positive definiteness, we note that  the decomposition of $\f g $ in $J$-simple ideals is  orthogonal with respect to  $B_s^{1,1}$. Hence, it is sufficient to prove the claim on each $J$-simple ideal. Since $B_{\f{aff}(\R)}^{1,1}>0$ and $s_{\f{aff}(\R)}={\rm Id}$, it is enough to prove the statement on $\f {aff}(\C)$, for which it is easy to see that 
$$
\qquad \, B^{1,1}_{\f{aff}(\C)}|_{\f{aff}(\C)_+\times \f{aff}(\C)_+}>0\,,\quad B^{1,1}_{\f{aff}(\C)}|_{\f{aff}(\C)_+\times \f{aff}(\C)_-}=0\,, \quad B^{1,1}_{\f{aff}(\C)}|_{\f{aff}(\C)_-\times \f{aff}(\C)_-}<0\,.
$$
Since $s|_{\f{aff}(\C)_-}=-{\rm Id}$, the claim follows straightforwardly.

As for \emph{(2)}, we have that, using  Proposition \ref{propKilling}, part \emph{(3)}, for any $X\in \f g_{\pm}\cap \f g'$,  
$$
\begin{aligned}
B^{1,1}_s(\ad_{JX} Y, Z)=&\, 
B^{1,1}(Y, \ad_{JX}sZ)
= B^{1,1}(Y, s\ad_{JsX}Z)=\pm B^{1,1}_s(Y, \ad_{JX}Z)\,,
\end{aligned}
$$
concluding the proof. \qedhere
\end{proof}

With this settled, whenever a Lie algebra $\f g$ is endowed with a semisimple and abelian complex structure $J$, then, thanks to \eqref{sssss}, we can introduce the following basis of $\f g$, which we call  \emph{standard}:
\begin{equation}\label{baseincredibile}
\mathcal B:=\{X_1, JX_1, \ldots,X_p, JX_p, X_{p+1}, JX_{p+1}, \ldots,  X_{p+q}, JX_{p+q}, Y_{p+1},JY_{p+1}, \ldots, Y_{p+q}, JY_{p+q}\}\,,
\end{equation}
 where $\{X_1, JX_1, \ldots, X_{p+q}, JX_{p+q}\}$ is a basis of $\f g_+$ whereas $\{Y_{p+1},JY_{p+1}, \ldots, Y_{p+q}, JY_{p+q}\}$ is a basis of $\f g_-$ such that $\langle X_i, JX_i\rangle\simeq \f{aff}(\R) $ for any $i=1,\ldots, p$, and $ \langle X_{p+j}, JX_{p+j}, Y_{p+j}, JY_{p+j}\rangle\simeq \f{aff}(\C)$ for any $j=1, \ldots, q$.
\subsection{Levi--Malcev decomposition for abelian complex structures}
In this subsection we will prove Theorem \ref{mainabelian}. Before proceeding, we introduce two tools that will be useful in what follows.

\begin{defn}\label{defn_U}
Let $\f g$ be a  Lie algebra endowed with a semisimple abelian complex structure $J$. The \emph{mean curvature vector} $JU$  of $(\f g, B^{1,1}_s)$ is defined implicitly by:
$$
{\rm tr}(\ad_X)=B_s^{1,1}(JU, X)\,, \quad X \in \f g\,.
$$
\end{defn}

The terminology is standard, see e.g. \cite{Einstein}. In the next result we collect some properties of the mean curvature vector in our setting. 

\begin{lem}\label{ILGRANDEJU}
Let $\f g$ be a  Lie algebra endowed with a semisimple, abelian complex structure $J$. Then 
$$
JU=2\sum_{i=1}^{\dim_{\C}\f g_+}JX_i\,,
$$ where $\{X_1, \ldots, X_{\dim_{\C}\f g_+}\}$ is the basis of $\f g_+\cap \f g'$  contained in the standard basis $\mathcal B$. Hence, $[JU, U]=2U$ and, more generally, \begin{equation}\label{ad_JU2Id}\ad_{JU}|_{\f g'}=2{\rm Id}_{\f g'}\,.
\end{equation}
Finally, if $\varphi\colon \f g_1\to \f g_2$ is a holomorphic isomorphism of $J$-semisimple Lie algebras with abelian complex structure, then $\phi(JU_{\f g_1})=JU_{\f g_2}.$ 
\end{lem}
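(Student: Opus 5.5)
The plan is to compute $JU$ factor by factor, using the decomposition \eqref{sssss} and the standard basis $\mathcal B$ of \eqref{baseincredibile}, and then to obtain naturality from an intrinsic characterisation of $JU$ rather than through the involution $s$.

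\emph{Reduction to the simple factors.} By Corollary \ref{corsymmetricdecom}, $\f g=\f{aff}(\R)^p\oplus\f{aff}(\C)^q$ is a direct sum of $J$-invariant ideals, and this decomposition is orthogonal for $B^{1,1}_s$. For $X$ in a factor $\f k$, $\ad_X$ kills the other factors, so $\tr(\ad^{\f g}_X)=\tr(\ad^{\f k}_X)$. Likewise $B^{1,1}_s$ restricts on $\f k$ to the corresponding form of $\f k$, since $B_{\f k}=B|_{\f k\times\f k}$ and $s$ preserves each factor. Hence $JU$ is the sum of the mean curvature vectors of the factors, and it suffices to compute $JU$ for $\f{aff}(\R)$ and for $\f{aff}(\C)$.

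\emph{The two computations.} I will use the brackets of Example \ref{aff}.
\begin{itemize}
\item For $\f{aff}(\R)$, with $[JX,X]=X$: $\ad_X$ is nilpotent, so $\tr\ad_X=0$, while $\tr\ad_{JX}=1$. Moreover $B(JX,JX)=1$ and $B(X,\cdot)=0$, so $B^{1,1}(X,X)=B^{1,1}(JX,JX)=\tfrac12$. Since $s=\mathrm{Id}$ here, this gives $JU=2JX$.
\item For $\f{aff}(\C)$: $\ad_X$ and $\ad_Y$ are nilpotent. On $\f g'=\langle X,Y\rangle$, $\ad_{JX}$ acts as the identity, so $\tr\ad_{JX}=2$. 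Also $\ad_{JY}$ acts as a rotation, so $\tr\ad_{JY}=0$.
\item In the same factor, $B(JX,JX)=2$, $B(JY,JY)=-2$ and $B(JX,JY)=0$. Hence $B^{1,1}$ equals $1$ on $\f{aff}(\C)_+$ and $-1$ on $\f{aff}(\C)_-$ (on basis vectors). Twisting by $s$ makes the second value $+1$.
\item Writing $JU=aJX+bJY$ and testing against $JX$ and $JY$ gives $a=2$ and $b=0$.
\end{itemize}
Summing over the factors yields $JU=2\sum_i JX_i$, where the $X_i$ run over the $\f g_+\cap\f g'$ part of $\mathcal B$.

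\emph{Consequences.} Set $U=2\sum_i X_i$. Since $[JX_i,X_i]=X_i$ and different factors commute, $[JU,U]=2U$. Each $\f g'\cap\f k$ is spanned by $X_i$, or by $X_{p+j},Y_{p+j}$, and $\ad_{JX_i}$ acts as the identity there while killing the other factors. Therefore $\ad_{JU}|_{\f g'}=2\,\mathrm{Id}_{\f g'}$, which is \eqref{ad_JU2Id}.

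\emph{Naturality.} This is the step needing care, because it is not a priori clear that $\varphi$ intertwines the involutions $s$. To avoid this, I will characterise $JU$ intrinsically as the unique $W\in J\f g'$ with $\ad_W|_{\f g'}=2\,\mathrm{Id}$.
\begin{itemize}
\item Existence is the computation above.
\item Uniqueness: suppose $JV\in J\f g'$ satisfies $[JV,\f g']=0$. Abelianity gives $[JV,J\f g']=[V,\f g']=0$, where the last bracket vanishes by $2$-step solvability. By Theorem \ref{Semisimple}, $\f g=\f g'\oplus J\f g'$, so $JV$ is central. The centre is a $J$-invariant abelian ideal, hence zero by semisimplicity.
\end{itemize}
A holomorphic isomorphism $\varphi$ maps $\f g_1'$ onto $\f g_2'$ and $J\f g_1'$ onto $J\f g_2'$, and conjugates adjoint actions. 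Hence $\varphi(JU_{\f g_1})$ satisfies the characterising property in $\f g_2$, and so equals $JU_{\f g_2}$.
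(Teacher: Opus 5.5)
Your proof is correct. For the formula for $JU$ and for $\ad_{JU}|_{\f g'}=2\,\mathrm{Id}_{\f g'}$ it is a direct verification, as in the paper, but organised differently.

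\textbf{The formula.} The paper takes the candidate $JY=2\sum_i JX_i$ and reads off $\ad_{JY}|_{\f g'}=2\,\mathrm{Id}$ from the brackets of $\mathcal B$. It then checks the defining relation globally, using $B^{1,1}_s(JY,\f g')=0$ and \eqref{penis} to get $B^{1,1}_s(JY,JX)=\frac12\tr(\ad_{J[JY,X]})=\tr(\ad_{JX})$. You instead split into $J$-simple factors and compute $B^{1,1}_s$ explicitly on $\f{aff}(\R)$ and $\f{aff}(\C)$. In the $\f{aff}(\C)$ case you should also test against $X$ and $Y$, but those traces and pairings vanish, so the conclusion is unchanged.

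\textbf{Naturality.} Here your route is genuinely different. The paper proves the stronger fact $\varphi s_{\f g_1}\varphi^{-1}=s_{\f g_2}$, i.e.\ $\varphi((\f g_1)_\pm)=(\f g_2)_\pm$, and only sketches it via the standard basis. Making that rigorous requires showing the splitting $\f g_\pm$ is canonical. The $J$-simple ideals are unique, and in each $\f{aff}(\C)$ factor $X$ is forced to be the unit of the algebra $(\f g',[J\cdot,\cdot])$ and $Y$ a square root of $-X$. Your characterisation of $JU$ as the unique $W\in J\f g'$ with $\ad_W|_{\f g'}=2\,\mathrm{Id}_{\f g'}$ avoids all of this. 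It also shows that $JU$ does not depend on the choice of standard basis or of $s$.

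What your route does not give is that $\varphi$ is an isometry for $B^{1,1}_s$. The paper's route yields that, but it is not needed for this lemma or for its later uses (Lemma \ref{unnosomicasai}, Theorem \ref{storiafatta}).
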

\begin{proof}
First of all, we define $JY:=2\sum_{i=1}^{\dim_{\C}\f g_+}JX_i$ and, using the bracket relations of the vectors in the basis $\mathcal B$, we  infer that $\ad_{JY}|_{\f g'}=2{\rm Id}_{\f g'}$. To prove the first claim, we observe that, for any $X\in \f g'$, $B_{s}^{1,1}(JY, X)=0$. Using the fact that $JY\in\f g_+$ and \eqref{penis} we deduce
$$
B_s^{1,1}(JY, JX)=\frac12 {\rm tr}(\ad_{J[JY, X]})={\rm tr}(\ad_{JX})\,,
$$
thereby proving the first three statements. 

The last claim is proved by straightforward computations once we observe that   $\varphi s_{\f g_1 }\varphi^{-1}=s_{\f g_2 }$, which follows from $\varphi((\f g_1) _{\pm})=(\f g_2)_{\pm}$. This property can be deduced, for instance, using the basis \eqref{baseincredibile} and the fact that $\varphi$ is a Lie algebra isomorphism.
\end{proof}

Next, we introduce an operator which will  help us to obtain the desired Levi--Malcev decomposition.
\begin{defn}\label{defn_Cas}
Let $\f g$ be a  Lie algebra endowed with a semisimple abelian complex structure $J$. The \emph{$J$-adapted Casimir operator}  is defined as 
$$
\Omega^{1,1}_{\f g}:=\frac12(\ad_{JU}-J\ad_{U})\in{\rm End}(\f g, J)\,.
$$
\end{defn}
\begin{rmk}\label{Casimiro}
We now justify the name of $\Omega_{\f g}^{1,1}.$ To this end, we consider the standard basis $\mathcal B $ defined in \eqref{baseincredibile} and 
 we notice that, for any $i=1, \ldots, p$ and $j=1, \ldots, q$, 
 $$
B^{1,1}(X_i,X_i)=\frac12\,,\quad B^{1,1}(X_{p+j}, X_{p+j})=-B^{1,1}(Y_{p+j}, Y_{p+j})=1\,.
 $$
 Hence, the dual basis of $\mathcal B$ with respect to $B^{1, 1}$, see \cite[Chapter 5, p.238]{K02},  is given by
$$
\tilde{\mathcal B}:=\{2X_1, 2JX_1, \ldots,2X_{p}, 2JX_p, X_{p+1}, JX_{p+1}, \ldots,  X_{p+q}, JX_{p+q}, -Y_{p+1},-JY_{p+1}, \ldots, -Y_{p+q}, -JY_{p+q}\}\,.
$$
By \eqref{penis} and the explicit expressions for the Lie brackets of elements in $\mathcal B$, the Casimir operator $\Omega_{\f g}$ takes the form
$$
\begin{aligned}
\Omega_{\f g}=&\, \sum_{i=1}^pB^{1,1}(X_i, X_i)(\ad^2_{2X_i}+\ad_{2JX_i}^2)+ \sum_{j=1}^{q}B^{1,1}(X_{p+j}, X_{p+j})(\ad_{X_{p+j}}^2+\ad_{JX_{p+j}}^2-\ad_{Y_{p+j}}^2-\ad_{JY_{p+j}}^2)\\
=&\, 2\left(\sum_{i=1}^p\ad_{JX_i}+\sum_{j=1}^q\ad_{JX_{p+j}}\right)=\ad_{JU}\,.
\end{aligned}
$$
Hence, 
$$
\Omega_{\f g}^{1,1}=\frac12(\Omega_{\f g}-J\Omega_{\f g}J)\,.
$$
 
Moreover,  it turns out that $\Omega^{1,1}_\g$ coincides with the identity operator on $\g$.  Indeed, by \eqref{ad_JU2Id}, we have that $J\ad_U|_{J\f g' }=-2{\rm Id}_{J\f g'}$. Hence,  
$ \Omega_{\f g}^{1,1}={\rm Id}_{\f g}$, since $\f g$ is $J$-perfect.
\end{rmk}
 
Before presenting the proof of Theorem \ref{mainabelian}, we give the following  definition and prove a general lemma that will be used later.

\begin{defn}
Let $\g$ be a Lie algebra endowed with a complex structure $J$. We call a $J$-invariant subalgebra  $\h$  of $\g$ a $J$-\emph{adapted Levi subalgebra} of $\g$  if
\[
\g=\h \ltimes {\rm Rad}(\g)\,.
\]
\end{defn}

From Lemma \ref{lemss} it is evident that a $J$-adapted Levi subalgebra is necessarily $J$-semisimple as it is holomorphically isomorphic to $\g / {\rm Rad}(\g)$.

\begin{lem}\label{Lemmino}
Let $\f g$ be a Lie algebra endowed with a complex structure $J$. Let $\f k_1, \f k_2\subseteq \f g$ be two $J$-invariant ideals such that $\f k_1\subseteq \f k_2$ such that $\f k_2'+J\f k_2'\subseteq \f k_1$  and $\g'_J \subseteq {\rm Rad}(\f k_2)$. Then
\[
{\rm Rad}(\f k_1)={\rm Rad}(\f k_2)\cap \f k_1\,, \qquad \f k_2=\f k_1+ {\rm Rad}(\f k_2)\,.
\]
\end{lem}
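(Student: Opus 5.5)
The plan is to pass to the quotient $\f q:=\f k_2/{\rm Rad}(\f k_2)$ and show that it is $J$-perfect. Both identities then follow by pushing $\f k_1$ and ${\rm Rad}(\f k_1)$ into $\f q$. Throughout, I use that ${\rm Rad}(\f k_2)$ is a $J$-invariant ideal of $\f k_2$. I also use the fact, noted before Lemma \ref{lemmagrullo}, that $J$-invariant subalgebras and holomorphic images of $J$-solvable algebras are $J$-solvable. Let $\pi\colon\f k_2\to\f q$ be the canonical holomorphic projection. By Lemma \ref{lemss}, part \emph{(3)}, $\f q$ is $J$-semisimple.

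\emph{Step 1: $J$ is abelian on $\f q$.} For $X,Y\in\f k_2$, the vanishing of the Nijenhuis tensor gives
\[
[X,Y]-[JX,JY]=-J\big([JX,Y]+[X,JY]\big).
\]
By \eqref{behaviourofJ2step}, the bracket on the right lies in $\g'_J$. Since $\g'_J$ is $J$-invariant, the whole right-hand side lies in $\g'_J$. By hypothesis $\g'_J\subseteq{\rm Rad}(\f k_2)$, so applying $\pi$ gives $[\pi X,\pi Y]=[J\pi X,J\pi Y]$. Hence the induced complex structure on $\f q$ is abelian.

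\emph{Step 2: $\f q$ is $J$-perfect, and the second identity follows.} By Step 1, $\f q$ is a Lie algebra with a semisimple abelian complex structure. The final assertion of Theorem \ref{Semisimple} then gives $\f q=\f q'\rtimes J\f q'$. Now $\f q'=\pi(\f k_2')$ and $J\f q'=\pi(J\f k_2')$, so by the hypothesis $\f k_2'+J\f k_2'\subseteq\f k_1$ we get
\[
\f q=\f q'+J\f q'\subseteq\pi(\f k_1).
\]
Therefore $\pi(\f k_1)=\f q$, which is exactly $\f k_2=\f k_1+{\rm Rad}(\f k_2)$.

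\emph{Step 3: the first identity.} First, ${\rm Rad}(\f k_2)\cap\f k_1$ is a $J$-invariant ideal of $\f k_1$, and as a $J$-invariant subalgebra of a $J$-solvable algebra it is $J$-solvable. Hence ${\rm Rad}(\f k_2)\cap\f k_1\subseteq{\rm Rad}(\f k_1)$.

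Conversely, ${\rm Rad}(\f k_1)$ is a $J$-invariant ideal of $\f k_1$, so $\pi({\rm Rad}(\f k_1))$ is a $J$-invariant ideal of $\pi(\f k_1)$. By Step 2, $\pi(\f k_1)=\f q$, so $\pi({\rm Rad}(\f k_1))$ is a $J$-invariant ideal of $\f q$. It is $J$-solvable, being a holomorphic image of a $J$-solvable algebra. Since $\f q$ is $J$-semisimple, $\pi({\rm Rad}(\f k_1))=0$, that is, ${\rm Rad}(\f k_1)\subseteq{\rm Rad}(\f k_2)\cap\f k_1$.

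\emph{Main obstacle.} The delicate point is Step 2: $\f k_1$ need not contain ${\rm Rad}(\f k_2)$, and ${\rm Rad}(\f k_1)$ is not a priori an ideal of $\f k_2$. The hypothesis $\g'_J\subseteq{\rm Rad}(\f k_2)$ is used precisely to make the quotient abelian. This is what allows the strong structural Theorem \ref{Semisimple}, which yields $J$-perfectness, to be invoked; without abelianness, Example \ref{ssnonabelian} shows that a $J$-semisimple algebra need not be $J$-perfect, and the argument would break.
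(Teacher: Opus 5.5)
Your proof is correct and follows essentially the paper's route. You pass to the $J$-semisimple quotient $\f k_2/{\rm Rad}(\f k_2)$, check that its complex structure is abelian because $\g'_J\subseteq{\rm Rad}(\f k_2)$ (a point the paper leaves implicit), and apply Theorem \ref{Semisimple}. The only difference is the order of the argument. You first get $J$-perfectness to obtain $\f k_2=\f k_1+{\rm Rad}(\f k_2)$, and then use surjectivity of $\pi|_{\f k_1}$ for the first identity. The paper instead proves the first identity via $J$-semisimplicity of the ideal $(\f k_1+{\rm Rad}(\f k_2))/{\rm Rad}(\f k_2)$, and the second via the abelian, $J$-semisimple quotient $\f k_2/(\f k_1+{\rm Rad}(\f k_2))$.
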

\begin{proof}
From the inclusion $\f k_1\subseteq \f k_2$, we deduce that  ${\rm Rad}(\f k_2)\cap\f k_1$ is a $J$-solvable ideal of $\f k_1$. Hence, ${\rm Rad}(\f k_2)\cap \f k_1\subseteq {\rm Rad}(\f k_1)$.  On the other hand, 
 $$
 \frac{\f k_1}{{\rm Rad}(\f k_2)\cap \f k_1}\simeq\frac{\f k_1 + {\rm Rad}(\f k_2)}{{\rm Rad}(\f k_2)}\,,
 $$
 which is a $J$-invariant ideal of $ \f k_2/{\rm Rad}(\f k_2)$ and thus $J$-semisimple, due to  the assumption $\g'_J \subseteq {\rm Rad(\f k_2)}$ and Theorem \ref{Semisimple}. Therefore, 
 $$
 0={\rm Rad}\left(\frac{\f k_1}{{\rm Rad}(\f k_2)\cap \f k_1}\right)=\frac{{\rm Rad}(\f k_1)}{{\rm Rad}(\f k_2)\cap  \f k_1},
 $$
 concluding that ${\rm Rad}(\f k_1 )={\rm Rad}(\f k_2)\cap \f k_1$.  As for the second identity, note that the quotient
\[
\frac{\frac{\f k_2}{{\rm Rad}(\f k_2)}}{\frac{\f k_1 + {\rm Rad}(\f k_2)}{{\rm Rad}(\f k_2)}} \simeq \frac{\f k_2}{\f k_1+{\rm Rad}(\f k_2)}
\]is $J$-semisimple and abelian, since $\f k_2'+J\f k_2'\subseteq \f k_1$. This can only happen if $\f k_1 + {\rm Rad}(\f k_2)=\f k_2$.
\end{proof}

\begin{rmk}
We notice that if $J$ is abelian  then $\g'_J \subseteq {\rm Rad}(\g)$. Indeed, $\g'_J \subseteq \f z(\g'+J\g')$ and $ \f z(\g'+J\g')$ is a $J$-invariant abelian ideal of $\g$, and as such, it is contained in ${\rm Rad}(\g)$.
\end{rmk}

We can now establish the $J$-adapted Levi-Malcev decomposition in the particular case when $\g'_J=0$ which is later extended to arbitrary abelian complex structures.
\begin{thm}\label{propsemidir}
Let $\f g=\f a\rtimes J\f a$ be a  Lie algebra with an abelian complex structure $J$ such that $\f a$ is an abelian ideal and $\f g_J'=0$. Then there exists a unique $J$-invariant subalgebra $\f h$ such that 
$$
\f g= \f h\ltimes\f n_J\,.
$$
\end{thm}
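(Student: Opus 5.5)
The plan is to reduce the statement to the Wedderburn--Malcev principal theorem for finite-dimensional commutative associative $\R$-algebras, via the affine model of Definition \ref{defiaffi}.

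\textbf{Step 1: the affine model.} Put on $A:=\f a$ the product $X\cdot Y:=[JX,Y]$. Since $\f a$ is abelian and $J$ is abelian, $[JX,JY]=[X,Y]=0$ for $X,Y\in\f a$. Integrability (or \eqref{behaviourofJ2step} together with $\g'_J=0$) then gives $[JX,Y]=[JY,X]$, so the product is commutative. Associativity should follow from the Jacobi identity applied to $JX,JY,Z$, using $[JX,JY]=0$. Since $\f a$ is an ideal, $[JX,Y]\in\f a$, so the product takes values in $A$. The map $(a,b)\mapsto Ja+b$ should then be a holomorphic isomorphism $\f{aff}(A)\to\g$, as in the remark after Theorem \ref{affines}; I will check the signs directly. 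From now on I work in $\f{aff}(A)$.

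\textbf{Step 2: identifying $\f n_J$.} Let $N$ be the nilradical (Jacobson radical) of $A$. By Lemma \ref{lemmaffini}(1), $\f{aff}(N)$ is a $J$-invariant ideal. It is nilpotent, because brackets only multiply by elements of $N$ and $N$ is nilpotent. Hence $\f{aff}(N)\subseteq\f n_J$, using Proposition \ref{propKilling}(1). For the converse I pass to $A/N$, which is semisimple, hence a product of copies of $\R$ and $\C$ by Artin--Wedderburn and Frobenius; the unital issue is addressed in Step 4. By Lemma \ref{lemmaffini}(5), $\g/\f{aff}(N)\simeq\f{aff}(A/N)$ is a sum of copies of $\f{aff}(\R)$ and $\f{aff}(\C)$, so it is $J$-semisimple by Theorem \ref{Semisimple}. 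The image of $\f n_J$ there is a $J$-invariant nilpotent ideal, hence $J$-solvable by Lemma \ref{Lem:basicnilsol}, hence zero. So $\f n_J=\f{aff}(N)$. The same argument gives ${\rm Rad}(\g)=\f n_J$, which is what makes the statement a $J$-adapted Levi decomposition.

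\textbf{Step 3: existence.} Wedderburn--Malcev over $\R$ (characteristic zero, so $A/N$ is separable) gives a subalgebra $S\subseteq A$ with $A=S\oplus N$. If $A$ is not unital, I apply the theorem to the unitisation $\R 1\oplus A$ and intersect the resulting complement with $A$. Lemma \ref{lemmaffini}(3) then yields $\g=\f{aff}(S)\ltimes\f{aff}(N)=\f h\ltimes\f n_J$ with $\f h:=\f{aff}(S)$.

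\textbf{Step 4: uniqueness (the main obstacle).} Let $\f h$ be any $J$-invariant subalgebra with $\g=\f h\ltimes\f n_J$.
\begin{itemize}
\item Since $\f h\simeq\g/\f n_J$, it is $J$-semisimple with abelian complex structure, so by Theorem \ref{Semisimple} it is $J$-perfect.
\item Lemma \ref{lemmaffini}(2) then shows $\f h=\f{aff}(S')$ for a subalgebra $S'\subseteq A$, using $\f h'\subseteq\g'\subseteq\f a$.
\item Dimension count and $\f h\cap\f n_J=0$ give $A=S'\oplus N$, and $S'\simeq A/N$ is semisimple.
\end{itemize}
It remains to prove that a semisimple complement of $N$ in a commutative algebra is unique. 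I would use the Jordan--Chevalley decomposition in $A$. Each $x\in A$ splits uniquely as $x=x_s+x_n$, where multiplication by $x_s$ is semisimple (separable minimal polynomial), $x_n$ is nilpotent, and both are polynomials in $x$. In a commutative algebra the semisimple elements form a subalgebra $A_s$, with $A_s\cap N=0$. Every element of $S'$ is semisimple, because $S'$ is a product of copies of $\R$ and $\C$. Hence $S'\subseteq A_s$, and $\dim S'=\dim A/N\ge\dim A_s$ forces $S'=A_s$. The complement is therefore intrinsic, and $\f h=\f{aff}(A_s)$ is unique.

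Two points need care. First, the semisimplicity must be that of multiplication on $A$ itself, not only on $S'$; I would use commuting semisimple operators generating a semisimple subalgebra of ${\rm End}(A)$. Second, the non-unital case needs the unitisation argument of Step 3.
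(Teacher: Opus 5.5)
Your argument is correct. Existence follows the paper: Wedderburn's principal theorem, Lemma \ref{lemmaffini}(3), and $J$-semisimplicity of $\f{aff}(A/N)$ to get $\f{aff}(N)=\f n_J$. The only difference there is that you verify by hand the isomorphism $\g\simeq\f{aff}(\f a,\cdot)$ and the nilpotency of $\f{aff}(N)$, which the paper takes from \cite[Theorem 3.5]{ABD12} and \cite[Lemma 2.3]{AO15}.

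The genuine difference is uniqueness. The paper uses the conjugacy half of the Wedderburn--Malcev theorem, $S_1=(1-n)S_2(1-n)^{-1}$ for some $n\in N$, which collapses to $S_1=S_2$ by commutativity. You instead show that every semisimple complement of $N$ coincides with the intrinsically defined subspace $A_s$ of semisimple elements. Your route avoids the conjugacy theorem and gives the explicit description $\f h=\f{aff}(A_s)$. The paper's is a one-liner, but it rests on a cited theorem stated for unital algebras: $1-n$ only lives in the unitisation, a point the paper leaves implicit and you treat explicitly.

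One correction to your first point of care. Semisimplicity must be measured in a \emph{faithful} representation, namely multiplication on the unitisation $A^+=\R1\oplus A$; multiplication on $A$ itself only works when $A$ is unital. On a non-unital $A$, every $n\in\mathrm{Ann}(A)\subseteq N$ has $L_n=0$, so $A_s\cap N\supseteq\mathrm{Ann}(A)$ and the count $\dim A_s\le\dim A/N$ fails. This is the typical situation here, since $\f{aff}(\mathrm{Ann}(A))=\f z(\g)$.

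On $A^+$ one has $L_x(1)=x$, so an element that is both semisimple and nilpotent vanishes, giving $A_s\cap N=0$. Each $x\in S'$ satisfies $p(x)=0$ for a separable real polynomial $p$ with $p(0)=0$, so $p(L_x)=L_{p(x)}=0$ on $A^+$ and hence $S'\subseteq A_s$. With this convention your dimension argument goes through unchanged.
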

\begin{proof}
Using \cite[Theorem 3.5]{ABD12}, we know that 
$
\f g\simeq \f{aff}(A)
$
for some finite-dimensional,  associative and commutative $\R$-algebra $A$. Using the Wedderburn's Principal Theorem, see \cite[Corollary, Section 11.6]{Pie82},  we can write 
$$
A=J\oplus S\, ,
$$
where $J$ is the Jacobson radical and $S$ is a semisimple subalgebra of $A$ such that $S\simeq A/J$, see \cite[Corollary b, Section 10.7]{Pie82}. On the other hand, using the finite dimensionality of $A$ as an $\R$-algebra, we have that $J=N$, where $N$ is the nilradical of $A$, i.e. the maximal ideal consisting of nilpotent elements. Therefore, 
$$
A=N \oplus S\,.
$$
Once we have this, we can use Lemma \ref{lemmaffini}, part \emph{(3)}, to infer  
$$
\f g\simeq \f{aff}(A)=\f{aff}(N\oplus S)= \f{aff}(N)\ltimes \f{aff}(S)\,.
$$
Since $S$ is semisimple,  $\f{aff}(S)$ is the direct sum of $J$-simple ideals by Lemma \ref{lemmaffini}, part \emph{(3)}, and hence $J$-semisimple by Theorem \ref{Semisimple}. On the other hand, using \cite[Lemma 2.3]{AO15}, we have that $\f{aff}(N)$ is a $J$-invariant nilpotent ideal, consequently $\f{aff}(N)\subseteq \f n_J$. Since $\f{aff}(S)\simeq \f{aff}(A/N)$ is $J$-semisimple, we conclude $\f{aff}(N)=\f n_J$. 

We now prove the uniqueness part of the statement. Let us assume that there exist two  $J$-adapted Levi subalgebras $\f h_1, \f h_2$ of $\f g$. Using  the fact that $\f h_i$ is $J$-perfect and Lemma \ref{lemmaffini}, part \emph{(2)}, we know that there exist a subalgebra $S_i\subseteq A$ such that $\f h_i=\f{aff}(S_i)$, for any $i=1,2$.  Thanks to Lemma \ref{lemmaffini}, part \emph{(3)}, and since $\f{aff}(N)=\f n_J$, we deduce that $A=S_1\oplus N=S_2\oplus N$. Now, we can apply \cite[Corollary, Section 11.6]{Pie82} to conclude that there exists $n\in N$ such that $S_1=(1-n)S_2(1-n)^{-1}$. We obtain $S_1=S_2$ by commutativity of $A$. The claim now follows. 
\end{proof}

With this established, we can now prove Theorem \ref{mainabelian}.

\begin{thm}\label{Leviabelian}
Let $\g$ be a Lie algebra equipped with an abelian complex structure $J$. Then there exists a unique $J$-invariant subalgebra $\f h$ such that 
\[
\g = \h \ltimes {\rm Rad}(\f g)\,.
\]
\end{thm}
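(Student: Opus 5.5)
The plan is to reduce to Theorem \ref{propsemidir} by quotienting out $\g'_J$, then lift the resulting Levi subalgebra back to $\g$. Set $\f k_1:=\g'+J\g'$, which is a $J$-invariant ideal of $\g$. Since $J$ is abelian, $\g'_J\subseteq \f z(\f k_1)$. Consider the quotient $\f q:=\f k_1/\g'_J$ with its induced abelian complex structure. Write $\f a$ for the image of $\g'$ in $\f q$. Then $\f a$ is an abelian ideal, $\f q=\f a\rtimes J\f a$, and $\f q'_J=0$, because the image of $\g'$ meets its $J$-image trivially in the quotient. Theorem \ref{propsemidir} therefore gives a unique $J$-invariant subalgebra $\f h_{\f q}$ with $\f q=\f h_{\f q}\ltimes \f n_J(\f q)$, and $\f h_{\f q}$ is $J$-semisimple, hence $J$-perfect, by Theorem \ref{Semisimple}.

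For existence, let $\tilde{\f h}\subseteq \f k_1$ be the full preimage of $\f h_{\f q}$, so $\tilde{\f h}$ is a central extension of $\f h_{\f q}$ by $\g'_J$. Since $\g'_J$ is central in $\f k_1$, the bracket $[X,Y]$ for $X,Y\in\tilde{\f h}$ depends only on the classes of $X$ and $Y$. The candidate is
\[
\f h:=\tilde{\f h}'+J\tilde{\f h}'\,.
\]
It is a $J$-invariant subalgebra, and it surjects onto $\f h_{\f q}'+J\f h_{\f q}'=\f h_{\f q}$ by $J$-perfectness. The main obstacle is proving $\f h\cap \g'_J=0$, i.e.\ that the central extension splits holomorphically.

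For this I will use the $J$-adapted Casimir operator. Pick a lift $\widetilde{JU}$ of the mean curvature vector of $\f h_{\f q}$, with $\widetilde{U}:=-J\widetilde{JU}$, and set $P:=\tfrac12(\ad_{\widetilde{JU}}-J\ad_{\widetilde U})$ on $\tilde{\f h}$. Because $\g'_J$ is central in $\f k_1$, $P$ vanishes on $\g'_J$. By Remark \ref{Casimiro}, $P$ induces the identity $\Omega^{1,1}_{\f h_{\f q}}$ on the quotient. Hence $P$ is $J$-linear, $\operatorname{Im}P$ is a $J$-invariant complement of $\g'_J$ in $\tilde{\f h}$, and $P^2=P$.

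It then remains to check that $\f h\subseteq \operatorname{Im}P$. For $X,Y\in\tilde{\f h}$, I will use that $\ad_{\widetilde{JU}}$ is a derivation, the identity \eqref{penis}, and the relation $\ad_{JU}|_{\f h_{\f q}'}=2\,\mathrm{Id}$ from Lemma \ref{ILGRANDEJU}. These should show that $[X,Y]$ and $J[X,Y]$ are fixed by $P$, once one uses that brackets only see classes modulo $\g'_J$. Should this computation become unwieldy, I would fall back on a dimension count: compute $\dim \tilde{\f h}'$ directly in the standard basis \eqref{baseincredibile}, lifted to $\tilde{\f h}$.

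Next, apply Lemma \ref{Lemmino} with $\f k_2=\g$. Its hypotheses hold: $\g'+J\g'\subseteq\f k_1$, and $\g'_J\subseteq{\rm Rad}(\g)$ by the remark following that lemma. This yields ${\rm Rad}(\f k_1)={\rm Rad}(\g)\cap\f k_1$ and $\g=\f k_1+{\rm Rad}(\g)$. Since $\g'_J\subseteq{\rm Rad}(\f k_1)$, we get ${\rm Rad}(\f k_1)/\g'_J={\rm Rad}(\f q)=\f n_J(\f q)$, so $\f k_1=\f h\oplus{\rm Rad}(\f k_1)$. It follows that $\g=\f h+{\rm Rad}(\g)$. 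Moreover, $\f h\cap{\rm Rad}(\g)=\f h\cap{\rm Rad}(\f k_1)=0$, which gives $\g=\f h\ltimes{\rm Rad}(\g)$.

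For uniqueness, let $\f h_1,\f h_2$ be $J$-adapted Levi subalgebras of $\g$. Each is $J$-perfect, so $\f h_i=\f h_i'+J\f h_i'\subseteq \f k_1$. Their images in $\f q$ are $J$-adapted Levi subalgebras of $\f q$, using Lemma \ref{Lemmino} to identify the radicals. By the uniqueness part of Theorem \ref{propsemidir}, both images equal $\f h_{\f q}$. Hence each $\f h_i$ lies in $\tilde{\f h}$ and maps onto $\f h_{\f q}$. Since brackets in $\tilde{\f h}$ depend only on classes mod $\g'_J$, we get $\f h_i'=\tilde{\f h}'$. Therefore $\f h_1=\tilde{\f h}'+J\tilde{\f h}'=\f h_2$.
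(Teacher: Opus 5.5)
Your proof is correct, but it is organised quite differently from the paper's.

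\textbf{The paper's route.} The paper argues by induction on $\dim_\C\g$ through a chain of reductions:
\begin{enumerate}
\item first to the case where no $J$-invariant ideal lies properly inside ${\rm Rad}(\g)$;
\item then to $\g=\g'+J\g'$ via Lemma \ref{Lemmino};
\item then it splits off the centre with $\Phi(U)$ when $\f z(\g)={\rm Rad}(\g)$;
\item only at the end, with $\f z(\g)=0$, it invokes Theorem \ref{propsemidir}.
\end{enumerate}
Uniqueness is a separate case-by-case induction.

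\textbf{Your route.} You pass at once to $(\g'+J\g')/\g'_J$. You notice that the preimage $\tilde{\f h}$ of its Wedderburn factor is automatically in the paper's Step 3 situation, ${\rm Rad}(\tilde{\f h})=\f z(\tilde{\f h})=\g'_J$. So a single Casimir projection splits it, and Lemma \ref{Lemmino} carries the splitting up to $\g$.

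\textbf{The hedged step goes through.} Since $\widetilde U\in\g'$, the operator $\ad_{\widetilde U}$ kills $\tilde{\f h}'\subseteq\g'$. Write the classes of $X$ and $Y$ as $A+JB$ with $A,B\in\f h_{\f q}'$. Use $[JU,A]=2A$, $[JU,JB]=[U,B]=0$, and the fact that lifts of $\f h_{\f q}'$ lie in the abelian ideal $\g'$. Then the derivation rule gives $[\widetilde{JU},[X,Y]]=2[X,Y]$, so $P$ fixes $\tilde{\f h}'$. Equivalently, as in the paper, $[P,\ad_X]=0$, so $\Im P$ is an ideal complementary to the centre. Note that $[P,J]=0$ comes from \eqref{integrability}, not from $P$ inducing the identity on the quotient.

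\textbf{What each approach buys.} Your route gives a non-inductive argument and an explicit formula $\f h=\tilde{\f h}'+J\tilde{\f h}'$. That formula turns uniqueness into a one-line consequence of the centrality of $\g'_J$ in $\g'+J\g'$ plus uniqueness in Theorem \ref{propsemidir}.

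The paper's route gives a template that survives for non-abelian $J$. In Theorem \ref{levihol} the ideal $\g'_J$ is no longer central, and $\Phi(U)$ acts on it by $a+bJ$ rather than by $0$. The same reductions are reused there to reach that scalar form and isolate the obstruction $(a,b)\neq(1,0)$. Your shortcut rests essentially on $\g'_J\subseteq\f z(\g'+J\g')$, so it does not extend that way.
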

\begin{proof}
We prove the existence part of the theorem in a stepwise fashion by successively reducing to simpler cases.

{\bf Step 1}. We show that we may assume that $\g$ has no $J$-invariant ideals properly contained in ${\rm Rad}(\f g)$. To prove this we work inductively on $\dim_\C(\g)$. If $\g$ is $1$-dimensional, it can only be either $\mathfrak{aff}(\R)$ or $\C$ and in both cases the theorem is clearly true. Thus, let $\f k \subset {\rm Rad}(\f g)$ be a $J$-invariant ideal in $\g$. Since $\mathrm{Rad}(\g/\f k)={\rm Rad}(\f g)/\f k$, by the inductive assumption we have $\g/\f k = \f s \ltimes {\rm Rad}(\f g)/\f k$,  for some $J$-semisimple subalgebra $\f s$. Consider now the canonical projection onto the quotient $\pi \colon \g \to \g/\f k$, then $\hat{\f s}:= \pi^{-1}(\f s)$ is a proper $J$-invariant subalgebra of $\g$ and so, by the inductive assumption again, it can be written as $\hat{\f s}= \h \ltimes \mathrm{Rad}(\hat{\f s})$ for a $J$-semisimple subalgebra $\h$. Since $\hat{\f s}/\f k=\pi(\pi^{-1}(\f s))= \f s$ is $J$-semisimple, it follows that $\mathrm{Rad}(\hat{\f s})\subseteq  \f k$, but $\f k\subseteq  {\rm Rad}(\f g)$ is $J$-solvable and thus we must have $\f k= \mathrm{Rad}(\hat{\f s})$. It then follows that $\g= \h \ltimes  {\rm Rad}(\f g)$.

{\bf Step 2}. We reduce here to the case where $\g=\g'+J\g'$. 
Using Lemma \ref{Lemmino} with $\f k_1=\g'+J\g'$ and $\f k_2=\g$,  we have $\g=\g'+J\g'+{\rm Rad}(\g)$ and ${\rm Rad}(\f g'+J\f g')={\rm Rad}(\f g)\cap (\f g'+J\f g')$. The latter is a $J$-invariant ideal of $\f g$ contained in ${\rm Rad}(\f g)$. Using Step 1, we may assume either ${\rm Rad}(\f g'+J\f g')=0$ or ${\rm Rad}(\f g'+J\f g')={\rm Rad}(\f g)$. In the first case, the Levi--Malcev decomposition follows from $\g=(\g'+J\g')\oplus{\rm Rad}(\g)$. In the second case we have ${\rm Rad}(\f g)\subseteq \f g'+J\f g'$, and hence $\g=\g'+J\g'+{\rm Rad}(\g)\subseteq \f g'+J\f g'$, as claimed.  

{\bf Step 3}. We show that it may be assumed that $\f z(\g)=0$. We suppose $\f z(\g)\neq 0$ and provide the Levi--Malcev decomposition for this case. Since $J$ is abelian,  $\f z(\g)$ is $J$-invariant and thus contained in the radical of $J$. Therefore, again by Step 1, we can suppose $\f z(\g)= {\rm Rad}(\f g)$. In these assumptions, the adjoint representation $\ad \colon \g \to \mathrm{End}(\g)$ descends to a well-defined representation of the quotient 
\[
\ad\colon \frac{\g}{{\rm Rad}(\f g)} \to \mathrm{End}(\g)\,.
\]
We consider the operator
\[
\Phi(U):=\frac12 \left(\ad_{JU}-J\ad_{U}\right) \in \mathrm{End}(\g)\,, 
\] where $JU$ is the mean curvature vector of $\f g/{\rm Rad}(\f g)$. 
It is easy to prove that $\Phi(U) \in \mathrm{End}(\g, J)$, namely $[\Phi(U),J]=0$.  Moreover, for every $X\in \g/{\rm Rad}(\f g)$,  
we have 
$$
\begin{aligned}
\relax[\Phi(U), \ad_X]=&\, \frac12(\ad_{JU}\ad_X-J\ad_U\ad_X-\ad_X\ad_{JU}+\ad_XJ\ad_U)\\=&\,\frac12(\ad_{JU}\ad_X-\ad_X\ad_{JU}-\ad_{JX}\ad_U+\ad_U\ad_{JX})=\frac12(\ad_{[JU, X]+[U, JX]})=0\,,
\end{aligned}
$$
where we used the fact that, since $\f g$ is $2$-step and $U\in (\f g/{\rm Rad}(\f g))'$, $\ad_U\ad_X=\ad_U\ad_{JX}=0.$ 
As a consequence, $\ker\Phi(U)$ and $\Im \Phi(U)$ are $J$-invariant ideals of $\g$. We now consider the operator
\[
\Omega^{1,1}_{\g/ {\rm Rad}(\f g)}:=\frac12 \left(\ad^{\g/{\rm Rad}(\f g)}_{JU}-J\ad^{\g/{\rm Rad}(\f g)}_{U}\right) \in \mathrm{End}\left(\frac{\g}{{\rm Rad}(\f g)}, J\right)\,,
\]
defined so that $\Omega^{1,1}_{\g / {\rm Rad}(\f g)} \circ \pi = \pi \circ \Phi(U)$, where $\pi \colon \g \to \g / {\rm Rad}(\f g)$ is the canonical projection onto the quotient. By Remark \ref{Casimiro} $\Omega^{1,1}_{\g / {\rm Rad}(\f g)}=\mathrm{Id}_{\g / {\rm Rad}(\f g)}$, whence $0=\ker \Omega^{1,1}_{\g / {\rm Rad}(\f g)}=\ker\Phi(U)/{\rm Rad}(\f g)$, i.e. $\ker\Phi(U)={\rm Rad}(\f g)$. We then conclude that $\g=\Im \Phi(U)\oplus {\rm Rad}(\f g)$, completing the proof of this step.

{\bf Step 4}. Finally, we conclude the existence part of the theorem. Thanks to the previous steps we may assume $\g= \g' + J \g'$ and $ \f z(\g)=0$. Since $\f z(\g)=0$, in particular $\g'_J=0$, which puts us in the position to apply Theorem  \ref{propsemidir} to deduce the desired decomposition of $\g$.

In order to prove the uniqueness part of  the theorem, we proceed by induction on $\dim_{\C}\f g$. If $\dim_{\C }\f g=1$, then either $\f g=\C$ or $\f g=\f {aff}(\R)$ and the statement is clearly true. Assume the assertion is true for any Lie algebra whose complex dimension is strictly less than $\dim_{\C}\f g$.
Let $\f h_1, \f h_2$ be two $J$-adapted Levi subalgebras of $\f g.$ Assume first $\f g\ne \f g' + J\f g' $. Then, $\f h_1, \f h_2\subseteq \f g'+J\f g' $, because they are both $J$-perfect. Furthermore, thanks to Lemma \ref{Lemmino}, we deduce that $\h_1, \h_2$ are $J$-adapted Levi subalgebras of $\g'+J\g'$. We then apply the inductive hypothesis and conclude that they are equal. 
Then, it remains to treat the case when $\f g=\f g'+J\f g'$. If $\f z( \f g)=0$, then $\f g_J'=0$ and we may conclude applying Theorem \ref{propsemidir}. Hence, we may assume that $\f z(\f g)\ne 0 $. We distinguish two cases. Let us assume first that $\f z(\f g)\ne {\rm Rad}(\f g)$. Consider the canonical projection onto the quotient $\pi\colon\f g \to\f g/\f z(\f g)$ and observe that $\pi(\f h_i)$, $i=1,2$, are $J$-adapted Levi subalgebras of $\f g/\f z(\f g)$. By the inductive hypothesis, $\pi(\f h_1)=\pi(\f h_2)$. This allows to infer that $\f h_1\subseteq \f h_2+\f z(\f g):=\f s$. Of course, ${\rm Rad}(\f s )=\f z(\f g)$, hence $\f h_1, \f h_2$ are $J$-adapted Levi subalgebras of $\f s$ and $\dim_{\C}\f s<\dim_{\C}\f g$, since $\f z(\f g)\ne {\rm Rad}(\f g)$. We  conclude that $\f h_1=\f h_2$ by the inductive hypothesis. It remains to understand the case $\f z(\f g)={\rm Rad}(\f g)$. 
In terms of the decomposition $\f g=\f h_1\ltimes \f z(\f g)$, we  consider the projections $p_{\f h_1}$ and $p_{\f z(\f g)}$ on $\f h_1$ and $\f z(\f g)$, respectively. First of all, we observe that, since $J\f h_1=\f h_1$ and $J\f z(\f g)=\f z(\f g)$, $[p_{\f h_1}, J]=[p_{\f z(\f g)}, J]=0$. Hence, for any $Z\in \f h_2$, we can write 
$$
Z=p_{\f h_1}(Z)+ p_{\f z(\f g)}(Z)\,.
$$
It follows that,  for any $Z_1, Z_2\in \f h_2$,  
$$
p_{\f h_1}([Z_1, Z_2])+ p_{\f z(\f g)}([Z_1, Z_2])=[Z_1, Z_2]=[p_{\f h_1}(Z_1), p_{\f h_1}(Z_2)]\,,
$$
showing that $p_{\f h_1}$ is a holomorphic Lie algebra homomorphism and that $p_{\f z(\f g)}|_{\f h_2'}=0$. Since $\f h_2=\f h_2'\oplus J\f h_2'$ and $[p_{\f z(\f g)}, J]=0$ it follows that $p_{\f z(\f g)}|_{\f h_2}=0$.  Thus, for any $Z\in\f h_2$,  $Z=p_{\f h_1}(Z)\in\f h_1$ and so $\f h_2\subseteq \f h_1$ forcing $\f h_2=\f h_1$, since they have the same dimension, thereby completing the proof. 
\end{proof}

A first consequence of Theorem \ref{Leviabelian} is the following result.

\begin{cor}\label{lemspettroB}
Let $\f g$ be a Lie algebra endowed with an abelian complex structure $J$. Then the adjoint operators associated with elements of the standard basis $\mathcal B$ of the $J$-adapted Levi subalgebra $\f h$ defined in \eqref{baseincredibile}, $\{\ad_{JX_1}, \ldots, \ad_{JX_{p+q}}, \ad_{JY_{p+1}}, \ldots, \ad_{JY_{p+q}}\}$, are simultaneously diagonalisable, with 
$$
{\rm Spec}(\ad_{JX_i})\subseteq\{0,1\}\,, \quad  {\rm Spec}(\ad_{JY_{p+j}})\subseteq\{0, \sqrt{-1}, -\sqrt{-1}\}\,, \quad i=1, \ldots, p+q\,,\quad j=1, \ldots, q\,.
$$
In particular, for any $X\in \f h'$, $\ad_{JX}$ is diagonalisable. 
\end{cor}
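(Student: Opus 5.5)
The idea is to exploit the fact that $J$ is abelian, so $\f g$ is $2$-step solvable. Then the operators $\ad_{JV}$ with $V\in\f g'$ pairwise commute: by Jacobi, $[\ad_{JV},\ad_{JW}]=\ad_{[JV,JW]}=\ad_{[V,W]}=0$, since $[V,W]\in\f g''=0$. Recall that $\f h=\f h'\oplus J\f h'$ by Theorem \ref{Semisimple}, and that in the standard basis \eqref{baseincredibile} the elements $X_i,Y_{p+j}$ span $\f h'$. Hence the family $\{\ad_{JX_i},\ad_{JY_{p+j}}\}$ is commuting on $\f g$. A commuting family of operators is simultaneously diagonalisable (over $\C$) as soon as each member is diagonalisable. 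So I reduce to showing that each single operator is diagonalisable, with spectrum in the stated sets. The final claim then follows immediately: for $X\in\f h'$, $\ad_{JX}$ is a linear combination of these commuting diagonalisable operators.

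To control each operator I would derive polynomial identities from \eqref{penis}. First I take $X=Y=X_i$ in \eqref{penis}. For $X_i\in\f h'\subseteq\f g'$ and $2$-step solvability, $\ad_{X_i}^2=0$ on $\f g$, because $\ad_{X_i}\f g\subseteq\f g'$ and $[\f g',\f g']=0$. So \eqref{penis} gives $\ad_{JX_i}^2=\ad_{J[JX_i,X_i]}=\ad_{JX_i}$, using $[JX_i,X_i]=X_i$ in $\f{aff}(\R)$ or $\f{aff}(\C)_+$. Thus $\ad_{JX_i}^2=\ad_{JX_i}$, and $\ad_{JX_i}$ is a projection: it is diagonalisable with spectrum in $\{0,1\}$.

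Similarly, $Y_{p+j}\in\f g'$ gives $\ad_{Y_{p+j}}^2=0$. Then \eqref{penis} yields $\ad_{JY_{p+j}}^2=\ad_{J[JY_{p+j},Y_{p+j}]}=-\ad_{JX_{p+j}}$. Hence $\ad_{JY_{p+j}}^4=\ad_{JX_{p+j}}^2=\ad_{JX_{p+j}}=-\ad_{JY_{p+j}}^2$, so $\ad_{JY_{p+j}}$ annihilates $t^2(t^2+1)$. This polynomial has the double root $0$, so I still need to rule out a nilpotent Jordan block at $0$. This is the main obstacle.

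I would attack it using the commuting family. Decompose $\f g$ (complexified) into the $0$ and $1$ eigenspaces of the projection $P=\ad_{JX_{p+j}}$; the operator $T=\ad_{JY_{p+j}}$ commutes with $P$, and $T^2=-P$. On $\ker P$ we get $T^2=0$, and on $\operatorname{Im}P$ we get $T^2=-\mathrm{Id}$. So the remaining task is $T|_{\ker P}=0$. For this I would use the other relation from \eqref{penis}, with $X=X_{p+j}$ and $Y=Y_{p+j}$ (or the reverse):
$$
\ad_{Y}\ad_{X}+\ad_{JX}\ad_{JY}=\ad_{J[JX,Y]}=\ad_{JY}\,.
$$
Since $X,Y\in\f g'$ and $\f g$ is $2$-step, $\ad_Y\ad_X$ maps $\f g$ into $[\f g',\f g']=0$. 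Hence $\ad_{JX}\ad_{JY}=\ad_{JY}$, that is, $PT=T$. Therefore $T=PT=TP$ vanishes on $\ker P$, which gives diagonalisability with spectrum in $\{0,\pm\sqrt{-1}\}$.
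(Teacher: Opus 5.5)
Your proof is correct and follows the paper's argument. You derive the same three identities from \eqref{penis} and $2$-step solvability, namely $\ad_{JX_i}^2=\ad_{JX_i}$, $\ad_{JY_{p+j}}^2=-\ad_{JX_{p+j}}$ and $\ad_{JX_{p+j}}\ad_{JY_{p+j}}=\ad_{JY_{p+j}}$, and you use the same simultaneous diagonalisation of the commuting family; the only difference is that the paper combines the last two identities into $\ad_{JY_{p+j}}^3=-\ad_{JY_{p+j}}$, a polynomial with simple roots, so the Jordan-block issue you settle via the splitting $\ker P\oplus\operatorname{Im}P$ never arises.
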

\begin{proof}
First of all, for any $i=1, \ldots, p+q$ and $j=1, \ldots, q$, using \eqref{penis} multiple times, we deduce 
$$
\begin{aligned}
\ad_{JX_i}^2&=\ad_{J[JX_i, X_i]}=\ad_{JX_i}\,,\\
\ad_{JY_{p+j}}^3&=\ad_{J[JY_{p+j},Y_{p+j}]}\ad_{JY_{p+j}}=-\ad_{JX_{p+j}}\ad_{JY_{p+j}}=-\ad_{J[JX_{p+j}, Y_{p+j}]}=-\ad_{JY_{p+j}}\,,
\end{aligned}
$$
which immediately imply that $\ad_{JX_i}$ and $\ad_{JY_{p+j}}$ are diagonalisable with ${\rm Spec}(\ad_{JX_i})\subseteq\{0,1\}$ and ${\rm Spec}(\ad_{JY_{p+j}})\subseteq\{0, \sqrt{-1}, -\sqrt{-1}\}$.
Finally, since $J$ is abelian, $\{\ad_{JX_1}, \ldots, \ad_{JX_{p+q}}, \ad_{JY_{p+1}}, \ldots, \ad_{JY_{p+q}}\}$ is a family of commuting and diagonalisable endomorphisms of $\f g$ which are, hence, simultaneously diagonalisable.  The last statement is now clear.
\end{proof}

As a consequence of Corollary \ref{lemspettroB}, we obtain the following.

\begin{cor}\label{abelianpostLevi}
Let $\g$ be a Lie algebra endowed with an abelian complex structure $J$.
\begin{enumerate}
    \item If $\g'+J\f g'$ is unimodular, then $\g$ is $J$-solvable.
    \item If $\g$ is of real type, then the $J$-adapted Levi subalgebra is holomorphically isomorphic to $\f{aff}(\R)^p$, $p\geq 0$.
\end{enumerate}
\end{cor}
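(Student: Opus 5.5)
Both parts rest on the $J$-adapted Levi decomposition of Theorem \ref{Leviabelian}, $\g=\h\ltimes{\rm Rad}(\g)$, together with the spectral information on $\h$ in Corollary \ref{lemspettroB}. In each case we show that the $J$-semisimple part is constrained (in (1), forced to vanish).

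For (1), suppose $\h\neq 0$. By Corollary \ref{corsymmetricdecom}, $\h$ contains at least one basis vector $X_1\in\h'$ with $[JX_1,X_1]=X_1$. Write $JX:=JX_1$; both $X_1$ and $JX$ lie in $\h'+J\h'\subseteq \g'+J\g'$. By Corollary \ref{lemspettroB}, $\ad_{JX}$ is diagonalisable on $\g$ with eigenvalues in $\{0,1\}$.

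We need the trace of $\ad_{JX}$ restricted to the ideal $\f k:=\g'+J\g'$. The subspace $\f k$ is $\ad_{JX}$-invariant, so the restriction is again diagonalisable with eigenvalues in $\{0,1\}$. Its trace therefore counts the multiplicity of the eigenvalue $1$ on $\f k$. This multiplicity is at least $1$, since $X_1\in\f k$ and $\ad_{JX}X_1=X_1$. Hence $\tr(\ad^{\f k}_{JX})\geq 1>0$, which contradicts the unimodularity of $\f k$. We conclude $\h=0$, so $\g={\rm Rad}(\g)$ is $J$-solvable.

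The only delicate point is that Corollary \ref{lemspettroB} is stated on $\g$, whereas unimodularity concerns $\ad^{\f k}$. Restricting a diagonalisable operator to an invariant subspace keeps it diagonalisable with eigenvalues in the same set, so this causes no difficulty.

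For (2), assume $\g$ is of real type and write $\h\simeq\f{aff}(\R)^p\oplus\f{aff}(\C)^q$. We must show $q=0$, so suppose $q\geq 1$. By Corollary \ref{lemspettroB}, $\ad_{JY_{p+1}}$ is diagonalisable on $\g$ with spectrum contained in $\{0,\pm\sqrt{-1}\}$. It is not nilpotent, because it acts nontrivially on the $\f{aff}(\C)$ factor: $[JY,X]=Y$, and a diagonalisable nilpotent operator would be zero. It also has no eigenvalue with nonzero real part. This contradicts the real-type assumption, so $q=0$ and $\h\simeq\f{aff}(\R)^p$.
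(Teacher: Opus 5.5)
Your proof is correct and follows the paper's route. In (1) you combine the Levi decomposition of Theorem \ref{Leviabelian} with Corollary \ref{lemspettroB}, so that $\tr(\ad_{JX_i}|_{\g'+J\g'})\geq 1$ contradicts unimodularity; in (2) you use that $\ad_{JY_{p+j}}$ is non-nilpotent with purely imaginary spectrum, which contradicts real type. You are just writing out the details that the paper's two-line proof leaves to the reader.
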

\begin{proof}
Using Corollary  \ref{lemspettroB}, we observe that unimodularity of $\g'+J\f g'$ would not allow the $J$-adapted Levi subalgebra $\f h$ to be  non-zero, proving \emph{(1)}. As for \emph{(2)}, we notice that, again by Corollary  \ref{lemspettroB}, if $\f g$  is of real type then there cannot be any ideal of the $J$-adapted Levi subalgebra in the decomposition given by Corollary \ref{corsymmetricdecom} which is isomorphic to $\f{aff}(\C)$. 
\end{proof}

\section{Levi--Malcev decomposition}\label{seclevi}
We begin by emphasising that proving a Levi--Malcev decomposition in the general framework of $2$-step solvable Lie algebras with complex structure is actually an impossible task, as shown by the following example.

\begin{es}\label{noLevi}
Let us consider the almost abelian Lie algebra $\f g$ defined by:
$$
[e_1, e_2]=e_2+e_3+e_4\,,\quad [e_1, e_3]=e_3\,,\quad [e_1, e_4]=e_4\,.
$$
In the notations of \cite{SW14}, such Lie algebra is isomorphic to $\f s_{4,3}$ with $a=b=1$. We endow $\g$ with the complex structure $Je_1=e_2$ and $Je_3=e_4$. As it is evident, $\f g_J'={\rm Rad}(\f g)=\langle e_3,e_4\rangle$ and $\f g/\f g_J'\simeq \f{aff }(\R)$. It is fairly easy to see that there is no $J$-adapted Levi subalgebra of $\f g $. Moreover, we also  note that  $\f g \ltimes \langle e_5, e_6\rangle$ with $Je_5=e_6$ and 
$$
[e_1, e_5]=-\frac32e_5\,, \quad [e_1, e_6]=-\frac32e_6\,,
$$ is a unimodular, $2$-step solvable Lie algebra with no $J$-adapted Levi subalgebra.
\end{es}

On the other hand, the previous example gives us an indication of the obstruction to take into account if one wishes to prove a general $J$-adapted Levi--Malcev decomposition. We now work towards the formalisation of such a condition.

\begin{defn}
Let $\f g $ be a $2$-step solvable Lie algebra with a complex structure $J$. Consider the subspace $\mathcal{O}(\g'_J)$ of elements in $\g$ that act holomorphically on $\g'_J$, i.e.
$$
\mathcal O (\f g_J'):=\{X\in \f g \, |\, [\ad_X, J]|_{\f g_J'}=0\}\,.
$$
Notice that $\g'+J\g' \subseteq \mathcal{O}(\g'_J)$ by Lemma \ref{Lem2stepideal}, and so $\mathcal{O}(\g'_J)$ is an ideal in $\g$. By integrability it is also $J$-invariant. Furthermore,  $\g'_J$ is a $J$-invariant ideal in $\mathcal{O}(\g'_J)$.
\end{defn}
  Since $\f g_J'$ is abelian, we can define the representation
$$
\tau\colon \frac{\mathcal{O}(\g'_J)}{\f g_J'}\to {\rm End}(\f g_J')\,, \quad \tau(X+\f g_J'):=\ad_X|_{\f g_J'}\,.
$$ 
On the other hand, using Theorem \ref{Leviabelian} we can consider the $J$-adapted Levi subalgebra $\f h$ of $\mathcal{O}(\g'_J)/\g'_J$. Within $\h$, we take into account the subalgebra $\f h_{JU}:=\langle U, JU\rangle$, where $JU$ is the mean curvature vector of $\h$, see Definition \ref{defn_U}.
Then, we define
$$
\tau_{JU}:=\tau|_{\f h_{JU}}\,,
$$
which turns out to be a representation of $\f h_{JU}$ on $\f g_J'$.  Observe that $\tau_{JU}(U)=0$, because $U\in (\mathcal{O}(\g'_J)/ \f g_J')'$.
The obstruction we are interested in is encoded in the cohomology group arising from the representation $\tau_{JU}$, see Definition \ref{cohomologiagen}. The $J$-adapted Levi subalgebra $\h$ is unique, hence, the following is well-defined.
\begin{defn}\label{cohomolob}
Let $\g$ be a $2$-step solvable Lie algebra with a complex structure $J$. Then we define the cohomology group
$$
H^2(\g, JU):= H^{2}(\f h_{JU}, \tau_{JU})\,.
$$
\end{defn}

With a slight abuse of notation, we will often write $JU$ to denote also a representative in $\mathcal{O}(\g'_J)$ of the equivalence class of the mean curvature vector of $\f h$.

Our interest in the just defined cohomology group stems from the following characterisation.

\begin{lem}\label{obstruction}
Let $\g$ be a $2$-step solvable Lie algebra with a complex structure $J$. Then $H^2(\g, JU)=0$ if and only if $2\notin {\rm Spec}(\ad_{JU}|_{\f g_J'})$.
\end{lem}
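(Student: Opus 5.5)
The plan is a direct computation: $\f h_{JU}$ is $2$-dimensional, so its second cohomology with values in $\tau_{JU}$ reduces to the cokernel of a single endomorphism of $\f g_J'$.

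First I dispose of the degenerate case. If $\f h=0$, then $JU=0$ and $\f h_{JU}=0$, so $H^2(\g,JU)=0$. Also $\mathrm{Spec}(\ad_{JU}|_{\f g_J'})=\{0\}$ (or is empty when $\f g'_J=0$), so $2$ does not belong to it and the equivalence holds trivially.

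Now assume $\f h\neq 0$. Lemma \ref{ILGRANDEJU} gives $[JU,U]=2U$, which forces $U\neq 0$, and $JU\notin\langle U\rangle$ because $J$ has no real eigenvalues. Hence $\f h_{JU}=\langle U,JU\rangle$ is $2$-dimensional, isomorphic to $\f{aff}(\R)$, with bracket $[JU,U]=2U$. Set $V:=\f g_J'$ and $A:=\tau_{JU}(JU)=\ad_{JU}|_{\f g_J'}$. As observed before Definition \ref{cohomolob}, $\tau_{JU}(U)=0$.

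Next I compute the cochain spaces and the differentials.
\begin{itemize}
\item Since $\dim\f h_{JU}=2$, we have $\Lambda^3\f h_{JU}^*=0$. So every $\alpha\in V^2(\f h_{JU},\tau_{JU})$ is closed. Moreover, $\alpha$ is determined by the single vector $v:=\alpha(JU,U)\in V$, and every $v\in V$ occurs. Thus the closed $2$-cochains are identified with $V$.
\item For $L\in V^1(\f h_{JU},\tau_{JU})$, the formula in the preliminaries gives
\[
(d_{\tau}L)(JU,U)=A\,L(U)-\tau_{JU}(U)L(JU)-L([JU,U])=(A-2\,\mathrm{Id})\,L(U)\,.
\]
\item Since $L(U)$ can be any $w\in V$, the exact $2$-cochains correspond exactly to $\Im(A-2\,\mathrm{Id})$.
\end{itemize}

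It follows that $H^2(\g,JU)\cong V/\Im(A-2\,\mathrm{Id})$. This quotient vanishes if and only if $A-2\,\mathrm{Id}$ is surjective. As $V$ is finite-dimensional, that holds if and only if $A-2\,\mathrm{Id}$ is invertible, i.e.\ if and only if $2\notin\mathrm{Spec}(\ad_{JU}|_{\f g_J'})$. For a real operator, $2$ lies in the complex spectrum exactly when $A-2\,\mathrm{Id}$ is singular, so it does not matter whether the spectrum is taken over $\R$ or $\C$.

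The only points needing care are that $\tau_{JU}(U)=0$, so that $L(JU)$ drops out, and that the result does not depend on the chosen representative of $JU$ in $\mathcal O(\f g'_J)$. The latter holds because representatives differ by elements of $\f g'_J$, which act trivially on the abelian ideal $\f g_J'$. I expect no real obstacle beyond these checks.
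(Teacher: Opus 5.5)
Your proposal is correct and follows the paper's proof essentially verbatim. Both arguments use that every $2$-cochain is closed because $\dim_\R\f h_{JU}=2$, that a $2$-cochain is determined by $\alpha(JU,U)\in\f g_J'$, and that $(d_{\tau_{JU}}L)(JU,U)=(\ad_{JU}-2\,\mathrm{Id})L(U)$. This reduces $H^2(\g,JU)=0$ to the surjectivity, hence invertibility, of $(\ad_{JU}-2\,\mathrm{Id})|_{\f g_J'}$. You also treat the degenerate case $\f h=0$ separately, where the paper's claim $\dim_\R\f h_{JU}=2$ fails but both sides hold trivially, and you check independence from the representative of $JU$; the paper leaves both points implicit.
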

\begin{proof}
Of course $\ker d_{\tau_{JU}}\cap V^2(\h_{JU}, \tau_{JU})=V^2(\h_{JU}, \tau_{JU})$, since $\dim_{\R}\f h_{JU}=2$. On the other hand, 
$$
(d_{\tau_{JU}}L)(JU, U)=\ad_{JU}L(U)-2L(U)=(\ad_{JU}-2{\rm Id})L(U)\,,
$$
using the fact that $[JU, U]=2U$ and that $\tau_{JU}(U)=0$. Now, any $\alpha \in V^2(\h_{JU}, \tau_{JU}) $ is completely determined by $\alpha(JU, U)\in \f g_J'$. Hence, the condition $H^2(\g, JU)=0$ is equivalent to the surjectivity of $(\ad_{JU}-2{\rm Id})|_{\f g_J'}$ which, in turn, is equivalent to $2\notin {\rm Spec}(\ad_{JU}|_{\f g_J'})$.
\end{proof}

Before stating and proving a preliminary version of Theorem \ref{Main1}, we show that the vanishing of $H^2(\g,JU)$ is inherited by suitable $J$-invariant subalgebras.

\begin{lem}\label{unnosomicasai}
Let $\f g$ be a $2$-step solvable Lie algebra endowed with a complex structure $J$ such that $H^{2}(\f g, JU)=0$. Then, for any $J$-invariant  subalgebra $\hat {\f s} \subseteq \mathcal O(\f g_J')$ satisfying ${\rm Rad}(\hat{\f s})\subseteq {\rm Rad}(\mathcal O(\f g_J'))$ and $\hat {\f s}/{\rm Rad}(\hat {\f s})\simeq \mathcal O(\f g_J')/{\rm Rad}(\mathcal O(\f g_J'))$, we have $H^2(\hat{\f s}, JU_{\hat{\f s}})=0.$
\end{lem}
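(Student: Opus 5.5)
By Lemma \ref{obstruction}, it suffices to prove that $2\notin{\rm Spec}(\ad_{JU_{\hat{\f s}}}|_{\hat{\f s}'_J})$. The plan is to compare both the representation space and the mean curvature vector of $\hat{\f s}$ with those of $\f g$.

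\textbf{Step 1: the representation space.} Since $\hat{\f s}\subseteq\f g$, we have $\hat{\f s}'\subseteq\f g'$, and hence $\hat{\f s}'_J\subseteq\f g'_J$. Because $\hat{\f s}\subseteq\mathcal O(\f g'_J)$ and $\f g'_J$ is abelian, the operator $\ad_{JU_{\hat{\f s}}}$ preserves $\f g'_J$ and acts on it holomorphically. If $\hat{\f s}'_J$ is invariant under this operator, then
\[
{\rm Spec}(\ad_{JU_{\hat{\f s}}}|_{\hat{\f s}'_J})\subseteq{\rm Spec}(\ad_{JU_{\hat{\f s}}}|_{\f g'_J}).
\]
Invariance should follow from $\hat{\f s}'_J$ being an ideal of $\mathcal O(\hat{\f s}'_J)\ni JU_{\hat{\f s}}$, since $U_{\hat{\f s}}$ lies in the derived algebra. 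So the task reduces to comparing $\ad_{JU_{\hat{\f s}}}$ and $\ad_{JU}$ on $\f g'_J$.

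\textbf{Step 2: the mean curvature vectors.} Let $\h$ be the $J$-adapted Levi subalgebra of $\mathcal O(\f g'_J)/\f g'_J$. By the remark after Lemma \ref{Lemmino} and Theorem \ref{Leviabelian}, $\h$ is holomorphically isomorphic to $\mathcal O(\f g'_J)/{\rm Rad}(\mathcal O(\f g'_J))$, since $\f g'_J\subseteq{\rm Rad}$. Similarly, the Levi subalgebra $\h_{\hat{\f s}}$ of $\mathcal O(\hat{\f s}'_J)/\hat{\f s}'_J$ maps onto $\hat{\f s}/{\rm Rad}(\hat{\f s})$. Here I will need $\mathcal O(\hat{\f s}'_J)\supseteq\hat{\f s}'+J\hat{\f s}'$, which contains the Levi part.

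The hypotheses ${\rm Rad}(\hat{\f s})\subseteq{\rm Rad}(\mathcal O(\f g'_J))$ and $\hat{\f s}/{\rm Rad}(\hat{\f s})\simeq\mathcal O(\f g'_J)/{\rm Rad}(\mathcal O(\f g'_J))$ yield a holomorphic isomorphism induced by the inclusion
\[
\hat{\f s}/{\rm Rad}(\hat{\f s})\to \mathcal O(\f g'_J)/{\rm Rad}(\mathcal O(\f g'_J)).
\]
This map is injective because of the radical inclusion, and surjective by a dimension count. Composing with the isomorphisms above and applying the last statement of Lemma \ref{ILGRANDEJU}, mean curvature vectors correspond. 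Concretely, a representative $JU_{\hat{\f s}}\in\hat{\f s}$ satisfies
\[
JU_{\hat{\f s}}=JU+R,\qquad R\in{\rm Rad}(\mathcal O(\f g'_J)),
\]
where $JU$ is some representative of the mean curvature vector of $\f g$.

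\textbf{Step 3: the main obstacle.} The difficulty is that adding $R$ could alter the spectrum on $\f g'_J$. The remedy is to show that representatives of $JU$ may differ by radical elements without changing the relevant spectrum. In particular, choosing the representative of $JU$ inside $\hat{\f s}$ is legitimate: the spectrum of $\ad_{JU}|_{\f g'_J}$ should not depend on the representative in the class modulo ${\rm Rad}(\mathcal O(\f g'_J))$.

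I would try to justify this by a Lie-type triangularization of the $J$-solvable radical acting on $\f g'_J$ together with the element $JU$. The element $U$ lies in the derived algebra and acts trivially, and the radical action should contribute only nilpotent pieces relative to the semisimple part. If this fails, the fallback is to take the representative $JU$ to be $JU_{\hat{\f s}}$ itself, which is allowed by the paper's convention that any representative may be used. Then ${\rm Spec}(\ad_{JU_{\hat{\f s}}}|_{\f g'_J})\not\ni 2$ by hypothesis, and Step 1 concludes.
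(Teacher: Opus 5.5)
Your Steps~1 and~2 are essentially fine, apart from two small attributions. First, $\f g'_J\subseteq{\rm Rad}(\mathcal O(\f g'_J))$ holds because $\f g'_J$ is a $J$-invariant abelian ideal of $\mathcal O(\f g'_J)$; the remark after Lemma~\ref{Lemmino} concerns abelian $J$ and does not apply. Second, injectivity of $\hat{\f s}/{\rm Rad}(\hat{\f s})\to\mathcal O(\f g'_J)/{\rm Rad}(\mathcal O(\f g'_J))$ comes from $\hat{\f s}\cap{\rm Rad}(\mathcal O(\f g'_J))$ being a $J$-solvable ideal of $\hat{\f s}$; the radical inclusion only gives well-definedness.

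The genuine gap is Step~3. Working modulo the full radical only yields $JU_{\hat{\f s}}-JU\in{\rm Rad}(\mathcal O(\f g'_J))$, and the spectrum on $\f g'_J$ does change under such a shift. Take $\f g=\langle X,JX,A,JA,Z,JZ\rangle$ with non-zero brackets $[JX,X]=X$, $[JA,Z]=Z$, $[JA,JZ]=JZ$. This algebra is $2$-step solvable, with
\[
\f g'_J=\langle Z,JZ\rangle,\qquad \mathcal O(\f g'_J)=\f g,\qquad {\rm Rad}(\f g)=\langle A,JA,Z,JZ\rangle .
\]
Here $JU=2JX$ acts trivially on $\f g'_J$. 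But $2JX+2JA$ differs from $JU$ by a radical element and acts as $2\,{\rm Id}$ on $\f g'_J$. So the radical does not contribute ``nilpotent pieces'', and no triangularisation argument can rescue Step~3.

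Your fallback is circular. $JU$ is a well-defined element of $\h\subseteq\mathcal O(\f g'_J)/\f g'_J$, so the paper's convention only allows representatives modulo $\f g'_J$. Declaring $JU_{\hat{\f s}}$ to be a representative presupposes $JU_{\hat{\f s}}-JU\in\f g'_J$, which is exactly what must be proved.

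The missing idea is to compare the quotients by $\hat{\f s}'_J$ and $\f g'_J$, not by the radicals, and then use the \emph{uniqueness} part of Theorem~\ref{Leviabelian}.
\begin{enumerate}
\item Since $\hat{\f s}\subseteq\mathcal O(\f g'_J)$, one has $\mathcal O_{\hat{\f s}}(\hat{\f s}'_J)=\hat{\f s}$.
\item The inclusion induces a holomorphic homomorphism $\varphi\colon\hat{\f s}/\hat{\f s}'_J\to\mathcal O(\f g'_J)/\f g'_J$. Its kernel $(\hat{\f s}\cap\f g'_J)/\hat{\f s}'_J$ lies in the radical, so $\varphi$ is injective on the Levi subalgebra $\h_{\hat{\f s}}$ of $\hat{\f s}/\hat{\f s}'_J$.
\item The image $\varphi(\h_{\hat{\f s}})$ is $J$-semisimple, so it meets ${\rm Rad}(\mathcal O(\f g'_J))/\f g'_J$ trivially. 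By the hypothesis $\hat{\f s}/{\rm Rad}(\hat{\f s})\simeq\mathcal O(\f g'_J)/{\rm Rad}(\mathcal O(\f g'_J))$, it has complementary dimension.
\item Hence $\varphi(\h_{\hat{\f s}})$ is a $J$-adapted Levi subalgebra of $\mathcal O(\f g'_J)/\f g'_J$, and by uniqueness it equals $\h$.
\item The last statement of Lemma~\ref{ILGRANDEJU} then gives $JU_{\hat{\f s}}+\f g'_J=JU+\f g'_J$.
\end{enumerate}
Since $\f g'_J$ is abelian, $\ad_{JU_{\hat{\f s}}}$ and $\ad_{JU}$ agree on $\f g'_J$, and your Step~1 finishes the proof.
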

\begin{proof}
Let $\hat {\f s}$ be a subalgebra as in the statement. It is apparent that $\hat {\f s}_J'\subseteq \f g_J'$ and, since $\hat {\f s}\subseteq \mathcal O(\f g_J')$, we have $\hat {\f s}=\mathcal O_{\hat {\f s}}(\hat {\f s}_J'):=\{X\in \hat {\f s}\, \, |\, \, [\ad_X, J]|_{\f s_J'}=0\}$. Furthermore, since $\hat {\f s}_J'\subseteq \f g_J'$,  the inclusion $\hat {\f s}\to \f g$ induces a Lie algebra homomorphism 
$$
\varphi\colon \frac{\hat {\f s}}{\hat {\f s}_J'}\to \frac{\mathcal O(\f g_J')}{\f g_J'}\,, \quad \varphi(X+\hat {\f s}_J')=X+\f g_J'\,.
$$
Now, by Theorem \ref{Leviabelian}, we have that 
$$
\frac{\hat {\f s}}{\hat {\f s}_J'}=\f h_{\hat {\f s}}\ltimes \frac{{\rm Rad}(\hat {\f s})}{\hat {\f s}_J'}\,,\quad \frac{\mathcal O(\f g_J')}{\f g_J'}=\f h_{\f g}\ltimes\frac{{\rm Rad}(\mathcal O (\f g_J'))}{\f g_J'}\,. 
$$ 
Then, since $\ker(\varphi|_{\f h_{\hat {\f s}}})=\ker \varphi\cap \f h_{\hat {\f s}}=\frac{\f g_J'}{\hat {\f s}_J'}\cap \f h_{\hat {\f s}}=0$, we see that $\varphi|_{\f h_{\hat {\f s}}}$ is a holomorphic Lie algebra isomorphism onto ${\rm Im}(\varphi|_{{\f h}_{\hat {\f s}}})$. In particular, this implies that ${\rm Im}(\varphi|_{\f h_{\hat {\f s}}})$ is a $J$-semisimple Lie subalgebra of $\mathcal O(\f g_J')/\f g_J'$, and hence ${\rm Im}(\varphi|_{\f h_{\hat {\f s}}})\cap \frac{{\rm Rad}(\mathcal O (\f g_J'))}{\f g_J'}=0$. On the other hand, 
$$
\f h_{\hat {\f s}}\simeq \frac{\hat {\f s}}{{\rm Rad}(\hat {\f s})}\simeq \frac{\mathcal O(\f g_J')}{{\rm Rad}(\mathcal O(\f g_J'))},
$$ which gives that ${\rm Im}(\varphi|_{\f h_{\hat {\f s}}})$ is a $J$-adapted Levi subalgebra of  $\mathcal O(\f g_J')/\f g_J'$. Using Theorem \ref{Leviabelian}, we conclude that $\f h_{\f g}= {\rm Im}(\varphi|_{\f h_{\hat {\f s}}})$. Then, from the last statement in Lemma \ref{ILGRANDEJU} we deduce 
$$
JU_{\hat {\f s}}+ \f g_J'=\varphi(JU_{\hat {\f s}}+ \hat {\f s}_J')=JU_{\f g}+ \f g_J'. 
$$ The claim now follows, since $\ad_{JU_{\hat {\f s}}}|_{\hat {\f s}_J'}=\ad_{JU_{\f g}}|_{\hat {\f s}_J'}$. 
\end{proof}

\begin{cor}\label{corobgprimo}
Let $\f g$ be a $2$-step solvable Lie algebra endowed with a complex structure $J$ such that $H^2(\f g, JU)=0$. Then $H^2(\f g'+J\f g', JU_{\f g'+J\f g'})=0.$
\end{cor}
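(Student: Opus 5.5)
The plan is to obtain the corollary as a direct application of Lemma \ref{unnosomicasai} with $\hat{\f s}:=\f g'+J\f g'$. First, $\hat{\f s}$ is a $J$-invariant subalgebra (indeed an ideal) of $\g$. It is contained in $\mathcal O(\f g_J')$ by Lemma \ref{Lem2stepideal}, which says that $\f g'+J\f g'$ acts holomorphically on $\g'_J$. It then remains to verify the two radical hypotheses of Lemma \ref{unnosomicasai}:
\[
{\rm Rad}(\hat{\f s})\subseteq {\rm Rad}(\mathcal O(\f g_J'))\,, \qquad \hat{\f s}/{\rm Rad}(\hat{\f s})\simeq \mathcal O(\f g_J')/{\rm Rad}(\mathcal O(\f g_J'))\,.
\]

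For both, I would apply Lemma \ref{Lemmino} with $\f k_1=\g'+J\g'$ and $\f k_2=\mathcal O(\g'_J)$. These are $J$-invariant ideals of $\g$ with $\f k_1\subseteq\f k_2$. Moreover, $\f k_2'+J\f k_2'\subseteq \g'+J\g'=\f k_1$.

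The remaining hypothesis is $\g'_J\subseteq{\rm Rad}(\mathcal O(\g'_J))$. This holds because $\g'_J$ is an abelian $J$-invariant ideal of $\mathcal O(\g'_J)$, as noted in the definition of $\mathcal O(\g'_J)$, and is therefore $J$-solvable. Lemma \ref{Lemmino} then gives
\[
{\rm Rad}(\hat{\f s})={\rm Rad}(\mathcal O(\g'_J))\cap\hat{\f s}\,, \qquad \mathcal O(\g'_J)=\hat{\f s}+{\rm Rad}(\mathcal O(\g'_J))\,.
\]
The first identity yields the inclusion of radicals. For the second hypothesis, combine both identities with the isomorphism theorem:
\[
\frac{\hat{\f s}}{{\rm Rad}(\hat{\f s})}=\frac{\hat{\f s}}{\hat{\f s}\cap{\rm Rad}(\mathcal O(\g'_J))}\simeq\frac{\hat{\f s}+{\rm Rad}(\mathcal O(\g'_J))}{{\rm Rad}(\mathcal O(\g'_J))}=\frac{\mathcal O(\g'_J)}{{\rm Rad}(\mathcal O(\g'_J))}\,.
\]
This isomorphism is induced by the inclusion, so it is holomorphic.

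Since $H^2(\g,JU)=0$ by assumption, Lemma \ref{unnosomicasai} now gives $H^2(\g'+J\g',JU_{\g'+J\g'})=0$.

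The only point needing care is confirming that the objects used to define $H^2(\hat{\f s},JU_{\hat{\f s}})$ are the intended ones. Specifically, $(\hat{\f s})'_J$ must be the space on which the obstruction is computed, and $\mathcal O_{\hat{\f s}}((\hat{\f s})'_J)=\hat{\f s}$. The latter was already observed inside the proof of Lemma \ref{unnosomicasai}, so I expect no real obstacle beyond that bookkeeping.
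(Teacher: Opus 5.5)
Your proposal is correct and is the paper's own argument: the paper's one-line proof applies Lemma \ref{unnosomicasai} to $\f g'+J\f g'$, with the hypotheses checked via Lemma \ref{Lemmino}. You use the same choice $\f k_1=\f g'+J\f g'$, $\f k_2=\mathcal O(\f g_J')$, and you spell out the verifications the paper leaves implicit, including $\f g'_J\subseteq {\rm Rad}(\mathcal O(\f g'_J))$ and the holomorphic second-isomorphism identification of the quotients.
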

\begin{proof}
We have that $\f g'+J\f g'$ satisfies the hypotheses of Lemma \ref{unnosomicasai}, thanks to Corollary \ref{Lemmino}. The conclusion follows.  
\end{proof}

The next result proves the desired Levi-Malcev decomposition when the Lie algebra $\f g$ acts holomorphically on $\f g_J'$ and the cohomology group defined above vanishes.

\begin{thm}\label{levihol}
Let $\f g$ be a $2$-step solvable Lie algebra endowed with a complex structure $J$ such that $\f g=\mathcal O(\g'_J)$, and suppose that $H^2(\g, JU)=0$. Then there exists a $J$-semisimple subalgebra $\f h$  of $\f g$ such that $$
\f g=\f h\ltimes {\rm Rad}(\f g)\,. 
$$
\end{thm}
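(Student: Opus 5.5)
We follow the scheme of Theorem \ref{Leviabelian}: first reduce to a simple situation, then build an explicit $J$-invariant complement of ${\rm Rad}(\g)$ as the image of an operator. Since $\g=\mathcal O(\g'_J)$, the abelian ideal $\g'_J$ is $J$-invariant in $\g$, and $\g/\g'_J$ carries an abelian complex structure (by \eqref{behaviourofJ2step}). Moreover $\g'_J\subseteq{\rm Rad}(\g)$, and ${\rm Rad}(\g/\g'_J)={\rm Rad}(\g)/\g'_J$. By Theorem \ref{Leviabelian} we may write $\g/\g'_J=\f h\ltimes {\rm Rad}(\g)/\g'_J$ with $\f h$ $J$-semisimple. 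Let $\hat{\f s}:=\pi^{-1}(\f h)$, where $\pi\colon\g\to\g/\g'_J$. Then ${\rm Rad}(\hat{\f s})=\g'_J$. It therefore suffices to find a $J$-invariant subalgebra $\f h'\subseteq\hat{\f s}$ with $\hat{\f s}=\f h'\oplus\g'_J$, since then $\g=\f h'\ltimes{\rm Rad}(\g)$.

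Next we need the hypothesis to pass to $\hat{\f s}$. Lemma \ref{unnosomicasai} gives $H^2(\hat{\f s},JU_{\hat{\f s}})=0$, i.e. $2\notin{\rm Spec}(\ad_{JU}|_{\hat{\f s}'_J})$. We only know $\hat{\f s}'_J\subseteq \g'_J$, so if $\hat{\f s}'_J\neq\g'_J$ we would need an induction. To avoid this, we argue directly: $\ad_{JU}$ acts on $\g'_J$ independently of the representative, and the hypothesis on $\g$ gives $2\notin{\rm Spec}(\ad_{JU}|_{\g'_J})$. As in Step 1 of Theorem \ref{Leviabelian}, we use induction on the dimension to assume in addition that $\g'_J$ has no proper nonzero $J$-invariant ideals of $\hat{\f s}$ inside it; for instance, $\ad_{JU}$-eigenspaces are candidates for such ideals.

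The central step is to adapt the Casimir argument of Step 3. Define on $\hat{\f s}$ an operator
\[
\Phi:=\tfrac12\left(\ad_{JU}-J\ad_U\right)+P\,,
\]
where $P$ is a correction acting on $\g'_J$ and built polynomially from $\ad_{JU}|_{\g'_J}$ (for example via $(\ad_{JU}-2)^{-1}$ on $\g'_J$, which exists by the spectral hypothesis). We choose $P$ so that $\Phi$ commutes with $J$, commutes with $\ad_X$ for all $X\in\hat{\f s}$, and projects onto $\Omega^{1,1}_{\f h}={\rm Id}$ modulo $\g'_J$ (Remark \ref{Casimiro}). If this works, $\ker\Phi$ and ${\rm Im}\,\Phi$ are $J$-invariant ideals, and modulo $\g'_J$ the map $\Phi$ is invertible. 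An alternative route to the same splitting is the following. Since $\ad_{JU}$ is diagonalisable on $\f h$ with eigenvalue $2$ on $\f h'$ and $0$ on the $J$-complement (Lemma \ref{ILGRANDEJU}), take a lift $\tilde{JU}$ and split $\hat{\f s}$ into generalised $\ad_{\tilde{JU}}$-eigenspaces. Set $\f h'$ to be the sum of the $2$-eigenspace (lifting $\f h'$), its image under $J$, and a suitable lift of $JU$. The spectral condition ensures that the $2$-eigenspace meets $\g'_J$ trivially, so $\f h'\cap\g'_J=0$. Brackets of lifts of $\f h'$ land in the $2$-eigenspace, hence close up modulo nothing in $\g'_J$.

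The main obstacle is checking closure under brackets together with $J$-invariance. For elements $JX$ with $X\in\f h'$, brackets can produce components in the generalised $0$-eigenspace of $\g'_J$, and mixed terms appear through the non-abelian part of $J$ on $\g'_J$. We will first attempt the eigenspace construction. At the failure point we will correct the lift of $J\f h'$ by elements of $\g'_J$, solving an equation of the form $(\ad_{JU}-2)w=c$ in $\g'_J$; this is exactly the cocycle equation of Lemma \ref{obstruction}, and it is solvable precisely because $2\notin{\rm Spec}(\ad_{JU}|_{\g'_J})$.
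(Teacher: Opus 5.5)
There is a genuine gap at the central step. You never establish that your candidate complement is closed under brackets, and your first route cannot work. Suppose $\Phi$ commuted with every $\ad_X$, $X\in\hat{\f s}$. Then any complement of $\g'_J$ produced from $\Phi$ (as ${\rm Im}\,\Phi$, or via a Fitting decomposition) would be an ideal $I$ with $[I,\g'_J]\subseteq I\cap\g'_J=0$, which forces $\g'_J\subseteq\f z(\hat{\f s})$. This is false as soon as the Levi factor acts non-trivially on $\g'_J$. For example, take $\langle X,JX,Z,JZ\rangle$ with $[JX,X]=X$, $[JX,Z]=cZ$, $[JX,JZ]=cJZ$ and $c\notin\{0,1\}$; it satisfies all hypotheses, since $\ad_{JU}|_{\g'_J}=2c\,{\rm Id}$.

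The Casimir trick of Step 3 of Theorem \ref{Leviabelian} works only because there ${\rm Rad}(\g)=\f z(\g)$. In the paper's proof, $\Phi(U)=\frac12(\ad_{JU}-J\ad_U)$ does not commute with the $\ad_X$, and the image of $\Phi(U)-a\,{\rm Id}-bJ$ is shown to be only a subalgebra. In your second route you call closure ``the main obstacle'', predict a failure for brackets involving $JX$, and propose an unspecified correction via $(\ad_{JU}-2)w=c$. As described this is not coherent: correcting only the lift of $J\f h'$ by elements of $\g'_J$ breaks $J$-invariance. Adding a further ``suitable lift of $JU$'' would put a non-zero element of $\g'_J$ into the candidate, because $JU\in J\f h'$ is already lifted.

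In fact your second route closes with no correction, and with no induction or minimality. The argument is as follows.
\begin{itemize}
\item Let $T\in\hat{\f s}$ lift $JU$, and let $E_\lambda$ be the generalised eigenspaces of the derivation $\ad_T$ on $\hat{\f s}\otimes\C$. Then $[E_\lambda,E_\mu]\subseteq E_{\lambda+\mu}$.
\item $\pi$ maps $E_\lambda$ onto the generalised $\lambda$-eigenspace of $\ad_{JU}$ on $\f h=\f h'\oplus J\f h'$. That operator is $2\,{\rm Id}$ on $\f h'$ and $0$ on $J\f h'$, since $[JU,JZ]=[U,Z]=0$.
\item Since $2\notin{\rm Spec}(\ad_T|_{\g'_J})$, the space $E_2$ (defined over $\R$) maps isomorphically onto $\f h'$. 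Also $E_2=\ad_T(E_2)\subseteq\g'$, so $[E_2,E_2]=[E_2,\g'_J]=0$ by $2$-step solvability.
\item For $X\in E_2$ we have $\pi(JX)\in J\f h'$. Hence the components of $JX$ in $E_\lambda$, $\lambda\neq0$, lie in $\g'_J\otimes\C$ and bracket trivially with $E_2$. So $[JX,Y]=[(JX)_0,Y]\in E_2$ for all $X,Y\in E_2$.
\item Integrability then gives $[JX,JY]=[X,Y]+J([JX,Y]-[JY,X])\in JE_2$.
\end{itemize}
Thus $L:=E_2\oplus JE_2$ is a $J$-invariant subalgebra mapped isomorphically onto $\f h$. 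Therefore $\hat{\f s}=L\oplus\g'_J$ and $\g=L\ltimes{\rm Rad}(\g)$.

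Completed this way, your argument is genuinely different from, and shorter than, the paper's. The paper first reduces by induction (via Lemma \ref{unnosomicasai} and quotients by $J$-invariant ideals inside $\g'_J$) to ${\rm Rad}(\g)=\g'_J$ with no smaller $J$-invariant ideals. It then uses $\Phi(U)\ad_Y=\ad_{\Phi(U)Y}$ and minimality to get $\Phi(U)|_{\g'_J}=a\,{\rm Id}+bJ$ with $(a,b)\neq(1,0)$. Finally it checks by a lengthy computation that ${\rm Im}(\Phi(U)-a\,{\rm Id}-bJ)$ is a subalgebra. The eigenspace argument isolates exactly where $2\notin{\rm Spec}$ is used (injectivity of $\pi$ on $E_2$). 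The paper's version keeps the analogy with the Casimir operator of the abelian case.
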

\begin{proof}
 First of all, we note that in our hypotheses $\f g_J'$ is a $J$-invariant ideal of $\f g$. With this in mind,  we divide the proof into steps.

{\bf Step 1}. We show that we may assume ${\rm Rad}(\f g)=\f g_J'\ne 0 $ and that $\g$ has no $J$-invariant ideals properly contained in ${\rm Rad}(\f g)$. 
First, note that we may suppose $\f g_J'\ne 0$, otherwise we can simply apply Theorem \ref{Leviabelian} and conclude. To prove that we may reduce ourselves to the case ${\rm Rad}(\f g)=\f g_J'$,  we work inductively on $\dim_\C(\g)$. If $\g$ is $1$-dimensional it can only be either $\mathfrak{aff}(\R)$ or $\C$ and in both cases the theorem is clearly true. 
Let us assume that $\f g_J'\subset{\rm Rad}(\f g)$. Then,  since $\f g/ \f g_J'$ has an abelian  complex structure, we  can apply Theorem \ref{Leviabelian} and obtain 
$$
\frac{\f g}{\f g_J'}=\f s\ltimes \frac{{\rm Rad}(\f g)}{\f g_J'}\,, 
$$
where $\f s $ is the  $J$-adapted Levi subalgebra of $\f g /\f g_J'$. Now consider $\hat{\f s}:=\pi^{-1}(\f s)$, which is a proper $J$-invariant subalgebra of $\f g$. Since $\hat{\f s}/\f g_J'=\f s$ is $J$-semisimple, we can infer, using the same strategy as in Step 1 of Theorem \ref{Leviabelian}, that   $\f g_J'= {\rm Rad}(\hat{\f s})$.  We may then apply Lemma \ref{unnosomicasai} to infer that $H^2(\hat{\f s},JU_{\hat{\f s}})=0$. 
Hence, by the inductive hypothesis $\hat{\f s}$ admits a Levi--Malcev decomposition, and therefore so does $\g$. Consequently, we can assume $\f g_J'={\rm Rad}(\f g).$ 

Finally, we prove that $\f g$ has no $J$-invariant ideals properly contained in ${\rm Rad}(\f g)$, once again by induction on $\dim_\C\f g$. If $\dim_\C\f g=1$, the claim is trivial.  
Let $\f k \subset \f g_J'$ be a $J$-invariant ideal of $\f g$. Note that $(\f g/\f k)_J'=\frac{\f g_J'}{\f k}$. Furthermore $\frac{\f g/\f k}{\f g_J'/\f k}\simeq \f g/\f g_J'$ via $\varphi((X+\f k)+ \frac{\f g_J'}{\f k})=X+\f g_J'$. On the other hand, using again  Lemma \ref{ILGRANDEJU}, we have $JU_{\f g/\f k}+ \f g'_J=\varphi((JU_{\f g/\f k }+\f k)+\frac{\g'_J}{\f k})=JU_{\f g}+ \f g_J'.$ Denote by $\pi^{\f k }\colon \f g \to\f g/\f k $ the quotient map, then
 $$
\ad_{JU_{\f g/\f k}+ \f k}^{\f g/\f k }|_{\f g_J'/\f k}=\pi^{\f k}\ad_{JU_{\f g/\f k}}^{\f g}|_{\f g_J'}=\pi^{\f k}\ad_{JU_{\f g}}^{\f g}|_{\f g_J'}\,.
 $$
Hence, since $2\notin{\rm Spec}(\ad_{JU_{\f g}}^{\f g}|_{\f g_J'})$, the same  holds for $\ad_{JU_{\f g/\f k}+ \f k}^{\f g/\f k }|_{\f g_J'/\f k}$. Furthermore, since $\f g$ acts holomorphically on $\f g_J'$, then $\f g/\f k$ acts holomorphically on $(\f g/\f k)_J'$. Therefore, applying the inductive hypothesis we obtain a Levi--Malcev decomposition of $\f g/ \f k$. The same argument in Step 1 of Theorem \ref{Leviabelian}, together with an argument analogous to that used in the first part of this proof, guarantees the existence of a $J$-adapted Levi subalgebra for $\g$.

{\bf Step 2}. 
First of all, thanks to Step 1, we may assume  $0\neq \g'_J={\rm Rad}(\f g)$ and $\f g$ has no $J$-invariant ideals properly contained in ${\rm Rad}(\f g)$.  Moreover, $\f g/\f g_J'$ is $J$-semisimple with an abelian complex structure, hence, it is $J$-perfect by Theorem \ref{Semisimple}.
As in Step 2 of Theorem \ref{Leviabelian}, we can apply Lemma \ref{Lemmino} to conclude that $\f g $ is $J$-perfect. In this setting, we consider 
$$
\Phi\colon \f g \to {\rm End}(\f g  )\,,\quad \Phi(X)=\frac12(\ad_{JX}-J\ad_X)\,, \quad X \in\f g\,.
$$
First of all, using integrability of $J$, for any $X\in \g$, we deduce 
\begin{equation}\label{cazzettiinano}
[\Phi(X), J]=0\,, \quad \Phi(JX)=-\Phi(X)J\,.
\end{equation}
By \eqref{adXg'_J} and \eqref{cazzettiinano}, we know that $\f g_J'\subseteq \ker \Phi$ and $J\ker \Phi=\ker \Phi$. 
Furthermore,  as in Lemma \ref{Lem2stepideal}, we observe that $\f g$ acts holomorphically on $\ker \Phi$.  
Therefore, $[\g, \ker \Phi]\subseteq \f g_J'$, giving that $\ker \Phi$ is a $J$-solvable ideal of $\f g $. Thus, $\f g_J'= \ker \Phi$.
In view of this, $\Phi$ descends to the quotient, i.e. 
$$
\Phi\colon \frac{\f g}{ \f g_J'}\to {\rm End}(\f g).
$$
Moreover, we observe that, if $X\in\f g/\f g_J'$ and $Y\in\f g_J'$, then
\begin{equation}\label{giovannibuco}
\Phi(X)\ad_Y=\ad_{\Phi(X)Y}\,.
\end{equation}
Indeed,  using $2$-step solvability and that $X$ acts holomorphically on $\f g_J'=\ker \Phi$, we have  
$$
\begin{aligned}
\Phi(X)\ad_Y=&\, 
\frac12(\ad_{JX}\ad_Y-J\ad_X\ad_Y)=
\frac12(\ad_{[JX, Y]}-\ad_XJ\ad_Y)=\frac12(\ad_{[JX, Y]}-\ad_X\ad_{JY})\\
=&\, \frac12\ad_{[JX, Y]-[X, JY]}=\ad_{\Phi(X)Y}\,,
\end{aligned}
$$
as claimed.
We will only be interested in $\Phi(U)$, where $JU\in J(\f g /\f g_J')'$ is the mean curvature vector of $\f g /\f g_J'$,  introduced in Definition \ref{defn_U}. 
We extend the endomorphism $\Phi(U)$ by $\C$-linearity on $\f g^{\C}:=\f g\otimes \C$. By abuse of notation, this extension will again be denoted $\Phi(U)$. Since $\f g_J'$ is an ideal of $\f g$, and by  \eqref{cazzettiinano}, $[\Phi(U), J]=0$, we deduce
$$
\Phi(U)((\f g_J')^{1,0})\subseteq (\f g_J')^{1,0}\,,\qquad  \Phi(U)((\f g_J')^{0,1})\subseteq (\f g_J')^{0,1}\,.
$$
Consequently, we can find $\lambda \in \C$ such that $\ker \bigl(\left(\Phi(U)-\lambda {\rm Id}\right)|_{(\f g_J')^{1,0}}\bigr)\ne 0$. By simply exploiting $\C$-linearity, from \eqref{giovannibuco}, we have
$$
\left(\Phi(U)-\lambda {\rm Id}\right)\ad^{\f g^{\C}}_Y=\ad^{\f g^{\C}}_{(\Phi(U)-\lambda {\rm Id})Y}\,, \qquad Y \in (\f g_J')^{1,0}\,.
$$
Hence, by the above and the fact that $\f g$ acts holomorphically on $\f g_J'$, we infer that $\ker \bigl(\left(\Phi(U)-\lambda {\rm Id}\right)|_{(\f g_J')^{1,0}}\bigr)$ and $\ker \bigl(\left(\Phi(U)-\bar \lambda {\rm Id}\right)|_{(\f g_J')^{0,1}}\bigr)$ are $J$-invariant ideals of $\f g^{\C}$ contained in $\f g_J'\otimes \C$. We then consider 
$$
\f k:=\ker \left(\left(\Phi(U)-\lambda {\rm Id}\right)|_{(\f g_J')^{1,0}}\right)\oplus \ker \left(\left(\Phi(U)-\bar \lambda {\rm Id}\right)|_{(\f g_J')^{0,1}}\right)\subseteq \f g_J'\otimes \C
$$
which is a non-zero $J$-invariant ideal of $\f g^{\C}$ which is stable under conjugation. Thus, 
$
\f k=(\f k \cap \f g)\otimes \C\,, 
$
where $\f k \cap \f g\subseteq \f g_J'$ is a non-zero $J$-invariant ideal of $\f g.$ As a consequence, using the first step, we may assume  $\f k\cap \f g= \f g_J'$  and, thus, $\f k= \f g_J'\otimes \C$. 
In particular, we conclude that 
$$
\Phi(U)|_{(\f g_J')^{1,0}}=\lambda {\rm Id}_{(\f g_J')^{1,0}}\,, \quad\Phi(U)|_{(\f g_J')^{0,1}}=\bar \lambda {\rm Id}_{(\f g_J')^{0,1}}\,. 
$$
Denote $a:={\rm Re}(\lambda)$ and $b:={\rm Im}(\lambda)$. Then we infer that
\begin{equation}\label{baumiao}
\Phi(U)|_{\f g_J'}=a{\rm Id}_{\f g_J'}+ b J|_{\f g_J'}\,.
\end{equation}
Now, since $H^2(\g, JU)=0$ and by Lemma \ref{obstruction}, we deduce that $(a, b)\ne (1,0)$. Therefore, from \eqref{baumiao} follows that the map 
$$
\tilde \Psi:=\Phi(U)-a{\rm Id}-b J\colon \f g \to \f g
$$
descends to a well-defined map on the quotient, namely
$$
\Psi\colon \frac{\f g}{ \f g_J'}\to \f g\,.
$$
The first thing to observe is that $\Psi$ is injective. Indeed, recalling Definition \ref{defn_Cas} and  Remark \ref{Casimiro}, 
$$
\pi \tilde \Psi=\pi\Phi(U)-a\pi-bJ\pi=\Omega_{\f g/ \f g_J'}\pi-a\pi- bJ \pi=((1-a){\rm Id}- bJ)\pi\,.
$$
Hence, if $X\in \ker \tilde \Psi $, then $(1-a)\pi(X)=bJ\pi(X)$, which is possible if and only if $\pi(X)=0$, since $(a, b)\ne (1, 0)$, therefore $X\in \f g_J'$. On the other hand, $\Im(\Psi)\cap \f g_J'=0$. Indeed, if $\Psi(\pi (X))\in\f g_J'$, $X\in\f g$,  then again 
$$
0=\pi \Psi(\pi(X))=\pi \tilde \Psi(X)=((1-a)\pi(X)-bJ\pi(X))
$$
implying that $\pi(X)=0$, as claimed. Moreover,  ${\rm Im}\Psi$ is $J$-invariant,  since $[\Psi, J]=0$.

It remains to prove that $\Im \Psi$ is a subalgebra of $\f g$. To do so, we fix $X, Y\in (\f g/\f g_J')'$ and  compute $[\Psi(X),\Psi(Y)]$,  $[\Psi(JX),\Psi(JY)]$ and  $[\Psi(X),\Psi(JY)]$. Let us start  by observing that 
 \[
 \Psi(X)=\frac12[JU,X]-aX-bJX\,, \qquad \Psi(Y)=\frac12[JU,Y]-aY-bJY\,.
 \]
Then,  using $2$-step solvability, Jacobi identity and integrability of $J$,  we get
 \[
 \begin{split}
 [\Psi(X),\Psi(Y)]&=b\left( -\frac12[[JU,X],JY]+a[X,JY]-\frac12[JX,[JU,Y]]+a[JX,Y]+b[JX,JY]\right)\\
 &=b\left( -\frac12[JU,[X,JY]+[JX,Y]]+a[X,JY]+a[JX,Y]+b[JX,JY]\right)\\
&=b\left( \frac12[JU,J[JX,JY]]-aJ[JX,JY]+b[JX,JY]\right)=b\Psi(J[JX,JY])=0\,, 
 \end{split}
 \]  since $J[JX,  JY]\in \f g_J'$. 
 On the other hand, using integrability, we have
 \[
 \Psi(JX)=\frac12J[JU,X]-aJX+bX\,, \qquad \Psi(JY)=\frac12J[JU,Y]-aJY+bY\,,
 \]
 thus
 \[
 \begin{split}
 [\Psi(JX),\Psi(JY)]&=\frac14[J[JU,X],J[JU,Y]]-\frac a2[J[JU,X],JY]+\frac b2[J[JU,X],Y]-\frac a2[JX,J[JU,Y]]\\&\quad +a^2[JX,JY]-ab[JX,Y]+\frac b2[X,J[JU,Y]]-ab[X,JY]\,.
 \end{split}
 \]
 We treat the first term as follows. Using integrability,  $2$-step solvability and Jacobi identity, we have
 \[
\begin{aligned}
\relax
[J[JU, X], J[JU, Y]] &=J([J[JU, X], [JU,Y]]+[[JU, X], J[JU, Y]])\\
&=J\left([JU,[J[JU,X],Y]]+ [JU,[X, J[JU, Y]]]\right)\,.
\end{aligned}
\]
Also, by integrability
\[
[J[JU,X],JY]+[JX,J[JU,Y]]=J[J[JU,X],Y]+J[[JU,X],JY]+J[JX,[JU,Y]]+J[X,J[JU,Y]]\,,
\]
whence
\[
\begin{split}
 [\Psi(JX),\Psi(JY)]&=\frac12J\Psi\left([J[JU,X],Y]+[X,J[JU,Y]]\right)\\
 &\quad-\frac a2J[[JU,X],JY]-\frac a2J[JX,[JU,Y]]+a^2[JX,JY]-ab[JX,Y]-ab[X,JY]\\
 &=\frac12J\Psi\left([J[JU,X],Y]+[X,J[JU,Y]]\right)-\frac a2J[JU,[X,JY]]-\frac a2J[JU,[JX,Y]]\\
 &\quad +a^2J[JX,Y]+a^2J[X,JY]-ab[JX,Y]-ab[X,JY]\\
 &=\Psi\left(\frac12J[J[JU,X],Y]+\frac12J[X,J[JU,Y]]-aJ[X,JY]-aJ[JX,Y]\right)=0\,,
 \end{split}
\]
since the argument of $\Psi$ lies in $\f g_J'$.
Finally, we compute
 \[
 \begin{split}
 [\Psi(X),\Psi(JY)]&=\frac14[[JU,X],J[JU,Y]]-\frac a2[[JU,X],JY]-\frac a2[X,J[JU,Y]]+a^2[X,JY]\\
 &\quad  -\frac b2[JX,J[JU,Y]]+ab[JX,JY]-b^2[JX,Y]\\
 &=\frac 14[JU,[X,J[JU,Y]]]-\frac a2[JU,[X,JY]]-\frac a2[X,J[JU,Y]]+a^2[X,JY]\\
 &\quad -\frac b2J[X,J[JU,Y]]-\frac b2J[JX,[JU,Y]]+abJ[X,JY]+abJ[JX,Y]-b^2[JX,Y]\\
 &=\frac12\Psi([X,J[JU,Y]])-a\Psi([X,JY])-bJ\left(\frac12[JU,[JX,Y]]-a[JX,Y]-bJ[JX,Y]\right)\\
&=\Psi\left(\frac12[X,J[JU,Y]]-a[X,JY]-bJ[JX,Y]\right)\,.
 \end{split}
 \]
It follows that $[\Im \Psi, \Im \Psi]\subseteq \Im \Psi$, and hence $\Im \Psi$ is a subalgebra, as desired.
\end{proof}

\begin{rmk}
The appearance of the cohomological obstruction in Theorem \ref{levihol} is somewhat unsurprising. Indeed, a possible proof of the classical Levi--Malcev decomposition uses the fact that the second cohomology group with values in any representation of a semisimple Lie algebra always vanishes, see \cite[Theorems 3.12.1, 3.14.1]{Var84}. The difference here is that, in our framework, the vanishing of the relevant cohomology group is not always guaranteed and must therefore be imposed as an additional assumption. For instance, it is easy to check that Example \ref{noLevi} violates the condition $H^2(\g,JU)=0$. 
\end{rmk}

In order to extend Theorem \ref{levihol} to the general case we need some preparations.  Let $\f g $ be a $2$-step solvable Lie algebra, we know that $\f g_J'$  is a $J$-invariant ideal in $\f g'+J\f g'$.  
We can then make use of Theorem \ref{Leviabelian} to infer that there exists a $J$-invariant subalgebra $\h$ satisfying 
$$
\frac{\f g'+J\f g'}{\f g_J'}=\f h \ltimes \frac{{\rm Rad}(\f g'+J\f g')}{\f g_J'}\,.
$$
If $\f g $ is not $J$-solvable, then, as in Corollary \ref{lemspettroB}, we observe that
\begin{equation}\label{adu^2=2adu}
\ad_{JU}^2=2\ad_{JU}\,.
\end{equation}
In particular, $\ad_{JU}$ is diagonalisable over $\frac{{\rm Rad}(\f g'+J\f g')}{\f g_J'}$ and
\begin{equation}\label{specU}
\mathrm{Spec}\left(\ad_{JU}|_{\frac{{\rm Rad}(\f g'+J\f g')}{\f g_J'}} \right) \subseteq \{0,2\}\,.
\end{equation}
Let $V_0:=\ker\left(\ad_{JU}|_{\frac{{\rm Rad}(\f g'+J\f g')}{\f g_J'}}\right)$ and,  denoted by $\pi\colon \f g'+J\f g'\to \frac{\f g'+J\f g'}{\f g_J'} $ the canonical projection onto the quotient,  we define
\begin{equation}\label{defkon}
\f q:=\pi^{-1}(V_0)\cap \f g'=\{ X\in \mathrm{Rad}(\g'+J\g')\cap \g' \mid [JU, X] \in \g'_J\}\,.
\end{equation}
The next is a technical lemma we will use later.

\begin{lem}\label{trigo}
Let $\f g$ be a $2$-step solvable Lie algebra such that $H^2(\g, JU)=0$.  Suppose $S,T\in \g'$ satisfy
$$
[JU, S]= aS+bT + Z\,,\qquad
[JU, T]= -bS+aT +Z'\,,
$$
for some $a,b\in \R$ with $(a, b)\ne (2, 0)$ and $Z,Z'\in \g'_J$. If $X\in\f g$ is such that $[X,Z],[X,Z']\in \mathfrak{q}$, then  $[X,S],[X,T]\in  \mathfrak{q} $.
\end{lem}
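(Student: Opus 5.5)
The plan is to reduce the statement to a linear-algebra fact about $\ad_{JU}$ acting on the quotient $(\g'+J\g')/\g'_J$, where Corollary \ref{lemspettroB} and \eqref{adu^2=2adu} give full control of $\ad_{JU}$. Set $A:=[X,S]$ and $B:=[X,T]$; since $S,T\in\g'$, both lie in $\g'$.

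\textbf{Step 1: a linear system for $A,B$.} Since $JU\in\g'+J\g'$, we have $[JU,X]\in\g'$. By $2$-step solvability, $[[JU,X],S]=[[JU,X],T]=0$. The Jacobi identity then gives
\[
[JU,A]=aA+bB+[X,Z]\,,\qquad [JU,B]=-bA+aB+[X,Z']\,.
\]
Let $\pi\colon\g'+J\g'\to(\g'+J\g')/\g'_J$ be the projection. Put $w:=\pi(A)+\sqrt{-1}\,\pi(B)$ and $c:=\pi([X,Z])+\sqrt{-1}\,\pi([X,Z'])$, both in the complexified quotient. The system becomes
\[
\ad_{JU}w=(a-\sqrt{-1}b)\,w+c\,.
\]
By hypothesis $[X,Z],[X,Z']\in\f q$, so $c\in V_0\otimes\C$. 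In particular $\ad_{JU}c=0$.

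\textbf{Step 2: eigenspace decomposition.} By \eqref{adu^2=2adu} and Corollary \ref{lemspettroB}, $\ad_{JU}$ is diagonalisable on the whole quotient $\f h\ltimes{\rm Rad}(\g'+J\g')/\g'_J$, with eigenvalues in $\{0,2\}$. Write $w=w_0+w_2$ accordingly and compare components in the displayed equation.
\begin{itemize}
\item The $2$-eigencomponent gives $(2-(a-\sqrt{-1}b))w_2=0$. Since $(a,b)\neq(2,0)$, this forces $w_2=0$.
\item The $0$-eigencomponent gives $(a-\sqrt{-1}b)w_0=-c$.
\end{itemize}
Hence $\ad_{JU}\pi(A)=\ad_{JU}\pi(B)=0$, that is, $[JU,A],[JU,B]\in\g'_J$.

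\textbf{Step 3: radical membership, the main obstacle.} It remains to show $A,B\in{\rm Rad}(\g'+J\g')$. The kernel of $\ad_{JU}$ on the quotient is not contained in the radical part: it also contains $J\f h'$, since for $\f{aff}(\R)$ one has $[JX,JX]=0$. My plan is to kill the $\f h$-component using $\ad_U$.

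Choose a representative of $U$ in $\g'$, which is possible because $U\in\f h'$ and $\f h'\subseteq\pi(\g')$. Then $[U,A]=[U,B]=0$ by $2$-step solvability, so $\ad_U w=0$. Decompose $w=w_{\f h}+w_R$ along $\f h\ltimes{\rm Rad}(\g'+J\g')/\g'_J$.
\begin{itemize}
\item We have $[U,w_{\f h}]\in\f h$ and $[U,w_R]\in{\rm Rad}/\g'_J$, so both brackets vanish.
\item From Step 2, $w_{\f h}$ lies in the $0$-eigenspace of $\ad_{JU}$ inside $\f h$, which is $J\f h'\otimes\C$ by the standard basis \eqref{baseincredibile}.
\item On $J\f h'$ the map $\ad_U$ is injective. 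In the standard basis $[U,JX_i]=-2X_i$, and similarly on the $\f{aff}(\C)$ blocks.
\end{itemize}
Hence $w_{\f h}=0$, so $\pi(A),\pi(B)\in{\rm Rad}(\g'+J\g')/\g'_J$.

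Finally, $\g'_J\subseteq{\rm Rad}(\g'+J\g')$, because it is a $J$-invariant abelian ideal of $\g'+J\g'$ by Lemma \ref{Lem2stepideal}. Therefore $A,B\in{\rm Rad}(\g'+J\g')\cap\g'$, and with Step 2 this gives $A,B\in\f q$. The remaining care is in checking the injectivity of $\ad_U$ on $J\f h'$ block by block, and in confirming that the representative of $U$ can indeed be taken in $\g'$. The hypothesis $H^2(\g,JU)=0$ enters only through the setup that defines $JU$ and $\f q$.
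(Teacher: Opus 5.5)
Your proof is correct, but it is organised differently from the paper's.

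\textbf{The paper's route.} It uses $H^2(\g,JU)=0$ (through Corollary \ref{corobgprimo}) solely to invoke Theorem \ref{levihol}. This gives an actual decomposition $\g'+J\g'=\h\ltimes{\rm Rad}(\g'+J\g')$. It then writes the $\h$-components of $[X,S],[X,T]$ in a basis of $\h'$, tacitly using that $\g'$ projects into $\h'$. These components are killed by the real system $(a-2)\alpha_i+b\beta_i=0=(a-2)\beta_i-b\alpha_i$, which comes from $\ad_{JU}|_{\h'}=2\,{\rm Id}$. Only afterwards, inside the radical modulo $\g'_J$, does it apply $\ad_{JU}$ a second time. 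This produces the eigenvector $[JU,P]-\sqrt{-1}[JU,Q]$, and the paper concludes with \eqref{specU} and \eqref{adu^2=2adu}.

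\textbf{Your route.} You reverse the order.
\begin{enumerate}
\item The eigen-decomposition of $\ad_{JU}$ on the whole quotient gives $[JU,A],[JU,B]\in\g'_J$ in one stroke. Your $2$-eigencomponent absorbs both the $\h'$-part and the $2$-eigenspace of the radical, which the paper treats in two separate stages.
\item Radical membership then follows from your $\ad_U$ argument. This makes explicit a point the paper leaves implicit. The injectivity you flagged is a one-liner: by abelianity of $J$ on the quotient, $[U,JV]=[JU,J^2V]=-2V$ for $V\in\h'$.
\end{enumerate}

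\textbf{What each approach buys.} Your argument lives entirely in $(\g'+J\g')/\g'_J$, with the Levi subalgebra supplied by Theorem \ref{Leviabelian}. So the hypothesis $H^2(\g,JU)=0$ is in fact never used, not even in the setup that defines $JU$ and $\f q$, contrary to your closing remark. Only $(a,b)\neq(2,0)$ matters. It also shows in passing that $\f q=\{Y\in\g'\mid [JU,Y]\in\g'_J\}$, i.e.\ the radical condition in \eqref{defkon} is automatic. In exchange, the paper's route stays with real coefficients and an explicit determinant $(a-2)^2+b^2\neq 0$, reusing the Levi decomposition it already has available.
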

\begin{proof}
We start by observing that the hypothesis $H^2(\g, JU)=0$ and Corollary \ref{corobgprimo} put us in the position to apply Theorem \ref{levihol} and infer that 
$$
\g'+J\f g'=\f h\ltimes {\rm Rad}(\g'+ J \f g')\,, \qquad \f h\simeq \frac{\f g'+J\f g'}{{\rm Rad}(\g'+ J\f g')}.
$$
We first prove that if $[X,Z],[X,Z']\in {\rm Rad}(\g'+J\g')$, then $[X,S],[X,T] \in {\rm Rad}(\g'+J\g')$. Surely $[X,S],[X,T]\in \g'+J\g' $ and we can decompose
$$
[X,S]=\sum_{i=1}^{\dim \f h'} \alpha_i Y_i + W_1\,, \qquad 
[X,T]=\sum_{i=1}^{\dim \f h'} \beta_i Y_i+ W_2 \,,
$$
where  $\{Y_1, \ldots, Y_{\dim \f h'}\}$ is a basis of $\f h'$ and $ W_1, W_2\in {\rm Rad}(\g'+J\g')$. 
Then, since $S\in\f g'$ and ${\rm Rad}(\f g'+J\f g')$ is an ideal in $\f g'+J\f g'$, $2$-step solvability and \eqref{ad_JU2Id} imply
\[
\begin{aligned}
0=&\, [[X,JU],S]=[[X,S],JU]+[X,[JU,S]]
=\sum_{i=1}^{\dim \f h'}\left(-2\alpha_i+a\alpha_i+b\beta_i\right)Y_i+W_3\,,\\
0=&\, [[X,JU],T]=[[X,T],JU]+[X,[JU,T]]=\sum_{i=1}^{\dim \f h'} \left(-2\beta_i-b\alpha_i+a\beta_i\right)Y_i+W_4\,,
\end{aligned}
\] 
where $W_3, W_4\in {\rm Rad}(\g'+J\g')$.
Hence, for all $i=1, \ldots, \dim \f h'$,  we obtain the system of equations
\begin{equation}\label{sefattolastoria}
\begin{cases}
(a-2)\alpha_i+b\beta_i=0\,, \\
(a-2)\beta_i-b\alpha_i=0\,.
\end{cases}
\end{equation}
Assume by contradiction that $[X,S]\notin \mathrm{Rad}(\g'+J\g')$, then there exists $i=1, \ldots, \dim \f h'$ such that  $\alpha_i\ne 0$. Therefore, from the second equation in \eqref{sefattolastoria} we have $b=(a-2)\frac{\beta_i}{\alpha_i}$, which, plugged in the first equation, 
forces $a=2$ and consequently $b=0$ which is against our assumptions. We conclude similarly if $[X,T]\notin \mathrm{Rad}(\g'+J\g')$.

Now we prove the lemma. Suppose $[X,Z],[X,Z'] \in \mathfrak{q}$ then,  by the first part of the proof,  we know that $[X,S],[X, T]\in {\rm Rad}(\g'+J\g')$ and we may write
$$
[X,S]=P+Z_1\,,\qquad
[X,T]=Q+Z_2\,,    
$$
with $P,Q\in V$, for some complement $V$ of $\f g_J'$ in ${\rm Rad}(\f g'+J\f g')$, and $Z_1, Z_2\in\f g_J'$. By repeating  the computation done above, we get
\[
\begin{cases}
[JU,P]= aP+bQ + [X,Z]+Z_3\,,\\
[JU,Q]= -bP+aQ +[X,Z']+Z_4\,,\\
\end{cases} 
\]
for some $Z_3, Z_4\in\f g_J'$. Applying $\ad_{JU}$ to these equations and using the assumption that $[X,Z],[X,Z']\in \mathfrak{q}$,  we get
\begin{equation}\label{pezzodistoria}
\begin{cases}
[JU, [JU,P]]= a[JU,P]+b[JU,Q] +Z_5\,,\\
[JU, [JU,Q]]= -b[JU,P]+a[JU,Q] +Z_6\,,\\
\end{cases} 
\end{equation}
with $Z_5,Z_6\in \g'_J$. Now, either $[JU,P],[JU,Q] \in \g'_J $, meaning $[X,S],[X,T]\in \mathfrak{q}$, or $\lambda=a+\sqrt{-1}b$ is an eigenvalue of $\ad_{JU} \vert_{\frac{{\rm Rad}(\g'+J\g')}{\g'_J}}$ with eigenvector $[JU,P]-\sqrt{-1}[JU,Q]$.  Thus,  by \eqref{specU} we must have $\lambda\in \{0,2\}$. On the other hand, since  $(a,b)\neq (2,0)$,  we necessarily have $a=b=0$.  Using \eqref{adu^2=2adu},  we conclude from \eqref{pezzodistoria} that $[JU,P],[JU,Q] \in \g'_J $, as desired.
\end{proof}
 With this settled, we are now ready to prove the following.
\begin{prop}\label{lastoria}
Let $\f g$ be a $2$-step solvable Lie algebra such that $H^2(\g, JU)=0$. Then
\[
\g'_J \subseteq {\rm Rad}(\g)\,.
\]
\end{prop}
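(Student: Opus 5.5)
We plan to show that the $J$-invariant ideal of $\g$ generated by $\g'_J$ is $J$-solvable; since $\g'_J$ itself is abelian, the content is that $\g$ cannot pull $\g'_J$ into the $J$-semisimple part of $\g'+J\g'$. Our candidate is
\[
\f r:=\f q+J\f q\,,
\]
with $\f q$ as in \eqref{defkon}. By construction $\g'_J\subseteq \f q$, because $\g'_J\subseteq{\rm Rad}(\g'+J\g')\cap\g'$ (the remark after Lemma \ref{Lemmino} applies to the quotient by $\g'_J$, and $\g'_J$ is abelian). Moreover $[JU,\g'_J]\subseteq\g'_J$, since $\g'_J$ is an ideal of $\g'+J\g'$.

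The first key step is to show that $\f q$ is stable under $\ad_X$ for every $X\in\g$; then $\f q$, $J\f q$ and $\f r$ are controlled. We would decompose $\g'_J$ and a complement inside $\g'$ according to the real Jordan blocks of $\ad_{JU}$. On ${\rm Rad}(\g'+J\g')/\g'_J$ the operator $\ad_{JU}$ is diagonalisable with eigenvalues in $\{0,2\}$ by \eqref{specU}. On $\g'_J$ its eigenvalues $a\pm\sqrt{-1}b$ avoid $2$, by Lemma \ref{obstruction} and the hypothesis $H^2(\g,JU)=0$. Working inductively along a flag of $\ad_{JU}$-invariant subspaces of $\g'_J$ built from real $2\times 2$ blocks, we get pairs $S,T\in\g'_J$ with
\[
[JU,S]=aS+bT+Z\,,\qquad [JU,T]=-bS+aT+Z'\,,
\]
where $(a,b)\ne(2,0)$ and $Z,Z'$ lie lower in the flag. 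Lemma \ref{trigo} then propagates $[X,\cdot\,]\in\f q$ up the flag, starting from the trivial bottom. This gives $[\g,\g'_J]\subseteq\f q$. We would apply the same lemma to the $0$-eigenspace part of $\f q$ modulo $\g'_J$: there $a=b=0$ and $Z,Z'\in\g'_J$, so we already know $[X,Z],[X,Z']\in\f q$. Together these yield $[\g,\f q]\subseteq\f q$.

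Next we check that $\f r$ is a $J$-invariant ideal of $\g$. For $X\in\g$ and $Q\in\f q$, integrability \eqref{behaviourofJ2step} gives
\[
[X,JQ]=-[JX,Q]+W\,,\qquad W\in\g'_J\subseteq\f q\,,
\]
so $[X,J\f q]\subseteq J\f q+\f q$ up to the first key step. Then $\f r$ is $J$-solvable. It is contained in ${\rm Rad}(\g'+J\g')$, and $\f r/\g'_J$ sits in the kernel of $\ad_{JU}$ on the radical of the abelian quotient $(\g'+J\g')/\g'_J$. Alternatively, one computes $\f r^{[1]}(J)\subseteq\g'_J+J\g'_J=\g'_J$ using that $\f r$ lies in the $J$-solvable ${\rm Rad}(\g'+J\g')$, together with Lemma \ref{lemmagrullo} applied to $\g'_J\subseteq\f r$. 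Hence $\f r\subseteq{\rm Rad}(\g)$ and so $\g'_J\subseteq{\rm Rad}(\g)$.

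The main obstacle is the first key step: organising the induction so that the hypotheses of Lemma \ref{trigo} are met at every stage. Lemma \ref{trigo} needs $S,T\in\g'$, not merely in $\g'+J\g'$, and it needs $(a,b)\ne(2,0)$ exactly on $\g'_J$. We would handle this by choosing the flag inside $\g'\cap{\rm Rad}(\g'+J\g')$, refining first by $\g'_J$ and then by the generalised eigenspaces of $\ad_{JU}$ over $\C$, taking real and imaginary parts. If the nilpotent part of $\ad_{JU}$ on $\g'_J$ causes trouble, we would instead use a basis in real Jordan form, so that each new pair differs from its eigen-relation only by terms already treated.
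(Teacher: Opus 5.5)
Your proposal is correct and follows essentially the paper's proof. The paper likewise gets $[\g,\g'_J]\subseteq\f q$ by running Lemma \ref{trigo} up a triangular basis for $\ad_{JU}|_{\g'_J}$, with $(a_k,b_k)\neq(2,0)$ coming from Lemma \ref{obstruction}. It then deduces $[\g,\f q]\subseteq\f q$ from the case $a=b=0$, and concludes with a $J$-invariant ideal that contains $\g'_J$ and lies inside ${\rm Rad}(\g'+J\g')$. The paper uses $\f k+J\f k$, with $\f k$ the ideal generated by $\g'_J$; your $\f q+J\f q$ works equally well.

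Do drop your ``alternative'' claim $\f r^{[1]}(J)\subseteq\g'_J$, since $[\f q,J\f q]$ need not lie in $\g'_J$. For example, $\g=\f{aff}(\R\oplus x\R[x]/(x^5))$ has $\g'_J=0$, yet $\f q+J\f q$ is non-abelian. The justification you actually need is just that $\f r$ is a $J$-invariant subalgebra of the $J$-solvable ${\rm Rad}(\g'+J\g')$, and hence is itself $J$-solvable.
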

\begin{proof}
First, we aim to prove that $\mathfrak{q}$ is an ideal, where $\mathfrak{q}$ is defined in \eqref{defkon}. Notice that, by 2-step solvability we have
\[
\ad_{JU}\vert_{(\g'_J)^{1,0}}=\ad_{JU}\vert_{(\g'_J)^{1,0}}+\sqrt{-1}\ad_{U}\vert_{(\g'_J)^{1,0}}=\ad_{JU+\sqrt{-1}U}\vert_{(\g'_J)^{1,0}}
\]
and so, by Lie's theorem,  there exists a basis $\{W_1,\dots,W_s\}$ of $(\g'_J)^{1,0}$ with respect to which $\ad_{JU}\vert_{(\g'_J)^{1,0}}$ is lower triangular. In particular, the eigenvalues $\lambda_i\in \C$ of $\ad_{JU}\vert_{(\g'_J)^{1,0}}$ satisfy
\[
[JU, W_k]=\lambda_k W_k+ V_k\,, \qquad V_k\in \bigoplus_{i<k} \langle W_i\rangle \qquad k=1,\dots,s\,.
\]
Writing $a_k:=\Re(\lambda_k)$, $b_k:=\Im(\lambda_k)$, and $Z_k:=\Re(W_k)$,  this is equivalent to
\[
\begin{cases}
[JU, Z_k]= a_kZ_k+b_kJZ_k+ \tilde Z_k\,, & \tilde Z_k\in \bigoplus_{i<k} \langle Z_i,JZ_i \rangle\,, \\
[JU,JZ_k]= -b_kZ_k+a_kJZ_k+\tilde Z'_k\,, & \tilde Z'_k\in \bigoplus_{i<k} \langle Z_i,JZ_i\rangle \,,\\
\end{cases} \qquad k=1,\dots,s\,.
\]
Note that $\g'_J \subseteq \mathfrak{q}$. We first prove that $[\g,\g'_J]\subseteq \f q$. In order to do this, we use induction on $k$, and show that $[\g,Z_k],[\g,JZ_k]\subseteq \mathrm{Rad}(\g'+J\g')$. We observe that the condition $H^2(\g,JU)=0$ entails that $(a_k,b_k)\neq (2,0) $, for all $k=1,\dots,s$. 
For $k=1$, using Lemma \ref{trigo} with $S=Z_1, T=JZ_1, Z=Z'=0$,  we see that we have $[\g,Z_1],[\g,JZ_1]\subseteq \f q$. Now, suppose that $[\g,Z_{i}], [\g,JZ_{i}]\subseteq  \f q$,  for all $i=1,\dots,k-1$, and let us prove the claim for $k$. By the inductive assumption,  we can again apply Lemma \ref{trigo} with $S=Z_k$, $T=JZ_k$, $Z=\tilde Z_k$, and $Z'=\tilde Z'_k$. Hence, we conclude that $[\g,Z_k],[\g,JZ_k]\in \f q$ for all $k$, meaning that $[\g,\g'_J]\subseteq \f q$. Knowing this, it now follows immediately from Lemma \ref{trigo} that $[\g,\mathfrak{q}]\subseteq \mathfrak{q}$.

Let $\f k$ be the ideal of $\g$ generated by $\g'_J$, i.e.
\[
\f k=\g'_J+[\g,\g'_J]+[\g,[\g,\g'_J]]+\dots
\]
Since $\mathfrak{q}$ is an ideal, we have $\mathfrak{k} \subseteq \mathfrak{q}\subseteq \mathrm{Rad}(\g'+J\g')\cap \g' $. Note that, for every $X\in \g$, $Y\in \mathfrak{k}$ we have $[X,JY]+[JX,Y]\in \g '_J $ and thus $\f k+ J \f k$ is an ideal of $\g$ contained in $\mathrm{Rad}(\g'+J\g')$. In particular, $\f k + J\f k $ is a $J$-solvable ideal of $\f g$ which contains $\f g_J'$, implying that $\f g_J'\subseteq {\rm Rad}(\f g)$.
\end{proof}

The above proposition allows us to draw the following corollary, which in particular shows that if $H^2(\g,JU)=0$, then any semisimple complex structure is abelian.

\begin{cor}\label{Ilcorollario}
Let $\f g$ be a $2$-step solvable Lie algebra such that $H^2(\g, JU)=0$.
Then
\[
{\rm Rad}(\mathcal O(\f g_J'))={\rm Rad}(\f g)\cap \mathcal O(\f g_J')\,, \qquad \g=\mathcal O(\f g_J')+ {\rm Rad}(\f g)\,.
\]
Moreover, if $J$ is semisimple, then it is abelian.
\end{cor}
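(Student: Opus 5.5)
The first two identities follow from Lemma \ref{Lemmino} with $\f k_1=\mathcal O(\f g_J')$ and $\f k_2=\f g$; the last claim is then a short argument with $\mathcal O(\f g_J')$.

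\textbf{Checking Lemma \ref{Lemmino}.} Both $\mathcal O(\f g_J')$ and $\f g$ are $J$-invariant ideals of $\f g$, and $\mathcal O(\f g_J')\subseteq \f g$. By Lemma \ref{Lem2stepideal} we have $\f g'+J\f g'\subseteq \mathcal O(\f g_J')$, which is the required condition $\f k_2'+J\f k_2'\subseteq \f k_1$. The remaining hypothesis is $\f g_J'\subseteq {\rm Rad}(\f k_2)={\rm Rad}(\f g)$. This is exactly Proposition \ref{lastoria}, and it is where $H^2(\f g,JU)=0$ enters. Lemma \ref{Lemmino} then gives
\[
{\rm Rad}(\mathcal O(\f g_J'))={\rm Rad}(\f g)\cap\mathcal O(\f g_J')
\qquad\text{and}\qquad
\f g=\mathcal O(\f g_J')+{\rm Rad}(\f g).
\]
One should double-check that the proof of Lemma \ref{Lemmino} uses the hypothesis on $\g'_J$ only to make $\f k_2/{\rm Rad}(\f k_2)$ carry an abelian complex structure, so that Theorem \ref{Semisimple} applies. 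Here this holds: $\f g_J'\subseteq {\rm Rad}(\f g)$ and \eqref{behaviourofJ2step} show that the quotient $\f g/{\rm Rad}(\f g)$ satisfies $[JX,JY]-[X,Y]\in \f g_J'$ modulo the radical. Hence $J$ is abelian on the quotient.

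\textbf{The final claim.} Assume $J$ is semisimple, so ${\rm Rad}(\f g)=0$. Proposition \ref{lastoria} gives $\f g_J'\subseteq{\rm Rad}(\f g)=0$. By the remark after the definition of abelian complex structures, \eqref{behaviourofJ2step} then yields $[X,JY]+[JX,Y]=0$. Applying this with $Y$ replaced by $JY$ gives $[JX,JY]=[X,Y]$, so $J$ is abelian.

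No step here is hard; the substance lies in Proposition \ref{lastoria}. The only point needing care is confirming that all the hypotheses of Lemma \ref{Lemmino} are met as stated.
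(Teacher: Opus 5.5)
Your proof is correct and is essentially the paper's argument: Lemma \ref{Lemmino} with $\f k_1=\mathcal O(\f g_J')$ and $\f k_2=\f g$, where the key hypothesis $\f g_J'\subseteq{\rm Rad}(\f g)$ comes from Proposition \ref{lastoria}. For the last claim the paper passes through the first identity, $\f g_J'\subseteq{\rm Rad}(\mathcal O(\f g_J'))={\rm Rad}(\f g)\cap\mathcal O(\f g_J')=0$, whereas you read $\f g_J'=0$ directly off Proposition \ref{lastoria}; the two routes are equivalent.
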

\begin{proof}
Combining Proposition \ref{lastoria} and Lemma \ref{Lemmino} immediately gives the first part. In turn, if $J$ is semisimple, this yields $\g'_J\subseteq {\rm Rad}(\mathcal O(\g_J'))={\rm Rad}(\g)\cap \mathcal O(\g_J')=0$.
\end{proof}

We can now  prove Theorem \ref{Main1}.

\begin{thm}\label{storiafatta}
Let $\f g$ be a $2$-step solvable Lie algebra and suppose that $H^2(\g, JU)=0$. Then there exists a unique $J$-semisimple subalgebra $\f h$  of $\f g$ such that
$$
\f g=\f h\ltimes {\rm Rad}(\f g)\,. 
$$
\end{thm}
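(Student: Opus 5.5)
Existence reduces to Theorem \ref{levihol} through Corollary \ref{Ilcorollario}, and uniqueness will follow the inductive scheme of Theorem \ref{Leviabelian}.

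For existence, set $\mathcal O:=\mathcal O(\f g_J')$, a $J$-invariant ideal of $\f g$. By construction $\mathcal O$ acts holomorphically on its own $\mathcal O'_J$, since $\mathcal O'_J\subseteq \f g_J'$. Its mean curvature vector agrees with that of $\f g$ modulo $\f g'_J$, because $H^2(\f g,JU)$ is defined through $\mathcal O$ itself. Hence $H^2(\mathcal O,JU_{\mathcal O})=0$; formally this is Lemma \ref{unnosomicasai} applied with $\hat{\f s}=\mathcal O$. Theorem \ref{levihol} then gives a $J$-semisimple subalgebra $\f h$ with $\mathcal O=\f h\ltimes{\rm Rad}(\mathcal O)$. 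Corollary \ref{Ilcorollario} gives ${\rm Rad}(\mathcal O)={\rm Rad}(\f g)\cap\mathcal O$ and $\f g=\mathcal O+{\rm Rad}(\f g)$. Therefore $\f g=\f h+{\rm Rad}(\f g)$ and $\f h\cap{\rm Rad}(\f g)=\f h\cap{\rm Rad}(\mathcal O)=0$, so $\f g=\f h\ltimes{\rm Rad}(\f g)$. Such an $\f h$ is automatically $J$-semisimple, being holomorphically isomorphic to $\f g/{\rm Rad}(\f g)$.

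For uniqueness, let $\f h_1,\f h_2$ be two $J$-adapted Levi subalgebras of $\f g$.
\begin{itemize}
\item \emph{Reduction to $\f g'+J\f g'$.} Each $\f h_i$ is $J$-perfect, since $J$-semisimple with abelian complex structure by Corollary \ref{Ilcorollario} and Theorem \ref{Semisimple}. Hence $\f h_i\subseteq\f g'+J\f g'$. By Proposition \ref{lastoria} and Lemma \ref{Lemmino}, both are $J$-adapted Levi subalgebras of $\f g'+J\f g'$. By Corollary \ref{corobgprimo} the hypothesis passes to $\f g'+J\f g'$, so induction on $\dim_\C$ handles the case $\f g\neq\f g'+J\f g'$. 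We may therefore assume $\f g$ is $J$-perfect, and in particular $\f g=\mathcal O$.
\item \emph{Passing to the quotient by $\f g_J'$.} Set $\pi\colon\f g\to\f g/\f g'_J$. Then $\pi(\f h_1),\pi(\f h_2)$ are $J$-adapted Levi subalgebras of $\f g/\f g_J'$, which carries an abelian complex structure. Theorem \ref{Leviabelian} gives $\pi(\f h_1)=\pi(\f h_2)$. Hence both $\f h_i$ lie in $\f s:=\f h_1+\f g'_J$, with ${\rm Rad}(\f s)=\f g'_J$. If $\f s\neq\f g$, induction applies to $\f s$, after checking via Lemma \ref{unnosomicasai} that $H^2(\f s,JU_{\f s})=0$.
\item \emph{The case ${\rm Rad}(\f g)=\f g'_J$ with $\f h_i$ complements.} Write $\f h_2$ as the graph of a $J$-linear map $\phi\colon\f h_1\to\f g_J'$. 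The bracket condition forces $\phi$ to be a $1$-cocycle for $\tau$, using that $\f g_J'$ is abelian: $\phi([X,Y])=[X,\phi Y]-[Y,\phi X]$. Evaluate this on $X=JU$, $Y=V\in\f h_1'$, using $[JU,V]=2V$ and $\tau(V)=0$ for $V\in\f h_1'$, since $\f h_1'$ acts trivially because $U$-type elements lie in the derived algebra and $\f g$ is $2$-step. This gives $(\ad_{JU}-2)\phi(V)=\ldots$. Then exploit $J$-linearity via $\phi(JV)=J\phi(V)$, together with the holomorphic action, to conclude $(\ad_{JU}-2)\phi(V)=0$. The condition $2\notin{\rm Spec}$ then yields $\phi|_{\f h_1'}=0$. $J$-linearity and $\f h_1=\f h_1'\oplus J\f h_1'$ then give $\phi=0$, so $\f h_1=\f h_2$.
\end{itemize}

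The main obstacle is this last cocycle computation. One must verify precisely that the cocycle identity, combined with $J$-linearity of $\phi$, isolates $(\ad_{JU}-2{\rm Id})\phi(V)$ and kills the remaining terms. This works as in the proof of Lemma \ref{trigo}, using \eqref{giovannibuco}. If a direct argument stalls, I would instead use the operator $\Psi$ from Theorem \ref{levihol}. Since $\Psi$ is built from $\ad_{JU}$, $\ad_U$ and $J$, it should be shown to preserve any $J$-adapted Levi subalgebra and to map $\f g/\f g_J'$ isomorphically onto it. This forces every Levi subalgebra to equal ${\rm Im}\,\Psi$.
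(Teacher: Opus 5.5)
Your proposal is correct. Uniqueness follows the paper's argument, but existence takes a shorter route.

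\textbf{Existence.} The paper splits into two cases.
\begin{itemize}
\item If ${\rm Rad}(\mathcal O(\g'_J))={\rm Rad}(\g)$, it shows $\g=\mathcal O(\g'_J)$ and applies Theorem \ref{levihol}.
\item If ${\rm Rad}(\mathcal O(\g'_J))\subsetneq{\rm Rad}(\g)$, it passes to $\g/{\rm Rad}(\mathcal O(\g'_J))$ and takes the Levi subalgebra $\f{s}$ there via Theorem \ref{Leviabelian}. It then checks that the preimage $\hat{\f s}$ lies in $\mathcal O(\g'_J)$, and only then applies Lemma \ref{unnosomicasai} and Theorem \ref{levihol} to $\hat{\f s}$.
\end{itemize}
You apply Theorem \ref{levihol} directly to the ideal $\mathcal O=\mathcal O(\g'_J)$; the hypotheses of Lemma \ref{unnosomicasai} with $\hat{\f s}=\mathcal O$ hold tautologically. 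You then glue using the two identities of Corollary \ref{Ilcorollario}. This removes the case distinction and the auxiliary quotient at no cost, since the same three results do all the work.

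\textbf{Uniqueness.} Your reductions to $\g'+J\g'$ and then to ${\rm Rad}(\g)=\g'_J$ mirror the scheme of Theorem \ref{Leviabelian}. The cocycle identity for your graph map $\phi$ is exactly \eqref{boloseipazzo}.

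Two small remarks.
\begin{itemize}
\item The step you flag as the main obstacle is immediate. In $\phi([JU,V])=[JU,\phi(V)]-[V,\phi(JU)]$ the last term vanishes, because $V\in\h_1'\subseteq\g'$ and $\phi(JU)\in\g'_J\subseteq\g'$. Hence $(\ad_{JU}-2\,{\rm Id})\phi(V)=0$ directly. Here $JU$ should be $JU_{\h_1}$, which represents the mean curvature vector of $\g/\g'_J$ by Lemma \ref{ILGRANDEJU}, so the spectral hypothesis applies to it. $J$-linearity of $\phi$ is needed only afterwards, to pass from $\h_1'$ to $\h_1=\h_1'\oplus J\h_1'$. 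The fallback via $\Psi$ is unnecessary.
\item $J$-perfectness of $\h_i$ is best justified through Proposition \ref{lastoria}. Since $[X,JY]+[JX,Y]\in\g'_J\subseteq{\rm Rad}(\g)$, the quotient $\g/{\rm Rad}(\g)\simeq\h_i$ carries an abelian complex structure, and Theorem \ref{Semisimple} applies. The last assertion of Corollary \ref{Ilcorollario} cannot be applied to $\h_i$ directly without first knowing $H^2(\h_i,JU_{\h_i})=0$.
\end{itemize}
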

\begin{proof}
Thanks to Corollary \ref{Ilcorollario}, we know that ${\rm Rad}(\mathcal{O}(\g'_J)) \subseteq {\rm Rad}(\g)$. Suppose ${\rm Rad}(\mathcal{O}(\g'_J))={\rm Rad}(\g)$, which implies ${\rm Rad}(\g)\subseteq \mathcal{O}(\g'_J)$. Similarly to Step 2 of Theorem \ref{Leviabelian} we deduce that $\g=\mathcal{O}(\g'_J)$. We are now in the position to apply Theorem \ref{levihol} and conclude this case.

Therefore we only need to deal with the case ${\rm Rad}(\mathcal{O}(\g'_J))\subset {\rm Rad}(\g)$. 
We know that $\g/{\rm Rad}(\mathcal{O}(\g'_J))$ has an abelian  complex structure. By Theorem \ref{Leviabelian}, there exists a $J$-semisimple subalgebra $\f s$ such that
\[
\frac{\f g}{{\rm Rad}(\mathcal{O}(\g'_J))}=\mathfrak{s}\ltimes \frac{{\rm Rad}(\g)}{{\rm Rad}(\mathcal{O}(\g'_J))}\,, \quad \f s\simeq \frac{\f g}{{\rm Rad }(\f g)}\,.
\]
Consider the $J$-invariant subalgebra $\hat{\f s}=\pi^{-1}(\f s)$ of $\g$, where $\pi\colon \f g \to \frac{\f g}{{\rm Rad}(\mathcal{O}(\g'_J))}$ is the canonical projection onto the quotient. As in Step 1 of Theorem \ref{Leviabelian},  we deduce ${\rm Rad}(\hat{\f s})={\rm Rad}(\mathcal{O}(\g'_J))$. This, together with the fact that $\f s=\hat{\f s}/{\rm Rad}(\mathcal{O}(\g'_J))$ is $J$-perfect, implies that $\hat{\f s}\subseteq \mathcal{O}(\g'_J)$. 
Using this last fact and the fact that $\hat{\f s}$ is a subalgebra, we see that $\hat{ \f s}$ acts holomorphically on $\hat{\f s}_J'$. 
Now, by Corollary \ref{Ilcorollario}, we observe that 
$$
\begin{aligned}
    \frac{\hat{\f s}}{{\rm Rad}(\hat{\f s})}\simeq \frac{\f g}{{\rm Rad}(\f g)}=\frac{\mathcal{O}(\g'_J)+{\rm Rad}(\f g)}{{\rm Rad}(\f g)}\simeq \frac{\mathcal{O}(\g'_J)}{{\rm Rad}(\f g)\cap \mathcal{O}(\g'_J)}=\frac{\mathcal{O}(\g'_J)}{{\rm Rad}(\mathcal{O}(\g'_J))}
\end{aligned}
$$
Thus, we are in the position to apply Lemma \ref{unnosomicasai} to conclude that $H^2(\hat{\f s}, JU_{\hat{\f s}})=0$.
Hence, $\hat{\f s}$ satisfies the hypotheses of Theorem \ref{levihol}, which allows us to write $\hat{\f s}=\f h\ltimes {\rm Rad}(\mathcal{O}(\g'_J)).$ It follows  that $\f g=\f h\ltimes {\rm Rad}(\f g)$, which concludes the existence part of the theorem.

The only thing left to prove is uniqueness of the $J$-adapted Levi subalgebra. The argument is by induction on $\dim_{\C}\f g$ and goes as in the proof of Theorem \ref{Leviabelian}, replacing $\f z(\g)$ with $\g'_J$. The only case that is slightly different is when $\g$ is $J$-perfect and $0\neq \g'_J={\rm Rad}(\g)$, so we provide some details. Let $\h_1,\h_2$ be $J$-adapted Levi subalgebras of $\g$.
Let $p_{\f h_1}$ and $p_{\g'_J}$ be the holomorphic projections of $\g$ onto $\f h_1$ and $\g'_J$, respectively. For any $Z_1, Z_2\in \f h_2$,  we have
$$
p_{\f h_1}([Z_1, Z_2])+ p_{\f g'_J}([Z_1, Z_2])=[Z_1, Z_2]=[p_{\f h_1}(Z_1), p_{\f h_1}(Z_2)]+[p_{\f h_1}(Z_1), p_{\f g'_J}(Z_2)]+[p_{\f g'_J}(Z_1), p_{\f h_1}(Z_2)]\,,
$$
and thus $p_{\f h_1}$ is a holomorphic Lie algebra homomorphism and
\begin{equation}\label{boloseipazzo}
p_{\f g'_J}([Z_1, Z_2])=[p_{\f h_1}(Z_1), p_{\f g'_J}(Z_2)]+[p_{\f g'_J}(Z_1), p_{\f h_1}(Z_2)]\,.
\end{equation}
Now, choosing $Z_1=JU_{\h_2}$ and $Z_2\in \h_2'$, we get from \eqref{boloseipazzo}
\[
2p_{\f g'_J}(Z_2)=[JU_{\h_1}, p_{\f g'_J}(Z_2)]\,,
\]
where we used $2$-step solvability and the fact that $p_{\f h_1}(JU_{\h_2})=JU_{\f h_1}$, thanks to Lemma \ref{ILGRANDEJU}, since $p_{\f h_1}\vert_{\f h_2}$ is a holomorphic Lie algebra isomorphism. But then $p_{\f g'_J}(Z_2)$ must vanish, since otherwise $2$ would be an eigenvalue for $\ad_{JU_{\f h_1}}\vert_{\g'_J}$, which is prohibited by our assumptions. As $\f h_2$ is $J$-perfect we infer $p_{\f g'_J}|_{\f h_2}=0$ and thus $\f h_2=\f h_1$, as desired. 
\end{proof}

We now show that whenever there is an SKT metric compatible with the complex structure the obstruction to the existence of the Levi--Malcev decomposition vanishes.

\begin{thm}\label{SKTLevi}
Let $\f g$ be a $2$-step solvable SKT Lie algebra.  Then $H^2(\g,JU)=0$. As a consequence,
$$
\f g=\f{aff}(\R)^p\ltimes {\rm Rad}(\f g)\,,
$$
for some $p\ge 0$.
\end{thm}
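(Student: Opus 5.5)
Plan: the statement has two parts. First I show that the SKT condition forces $2\notin{\rm Spec}(\ad_{JU}|_{\g'_J})$; by Lemma \ref{obstruction} this is $H^2(\g,JU)=0$. Then I apply Theorem \ref{storiafatta}, and finally identify the Levi factor using the SKT identity.

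\textbf{Step 1 (cohomology).} I argue by contradiction. Suppose $[JU,W]=2W$ for some $0\neq W\in(\g'_J)^{1,0}$; here $\ad_{JU}$ preserves $(\g'_J)^{1,0}$ because $JU\in\mathcal O(\g'_J)$. Take a representative $JU\in\g'+J\g'$ together with $U\in\g'$. I will use the brackets $[JU,U]\equiv 2U$ modulo $\g'_J$ and $[U,\g'_J]=0$, the latter by $2$-step solvability. Then I evaluate the identity of Lemma \ref{SKTmaledetto} with $X=U$ and $Y=\Re W$, or a suitable real combination. The terms $-|[X,Y]|^2-\dots$ contribute $-|[JU,Y]|^2-|[JU,JY]|^2$, which is $-8|Y|^2$ up to normalisation. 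The remaining terms are traces of the form $g([J[X,JX],\cdot],\cdot)$, with $[U,JU]=-2U+(\g'_J)$, and they involve only $\ad_{JU}$ and $\ad_U$ acting on $Y$. I expect these to equal at most $+4|Y|^2$ because the eigenvalue is exactly $2$. The sum would then be strictly negative, a contradiction. To make this work I would first rescale, using a metric adapted to the splitting of Theorem \ref{levihol} Step 2, so that the eigenvalue relation is used cleanly. A cleaner route would be the $\f{aff}(\R)$-type subalgebra $\langle U,JU\rangle\ltimes\langle Y,JY\rangle$ with weight $2$: it looks like Example \ref{noLevi}, and one could reduce the SKT inequality to this $4$-dimensional piece by restricting $g$, since the SKT identity with $X,Y$ in a $J$-invariant subalgebra depends only on brackets inside that subalgebra. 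I expect this sign computation to be the main obstacle. The difficulty is that $JU$ is only defined modulo $\g'_J$ and ${\rm Rad}$, so I must check that the correction terms drop out, via $2$-step solvability and $[\g'_J,\g']=0$.

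\textbf{Step 2 (decomposition).} Given $H^2(\g,JU)=0$, Theorem \ref{storiafatta} yields $\g=\h\ltimes{\rm Rad}(\g)$. Corollary \ref{Ilcorollario} shows that $J$ is abelian on $\h\simeq\g/{\rm Rad}(\g)$, because $\h$ is $J$-semisimple and inherits the vanishing obstruction. Corollary \ref{corsymmetricdecom} then gives $\h\simeq\f{aff}(\R)^p\oplus\f{aff}(\C)^q$.

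\textbf{Step 3 (no $\f{aff}(\C)$ factors).} To exclude $\f{aff}(\C)$ factors, I restrict the SKT metric to one $\f{aff}(\C)$ summand $\f k\subseteq\h$. The SKT identity evaluated at $X,Y\in\f k$ involves only brackets in $\f k$, because $\f k$ is a $J$-invariant subalgebra. Hence $g|_{\f k}$ is SKT on $\f{aff}(\C)$, which contradicts the observation in Example \ref{aff}. Therefore $q=0$.
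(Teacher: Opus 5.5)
\textbf{Gap in Step 1.} Step 1 carries all the content of the theorem, and you do not actually prove it. You choose the right test vectors, but you never evaluate Lemma \ref{SKTmaledetto}, and the heuristics around it are wrong in three ways.
\begin{enumerate}
\item The bound ``at most $+4|Y|^2$'' for the remaining terms is unsupported. In fact those terms are all negative.
\item You cannot ``rescale using a metric adapted to the splitting'', because the SKT metric is given.
\item The fallback through $\langle U,JU\rangle\ltimes\langle Y,JY\rangle$ fails. Since $[JU,U]=2U+v$ with $v\in\g'_J$ arbitrary, this span is not a subalgebra in general. Replacing $U$ by $U+z$, $z\in\g'_J$, to kill $v$ means solving $(\ad_{JU}-2)z=-v$ on $\g'_J$, and that is exactly what may fail when $2$ is an eigenvalue.
\end{enumerate}
The paper avoids these issues. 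It passes to the whole almost-abelian subalgebra $\pi^{-1}(\h_U)=\langle U,JU\rangle\oplus\g'_J$ of $\mathcal O(\g'_J)$ and normalises an orthogonal complement of $\g'_J$ there. It then invokes the characterisation of SKT almost-abelian Lie algebras \cite[Lemma 4.8]{AL19}, which gives that the real parts of the eigenvalues of $\ad_{JU}|_{\g'_J}$ lie in $\{0,-1\}$.

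\textbf{Your route does close.} Your direct evaluation works, and it needs no subalgebra and no normalisation.

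Every representative of $U$ lies in $\mathcal O(\g'_J)'+\g'_J\subseteq\g'$. Take a real $Y\in\g'_J$ with $[JU,Y]=2Y$; then $[JU,JY]=2JY$ by Lemma \ref{Lem2stepideal}. Two-step solvability gives $[U,Y]=[U,JY]=[Y,JY]=0$. Moreover $J[U,JU]=-2JU-Jv$, and $[Jv,Y]=[Jv,JY]=0$. Evaluating Lemma \ref{SKTmaledetto} at $X=U$, every term vanishes except six, which satisfy
\begin{align*}
&g([J[U,JU],JY],JY)=g([J[U,JU],Y],Y)=g([J[JU,Y],JU],JY)\\
&\quad=-g([J[JU,JY],JU],Y)=-|[JU,Y]|^2=-|[JU,JY]|^2=-4|Y|^2\,.
\end{align*}
So the identity reads $0=-24|Y|^2$, which is a contradiction. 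More generally, a real eigenvalue $\lambda$ gives $-4\lambda(\lambda+1)|Y|^2=0$.

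\textbf{Comparison and remaining steps.} Compared with the paper, your argument is self-contained and avoids the almost-abelian classification. It only controls real eigenvalues, but that is all Lemma \ref{obstruction} needs. The cited result gives finer information on all eigenvalues.

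Your Steps 2 and 3 coincide with the paper's. Your claim that $J$ is abelian on $\h$ holds but needs a line of justification. Either apply Step 1 to the SKT algebra $\h$ and then Corollary \ref{Ilcorollario}. Or argue directly: $[X,Y]-[JX,JY]=-J([JX,Y]+[X,JY])\in\g'_J\subseteq{\rm Rad}(\g)$ by Proposition \ref{lastoria}.
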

\begin{proof}
We know that $\mathcal O(\f g_J')$ is a $J$-invariant ideal of $\f g$ which inherits an SKT metric. 
Since $\f g_J'$ is a $J$-invariant ideal of $\mathcal{O}(\g'_J)$, we can use Theorem \ref{Leviabelian} to infer that 
$$
\frac{\mathcal{O}(\g'_J)}{\f g_J'}=\f h\ltimes \frac{{\rm Rad}(\mathcal{O}(\g'_J))}{\f g_J'}\,, \quad \f h\simeq \frac{\mathcal O (\f g_J')}{{\rm Rad}(\mathcal O(\f g_J'))}\,.
$$
We consider the subalgebra $\f  h_U=\langle U, JU \rangle$  of $\f h$ and its preimage $\pi^{-1}(\f h_U)$ via the canonical projection onto the quotient $\pi\colon \mathcal{O}(\g'_J) \to \mathcal{O}(\g'_J)/ \f g_J'$. Observe that $\pi^{-1}(\f h_U)$ is an almost-abelian $J$-invariant subalgebra of $\f g$. Indeed, $\f a:=\pi^{-1}(\langle  U\rangle)=\pi^{-1}(\f h_U')$ is an abelian ideal of $\pi^{-1}(\f h_U)$, since $U\in  (\mathcal{O}(\g'_J)/\f g_J')'=\mathcal{O}(\g'_J)'/\f g_J'$ and $\f g $ is $2$-step solvable. Since $\mathcal{O}(\g'_J)$ is SKT, $\pi^{-1}(\f h_U)$ admits an SKT metric $g$. We consider an orthogonal complement $\langle U', JU'\rangle$ of $\f g_J'$ in $\pi^{-1}(\f h_U)$ with respect  to   $g$. Clearly $U'=bU+X$, for some $X\in\f g _J'$ and $b\in \R$.  Therefore, we have $[JU',U' ]=2bU'+ v, $ for some $v\in\f g_J'$, and thus $\ad_{JU'}|_{\f g_J'}=b\ad_{JU}|_{\f g_J'}$.  We can further suppose, up to scaling, that $|U'|_g=1$.  Now, we are in the position to apply \cite[Lemma 4.8]{AL19}  or \cite[Main Theorem (iv)]{Asia} with $a=2b$, to infer that the real parts of the eigenvalues of $\ad_{JU}|_{\f g_J'}$ are either $0$ or $-1$. This concludes the proof of the first part. 

We can now apply Theorem \ref{storiafatta} to obtain 
$$
\f g=\f h\ltimes {\rm Rad}(\f g)\,,
$$
where $\f h $ is a $J$-semisimple subalgebra. Using Corollary \ref{corsymmetricdecom}, we have that $\f h$ is a direct sum of $p$ copies of $\f{aff}(\R)$ and $q$ copies of $\f{aff}(\C)$. On the other hand, $\f h$ is again SKT and hence $q=0$, since $\f{aff}(\C)$ does not admit any SKT metric.
\end{proof}

\begin{rmk}
Similar decompositions were studied in the literature in relation with the existence of special Hermitian metrics. For instance, in \cite{FKV15} the non-existence of SKT metrics was proved in the case of certain semidirect products. However, usually not all the assumptions in \cite[Theorem 1.1]{FKV15} are satisfied in our framework.
\end{rmk}
As a first application of Theorem \ref{SKTLevi}, we can characterize unimodular, 2-step solvable SKT Lie algebras with ${\rm Rad}(\f g)=\f g'_J$ as \emph{OT-like}, see \cite[Definition 3.18]{ShuhoOT}. Examples of said Lie algebras arise as Lie algebras of the universal covers of Oeljeklaus-Toma manifolds, see \cite{Ka12}.  The existence of SKT metrics on OT manifolds was characterized in \cite{ADOS24,O22}. In view of the latter, we will say that a Lie algebra is \emph{SKT OT-like} if it is OT-like and, for any $i=1, \ldots, s$,  there exists a unique $j_i=1, \ldots, t$ such that ${\rm Re}(c_{ij_i})=-\frac12$ and, $j_i\ne j_{i'}$ whenever $i\ne i'$. We first consider the following easy example.
\begin{es}
Consider the Lie algebra $\f g=\langle e_1, \ldots, e_6\rangle$ defined by the structural equations:
$$
\begin{aligned}\label{esnoSKT}
[e_2, e_1]=&\, e_1, \quad[e_4, e_3]=e_3, \quad[e_2, e_5]=-\frac12e_5+ae_6, \quad[e_2, e_6]=-\frac12e_6-ae_5,\\ [e_4, e_5]=&\, -\frac12e_5+be_6, \quad[e_4, e_6]=-\frac12e_6-be_5\,,
\end{aligned}
$$  with $a, b\in \R$ and  complex structure given by $Je_1=e_2,$ $Je_3=e_4$ and $Je_5=e_6$. We observe that 
$$
dd^c(e^{56})=e^{1456}-e^{2356}\,, \quad \iota_{e_5}\iota_{e_6}dd^c e^{ij}=0, \, \quad (i, j)\ne (5, 6)\,.
$$  Hence, $\f g$ cannot admit any SKT metric.
\end{es}
\begin{prop}
Let $\f g$ be a unimodular,  SKT,   $2$-step solvable Lie algebra such that ${\rm Rad}(\f g)=\f g'_J$. Then, $\f g$ is SKT OT-like.
\end{prop}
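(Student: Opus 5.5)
First, apply Theorem \ref{SKTLevi}, which gives $\f g=\f{aff}(\R)^p\ltimes \f g'_J$ with $\f h:=\f{aff}(\R)^p$. Then rewrite this splitting in the shape required by \cite[Definition 3.18]{ShuhoOT}: an abelian complement acting diagonally on a nilradical, which is abelian here.

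Concretely, let $\{X_1,JX_1,\dots,X_p,JX_p\}$ be the standard basis of $\f h$ from \eqref{baseincredibile}. Then $\f g'_J$ is an abelian ideal of complex dimension $t$, and $\f a:=\langle X_1,\dots,X_p\rangle\oplus \f g'_J$ is an abelian ideal, since $\f h'=\langle X_i\rangle$, $\f g$ is $2$-step solvable and $[X_i,\f g'_J]\subseteq \f g'\cap \f g'_J$ commutes. The complement $\langle JX_1,\dots,JX_p\rangle$ is abelian because $J$ is abelian on $\f h$. Hence $\f g=\langle JX_i\rangle\ltimes \f a$, which is the OT-like skeleton with $s=p$ and $\f a$ containing $\f g'_J$.

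Next, diagonalise the action. By Lemma \ref{Lem2stepideal} and the fact that $\f h\subseteq \mathcal O(\f g'_J)$, the operators $\ad_{X_i}|_{\f g'_J}$ and $\ad_{JX_i}|_{\f g'_J}$ are $J$-linear, with $\ad_{X_i}|_{\f g'_J}=-J\ad_{JX_i}|_{\f g'_J}$. I would argue that $\ad_{X_i}|_{\f g'_J}=0$. Indeed, $[X_i,Y]\in \f g'_J$ for $Y\in\f g'_J$, and $X_i=[JX_i,X_i]$, so Jacobi together with abelianness of $\f g'$ forces $\ad_{X_i}$ to act trivially. This makes $\ad_{JX_i}|_{\f g'_J}$ simultaneously triangularisable and $\C$-linear with complex eigenvalues $c_{ij}$, $j=1,\dots,t$. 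To get genuine diagonalisability, which is needed for OT-like, I would use the SKT condition of Lemma \ref{SKTmaledetto} applied on the almost-abelian subalgebras $\langle X_i,JX_i\rangle\ltimes\f g'_J$. Normality of $\ad_{JX_i}$ for SKT almost-abelian algebras, in the style of \cite{AL19}, then gives a simultaneous diagonalisation of the commuting family.

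Unimodularity fixes the traces. We have $\tr\ad_{JX_i}=1+2\sum_j\Re(c_{ij})=0$, where the $1$ comes from $\f{aff}(\R)$. Hence $\sum_j \Re(c_{ij})=-\tfrac12$.

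The main obstacle is the SKT OT-like condition: for each $i$, exactly one $j_i$ must have $\Re(c_{ij_i})=-\tfrac12$, and the $j_i$ must be distinct. My plan is to evaluate the SKT identity of Lemma \ref{SKTmaledetto} on pairs $X=X_i$ or $JX_i$ and $Y$ a weight vector $W_j\in\f g'_J$. This should give constraints of the form $\Re(c_{ij})(2\Re(c_{ij})+1)=0$, in parallel with the eigenvalue conclusion ($0$ or $-1$ after scaling) in the proof of Theorem \ref{SKTLevi}. The constraint says each $\Re(c_{ij})\in\{0,-\tfrac12\}$, and the trace condition then yields exactly one $j_i$ per $i$. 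For distinctness, I would use mixed SKT terms with $X=X_i+X_{i'}$. If $j_i=j_{i'}$, then $\ad_{J(X_i+X_{i'})}$ has eigenvalue real part $-1$ on $W_{j_i}$, which contradicts the $\{0,-\tfrac12\}$ constraint applied to the normalised vector, as in the proof of Theorem \ref{SKTLevi}. Deriving these constraints cleanly from the long formula in Lemma \ref{SKTmaledetto} is the step I expect to be the most laborious.
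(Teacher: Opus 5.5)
Your proposal is correct. Up to the last step it is the paper's argument:
\begin{itemize}
\item Theorem \ref{SKTLevi} gives $\f g=\f{aff}(\R)^p\ltimes\f g'_J$.
\item The $J$-invariant almost-abelian ideals $\langle X_i,JX_i\rangle\ltimes\f g'_J$ inherit the SKT metric, and \cite[Lemma 4.8]{AL19} makes each $\ad_{JX_i}|_{\f g'_J}$ normal and $J$-linear.
\item The commuting family is simultaneously diagonalised.
\item Unimodularity, $0=\tr\ad_{JX_i}=1+2\sum_j{\rm Re}(c_{ij})$, then forces a unique $j_i$.
\end{itemize}
The condition ${\rm Re}(c_{ij})\in\{0,-\frac12\}$ is part of the same result \cite[Lemma 4.8]{AL19} that you already invoke for normality. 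So the computation with Lemma \ref{SKTmaledetto} that you expect to be laborious is unnecessary.

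The genuine difference is the distinctness of the $j_i$. The paper restricts to $\langle X_i,JX_i,X_{i'},JX_{i'},Y_{j_i},JY_{j_i}\rangle$, identifies it with an explicit six-dimensional model, and excludes SKT metrics there by a direct $dd^c$ computation. You instead apply the almost-abelian criterion once more, to $\langle X,JX\rangle\ltimes\f g'_J$ with $X=X_i+X_{i'}$. This is valid:
\begin{itemize}
\item $[JX,X]=X$, so the normalisation is the same as for each $X_i$.
\item $X\in\f g'$ acts trivially on $\f g'_J$.
\item The subalgebra is $J$-invariant, so it inherits the SKT metric.
\item If $j_i=j_{i'}$, then $W_{j_i}$ is an eigenvector of $\ad_{JX}|_{\f g'_J}$ with eigenvalue $c_{ij_i}+c_{i'j_i}$, whose real part is $-1\notin\{0,-\frac12\}$.
\end{itemize}
Your route uses one tool throughout and avoids the ad hoc example. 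The paper's route is more self-contained at that step, since it exhibits the obstruction by hand.

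There is one slip to remove. The relation $\ad_{X_i}|_{\f g'_J}=-J\ad_{JX_i}|_{\f g'_J}$ is false: the identity $J\ad_X=\ad_{JX}$ of Lemma \ref{Lem2stepideal} holds for $X\in\f g'_J$, not for $X=X_i$. Combined with your (correct) $\ad_{X_i}|_{\f g'_J}=0$, it would give $\ad_{JX_i}|_{\f g'_J}=0$, hence $\tr\ad_{JX_i}=1$, contradicting unimodularity. It is not used later, so just drop it. $J$-linearity comes from the holomorphic action in Lemma \ref{Lem2stepideal}. Also, $\ad_{X_i}|_{\f g'_J}=0$ holds simply because $X_i=[JX_i,X_i]\in\f g'$ and $\f g'$ is abelian; no Jacobi argument is needed.
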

\begin{proof}
 Using Theorem \ref{SKTLevi}, we know that 
 $$
\f g=\f{aff}(\R)^s\ltimes \f g_J'\,.
 $$ We  consider  $\{X_1, JX_1, \ldots, X_s, JX_s\}$ the standard basis of $\f{aff}(\R)^s$, see \eqref{baseincredibile},  and denote with $t=\dim _{\C}\f g_J'$. Now, for any $i=1, \ldots, s$,  we have that $\langle X_i, JX_i \rangle\ltimes \f g_J'$ is an almost-abelian ideal of $\f g$, which hence inherits an SKT metric. Using \cite[Lemma 4.8]{AL19} or \cite[Main Theorem (iv)]{Asia}, we can infer that $\ad_{JX_i}|_{\f g_J'}$ is normal, thus diagonalizable over $\C$, $  [\ad_{JX_i}|_{\f g_J'},  J]=0$ and ${\rm Re}({\rm Spec}(\ad_{JX_i}|_{\f g_J'}))\subseteq\{0,-\frac12\}$. Moreover, using $2$-step solvability, we have that $\{\ad_{JX_1}|_{\f g_J'},\ldots,  \ad_{JX_s}|_{\f g_J'}\}$ is a family of commuting and diagonalizable endomorphisms. Therefore, we can find a basis  $\{Y_1, JY_1, \ldots, Y_t, JY_t\}$ of $\f g_J'$ such that, for any $i=1, \ldots, s$ and $j=1, \ldots, t$, 
 $$
 \ad_{JX_i}Y_j=\varepsilon_{ij}Y_j+a_{ij}JY_j\,,\quad  \ad_{JX_i}JY_j=\varepsilon_{ij}JY_j-a_{ij}Y_j\,, 
 $$ where $a_{ij}\in \R$ and $\varepsilon_{ij}\in\{0,-\frac12\}$.  
 % The proof will be  completed if we prove that  for any $j=1, \ldots, t$ there exists a unique $i=1, \ldots, p$ such that $\varepsilon_{ij}=-\frac12$.
 Firstly, we show that,   for any $i=1, \ldots, s$, there exists a unique  $j_i=1, \ldots, t $ such that $\varepsilon_{ij_i}=-\frac12$.  Indeed, 
 % $I:=\{i\in \{1, \ldots, s\}\,\, |\, \, \varepsilon _{ij}=0\,, j=1, \ldots, t\}=\emptyset$. Indeed, if it was not the case, for any $i\in I$, ${\rm tr}(\ad_{JX_i})=1$,  against unimodularity of $\f g$. Hence, for any $i=1, \ldots, s$, there exists $j_i=1, \ldots, t $ such that $\varepsilon_{ij_i}=-\frac12$. We show that $j_i$ is unique. By contradiction, 
 let $k\ge 0$ be such that $\{j_{i_1}, \ldots, j_{i_k}\}\subseteq \{1, \ldots, t\}$ satisfy $\varepsilon_{ij_{i_r}}=-\frac12$, for every $r=1, \ldots, k$.  In this case, we have that 
 $$
 0={\rm tr}(\ad_{JX_i})=1-k\,,
$$forcing $k=1$.  Finally, we prove that, for any $i,i'=1, \ldots, s$ with $i\ne i'$ then, $j_i\ne j_{i'}$.  Suppose $j_i=j_{i'}$ and consider the $J$-invariant subalgebra $\f s=\langle X_i, JX_i, X_{i'}, JX_{i'}, Y_{j_i}, JY_{j_i}\rangle$. $\f s$  inherits an SKT metric but  it is isomorphic to the Lie algebra in Example \ref{esnoSKT} which is a contradiction.  The claim follows by considering $C=(c_{ij})$  where $c_{ij}=\varepsilon_{ij}+ \sqrt{-1}a_{ij}$ in \cite[Definition 3.18]{ShuhoOT}. 
% On the other hand, if we suppose that $p<t$, we have
%  $$
% \f k=\f{aff}(\R)^p\ltimes \langle  Y_{p+1}, JY_{p+1}, \ldots, Y_{t}, JY_{t}\rangle
%  $$ is a non-unimodular  $J$-invariant  ideal  of $\f g$. Thus, $p=t$, this conclude the proof.
 \end{proof}

We conclude this section by highlighting the fact that not all SKT OT-like Lie algebras are the Lie algebra of the universal cover of a Oeljeklaus-Toma manifold.  
\begin{es}
Consider the unimodular Lie algebra $\f g=\langle e_1, \ldots, e_6\rangle$ defined by the structural equations:
$$
\begin{aligned}\label{esnoSKT}
[e_2, e_1]=&\, e_1, \quad[e_2, e_3]=-\frac12e_3+ae_4, \quad[e_2, e_4]=-\frac12e_4-ae_3,\\ [e_2, e_5]=&\, be_6, \quad[e_2, e_6]=-be_5\,,
\end{aligned}
$$
with $a, b\in \R$ and complex structure given by $Je_1=e_2,$ $Je_3=e_4$ and $Je_5=e_6$. We observe that 
$$
dd^c(e^{12}+e^{34}+e^{56})=0\,.
$$
Hence, $\f g$ admits a SKT metric. On the other hand,  $\f g= \mathfrak{aff}(\R)\ltimes \g'_J$ with $\dim_{\C}\f g_J'=2$, giving that $\f g$ cannot be the Lie algebra of the universal cover of a SKT Oeljeklaus-Toma manifold by \cite[Corollary 3]{ADOS24}.
\end{es}

\section{Solvable complex structures}\label{secsolv}
In this section, we work towards the proof of Theorem \ref{mainfv} and Theorem \ref{mainsktcompletely}.
\subsection{\texorpdfstring{$J$}{J}-solvability and Chern--Ricci flatness}
In this subsection, we explore the relation between $J$-solvability of a Lie algebra and the vanishing of the form $\eta$, defined in \eqref{defn_eta}. 
For starters, we relate $\eta$ to the $J$-solvable radical in a class of Lie algebras that includes $J$-perfect ones. This will be useful in what follows.

\begin{prop}\label{kerrhorad}
Let $\f g= \f a+ J \f a  $ be a solvable Lie algebra such that $\f a $ is an abelian ideal and $\f g_J'\subseteq \f a_J $. Then,  $\f g'\subseteq \f a$,  $\f a_J$ is an ideal in $\f g $ and
$$
\eta=\pi^*\eta_{\f g/\f a_J}\,,
$$
where $\pi\colon \f g \to \f g/\f a_J $ is the canonical projection onto the quotient. Moreover, $\ker \eta={\rm Rad}(\f g)$.
\end{prop}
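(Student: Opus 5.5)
We will go through the claims in the order stated, using only the algebraic identities available: integrability \eqref{behaviourofJ2step}, the holomorphic action in Lemma \ref{Lem2stepideal}, and Proposition \ref{propKilling} applied to a suitable quotient.

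\textbf{Step 1: $\f g'\subseteq \f a$.} Every element of $\f g$ has the form $X+JY$ with $X,Y\in\f a$. Since $\f a$ is an abelian ideal, $[\f a,\f a]=0$ and $[\f a,J\f a]\subseteq\f a$. It remains to handle $[JX,JY]$. Integrability gives
\[
[JX,JY]=[X,Y]+J[JX,Y]+J[X,JY]=J([JX,Y]+[X,JY]).
\]
By \eqref{behaviourofJ2step}, $[JX,Y]+[X,JY]\in\f g'_J$, so $[JX,JY]\in J\f g'_J=\f g'_J\subseteq \f a_J\subseteq\f a$.

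\textbf{Step 2: $\f a_J$ is an ideal.} Because $\f g'\subseteq\f a$ and $\f a$ is abelian, $\f g$ is $2$-step solvable and $\f g=\f g'+J\f g'+\f a$. Take $Z\in\f a_J$. Then $[\f a,Z]=0$. For $X\in\f a$ we have $JZ\in\f a$, so integrability gives
\[
[JX,Z]=-J[JX,JZ]+J[X,Z]+[X,JZ]=-J[JX,JZ].
\]
The element $[JX,JZ]$ lies in $[J\f a,\f a]\subseteq\f a$. We need $J[JX,JZ]\in\f a$ as well, and we get it from the mixed formula $[JX,JZ]+[X,Z]\in\f g'_J$ applied to the pair $(X,JZ)$, which gives $-[X,Z]+[JX,JZ]\in\f g'_J$. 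Since $[X,Z]=0$, we obtain $[JX,JZ]\in\f g'_J\subseteq\f a_J$, which is $J$-stable. Hence $[JX,Z]\in\f a_J$, and $\f a_J$ is an ideal.

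\textbf{Step 3: $\eta=\pi^*\eta_{\f g/\f a_J}$.} Recall $\eta(X,Y)=-\tfrac12\tr(J\ad_{[X,Y]})$, where $[X,Y]\in\f a$. For $W\in\f a$ the operator $\ad_W$ kills $\f a$, so $\ad_W$ maps $\f g\to\f g/\f a$, and from there into $\f a$. We compute $\tr(J\ad_W)$ using the flag $\f a_J\subseteq\f g$, which is preserved by $J$ and by $\ad_W$. The trace splits into the part on $\f a_J$ and the part on $\f g/\f a_J$. On $\f a_J\subseteq\f a$ the operator $\ad_W$ vanishes, so the first part is zero. What remains is $\tr(J\ad^{\f g/\f a_J}_{\pi W})$, and therefore $\eta$ is the pullback of $\eta_{\f g/\f a_J}$.

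\textbf{Step 4: $\ker\eta={\rm Rad}(\f g)$.} This is the main step. The quotient $\bar{\f g}=\f g/\f a_J$ satisfies $\bar{\f g}'_J\subseteq\pi(\f a_J)=0$, so by the remark after \eqref{behaviourofJ2step} its complex structure is abelian. Moreover $\bar{\f g}=\bar{\f a}\oplus J\bar{\f a}$ with $\bar{\f a}$ abelian, hence nilpotent, so $\bar{\f g}=\f n+J\f n$. Proposition \ref{propKilling}(4) then gives $\ker\eta_{\bar{\f g}}=\f n(\bar{\f g})_J$, and by Step 3, $\ker\eta=\pi^{-1}(\f n(\bar{\f g})_J)$.

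We next identify $\f n(\bar{\f g})_J$ with ${\rm Rad}(\bar{\f g})$:
\begin{itemize}
\item $\f n(\bar{\f g})_J$ is a $J$-invariant nilpotent ideal (Proposition \ref{propKilling}(1)), hence $J$-solvable by Lemma \ref{Lem:basicnilsol}, so it lies in ${\rm Rad}(\bar{\f g})$.
\item Conversely, by Theorem \ref{Leviabelian} and Theorem \ref{propsemidir}, the radical of $\bar{\f g}$ is $\f n_J$ whenever $\bar{\f g}=\bar{\f a}\rtimes J\bar{\f a}$ with $\bar{\f g}'_J=0$.
\end{itemize}
Finally, $\f a_J$ is an abelian $J$-invariant ideal, so $\f a_J\subseteq{\rm Rad}(\f g)$, and therefore ${\rm Rad}(\f g)=\pi^{-1}({\rm Rad}(\bar{\f g}))$ by Lemma \ref{lemmagrullo}. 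Combining these gives $\ker\eta={\rm Rad}(\f g)$.

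The hard part is Step 4: we must make sure the hypotheses of Theorem \ref{propsemidir} hold for $\bar{\f g}$, in particular that $\bar{\f a}\cap J\bar{\f a}=0$, which holds because $\bar{\f a}_J=\pi(\f a_J)=0$.
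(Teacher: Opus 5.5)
Your proof is correct and follows the paper's route. You use integrability to get $\mathfrak g'\subseteq\mathfrak a$ and that $\mathfrak a_J$ is an ideal. You split the trace along $\mathfrak a_J$ to get $\eta=\pi^*\eta_{\mathfrak g/\mathfrak a_J}$; your flag argument here is a tidier version of the paper's metric/orthogonal-complement computation, since it treats all pairs $X,Y$ at once rather than only $\eta(Y,JY)$. Finally, you apply Proposition~\ref{propKilling}(4) and Theorem~\ref{propsemidir} on the quotient to get $\ker\eta=\mathrm{Rad}(\mathfrak g)$.

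There are two harmless slips in Step 2. The aside $\mathfrak g=\mathfrak g'+J\mathfrak g'+\mathfrak a$ is false in general (it fails for $\mathfrak g=\mathbb R^2$ abelian with $\mathfrak a$ a line), but you never use it. The $[X,JZ]$ term in your integrability identity should carry a minus sign, which does not matter because that term vanishes.
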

\begin{proof}
Firstly,   we prove that $\f g'\subseteq \f a $. Since $\f a$ is an ideal,  it is sufficient to prove that $[J\f a,J\f a]\subseteq \f a $. Fix $X, Y\in\f a$,
exploiting integrability, we have 
$$
\f g'\ni [JX, JY]=J([JX, Y]+[X, JY])\in J \f g'\,, 
$$
concluding that $[JX, JY]\in\f g_J'\subseteq \f a_J$. 
Similarly to the proof of Lemma \ref{Lem2stepideal} we deduce that $\f a_J$ is an ideal.
In addition, using that $\f a_J$ is abelian, for every $X\in \f a_J$ and $Y\in \g$ we have $\Im(J\ad_{[X, Y]})\subseteq \f a_J$ and $J\ad_{[X, Y]}|_{\f a_J}=0$. Consequently
$$
\eta(X, Y)=-\frac12{\rm tr}(J\ad_{[X,Y]})=0\,, \qquad X\in \f a_J\,,\,\,Y \in\f g\,,
$$
showing that $\f a_J \subseteq \ker \eta$. Consider a Hermitian metric $g$ and fix $Y\in (\f a_J)^{\perp}\cap \f a$, where the orthogonal is with respect to $g$. Since $\f a $ is an abelian ideal, $[Y, JY]\in\f a$ and we have that $\ad_{[Y, JY]}\vert_{\f a_J}=0$. Thus
\[
\begin{aligned}
\eta(Y, JY)&=-\frac12 \tr(J\ad_{[Y,JY]} \vert_{\f a_J})- \frac12\tr(J \ad_{[Y,JY]}\vert_{(\f a_J)^\perp})=-\frac12\tr(J\ad_{[Y,JY]^{\perp}}\vert_{ (\f a_J)^{\perp}})\,,
\end{aligned}
\]
where $[Y,JY]^{\perp}$ is the orthogonal projection on $(\f a_J)^{\perp}$.  Thus, we obtain that 
$$
\eta(Y, JY)
=-\frac12{\rm tr}^{\f g/\f a_J}(J\ad_{[\pi(Y), J\pi(Y)]})\,. 
$$
The claim that $\eta=\pi^*\eta_{\g/\f a_J}$ follows from the above equation and from the inclusion $\f a_J\subseteq \ker \eta$. 

Furthermore, we note that $\ker \eta=\pi^{-1}(\ker\eta_{\f g/\f a_J})=\pi^{-1}(\f n_J)$, where $\f n_J $ is the maximal nilpotent $J$-invariant ideal of $\f g/\f a_J$. Using that $\pi$ is a holomorphic Lie algebra homomorphism, we get that $\ker \eta $ is a $J$-invariant ideal of $\f g$. Finally, by Lemma \ref{lemmagrullo}  $\ker\eta $ is also $J$-solvable. Thus,  $\ker \eta\subseteq {\rm Rad}(\f g)$.

Next, we claim that $\f n_J={\rm Rad}(\f g/ \f a_J)$. Indeed, since $\f g_J'\subseteq \f a_J$, we have that $\f g/\f a_J$ has an abelian complex structure $J$. Moreover, clearly, $\f g/\f a_J=\f a/\f a_J\oplus J \f a/\f a_J.$  In addition, since $\f g'\subseteq \f a$, we obtain $(\f g/\f a_J)'_J=(\f g'/\f a_J)\cap J(\f g'/\f a_J)\subseteq (\f a/\f a_J)\cap J(\f a/\f a_J)=0$. Then, using Theorem \ref{propsemidir},  we conclude the proof of the claim. In particular, since 
$$
\frac{\f g}{\ker \eta}\simeq \frac{\f g/\f a_J}{\pi^{-1}(\f n_J)/\f a_J}\simeq \frac{\f g/\f a_J}{\f n_J}\,,
$$this gives that $\f g/ \ker \eta$ is $J$-semisimple and hence ${\rm Rad}(\f g)=\ker \eta$, which concludes the proof. 
\end{proof}

Proposition \ref{kerrhorad} allows us to relate $J$-solvability with the vanishing of $\eta$, as follows. 

\begin{prop}\label{rhocore}
Let $\f g$ be a  $2$-step solvable Lie algebra endowed with a complex structure $J$. Then $\eta=0$ if and only if $\f g $ is $J$-solvable. In particular,  $\frac{\f g'+J\f g' }{\f g_J'}$ is nilpotent.
\end{prop}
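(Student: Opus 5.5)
The plan is to reduce to the ideal $\f k:=\f g'+J\f g'$ and use Proposition \ref{kerrhorad} there with $\f a:=\f g'$. Since $\f g$ is $2$-step solvable, $\f g'$ is an abelian ideal of $\f k$. Moreover $\f k'\subseteq\f g'$, hence $\f k'_J\subseteq \f g'_J=\f a_J$. So Proposition \ref{kerrhorad} applies to $\f k$ and gives $\ker\eta_{\f k}={\rm Rad}(\f k)$.

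Next I would compare $\eta$ with $\eta_{\f k}$. For $X,Y\in\f g$ we have $[X,Y]\in\f g'\subseteq\f k$. Since $\f k$ is a $J$-invariant ideal, $J\ad_{[X,Y]}$ maps $\f g$ into $\f k$, so its trace equals the trace of its restriction to $\f k$. Thus $\eta(X,Y)=-\frac12\tr^{\f k}(J\ad^{\f k}_{[X,Y]})$.

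Now the two implications.

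\emph{$J$-solvable implies $\eta=0$.} If $\f g$ is $J$-solvable, then $\f k=\f g^{[1]}(J)$ is $J$-solvable, so ${\rm Rad}(\f k)=\f k$ and $\eta_{\f k}=0$. It remains to get $\eta(X,Y)=0$ for arbitrary $X,Y\in\f g$, not only for $X,Y\in\f k$. By Proposition \ref{kerrhorad}, $\eta_{\f k}=\pi^*\eta_{\f k/\f a_J}$, and $\f k/\f g'_J$ carries an abelian complex structure. By Proposition \ref{propKilling}(2) applied to $\f k/\f g'_J$, the value $\tr(J\ad_Z)$ for $Z\in\f g'$ is a Killing-type expression $B^{1,1}_{\f k/\f g'_J}(JZ,\cdot)$ evaluated against elements of $\f k/\f g'_J$. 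That quotient is $J$-solvable, so by Proposition \ref{kerrhorad} (or Theorem \ref{propsemidir}) it is $J$-nilpotent in the sense that $\ker\eta$ is everything, i.e. $\f k/\f g'_J=\f n_J$. Nilpotency then makes all these traces vanish. Concretely, I would show $\tr(J\ad_Z)=0$ for every $Z\in\f g'$ by writing $\tr(J\ad_Z)=\tr^{\f g'_J}(J\ad_Z)+\tr^{\f k/\f g'_J}(J\ad_Z)+\tr^{\f g/\f k}(\cdots)$. The last term is zero because $\f g'$ acts trivially on $\f g/\f k$. The first term is zero because $\f g'_J$ is abelian and contained in $\f g'$, and by $2$-step solvability $\ad_Z|_{\f g'}=0$. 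The middle term is zero by nilpotency of $\f k/\f g'_J$ together with \eqref{adXg'_J}-type identities.

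\emph{$\eta=0$ implies $J$-solvable.} If $\eta=0$, then in particular $\eta_{\f k}=0$, so ${\rm Rad}(\f k)=\ker\eta_{\f k}=\f k$ and $\f k$ is $J$-solvable. The quotient $\f g/\f k$ is abelian, hence $J$-solvable, and Lemma \ref{lemmagrullo} gives that $\f g$ is $J$-solvable.

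For the final claim, assume $\f g$ is $J$-solvable. Then $\f k/\f g'_J$ is $J$-solvable with an abelian complex structure, and it equals $\f a/\f a_J\oplus J\f a/\f a_J$ with $(\f k/\f g'_J)'_J=0$. Theorem \ref{propsemidir} then gives $\f k/\f g'_J=\f n_J$, so the quotient is nilpotent. I read the ``in particular'' as holding under $J$-solvability, equivalently $\eta=0$.

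\textbf{Main obstacle.} The hard step is the first implication: passing from $\eta_{\f k}=0$ to $\eta=0$ on all of $\f g$, i.e. showing $\tr(J\ad_Z)=0$ for every $Z\in\f g'$. The first thing I would try is the three-term trace decomposition above, using that $\f g'$ acts trivially on $\f g/\f k$ and on $\f g'$ itself.
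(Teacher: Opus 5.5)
Your proposal is correct and follows the paper's proof essentially step by step. Proposition \ref{kerrhorad} applied to $\f g'+J\f g'$ with $\f a=\f g'$ gives the converse. For the direct implication you, like the paper, reduce $\eta=0$ to $\tr(J\ad_Z)=0$ for $Z\in\f g'$, split the trace along $\f g'_J\subseteq\f g'+J\f g'\subseteq\f g$, and kill the remaining term using nilpotency of $(\f g'+J\f g')/\f g'_J$.

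The one place to tighten is that last term, where the paper cites \cite[Proposition 2.1]{LRV15}. Since the induced complex structure on the quotient is abelian, $\ad_W J=-\ad_{JW}$ holds there, so $\tr(J\ad_W)=-\tr(\ad_{JW})=0$ by nilpotency. This is the precise identity your appeal to ``\eqref{adXg'_J}-type identities'' should name. Your detour through Proposition \ref{propKilling}(2) is not enough on its own, because it only reaches classes lying in the derived algebra of the quotient, which need not contain the image of $\f g'$.
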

\begin{proof}
It is easy to see that  $\eta|_{\f \g'+J\g' }=\eta_{\f g'+J\g' }\,$. We observe that, since $\f g $ is a $2$-step solvable Lie algebra, $\f g'$ is an abelian ideal of $\f g'+J\g'$.  
We are in the position to apply Proposition \ref{kerrhorad} to $\g'+J\g'$, and obtain, assuming $\eta=0$, that $\g'+J\g'={\rm Rad}(\f g'+J\g') $, which clearly gives that $\f g$ is $J$-solvable.

Viceversa, let us suppose that $J$ is solvable. In particular, by Proposition \ref{kerrhorad}, $\g'+J\g'$ is $J$-solvable and equal to $\ker \eta_{\f \g'+J\g'}=\pi^{-1}(\f n_J)$, where $\pi\colon \f g'+J\f g' \to (\f g'+J\f g')/\f g_J'$ is the canonical projection onto the quotient and $\f n_J$ is the maximal $J$-invariant nilpotent ideal of $(\f g'+J\g')/\f g_J'$. Thus, $( \f g'+J\g')/\f g_J'=\f n_J$  is  nilpotent. We will use this piece of information to conclude that $\eta=0$.  We know that $\eta=0$ is equivalent to require that $\beta|_{\f g'}=0$, where $\beta$ is the potential $1$-form of $\eta$, see \eqref{eqn_beta}. Since $\f g_J'$ is an abelian ideal in $\f g'+J\g'$ by Lemma \ref{Lem2stepideal}, we observe that $\g'_J \subseteq \ker \beta$. Now, let $X\in\g'$, we have
$$
\beta(X)=\frac12{\rm tr}(J\ad_X|_{\f g'+J\g'})=\frac12{\rm tr}^{\f n_J}(J\ad_{\pi(X)})+\frac12{\rm tr}(J\ad_X|_{\f g'_J})=\pi^*\beta_{\f n_J}(X)\,,
$$
where  we used that $\ad_X|_{\f g_J'}=0$, because $X\in \f g'$.
Now, thanks to \cite[Proposition 2.1]{LRV15}, we know that $\beta_{\f n_J}=0$, since $\f n_J$ is nilpotent. This concludes the proof. 
\end{proof}

 We are now ready to prove Theorem \ref{mainfv}.
\begin{thm}
Let $\g$ be a $2$-step solvable unimodular Lie algebra endowed with a complex structure $J$. If there exist both an SKT metric and a balanced metric compatible with $J$, then $J$ is solvable. In particular, there also exists a K\"ahler metric compatible with $J$.
\end{thm}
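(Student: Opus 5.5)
We argue in two steps, following the outline announced in the introduction. First we use the SKT metric to obtain the decomposition of Theorem \ref{SKTLevi}. Then we use the balanced metric, through Corollary \ref{Michelsohn}, to show that the semisimple part is trivial. By Theorem \ref{SKTLevi} we have $H^2(\g,JU)=0$ and
$$
\g=\f{aff}(\R)^p\ltimes {\rm Rad}(\g)\,,
$$
and it suffices to show that $p=0$. Suppose instead that $p\ge 1$, and let $\f h=\f{aff}(\R)^p$ be the $J$-adapted Levi subalgebra, with standard basis $\{X_1,JX_1,\dots,X_p,JX_p\}$.

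The goal is to produce a $d$-exact, non-zero, semi-positive $(1,1)$-form on $\g$, which rules out balanced metrics by Corollary \ref{Michelsohn} since $\g$ is unimodular. The natural candidate is obtained by pulling back along the holomorphic projection $\pi\colon\g\to\g/{\rm Rad}(\g)\simeq\f h$. On $\f{aff}(\R)^p$ the form $-\eta_{\f h}$ is an exact K\"ahler form (Example \ref{aff} and Proposition \ref{propKilling}(2)); explicitly it is a positive combination of $X_i^*\wedge JX_i^*$. Equivalently we can write it as $d\gamma$, where $\gamma=-\sum_i (JX_i)^*$ up to sign; indeed in $\f{aff}(\R)$ one has $d(JX)^*=0$ and $dX^*(JX,X)=-X^*([JX,X])=-1$. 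Set $\alpha:=\pi^*(-\eta_{\f h})=d(\pi^*\beta')$, where $\beta'$ is the potential on $\f h$. Since $\pi$ is a holomorphic Lie algebra homomorphism, $\pi^*$ commutes with $d$ and preserves types. Hence $\alpha$ is $d$-exact on $\g$, of type $(1,1)$, semi-positive (because it is positive on $\f h$ and vanishes on ${\rm Rad}(\g)$), and non-zero since $p\ge1$. Corollary \ref{Michelsohn} then gives a contradiction with the existence of the balanced metric. Therefore $p=0$, so $\g={\rm Rad}(\g)$ and $J$ is solvable.

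The point that needs care is that the exact form is built on the quotient and then pulled back. This is why we use $\pi^*$ rather than restricting forms to the subalgebra $\f h$: exactness would not survive restriction to a subalgebra, but it does survive pullback along a homomorphism. Unimodularity is needed only in Corollary \ref{Michelsohn}.

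For the K\"ahler statement: since $J$ is solvable and $\g$ is $2$-step solvable, Proposition \ref{rhocore} gives $\eta=0$, and unimodularity gives $\rho=\eta=0$. So $(\g,J)$ is Chern--Ricci flat. We then invoke \cite{FV26}, which establishes the Fino--Vezzoni conjecture for Chern--Ricci flat left-invariant complex structures. This is in the Lie algebra formulation, via symmetrisation \cite{Bel00,FG04,Uga07}, or on a compact quotient if a lattice exists. Combined with the SKT and balanced metrics, it yields a compatible K\"ahler metric. The main obstacle is making sure that the cited result in \cite{FV26} applies at the unimodular Lie algebra level without assuming a lattice. If it requires compactness, we would instead rerun its argument algebraically: the balanced metric is Chern-scalar flat by Lemma \ref{Lem:balancedcsf}, and one then uses $\rho=0$ together with the SKT condition.
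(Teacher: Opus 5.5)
Your proposal is correct and follows the paper's proof essentially verbatim. Theorem~\ref{SKTLevi} gives $\g=\f{aff}(\R)^p\ltimes{\rm Rad}(\g)$, the pullback $-\pi^*\eta$ of the exact K\"ahler form on $\g/{\rm Rad}(\g)\simeq\f{aff}(\R)^p$ is excluded by Corollary~\ref{Michelsohn}, and then Proposition~\ref{rhocore} together with \cite{FV26} gives the K\"ahler metric; the paper simply cites \cite{FV26} at the Lie algebra level and does not address your lattice caveat.

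One harmless slip in your aside: since $d(JX_i)^*=0$, the potential of $-\eta_{\f h}$ is $-\beta_{\f h}=\frac12\sum_i X_i^*$, not a multiple of $\sum_i (JX_i)^*$. Exactness is in any case already given by $\eta=d\beta$.
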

\begin{proof}
Applying Theorem \ref{SKTLevi} we immediately know that $ \f g=\f{aff}(\R)^p\ltimes {\rm Rad}(\f g),$ for some $p\ge 0$. Suppose by contradiction that $p\neq 0$ and  consider the holomorphic quotient map $\pi \colon \g \to \g/{\rm Rad}(\g)\simeq \f{aff}(\R)^p$. We know that   $\f{aff}(\R)^p$ is endowed with an exact K\"ahler form $-\eta $. Therefore, $-\pi^*\eta$ gives rise to a non-zero, semi-positive $(1,1)$-form which is exact. We can now use Corollary \ref{Michelsohn} to conclude $p=0$ and thus $\g$ must be $J$-solvable. In particular, Proposition \ref{rhocore} tells us that $\rho=0$. In this scenario we can then infer that there exists a K\"ahler metric invoking \cite{FV26}. 
\end{proof}

From Proposition \ref{kerrhorad} we can characterise the signature of the form $\eta$ on any $J$-perfect $2$-step solvable Lie algebra. 

\begin{cor}\label{signeta}
Let $ \f g $ be a $J$-perfect $2$-step solvable Lie algebra and let $\f g/\ker \eta\simeq \f{aff}(\R)^p\oplus \f{aff}(\C)^q$. Then,  the signature of $\eta$ is  $(2p+2q,2q, 2k)$ where $k=\dim_{\C}\ker \eta.$  In particular, $\eta \le 0$ if and only if $\f g/\ker \eta $ is the direct sum of $p$ copies of $\f{ aff}(\R).$
\end{cor}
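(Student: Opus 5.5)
The plan is to reduce everything to the quotient $\f g/\ker\eta$ and then compute on the explicit building blocks $\f{aff}(\R)$ and $\f{aff}(\C)$.

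\textbf{Reduction.} Since $\f g$ is $J$-perfect and $2$-step solvable, $\f a:=\f g'$ is an abelian ideal with $\f g=\f a+J\f a$, and $\f g_J'=\f a_J$. So Proposition \ref{kerrhorad} applies. It gives $\eta=\pi^*\eta_{\f g/\f a_J}$ and $\ker\eta={\rm Rad}(\f g)$, where $\f g/\f a_J$ carries an abelian complex structure.

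Applying Proposition \ref{kerrhorad} once more to $\f g/\ker\eta$, I would check that $\eta$ is the pullback of $\eta_{\f g/\ker\eta}$. The argument runs as in that proposition, because $\ker\eta/\f a_J=\f n_J$ is a $J$-invariant nilpotent ideal of the abelian quotient. Concretely:
\begin{itemize}
\item write $\f g/\f a_J=\f s\ltimes\f n_J$ via Theorem \ref{propsemidir};
\item by Proposition \ref{propKilling}, part \emph{(2)}, $\eta_{\f g/\f a_J}(X,Y)=-B^{1,1}(JX,Y)$;
\item by parts \emph{(4)} and \emph{(5)}, $\ker B^{1,1}=\f n_J$.
\end{itemize}
Hence $\eta_{\f g/\f a_J}$ descends to a non-degenerate form on $\f g/\ker\eta$. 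The only subtlety is comparing $B^{1,1}_{\f g/\f a_J}$ with $B^{1,1}_{\f s}$ on $\f s$. I would handle this with the trace formula $\eta(X,Y)=-\tfrac12\tr(J\ad_{[X,Y]})$: by Proposition \ref{kerrhorad} it only sees the action modulo $\ker\eta$, since $\ad_{[X,Y]}$ preserves the flag $\f a_J\subseteq\ker\eta$ and the trace on $\ker\eta/\f a_J=\f n_J$ vanishes.

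I expect this vanishing to be the main obstacle. I would try to get it from \cite[Proposition 2.1]{LRV15}, as in the proof of Proposition \ref{rhocore}: the trace of $J\ad_{[X,Y]}$ on $\f n_J$ equals $\beta_{\f n_J}$ evaluated on a bracket, which is zero. The catch is that $[X,Y]$ need not lie in $\f n_J$. If this fails, the fallback is Proposition \ref{propKilling}, part \emph{(5)}, which gives $\f s$-orthogonality of $\f n_J$ under $B^{1,1}$ and the restriction formula directly.

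\textbf{Signature on the semisimple quotient.} Here $\eta$ is non-degenerate, and $\f g/\ker\eta\simeq\f{aff}(\R)^p\oplus\f{aff}(\C)^q$ is a direct sum of ideals. The sum is $B^{1,1}$-orthogonal, so it is $\eta$-orthogonal and the signature is additive. On $\f{aff}(\R)$ we have $[JX,X]=X$, and Example \ref{aff} shows that $-\eta$ is a Kähler form, so $\eta$ contributes signature $(2,0,0)$ in the paper's convention. On $\f{aff}(\C)$, Example \ref{aff} and Corollary \ref{corsymmetricdecom} give:
\begin{itemize}
\item $B^{1,1}$ has signature $(2,2,0)$;
\item $B^{1,1}>0$ on $\f{aff}(\C)_+$ and $B^{1,1}<0$ on $\f{aff}(\C)_-$, with the two mutually orthogonal.
\end{itemize}
The form $\eta(X,JX)=-B^{1,1}(JX,JX)$ therefore contributes $(2,2,0)$. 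To be safe I would verify this directly from the brackets: compute $\tr(J\ad_{[X,JX]})$ and $\tr(J\ad_{[Y,JY]})=-\tr(J\ad_{[X,JX]})$ using $[JY,Y]=-X$. Summing the blocks gives $(2p+2q,2q,0)$. The kernel $\ker\eta$ has real dimension $2k$, which gives the stated signature.

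\textbf{Final claim.} Since $\eta$ is real of type $(1,1)$, we have $\eta\le0$ exactly when the count $n_-$ of positive eigenvalues vanishes, that is $2q=0$. So $\eta\le0$ if and only if $q=0$.
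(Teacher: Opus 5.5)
\textbf{Gap: the form on the Levi factor is not $\eta_{\f s}$.}
Your reduction via Proposition \ref{kerrhorad} is fine, and so is the descent of $\eta$ to a non-degenerate form on $\f g/\ker\eta$. The signature you end up with is also the right one. The gap is the step that identifies this descended form with the intrinsic form $\eta_{\f s}$ of the Levi factor $\f s\simeq\f{aff}(\R)^p\oplus\f{aff}(\C)^q$, and that identification is false. Neither of your two routes can repair it:
\begin{itemize}
\item the trace of $J\ad_{[X,Y]}$ over $\f n_J$ does not vanish in general for $X,Y\in\f s$;
\item Proposition \ref{propKilling}, part \emph{(5)}, gives $B^{1,1}_{\f k}=B^{1,1}|_{\f k\times\f k}$ only when $\f k$ is a $J$-invariant ideal, while $\f s$ is only a subalgebra.
\end{itemize}
For a concrete counterexample take $\f g=\f{aff}(A)$ with $A=\R[\epsilon]/(\epsilon^2)$. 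It is $J$-perfect and $2$-step solvable, with $\f g'_J=0$ and $\ker\eta=\f{aff}(\R\epsilon)$. The factor $\f s=\f{aff}(\R)$ is spanned by $X_1=(0,1)$ and $JX_1=(1,0)$. Since $\ad_{JX_1}$ is the projection of $A\oplus A$ onto $0\oplus A$, its trace is $2$ on $\f g$ but $1$ on $\f s$. Hence $\eta(X_1,JX_1)=-1$, whereas $\eta_{\f s}(X_1,JX_1)=-\tfrac12$. So $\eta$ is not the pullback of $\eta_{\f g/\ker\eta}$, and you cannot read its signature off the building blocks taken in isolation.

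\textbf{The missing idea: the extra contributions from $\f n_J$ never flip signs.}
This is exactly what Corollary \ref{lemspettroB} provides, and it is what the paper uses. Work on all of $\f g/\f g'_J$, where $J$ is abelian, so $\eta(X,Y)=\tfrac12\tr(\ad_{J[X,Y]})$. In the standard basis of $\f s$, with traces taken over the whole algebra, this gives
\begin{itemize}
\item $\eta(X_i,JX_i)=-\tfrac12\tr(\ad_{JX_i})$;
\item $\eta(Y_{p+j},JY_{p+j})=\tfrac12\tr(\ad_{JX_{p+j}})$;
\item $\eta(X_{p+j},JY_{p+j})=-\tfrac12\tr(\ad_{JY_{p+j}})$.
\end{itemize}
The operator $\ad_{JX_i}$ is diagonalisable with eigenvalues in $\{0,1\}$ and has eigenvalue $1$ on $X_i$, so its trace is positive. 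The operator $\ad_{JY_{p+j}}$ is real with eigenvalues in $\{0,\pm\sqrt{-1}\}$, so it is traceless. Two further facts vanish: the cross terms between distinct $J$-simple ideals, because their brackets are zero, and $\eta(X_{p+j},Y_{p+j})$. Hence each $\f{aff}(\R)$ block contributes $(2,0,0)$ and each $\f{aff}(\C)$ block contributes $(2,2,0)$. Only the magnitudes differ from your block-by-block computation, not the signs, which gives the stated signature.
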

\begin{proof}
  By Proposition \ref{kerrhorad} and  Theorem \ref{propsemidir}, we know that $ \eta=\pi^*\eta_{\f g/\f g'_J}$ and $\f g/\f g'_J=\f h\ltimes \f n_J$, where $\f n_J=\ker \eta_{\f g/\f g'_J}$ and $\f h\simeq \f g/\ker \eta$. Since $\eta_{\f g/\f g'_J}$ makes the $J$-simple  ideals of $\f h$ orthogonal, the conclusion follows from Corollary \ref{lemspettroB}.
\end{proof}

Example \ref{exsimplerigid3step} is a $J$-perfect $3$-step solvable Lie algebra and it is straightforward to check that $\rho=\eta=0$. Hence, one implication of Proposition \ref{rhocore} is no longer true for higher solvability steps.
\\
By means of Proposition \ref{rhocore}, we can also prove the converse of Corollary \ref{abelianpostLevi}, part \emph{(1)}.
\begin{cor}\label{besughi}
Let $\f g$ be a Lie algebra endowed with a solvable abelian complex structure $J$. Then $\f g'+J\f g'$ is unimodular.
\end{cor}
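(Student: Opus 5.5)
We want to show: if $J$ is abelian and solvable, then $\f k:=\f g'+J\f g'$ is unimodular, i.e. $\tr(\ad^{\f k}_X)=0$ for every $X\in\f k$.

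The plan is to reduce to the trace of $\ad_{JX}$ and then use nilpotency. Since $J$ is abelian, $\f g'_J\subseteq \f z(\f g'+J\f g')$, and $\f g'$ is an abelian ideal of $\f k$ by $2$-step solvability. Hence for $X\in\f g'$ the operator $\ad_X|_{\f k}$ maps $\f k$ into $\f g'$ and kills $\f g'$, so it is nilpotent and has zero trace. It therefore suffices to show that $\tr(\ad^{\f k}_{JX})=0$ for every $X\in\f g'$.

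First I apply Proposition \ref{rhocore} to the $2$-step solvable, $J$-solvable algebra $\f g$. This gives $\eta=0$, and also that $\f k/\f g'_J$ is nilpotent. Because $\f g'_J$ is central in $\f k$, an extension of a nilpotent algebra by a central ideal is nilpotent, so $\f k$ itself is nilpotent. Nilpotent Lie algebras are unimodular, which finishes the argument.

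As a backup in case the nilpotency transfer seems too quick, I would argue with traces directly. The identity $\ad_{JX}|_{\f g'_J}=0$ holds for $X\in\f g'$: by abelianity $[JX,Z]=-[X,JZ]$, and the right-hand side vanishes because $JZ\in\f g'$. So the trace of $\ad_{JX}$ on $\f k$ equals its trace on $\f k/\f g'_J$. That quotient is nilpotent, so this trace is zero.

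The main point to check is that the nilpotency conclusion of Proposition \ref{rhocore} applies verbatim. The proposition only assumes $2$-step solvability, which holds since abelian complex structures live on $2$-step solvable algebras \cite{Pet88}. Its final statement uses $J$-solvability of $\f g$ to get $\f g'+J\f g'={\rm Rad}(\f g'+J\f g')$, and this holds here.
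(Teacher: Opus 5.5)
Your proof is correct, but it takes a different route from the paper's. Both arguments start from Proposition~\ref{rhocore}; they use different halves of it.

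The paper uses the $\eta=0$ half. $J$-solvability gives $\eta=0$, which is equivalent to $\beta|_{\f g'}=0$. For abelian $J$ one has $\ad_{JX}=-\ad_X J$, hence $\beta(X)=\frac12\tr(J\ad_X)=-\frac12\tr(\ad_{JX})$ for $X\in\f g'$. So the traces of $\ad_{JX}$, $X\in\f g'$, vanish. Together with the trivial vanishing of $\tr(\ad_X|_{\f g'+J\f g'})$ for $X\in\f g'$, this is unimodularity.

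You instead use the nilpotency half: $(\f g'+J\f g')/\f g'_J$ is nilpotent, $\f g'_J$ is central in $\f g'+J\f g'$, and a central extension of a nilpotent algebra is nilpotent. So you conclude that $\f g'+J\f g'$ is itself nilpotent.

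What each route buys:
\begin{itemize}
\item The paper's one-line computation makes explicit, for abelian $J$, the link between $\eta$ and the traces of $\ad_{JX}$ on $\f g'$. Combined with Corollary~\ref{abelianpostLevi}, part \emph{(1)}, this makes unimodularity of $\f g'+J\f g'$ equivalent to $J$-solvability.
\item Your argument proves strictly more, namely nilpotency of $\f g'+J\f g'$. That is the final claim of Lemma~\ref{b2,0+0,2}, which the paper obtains only later, through a Killing-form argument that itself relies on this corollary.
\end{itemize}

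Your backup trace argument is also valid; it sits between the two approaches.
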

\begin{proof}
Using Proposition \ref{rhocore}, we have that $\eta=0$, which is equivalent to $\beta|_{\f g'}=0$. The claim then follows from the abelianity of $J$ and the observation that,  for any $X\in\f g'$, 
$ \beta(X)=-\frac12{\rm tr}(\ad_{JX})=0.$ 
\end{proof}

\subsection{SKT completely solvable Lie algebras}
 The aim of this subsection is to present the proof of Theorem \ref{mainsktcompletely}. We first prove the following proposition.

\begin{prop}\label{cazziarmati}
Let $\f g $ be a unimodular, solvable Lie algebra endowed with a complex structure such that  there exists a $J$-invariant abelian ideal $\f a $ satisfying $\f g_J'\subseteq \f a \subseteq C(\f g')$. Assume that $\f g$ admits an SKT metric $g$, then $\rho \le 0 $.
If $\rho=0$, $\ad_X|_{\f a}\in \f{su}(\f a, g )$, for any $X\in\f g$.  In these last hypotheses, if $\f g$ is completely solvable, then $\f a\subseteq \f z(\f g).$ In particular, if $\f g$ is $2$-step, then $\f g'+J\f g'$ is $2$-step nilpotent. 
\end{prop}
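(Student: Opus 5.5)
The plan is to combine the unimodular identity $\rho=\eta$ with the SKT identity of Lemma \ref{SKTmaledetto}, evaluated with the second argument $Y$ ranging over $\f a$. Since $\f g$ is unimodular, Proposition \ref{chernrigido} gives $\rho=\eta$. By Lemma \ref{riccihol}, $\rho(X,JX)=-\frac14\tr([J,\ad_{JX}]^2)$. So it suffices to show that the SKT condition makes this quantity non-positive.

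\textbf{Step 1: trace the SKT identity over $\f a$.} Fix a $g$-orthonormal basis $\{Y_k,JY_k\}$ of $\f a$, plug $Y=Y_k$ into the identity of Lemma \ref{SKTmaledetto}, and sum over $k$. Three facts should simplify the result:
\begin{itemize}
\item $\f a\subseteq C(\f g')$, so $\ad_{[\,\cdot\,,\,\cdot\,]}|_{\f a}=0$ and $[Y,JY]=0$;
\item $\f a$ is a $J$-invariant abelian ideal;
\item by \eqref{behaviourofJ2step}, $[X,JY]+[JX,Y]\in\f g'_J\subseteq\f a$.
\end{itemize}
After these cancellations, I expect the summed identity to take the form
\[
0=c\,\tr\bigl(J\ad_{[X,JX]}\bigr)-\bigl\|(\ad_X|_{\f a})^{\rm sym}\bigr\|^2-\bigl\|(\ad_{JX}|_{\f a})^{\rm sym}\bigr\|^2-(\text{further squares}),
\]
for some constant $c>0$. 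The first term comes from the summands of the shape $g([J[X,JX],Y],Y)$ and $g([J[X,JX],JY],JY)$: summing them over the basis of $\f a$ produces a trace. The trace over the complement of $\f a$ still has to be controlled, and I would handle it using $\tr(J\ad_{[X,JX]})=\tr(J\ad_{[X,JX]}|_{\f a})+\tr$ on the quotient, as in the proof of Proposition \ref{kerrhorad}. The negative terms come from the $-|[X,Y]|^2$-type summands together with the remaining mixed summands. The conclusion to aim for is $\rho(X,JX)=-\frac12\tr(J\ad_{[X,JX]})\le0$ for every $X$, with equality only if the symmetric parts of $\ad_X|_{\f a}$ and $\ad_{JX}|_{\f a}$ vanish.

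\textbf{Step 2: the case $\rho=0$.} If $\rho=0$, every square in Step 1 must vanish, so $\ad_X|_{\f a}$ is $g$-skew for every $X\in\f g$. Commutation with $J$ should follow from the same vanishing terms, or from \eqref{integrability} together with $[X,JY]+[JX,Y]\in\f a$. Tracelessness then follows from skewness. Together these give $\ad_X|_{\f a}\in\f{su}(\f a,g)$.

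\textbf{Step 3: complete solvability and the $2$-step case.} If $\f g$ is completely solvable, a skew endomorphism with real spectrum is semisimple with spectrum $\{0\}$, hence zero. So $\f a\subseteq\f z(\f g)$. For the $2$-step case, take $\f a=\f g'_J$, which lies in $C(\f g')$ by $2$-step solvability. Proposition \ref{rhocore} shows that $(\f g'+J\f g')/\f g'_J$ is nilpotent, since $\rho=0$ makes $J$ solvable. This quotient carries an abelian complex structure. I would then check directly that
\[
[\f g'+J\f g',\f g'+J\f g']\subseteq \f g'_J\subseteq\f z(\f g),
\]
using $[\f g',\f g']=0$, the relation $[JX,JY]\in\f g'_J$ for $X,Y\in\f g'$ (which follows from integrability, as in Proposition \ref{kerrhorad}), and \eqref{behaviourofJ2step}.

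\textbf{Main obstacle.} The hardest step is Step 1: organising the sixteen terms of Lemma \ref{SKTmaledetto} so that, after summing over $\f a$, they split exactly into a trace term proportional to $\rho(X,JX)$ plus a sum of non-positive squares. I would first try polarising in $Y$ and using $J$-invariance of $\f a$ to pair each term with its $JY$-counterpart. I expect this to be close to the symmetric/antisymmetric decomposition of $\ad_X|_{\f a}$ used in \cite{AL19,Asia} for almost-abelian algebras.
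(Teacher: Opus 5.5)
Your overall architecture matches the paper's: trace the SKT identity of Lemma \ref{SKTmaledetto} over $\f a$, extract squares, then use real spectrum to kill skew endomorphisms. But Step 1 is missing its key identity, and the problem is not one of organising terms. Summing $g([J[X,JX],Y],Y)+g([J[X,JX],JY],JY)$ over the basis of $\f a$ produces $\tr(\ad_{J[X,JX]}|_{\f a})$, not $\tr(J\ad_{[X,JX]})$; you are conflating $\ad_{J[X,JX]}$ with $J\ad_{[X,JX]}$. In fact $\tr(J\ad_{[X,JX]}|_{\f a})=0$ identically, because $[X,JX]\in\f g'$ and $\f a\subseteq C(\f g')$. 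So all of $\rho(X,JX)$ lives on $\f g/\f a$. Splitting the trace as in Proposition \ref{kerrhorad} only gives $\rho(X,JX)=-\frac12\tr^{\f g/\f a}(J\ad_{\pi([X,JX])})$, where $\pi\colon\f g\to\f g/\f a$, and the SKT identity over $\f a$ says nothing about that quantity. The missing bridge is as follows. Since $\f g'_J\subseteq\f a$, the induced complex structure on $\f g/\f a$ is abelian, so $\tr^{\f g/\f a}(J\ad_{\pi(Z)})=-\tr^{\f g/\f a}(\ad_{\pi(JZ)})$. Unimodularity of $\f g$ then turns the right-hand side into $\tr(\ad_{JZ}|_{\f a})$, hence $\rho(X,JX)=-\frac12\tr(\ad_{J[X,JX]}|_{\f a})$. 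This is where $\f g'_J\subseteq\f a$ and unimodularity genuinely enter, not just through $\rho=\eta$. With this in hand, the traced SKT identity reads
\[
\tr(\ad_{J[X,JX]}|_{\f a})=-\tr((J\ad_X)^2|_{\f a})-\tr((J\ad_{JX})^2|_{\f a})+|\ad_X|_{\f a}|^2+|\ad_{JX}|_{\f a}|^2\,,
\]
whose right-hand side equals $2|\mathrm{Skew}(J\ad_X|_{\f a})|^2+2|\mathrm{Skew}(J\ad_{JX}|_{\f a})|^2$. That gives the sign. It does not give your equality statement in terms of the symmetric parts of $\ad_X|_{\f a}$, and so it does not give Step 2. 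For that you need
\[
\tr(\ad_X^2|_{\f a})+\tr((J\ad_X)^2|_{\f a})=\tr(J\ad_{[X,JX]}|_{\f a})=0\,,
\]
which is the computation of Lemma \ref{riccihol} restricted to the $J$-invariant ideal $\f a$, combined with $\f a\subseteq C(\f g')$. It converts the right-hand side into $2|\mathrm{Sym}(\ad_X|_{\f a})|^2+2|\mathrm{Sym}(\ad_{JX}|_{\f a})|^2$. Read as $\frac12\tr([\ad_X,J]^2|_{\f a})=0$ for the skew operator $[\ad_X,J]|_{\f a}$, it is also exactly what forces $[\ad_X,J]|_{\f a}=0$. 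This is the precise form your appeal to integrability in Step 2 has to take.

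In Step 3 there are two problems. First, $\f g'_J$ is in general only an ideal of $\f g'+J\f g'$, not of $\f g$, so "take $\f a=\f g'_J$" is not admissible. It is also unnecessary, since the given $\f a$ contains $\f g'_J$ and is central. Second, and more seriously, your direct check of $[\f g'+J\f g',\f g'+J\f g']\subseteq\f g'_J$ does not go through. The facts $[\f g',\f g']=0$ and $[J\f g',J\f g']\subseteq\f g'_J$ are fine. But \eqref{behaviourofJ2step} only controls $[X,JY]+[JX,Y]$, and modulo $\f g'_J$ the cross bracket $[X,JY]$ with $X,Y\in\f g'$ is a symmetric product that nilpotency of the quotient does not kill. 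For instance, take $\f{aff}(A)$ with $A=t\R[t]/(t^5)$. It is nilpotent and $2$-step solvable, and $\f g'=(0,A^2)$ and $J\f g'=(A^2,0)$ meet trivially, yet $[\f g'+J\f g',\f g'+J\f g']=(0,A^4)\neq0$. The paper closes this step by using SKT once more. Since $\f g'_J\subseteq\f a\subseteq\f z(\f g)$ and $(\f g'+J\f g')/\f g'_J$ is nilpotent, $\f g'+J\f g'$ is nilpotent. It inherits the SKT metric, and nilpotent SKT Lie algebras are $2$-step nilpotent \cite{AN22}.
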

\begin{proof}
Using the same argument as in the proof of Proposition \ref{kerrhorad}, we infer that $\f a\subseteq \ker \rho.$
Now, we decompose $\f g=\f a^{\perp}\oplus \f a$, where the orthogonal complement is taken with respect to $g$. As a consequence,  for the claim to be true it suffices to prove that $\rho(X, JX)\le 0 $, for all $X\in\f a^{\perp}$. 
For any such  $X$, since $\f a \subseteq C(\f g')$ and $\f a$ is an ideal, we have 
$$
\begin{aligned}
\rho(X, JX)=&\, -\frac12{\rm tr}(J\ad_{[X, JX]}|_{\f a^{\perp}})-\frac12{\rm tr}(J\ad_{[X, JX]}|_{\f a})
= -\frac12{\rm tr}(J\ad_{[X, JX]}|_{\f a^{\perp}})\\
=&\, -\frac12{\rm tr}(J\ad_{[X, JX]^{\perp}}|_{\f a^{\perp}})=-\frac12{\rm tr}^{\f g/\f a}(J\ad_{\pi([X, JX])})\,,
\end{aligned}
$$
where  $\pi \colon \f g\to \f g/\f a$ is the canonical projection onto the quotient and $[X, JX]^{\perp}$ is the orthogonal projection of $[X, JX]$ onto $\f a^{\perp}$. Exploiting the fact that  $\f g/\f a$ has an abelian complex structure, since $\f g_J'\subseteq \f a$, we infer 
$$
\begin{aligned}
\rho(X, JX)=&\, -\frac12{\rm tr}^{\f g/\f a}(J\ad_{\pi([X, JX])})=\frac12{\rm tr}^{\f g/\f a}(\ad_{\pi(J[X, JX])})
=\frac12{\rm tr}(\ad_{J[X, JX]}|_{\f a^{\perp}})=-\frac12{\rm tr}(\ad_{J[X, JX]}|_{\f a})\,,
\end{aligned}
$$
where we used once again that $\f a$ is an abelian ideal of $\f g$ and unimodularity. 
Now,  we apply  Lemma \ref{SKTmaledetto} choosing $Y\in \f a$, and obtain 
$$
\begin{aligned}
0=&\,g([J[X, JX], JY], JY)+g([J[X, JX], Y],Y)
 +g([J[X, Y], X], JY)-g([J[X, JY], X], Y)\\
&\, +g([J[JX, Y], JX], JY)
 -g([J[JX, JY], JX], Y)
 -|[X, Y]|^2-|[X, JY]|^2\\
&\, - |[JX, Y]|^2-|[JX, JY]|^2\,.
\end{aligned}
$$
Summing over an orthonormal basis of $\f a$ with respect to $g$, we have 
$$
\begin{aligned}
{\rm tr}(\ad_{J[X, JX]}|_{\f a})=- {\rm tr}((J\ad_X)^2|_{\f a})-{\rm tr}((J{\ad_{JX}})^2|_{\f a})+|\ad_X|_{\f a}|^2+|\ad_{JX}|_{\f a}|^2\,.
\end{aligned}
$$
On the other hand, by integrability of $J$, we obtain
$$
\begin{aligned}
0&=\mathrm{tr}(J\ad_{[X,JX]}|_{\f a})
=\mathrm{tr}(J\ad_X\ad_{JX}|_{\f a})-\mathrm{tr}(J\ad_{JX}\ad_X|_{\f a})\\
&=-\mathrm{tr}(J\ad_X\ad_{X}J|_{\f a})+\mathrm{tr}(J\ad_XJ\ad_{X}|_{\f a})-\mathrm{tr}(J\ad_XJ\ad_{JX}J|_{\f a})-\mathrm{tr}(J\ad_{JX}\ad_X|_{\f a})
\\
&=\mathrm{tr}\left((\ad_X)^2|_{\f a}\right)+\mathrm{tr}\left((J\ad_X)^2|_{\f a}\right)\,.
\end{aligned}
$$ Hence, using the above equation, we get
$$
\begin{aligned}
\rho(X, JX)=-\frac12{\rm tr}(\ad_{J[X, JX]}|_{\f a})=&\, -\frac12\left( {\rm tr}((\ad_X)^2|_{\f a})+{\rm tr}(({\ad_{JX}})^2|_{\f a})+|\ad_X|_{\f a}|^2+|\ad_{JX}|_{\f a}|^2\right)\\
=&\, -2(|{\rm Sym}(\ad_X|_{\f a})|^2+|{\rm Sym}(\ad_{JX}|_{\f a})|^2)\le 0\,.
\end{aligned}
$$
If  $\rho=0$, then clearly 
$\ad_X|_{\f a}\in \f{so}(\f a, g)$.
The only thing remained to check is that $[\ad_X, J]|_{\f a}=0$, for any $X\in\f a^{\perp}$. Recalling the proof of Lemma \ref{riccihol}, we have 
$$
\frac12{\rm tr}([\ad_X|_{\f a}, J|_{\f a}]^2 )=\frac12{\rm tr}([\ad_X, J]^2 |_{\f a})={\rm tr}(J\ad_{[X, JX]}|_{\f a})=0\,.
$$
On the other hand, since $\ad_{X}|_{\f a}, J\in\f{so}(\f a, g)$, then $[\ad_X, J]|_{\f a}\in \f{so}(\f a, g)$. Hence, ${\rm tr}([\ad_X, J]^2|_{\f a})=0$ if and only if $[\ad_X, J]|_{\f a}=0$. Finally, if we assume that $\f g $ is completely solvable, then $\ad_X|_{\f a}=0$, for any $X\in\f g$, forcing $\f g_J'\subseteq\f a\subseteq \f z(\f g).$  Now, if $\f g$ is $2$-step with $\rho=0$, then $J$ is solvable and   $(\f g'+J\f g')/\f g_J'$ is nilpotent, by Proposition \ref{rhocore}. This, together with the fact that $\f g_J'$ is central, gives that $\f g'+J \f g'$ is nilpotent and hence $2$-step since it is SKT, see \cite{AN22}.
\end{proof}
\begin{rmk}\label{normaliziamodai}
Let $\f g$ be a $2$-step solvable Lie algebra.   We consider
$
N_{\f g}:=N_{\f g}(\f g_J')
$, the normaliser of $\f g_J'$ in $\f g$,  and note that $N_{\f g }$ is an ideal of $\f g$, since  $\f g'+J \f g' \subseteq N_{\f g}$ by Lemma \ref{Lem2stepideal}. Moreover, $N_{\f g}$ is $J$-invariant. Indeed, using \eqref{behaviourofJ2step},  for any $X\in N_{\f g}$ and $Y\in \f g_J'$, we have that 
$
[JX, Y]
\in \f g_J'
$ implying that $JX\in N_{\f g}$. Thus, if $\f g$ is a unimodular, SKT,  2-step solvable Lie algebra, then, $N_{\f g}$ has the same properties  and   contains $\f g_J'$ as an ideal. Most importantly, we note that $N_{\f g}$ satisfies the hypotheses of Proposition \ref{cazziarmati} with $\f a=\f g_J'$. 
\end{rmk}

 The next lemma collects some interesting facts on  solvable abelian complex structures.
\begin{lem}\label{b2,0+0,2}
Let $\f g$ be a  Lie algebra endowed with a solvable and abelian complex structure $J$. Then, the Killing form $B$ is of type $(2,0)+(0,2)$ and  $\ker B$ is a $J$-invariant ideal. Moreover,  the nilradical $\f n$ of $\f g$ is $J$-invariant. In particular, $\f g'+J\f g'$ is nilpotent. 
\end{lem}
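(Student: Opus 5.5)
The plan is to feed the solvability of $J$ into the identities for $B^{1,1}$ that hold for abelian complex structures. Since $J$ is abelian and solvable, Corollary \ref{besughi} makes $\g'+J\g'$ unimodular. Proposition \ref{rhocore} gives $\eta=0$, so Proposition \ref{propKilling}, part \emph{(2)}, yields $B^{1,1}(JX,Y)=-\eta(X,Y)=0$ for all $X,Y$, i.e. $B^{1,1}=0$. Since $B=B^{1,1}+B^{2,0+0,2}$, it follows that $B$ is of type $(2,0)+(0,2)$, i.e. $B(JX,JY)=-B(X,Y)$. Then $\ker B$ is automatically $J$-invariant: if $B(X,\cdot)=0$, then $B(JX,Y)=-B(X,JY)\cdot(\pm)$, which still vanishes. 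The kernel of the Killing form is always an ideal by $\ad$-invariance of $B$, so $\ker B$ is a $J$-invariant ideal.

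For the nilradical, I will use that for a solvable Lie algebra the nilradical coincides with $\{X : \ad_X \text{ nilpotent}\}$, and it contains $\g'$. The goal is to show $J\f n\subseteq \f n$. Since $J$ is abelian, $[JX,JY]=[X,Y]$. Take $X\in\f n$ and compute powers of $\ad_{JX}$ as in the proof of Proposition \ref{propKilling}, part \emph{(4)}: iterating \eqref{penis} with $Y=X$ gives
\[
\ad_{JX}^{k+1}=\ad_{JX}^{k-1}\ad_{J[JX,X]}-\ad_{JX}^{k-1}\ad_X^2 .
\]
The second term is nilpotent in a triangular basis given by Lie's theorem. The descent then reduces $\tr(\ad_{JX}^{k+1})$ to $\tr(\ad_{JX}\ad_{JW})$ for some $W\in\g'$. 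This quantity equals $B(JX,JW)=-B(X,W)$, which is $0$ because $X\in\f n\subseteq\ker B$. Hence all traces of powers of $\ad_{JX}$ vanish for $k\ge1$. The case $k=1$ needs $\tr(\ad_{JX}^2)=B(JX,JX)=-B(X,X)=0$. So $\ad_{JX}$ is nilpotent, and $JX\in\f n$.

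The main obstacle I expect is this reduction. Part \emph{(4)} of Proposition \ref{propKilling} was written under the hypothesis $\g=\f n+J\f n$ and with $JY\in\ker B$ given. Here the input replacing that hypothesis is exactly $B^{1,1}=0$, which converts $\f n\subseteq \ker B$ into $J\f n\subseteq\ker B$. I will check that the iteration only uses $\ad_X$ nilpotent and the final trace being a Killing-form value.

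Finally, $\g'\subseteq \f n$ and $J\f n=\f n$ give $\g'+J\g'\subseteq\f n$, so $\g'+J\g'$ is nilpotent. Alternatively, this last claim follows from Proposition \ref{rhocore}, since $\g'_J\subseteq\f z(\g'+J\g')$ and $(\g'+J\g')/\g'_J$ is nilpotent.
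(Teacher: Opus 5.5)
Your proposal is correct and follows essentially the same route as the paper: you get $B^{1,1}=0$ from Corollary \ref{besughi} and Proposition \ref{rhocore} together with Proposition \ref{propKilling}, part \emph{(2)}, and then run the trace-of-powers descent via \eqref{penis} and Lie's theorem to show that $\ad_{JX}$ is nilpotent for $X\in\f n$. The only difference is minor: you stop the descent at $\tr(\ad_{JX}\ad_{JW})=B(JX,JW)=-B(X,W)=0$ using $\f n\subseteq\ker B$, whereas the paper goes one step further to $\tr(\ad_{JW'})$ with $W'\in\g'$ and invokes unimodularity of $\g'+J\g'$; in both versions, the vanishing of $\tr(\ad_{JX}^m)$ for all $m\ge 2$ already forces nilpotency.
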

\begin{proof}
 By Corollary \ref{besughi}, we know that $\f g'+J\f g'$ is unimodular.  Using this fact and Proposition \ref{propKilling}, part \emph{(2)}, we deduce that $B^{1,1}=0$, thereby proving the first statement. 
It follows that $\ker B$ is  $J$-invariant ideal, since $\f n \subseteq \ker B$. 
Let us now fix $X\in \f n$ and we prove that ${\rm tr}(\ad_{JX}^k)=0$, for any $k\ge 2$.   

For $k=2$, we have that $$0=B(X, X)=-B(JX, JX)=-{\rm tr}(\ad_{JX}^2)\,.$$
Now, let us suppose that $k\ge 3 $. The strategy is the same as in the proof of Proposition \ref{propKilling}, part \emph{(4)}.  Using \eqref{penis} and Lie's Theorem,  we can conclude, iterating $k-1$ times the argument, that  
$$
{\rm tr}(\ad_{JX}^k)={\rm tr}(\ad_{JX}^{k-2}\ad_{J[JX, X]})={\rm tr}(\ad_{J[JX, [JX, \ldots, [JX, X]]\ldots]})=0\,,
$$ 
by unimodularity of $\f g'+J\f g'$. Hence, ${\rm tr}(\ad_{JX}^k)=0$, for any $k\ge 2$, which implies $JX\in \f n$, and thus forces $\f n$ to be $J$-invariant.
\end{proof}

As consequence of the previous lemma we deduce the following.
\begin{cor}\label{abeliancompletely}
Let $(\f g, J)$ be a  Lie algebra endowed with an abelian complex structure $J$.
\begin{enumerate}
    \item If $\g$ is completely solvable and $J$ is solvable, then $\g$ is nilpotent.
    \item If $\g$ is of rigid type, then it is nilpotent.
\end{enumerate} 
\end{cor}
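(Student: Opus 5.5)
Both parts go through Theorem \ref{Leviabelian} and Lemma \ref{b2,0+0,2}. The strategy is to first show that $J$ is solvable, then use that the nilradical $\f n$ is $J$-invariant and $\f g'+J\f g'$ is nilpotent, and finally exploit the spectral hypothesis to force $\ad_X$ nilpotent on all of $\f g$.

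For \emph{(1)}: by Lemma \ref{b2,0+0,2}, since $J$ is solvable and abelian, $\f n$ is $J$-invariant and $\f g'\subseteq \f n$. Now take any $X\in\f g$. As $\f g'\subseteq\f n$, the operator $\ad_X$ maps $\f g$ into $\f n$, so it suffices to show that $\ad_X$ is nilpotent. Complete solvability gives ${\rm Spec}(\ad_X)\subseteq\R$, and $\ad_X$ is nilpotent precisely when ${\rm tr}(\ad_X^k)=0$ for all $k\ge1$. I would compute ${\rm tr}(\ad_X^2)=B(X,X)$. By Lemma \ref{b2,0+0,2}, $B$ is of type $(2,0)+(0,2)$, so $B(X,X)=-B(JX,JX)$. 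Also $B(Z,\cdot)=0$ for $Z\in\f n$, and $B$ restricted to a complement of $\f n$ is controlled by that complement. Writing $\f g=\f n\oplus\f m$ with $\f m$ $J$-invariant (possible since $\f n$ is $J$-invariant), $B$ restricted to $\f m$ is a form of type $(2,0)+(0,2)$, hence has split signature. On the other hand, for completely solvable algebras $B(X,X)={\rm tr}(\ad_X^2)=\sum\lambda_i^2\ge0$ with equality iff $\ad_X$ is nilpotent. A $(2,0)+(0,2)$ form that is positive semidefinite must vanish, so $B(X,X)=0$ for all $X$, all eigenvalues of $\ad_X$ vanish, and $\f g$ is nilpotent. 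This semidefiniteness-plus-type argument is the main step. I would double check that ${\rm tr}(\ad_X^2)\ge0$ genuinely uses only the reality of the spectrum, which it does.

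For \emph{(2)}: here I first need $J$ solvable. I would use Theorem \ref{Leviabelian} to write $\f g=\f h\ltimes{\rm Rad}(\f g)$ with $\f h\simeq\f{aff}(\R)^p\oplus\f{aff}(\C)^q$. By Corollary \ref{lemspettroB}, $\ad_{JX_i}$ has eigenvalue $1$ on $\f g$ whenever $\f h\neq0$, since $[JX_i,X_i]=X_i$. This contradicts rigid type, so $\f h=0$ and $J$ is solvable. Then Lemma \ref{b2,0+0,2} again gives $\f g'\subseteq\f n$ with $\f n$ $J$-invariant, and $B$ is of type $(2,0)+(0,2)$. For rigid type, ${\rm tr}(\ad_X^2)=-\sum|\lambda_i|^2\le0$, so $B$ is negative semidefinite. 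By the same argument as in (1) it vanishes, and every $\ad_X$ has zero spectrum, hence $\f g$ is nilpotent by Engel's theorem.

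The main obstacle is the remark that a symmetric form of type $(2,0)+(0,2)$ which is semidefinite must be zero. This holds because $B(JX,JX)=-B(X,X)$ forces the values at $X$ and $JX$ to have opposite signs, so both are zero. Once this is observed, both parts are direct, and the rest is bookkeeping with the cited results.
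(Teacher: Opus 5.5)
Your proof is correct and follows the paper's argument: Lemma \ref{b2,0+0,2} gives $B(X,X)=-B(JX,JX)$, while the spectral hypothesis makes ${\rm tr}(\ad_X^2)$ semidefinite (non-negative in (1), non-positive in (2)), so $B=0$ and every $\ad_X$ is nilpotent. In (2) the paper obtains $J$-solvability from unimodularity of rigid-type algebras via Corollary \ref{abelianpostLevi}, whereas you use the real eigenvalue $1$ of $\ad_{JX_i}$ directly, which amounts to the same mechanism; the splitting $\f g=\f n\oplus\f m$ and the split-signature remark in your (1) are not needed.
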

\begin{proof}
We prove \emph{(1)}. The proof of \emph{(2)} is similar once one observes that Lie algebras of rigid type are always unimodular, hence $J$-solvable by Corollary \ref{abelianpostLevi}, part \emph{(1)}. Applying Lemma \ref{b2,0+0,2}, we know that the Killing form is of type $(2,0)+(0,2)$. On the other hand, by complete solvability, we have 
$$
0\le {\rm tr}(\ad_X^2)=B(X, X)=-B(JX, JX)=-{\rm tr}(\ad_{JX}^2)\le 0\,,
$$
and hence ${\rm tr}(\ad^2_X)=0$, for any $X\in\f g$. Using again complete solvability, ${\rm tr}(\ad_X^2)=0$ implies that $\ad_X$ is nilpotent, for any $X\in\f g$, concluding the proof.
\end{proof}

We are now ready to prove Theorem \ref{mainsktcompletely}.
\begin{thm}\label{completelysolvSKT1}
Let $\f g$ be a unimodular, SKT, $2$-step completely solvable Lie algebra endowed with a solvable complex structure. Then $\f g$ is nilpotent. Consequently, any unimodular, SKT, $2$-step  completely solvable Lie algebra $\f g$ satisfies
$$
\f g=\f{aff}(\R)^p\ltimes \f n_J\,
$$ for some $p\ge 0$.
\end{thm}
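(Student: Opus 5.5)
The plan is to first prove nilpotency under the assumption that $J$ is solvable, and then obtain the decomposition from Theorem \ref{SKTLevi}.

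\textbf{Nilpotency.} Since $J$ is solvable and $\g$ is $2$-step solvable, Proposition \ref{rhocore} gives $\eta=0$. Unimodularity gives $\rho=\eta$, hence $\rho=0$. Following Remark \ref{normaliziamodai}, consider the $J$-invariant ideal $N_\g=N_\g(\g'_J)$. It is unimodular, SKT and $2$-step solvable, and it satisfies the hypotheses of Proposition \ref{cazziarmati} with $\f a=\g'_J$. Its Chern--Ricci form is the restriction of that of $\g$, via the trace computation used for $\eta|_{\g'+J\g'}$ in Proposition \ref{rhocore}, so it vanishes. By complete solvability, Proposition \ref{cazziarmati} then gives $\g'_J\subseteq \f z(N_\g)$ and shows that $\g'+J\g'$ is $2$-step nilpotent.

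The delicate point is to upgrade this to nilpotency of all of $\g$. I would work with $\g/\g'_J$ in place of $N_\g$. The quotient $\g/\g'_J$ is not defined unless $\g'_J$ is an ideal of $\g$, so I first prove that $\g'_J$ is central in $\g$, or at least an ideal. The route is to apply the SKT identity of Lemma \ref{SKTmaledetto} with $Y\in\g'_J$ and arbitrary $X\in\g$, exactly as in the proof of Proposition \ref{cazziarmati}. There the summed identity expressed $\rho(X,JX)$ as minus the norms of the symmetric parts of $\ad_X|_{\f a}$ and $\ad_{JX}|_{\f a}$. For this I need $\ad_X$ to preserve $\g'_J$. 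If that fails, I would instead use that $[X,JY]+[JX,Y]\in\g'_J$ and that $\ad_X|_{\g'}$ has real spectrum, to show that $\ad_X$ maps $\g'_J$ into the nilpotent ideal $\g'+J\g'$ in a triangular way. This is the main obstacle.

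Once $\g'_J$ is central, the quotient $\bar\g=\g/\g'_J$ carries an abelian complex structure that is solvable and completely solvable. Corollary \ref{abeliancompletely}(1) then shows $\bar\g$ is nilpotent, and a central extension of a nilpotent algebra is nilpotent.

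\textbf{Decomposition.} For a general unimodular SKT $2$-step completely solvable $\g$, Theorem \ref{SKTLevi} gives $\g=\f{aff}(\R)^p\ltimes{\rm Rad}(\g)$. The radical ${\rm Rad}(\g)$ is a $J$-invariant ideal, so it inherits an SKT metric, $2$-step solvability and complete solvability, and it is $J$-solvable by definition. Its unimodularity should follow because ${\rm Rad}(\g)\supseteq\g'$ up to the $\f{aff}$ part, so traces of $\ad$ on it agree with those on $\g$; this also needs checking. The first part then shows ${\rm Rad}(\g)$ is nilpotent, so ${\rm Rad}(\g)\subseteq\f n_J$. Conversely, $\f n_J$ is a $J$-invariant nilpotent ideal, hence $J$-solvable by Lemma \ref{Lem:basicnilsol}, so $\f n_J\subseteq{\rm Rad}(\g)$. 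Therefore ${\rm Rad}(\g)=\f n_J$, which gives $\g=\f{aff}(\R)^p\ltimes\f n_J$.
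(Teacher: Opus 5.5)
Your outline follows the paper's structure: $\rho=0$ via Proposition \ref{rhocore}, then Proposition \ref{cazziarmati} on $N_{\g}$, then centrality of $\g'_J$, then Corollary \ref{abeliancompletely} on $\g/\g'_J$, then Theorem \ref{SKTLevi}. However, the step you flag as the main obstacle is left unresolved, and that step is where the proof needs a new idea.

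From $N_\g$ you only get $\g'_J\subseteq \f z(N_\g)$. This does not even show that $\g'_J$ is an ideal of $\g$, since elements outside the normaliser need not preserve it. Redoing the SKT computation of Proposition \ref{cazziarmati} with $\f a=\g'_J$ and arbitrary $X\in\g$ also fails. Summing the identity of Lemma \ref{SKTmaledetto} over an orthonormal basis of $\f a$ turns into traces of $\ad_X|_{\f a}$, $\ad_{JX}|_{\f a}$ and $\ad_{J[X,JX]}|_{\f a}$ only when $\ad_X\f a\subseteq \f a$. Your fallback, that ``$\ad_X$ maps $\g'_J$ into $\g'+J\g'$ in a triangular way'', gives no mechanism that produces the needed sign. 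As written, the nilpotency part is therefore incomplete.

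The fix is to change the ideal, not the computation. You already have $\g'_J\subseteq \f z(N_\g)$, and $\g'+J\g'\subseteq N_\g$, so $\g'_J\subseteq \f z(\g'+J\g')$. Together with nilpotency of $(\g'+J\g')/\g'_J$ from Proposition \ref{rhocore}, $\g'+J\g'$ is nilpotent, and it is SKT. Now set $\f a:=\f z(\g'+J\g')$. It has every property Proposition \ref{cazziarmati} requires:
\begin{itemize}
\item it is an abelian ideal of $\g$, because the centre of an ideal is an ideal;
\item it is $J$-invariant by \cite[Proposition 3.5]{EFV12}, since it is the centre of a nilpotent SKT Lie algebra;
\item it lies in $C_{\g}(\g')$;
\item it contains $\g'_J$.
\end{itemize}
So Proposition \ref{cazziarmati} applies to $\g$ itself with this $\f a$. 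Since $\rho=0$ and $\g$ is completely solvable, it yields $\g'_J\subseteq \f a\subseteq \f z(\g)$. From there your quotient argument goes through as written.

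In the decomposition part, the unimodularity of ${\rm Rad}(\g)$ that you say ``needs checking'' holds, but for a different reason than the one you give. Any ideal $I$ of a unimodular Lie algebra is unimodular: for $X\in I$, $\ad_X$ induces zero on $\g/I$, so $\tr(\ad_X|_I)=\tr(\ad_X)=0$.
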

\begin{proof}
As  for the first claim, by Remark \ref{normaliziamodai}, the normaliser $N_{\f g}$ of $\f g_J'$ in $\f g$ satisfies the hypotheses of Proposition \ref{cazziarmati}. Furthermore $N_{\f g}$ is $J$-solvable and completely solvable. 
Applying the last part of Proposition \ref{cazziarmati}, we have that $\f g_J'\subseteq \f z(N_{\f g})\cap (\f g'+J \f g')\subseteq \f z(\f g'+J \f g')$. Since $J$ is solvable, we know that $(\f g'+J\f g')/ \f g_J'$ is nilpotent and $\f g_J'\subseteq \f z(\f g'+J \f g')$, implying that $\f g'+J \f g' $ is nilpotent and SKT. Using \cite[Proposition 3.5]{EFV12},  we obtain that  $\f z(\f g'+J \f g')$ is $J$-invariant.  Hence, 
we are in the position to apply Proposition \ref{cazziarmati}  with $\f a=\f z(\f g'+J \f g' )$, which allows us to infer that $\f g_J'\subseteq\f z(\f g'+J\f g')\subseteq \f z(\f g)$. 
Therefore, taking the quotient by $\f g_J'$, we obtain a  completely solvable Lie algebra with a solvable abelian complex structure. Now, we can use Corollary \ref{abeliancompletely}, part \emph{(1)}, to conclude that $\f g/\f g_J'$ is nilpotent, which establishes the claim, using that $\f g_J'\subseteq \f z(\f g).$

The second claim is now a straightforward consequence of Theorem \ref{SKTLevi}, together with the first part of the proof applied to ${\rm Rad}(\f g)$. 
\end{proof}
 Non-existence  results for Hermitian symplectic structures  on completely solvable Lie algebras were proven in \cite{EFV12, FK16}.

We conclude this subsection inspecting the SKT condition on Lie algebras of real type.
\begin{lem}
Le $\f g$ be an SKT unimodular, $2$-step solvable Lie algebra of real type endowed with a solvable complex structure $J$. Then, $\f g'+J\f g' $ is $2$-step nilpotent.   
\end{lem}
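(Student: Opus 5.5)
The plan is to mimic the proof of Theorem \ref{completelysolvSKT1}, replacing complete solvability by the real type hypothesis. The aim is to show that $\f g_J'$ is central in $\f g'+J\f g'$ and that the quotient $(\f g'+J\f g')/\f g_J'$ is nilpotent. The $2$-step conclusion then follows from \cite{AN22}, since $\f g'+J\f g'$ is a $J$-invariant ideal and hence inherits the SKT metric.

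First, since $J$ is solvable and $\f g$ is $2$-step, Proposition \ref{rhocore} gives $\eta=0$. It also shows that $(\f g'+J\f g')/\f g_J'$ is nilpotent. By unimodularity, $\rho=\eta=0$.

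Next, by Remark \ref{normaliziamodai}, the normaliser $N_{\f g}$ of $\f g_J'$ is a unimodular, SKT, $2$-step solvable ideal satisfying the hypotheses of Proposition \ref{cazziarmati} with $\f a=\f g_J'$. Its Chern--Ricci form is still zero, since $J$-solvability passes to $J$-invariant subalgebras and Proposition \ref{rhocore} applies to $N_{\f g}$. Proposition \ref{cazziarmati} therefore gives $\ad_X|_{\f g_J'}\in\f{su}(\f g_J',g)$ for every $X\in N_{\f g}$, and in particular for every $X\in\f g'+J\f g'$. Each such $\ad_X|_{\f g_J'}$ is then skew-symmetric, so its spectrum lies in $\sqrt{-1}\R$.

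To conclude that $\ad_X$ is nilpotent for $X\in\f g'+J\f g'$, filter $\f g'+J\f g'$ by $\f g_J'\subseteq \f g'+J\f g'$. On the quotient $(\f g'+J\f g')/\f g_J'$, which is nilpotent, $\ad_X$ acts nilpotently. On $\f g_J'$ it acts with purely imaginary spectrum. Hence every eigenvalue of $\ad_X|_{\f g'+J\f g'}$ is purely imaginary. The same holds for $\ad_X$ acting on $\f g$: it maps $\f g$ into $\f g'$, so the eigenvalues on $\f g/\f g'$ are zero and those on $\f g'$ are among the ones already controlled. By the real type hypothesis, $\ad_X$ is then nilpotent. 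Since $\ad_X|_{\f g_J'}$ is also skew, hence semisimple, it must vanish. This gives $[\f g'+J\f g',\f g_J']=0$.

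It follows that $\f g_J'$ is central in $\f g'+J\f g'$ with nilpotent quotient, so $\f g'+J\f g'$ is nilpotent. Being SKT, it is $2$-step nilpotent by \cite{AN22}. This is the same conclusion as at the end of Proposition \ref{cazziarmati}.

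The main obstacle is the eigenvalue bookkeeping: real type concerns $\ad_X$ on all of $\f g$, whereas we control only the pieces $\f g_J'$ and $(\f g'+J\f g')/\f g_J'$. The point to verify carefully is that $\ad_X(\f g)\subseteq\f g'$ forces the remaining eigenvalues to be zero.
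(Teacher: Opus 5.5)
Your proposal is correct and follows essentially the same route as the paper. You apply Proposition \ref{cazziarmati} to the normaliser $N_{\f g}$ to see that $\f g'+J\f g'$ acts skew-symmetrically on $\f g_J'$, combine this with nilpotency of $(\f g'+J\f g')/\f g_J'$ from Proposition \ref{rhocore} and the real type hypothesis to get that $\f g'+J\f g'$ is nilpotent with $\f g_J'$ central, and conclude with \cite{AN22}; you also spell out two points the paper leaves implicit, namely why $\rho_{N_{\f g}}=0$ and why real type forces $\ad_X$ to be nilpotent and hence $\ad_X|_{\f g_J'}=0$.
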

\begin{proof}
We will again make use of Remark \ref{normaliziamodai}. We know that $N_{\f g}$  is a $J$-invariant ideal of $\f g$ that contains $\f g'+J \f g'$. In addition, $\f g'_J $ is an ideal of $N_{\f g} $ that satisfies the hypotheses of Proposition \ref{cazziarmati}. This allows us to infer that $\f g'+J\f g'$ acts skew-symmetrically on $\f g_J'$. On the other hand, being $J$ solvable, we know that $\frac{\f g'+J\f g'}{\f g_J'}$ is nilpotent. Now, using the fact that $\f g$ is of real type, we can infer that $\f g'+J\f g' $ is nilpotent and $\f g_J'\subseteq \f z(\f g '+J \f g')$. Finally, applying \cite{AN22}, we conclude that $\f g'+J \f g' $ is $2$-step nilpotent.
\end{proof}

\subsection{The case of higher solvability steps}\label{subsechigher}
In this subsection we collect some results in full generality, dropping the hypothesis of $2$-step solvability. For completely solvable Lie algebras an implication of Proposition \ref{rhocore} can be extended to any solvability step. In order to prove this, we need the following result, which is nothing but an extension of \cite[Proposition 2.1]{LRV15}.

\begin{prop}\label{Prop:cs_sigma}
Let $\f g$ be a solvable Lie algebra endowed with a complex structure $J$. Then
\begin{enumerate}
    \item If $\g$ is completely solvable, then $\rho=0$ if and only if $\sigma=0$.
    \item If $\g$ is of rigid type, then any complex structure on $ \f g$ is Chern--Ricci flat.
\end{enumerate}
\end{prop}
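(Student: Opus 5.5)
Both parts follow from the formula of Proposition \ref{chernrigido}, $\rho=d\sigma$ with $\sigma(X)=\frac12\tr(J\ad_X)-\frac12\tr(\ad_{JX})$. The implication $\sigma=0\Rightarrow\rho=0$ is trivial. For the converse, and for part (2), we show that $\sigma$ vanishes on $\g'$ and then use the following reduction: $\rho(X,Y)=-\sigma([X,Y])$, so $\rho=0$ is equivalent to $\sigma|_{\g'}=0$.

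\textbf{Step 1 (the model computation).} Fix $X\in\g$. By Lie's theorem, after complexifying, choose a flag of $\g^{\C}$ in which $\ad_X$ is triangular. The problem is that $J$ need not preserve this flag.

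The idea, following \cite[Proposition 2.1]{LRV15}, is to work with the $J$-adapted structure instead. Consider the operator $[J,\ad_X]$. By \eqref{integrability}, $[J,\ad_{JX}]=J[J,\ad_X]$. Define
\[
\ad_X^{1,0}:=\tfrac12(\ad_X-J\ad_XJ),
\]
the $J$-linear part of $\ad_X$. Then $\tr(J\ad_X)=\tr(J\ad_X^{1,0})$. Write $\ad_X^{1,0}$ as a complex-linear endomorphism of $(\g,J)\cong\C^n$. Then
\[
\tr_{\R}(\ad_X^{1,0})=2\,\Re\,\tr_{\C}(\ad_X^{1,0}),\qquad
\tr_{\R}(J\ad_X^{1,0})=-2\,\Im\,\tr_{\C}(\ad_X^{1,0}).
\]
Hence
\[
\sigma(X)=-\Im\,\tr_{\C}(\ad_X^{1,0})-\tfrac12\tr(\ad_{JX}),
\]
so the quantity to control is the complex trace of the $J$-linear part.

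\textbf{Step 2 (the elements of $\g'$).} For $X\in\g'$, $\ad_X$ is nilpotent in any solvable Lie algebra, so $\tr(\ad_{JX})$ is the only genuinely non-nilpotent contribution.

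For part (2) (rigid type) we use unimodularity. All eigenvalues of $\ad_Y$ are imaginary and come in conjugate pairs, so $\tr(\ad_Y)=0$ for every $Y$, and $\rho=-\frac12\eta$-type terms reduce to $\sigma(X)=\frac12\tr(J\ad_X)$. We then need $\tr(J\ad_{[X,Y]})=0$. By Lemma \ref{riccihol} it suffices to show $\tr([J,\ad_{JX}]^2)=0$ for all $X$. Our plan is to show that $[J,\ad_{JX}]$ is nilpotent. We would argue via the $(1,0)$/$(0,1)$ decomposition: on $\g^{1,0}$ the operator $\ad_{JX}$ has block form $\begin{pmatrix}A&B\\ \bar B&\bar A\end{pmatrix}$, and $[J,\ad_{JX}]$ is the off-diagonal part $2\sqrt{-1}\begin{pmatrix}0&-B\\ \bar B&0\end{pmatrix}$ (up to sign). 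We then have
\[
\tr([J,\ad_{JX}]^2)=8\,\tr(B\bar B)=8\,\Re\,\tr(B\bar B)
\]
up to sign. This must be compared with $\tr(\ad_{JX}^2)=2\Re\tr(A^2)+2\tr(B\bar B)$, which is $\le0$ for rigid type. The missing estimate, namely that $\Re\tr(A^2)\le0$ as well, is what we expect to obtain from the integrability relation $[J,\ad_{JX}]=J[J,\ad_X]$, which couples the $B$-blocks of $\ad_X$ and $\ad_{JX}$. Applying the same relation to $\ad_X$ and summing then forces $\tr(B\bar B)$ to vanish.

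\textbf{Step 3 (completely solvable case).} Here all eigenvalues are real. We apply the same block decomposition to $X\in\g'$, for which $\ad_X$ is nilpotent. Our plan is to prove that $\Im\tr_{\C}(\ad_X^{1,0})=0$ directly (the eigenvalues of $A$ are real combinations), and to show $\tr(\ad_{JX})=0$ from $\rho=0$. Conversely, if $\rho=0$, then $\sigma$ is closed. For a completely solvable algebra, a closed $1$-form vanishing on $\g'$ is zero exactly when it is zero on a complement, so we also need $\sigma$ to vanish modulo $\g'$. We would obtain this from $\sigma(JX)$ combined with the real-spectrum triangularisation, passing to the quotient $\g/\g'$, which is abelian with induced $J$.

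\textbf{Main obstacle.} The crucial and hardest step is the block-trace inequality in Step 2, which requires controlling $\tr(B\bar B)$ through integrability. If it fails, we would fall back on adapting the eigenvalue argument of \cite{LRV15} verbatim: choose a $J$-compatible flag in the nilradical.
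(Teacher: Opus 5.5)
\textbf{The missing idea.} Your Step~1 only rewrites $\sigma$ in terms of the $J$-linear part $\frac12(\ad_X-J\ad_XJ)$ of $\ad_X$. The spectrum of this operator is unrelated to that of $\ad_X$, so neither complete solvability nor rigid type controls its complex trace.

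A concrete failure: in $\f{aff}(\R)$ (completely solvable, $[JX,X]=X$) take $X\in\g'$. Then $\ad_X$ is nilpotent, yet $\frac12(\ad_X-J\ad_XJ)=\frac12J$, so $\Im\tr_\C=\frac12$ and $\tr(J\ad_X)=-1$. Hence your claim in Step~3 that $\Im\tr_\C(\ad_X^{1,0})=0$ holds ``directly'' for $X\in\g'$ is false. Likewise, your remark in Step~2 that only $\tr(\ad_{JX})$ matters for $X\in\g'$ overlooks exactly the term $\tr(J\ad_X)$ that must be controlled.

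Step~2 is also not a proof. Even granting $\Re\tr(A^2)\le0$, the inequality $2\Re\tr(A^2)+2\tr(B\bar B)\le0$ says nothing about $\tr(B\bar B)$, which has no sign a priori.

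What you need is the device the paper uses. By integrability, $\g^{1,0}$ is a subalgebra of $\g^\C$, hence invariant under the \emph{combined} operator $\ad_X-\sqrt{-1}\ad_{JX}$. Lie's theorem on $\g^{1,0}$ triangularises its block $A_X$ there, so the eigenvalues of $A_X$ are among those of $\ad_X-\sqrt{-1}\ad_{JX}$, and
\[
2\tr(A_X)=\tr(\ad_X)-\tr(J\ad_{JX})-\sqrt{-1}\bigl(\tr(\ad_{JX})+\tr(J\ad_X)\bigr)\,.
\]
One then evaluates at $X$ with $JX\in\f n$, not at $X\in\g'$. Since $\ad_{JX}$ is nilpotent, the eigenvalues of $A_X$ are eigenvalues of $\ad_X$.
\begin{itemize}
\item In the rigid case the real part vanishes, and unimodularity gives $\tr(J\ad_N)=0$ for all $N\in\f n\supseteq\g'$. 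This yields $\rho=0$ directly, with no need for Lemma~\ref{riccihol} or for nilpotency of $[J,\ad_{JX}]$.
\item In the completely solvable case the imaginary part vanishes, giving $\sigma|_{J\g'}=0$. Combined with $\rho=0\iff\sigma|_{\g'}=0$, this yields $\sigma|_{\g'+J\g'}=0$.
\end{itemize}

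\textbf{The extension to all of $\g$.} Passing to $\g/\g'$ ``with induced $J$'' is not available, because $\g'$ is in general not $J$-invariant (already in $\f{aff}(\R)$).

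More seriously, the step from $\sigma|_{\g'+J\g'}=0$ to $\sigma=0$ cannot be proved under the stated hypotheses. Take $\g=\langle e_0,\dots,e_3\rangle$ with $[e_0,e_2]=e_2$, $[e_0,e_3]=e_3$, $Je_0=e_1$, $Je_2=e_3$. Then:
\begin{itemize}
\item $J$ is integrable and $\g$ is completely solvable;
\item $\g'=\langle e_2,e_3\rangle$ and $\sigma|_{\g'}=0$, so $\rho=0$;
\item but $\sigma(e_1)=-\frac12\tr(\ad_{Je_1})=\frac12\tr(\ad_{e_0})=1$.
\end{itemize}
So the implication $\rho=0\Rightarrow\sigma=0$ in part (1) needs an extra hypothesis; this example is not unimodular.

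The paper's closing argument does not settle this point either. For any non-zero $(n,0)$-form $\Omega$ on $\g$, $d\Omega=0$ if and only if $\sigma=0$. So the closedness of $\phi^1\wedge\dots\wedge\phi^{n-k}\wedge\Psi$ is exactly what has to be shown, and in the example $d\bigl((e^0+\sqrt{-1}e^1)\wedge(e^2+\sqrt{-1}e^3)\bigr)\neq0$. Whatever extra hypothesis is imposed, an argument controlling $\sigma$ on a complement of $\g'+J\g'$ is still required, and your proposal does not contain one.
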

\begin{proof}
Fix an element $X\in \f g$ and consider the adjoint representation
\[
\ad_X-\sqrt{-1}\ad_{JX}=\ad_{X^{1,0}}=\begin{pmatrix}
    A_X  & *\\
    0 & *
\end{pmatrix}\,,
\]
where the matrix is written with respect to the decomposition $\f g_\C=\f g^{1,0}\oplus \f g^{0,1}$. 
By Lie's Theorem we may pick a basis $\{Z_1,\dots,Z_{n}\}$ of $\f g^{1,0}$ with respect to which the block $A_X$ is upper triangular. Set $Z_j=\frac{1}{\sqrt{2}}(e_{2j-1}-\sqrt{-1}e_{2j})$ so that $e_{2j}=Je_{2j-1}$. Choose a Hermitian metric $g$ on $\f g$ having $\{e_1,\dots,e_{2n}\}$ as an orthonormal basis and extend it bilinearly to $\f g_\C$ so that $\{Z_1,\dots,Z_{n}\}$ is a unitary basis. We then compute
\[
\begin{split}
\mathrm{tr}(A_X)&=\sum_{j=1}^{n}g(A_XZ_j,\bar Z_j)=\frac{1}{2}\sum_{j=1}^{n}g\left([X-\sqrt{-1}JX,e_{2j-1}-\sqrt{-1}e_{2j}], e_{2j-1}+\sqrt{-1}e_{2j}\right)\\
&=\frac{1}{2}\left( \mathrm{tr}(\ad_X)-\mathrm{tr}(J\ad_{JX}) - \sqrt{-1}\, \mathrm{tr}(\ad_{JX})-\sqrt{-1}\,\mathrm{tr}(J\ad_X) \right)\,.
\end{split}
\]
Choosing $X\in J\f n$ we have that $\ad_{JX}$ is nilpotent, hence the eigenvalues of $\ad_{X^{1,0}}$ are those of $\ad_X$.

In particular, if $\f g$ is of rigid type, then $\mathrm{tr}(A_X)$ is purely imaginary, and it follows that $\mathrm{tr}(J\ad_{JX})=0$,  for all $X\in J\f n$, because $\f g$ is unimodular. From this we immediately deduce
\[
\rho(X,Y)=-\frac{1}{2}\mathrm{tr}(J\ad_{[X,Y]})+\frac{1}{2}\mathrm{tr}(\ad_{J[X,Y]})=0\,, \qquad X,Y\in \f g\,,
\]
which proves \emph{(2)}.

Assume instead that $\g $ is completely solvable. Then $\mathrm{tr}(A_X)$ is real and so $\mathrm{tr}(J\ad_X)=0$, for all $X\in J\f n $. This implies that $\sigma$ vanishes on $J\f g'$. On the other hand $\rho=d\sigma=-\sigma \vert_{\f g'}$ therefore Chern--Ricci flatness implies that $\sigma$ vanishes on $\f g'+J\f g'$. Now, we have $0=\sigma\vert_{\f g'+J\f g'}=\sigma_{\f g'+J\f g'}$ hence there exists a closed,  nowhere vanishing $(k,0)$-form $\Psi$ on $\g'+J\f g'$, where $k=\dim_\C(\f g'+J\f g')$, see \cite[Theorem 3.2]{AT26}. Pick an orthogonal complement $\f k$ of $\f g'+J\f g'$ in $\f g$ and a basis $z_1,\dots,z_{n-k}$ of $\f k^{1,0}$. Let $\phi^1,\dots,\phi^{n-k}$ be the dual $(1,0)$-forms. Since $\f g' \subseteq \f k^\perp$ the form $ \phi^1\wedge \dots \wedge \phi^{n-k}\wedge \Psi $ is closed and nowhere vanishing on $\f g$ which then implies $\sigma=0$, again by \cite[Theorem 3.2]{AT26}.
\end{proof}

A first consequence of part \emph{(1)} of the above Proposition concerns holomorphic volume forms on completely solvable solvmanifold.

\begin{cor}
Let $M=G/\Gamma$ be a completely solvable solvmanifold equipped with a left-invariant complex structure $J$. Then any holomorphic trivialisation of the canonical bundle of $M$ must be left-invariant.
\end{cor}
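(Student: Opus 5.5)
The plan is to deduce the corollary from Proposition \ref{Prop:cs_sigma}, part \emph{(1)}. A holomorphic trivialisation of the canonical bundle $K_M$ is a nowhere-vanishing holomorphic $(n,0)$-form $\Psi$ on $M$. First, I would observe that the existence of such a $\Psi$ forces $c_1(K_M)=0$. In the left-invariant setting this gives Chern--Ricci flatness of the invariant structure. Concretely, I would proceed as follows.

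Fix a left-invariant nowhere-vanishing $(n,0)$-form $\Omega$ on $M$, induced from $\Lambda^{n,0}\f g^*$. Write $\Psi=f\,\Omega$ with $f\colon M\to\C$ smooth and nowhere zero. The condition $\bar\partial\Psi=0$ reads
\[
\bar\partial f\wedge\Omega+f\,\bar\partial\Omega=0 .
\]
Moreover $\bar\partial\Omega=\theta\wedge\Omega$ for a left-invariant $(0,1)$-form $\theta$, which is essentially $\sigma^{0,1}$ up to a constant, where $\sigma$ is the Koszul form of Proposition \ref{chernrigido}. Hence $\bar\partial\log f=-\theta$ locally, and so $d\log f$ has $(0,1)$-part equal to the invariant form $-\theta$.

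Next, I would average over $M$ using the symmetrisation technique \cite{Bel00}, with the bi-invariant volume that exists since $G$ admits a lattice and is therefore unimodular. The goal is to show that $\theta$ is $\bar\partial$-exact in a way that forces $\theta=0$. A cleaner route is to argue that $\bar\partial\log|f|^2$ is globally defined: its invariant part must vanish, because the symmetrisation of an exact form $\bar\partial h$ with $h$ a global function is zero. Thus the left-invariant $(0,1)$-form $-\theta$, which equals $\bar\partial\log f$ where this makes sense, must vanish.

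Care is needed here because $\log f$ is only locally defined. To handle this, I would instead use $|f|^2$ and the Chern connection. The Chern--Ricci form of the invariant metric equals $\sqrt{-1}\partial\bar\partial\log|f|^2$ computed from $\Psi$. It is therefore exact by a global function and, being invariant, symmetrises to zero. This gives $\rho=0$. Then Proposition \ref{Prop:cs_sigma}\emph{(1)} yields $\sigma=0$, which means $\Omega$ itself is closed, hence holomorphic.

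Finally, $\Psi/\Omega=f$ is a global holomorphic function on the compact manifold $M$, so $f$ is constant and $\Psi$ is left-invariant. The main obstacle is the first step: rigorously passing from the existence of $\Psi$ to $\rho=0$. I would settle it via the exactness and symmetrisation argument above, checking that the symmetrisation of $dd^c$ of a global function vanishes on unimodular $G$.
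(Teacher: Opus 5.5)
Your final argument is correct and follows the paper's route. The trivialisation forces $\rho=0$, and your symmetrisation of $\sqrt{-1}\partial\bar\partial\log|f|^2$ is a valid justification of what the paper calls clear. Proposition~\ref{Prop:cs_sigma}\emph{(1)} then upgrades this to $\sigma=0$, so an invariant holomorphic volume form $\Omega$ exists. Your observation that $\Psi/\Omega$ is a global holomorphic, hence constant, function on compact $M$ makes explicit the last step the paper leaves implicit.

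Discard the intermediate ``cleaner route'' entirely. Symmetrisation only kills $\bar\partial\log|f|^2$, not the multivalued $\bar\partial\log f=-\theta$. That shortcut would also give $\theta=0$ without ever using complete solvability, which is false in general (e.g.\ for suitable compact quotients of $E(2)\times\R$).
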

\begin{proof}
If the canonical bundle of $M$ is holomorphically trivial clearly $\rho=0$. By Proposition \ref{Prop:cs_sigma} we deduce $\sigma=0$ which implies, together with \cite[Theorem 3.2]{AT26}, that there exists a left-invariant holomorphic volume form on $G$, which then descends to $M$, concluding the proof.
\end{proof}
Another interesting consequence of Corollary \ref{Prop:cs_sigma} is the following.
\begin{cor}\label{Cor:cs_CRF}
Let $\g $ be a unimodular completely solvable Lie algebra equipped with a solvable complex structure $J$. Then $\rho=0$.
\end{cor}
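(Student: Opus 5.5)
By Proposition \ref{Prop:cs_sigma}, part \emph{(1)}, for completely solvable $\g$ the conditions $\rho=0$ and $\sigma=0$ are equivalent. So the plan is to prove $\rho=0$ directly by induction along the $J$-adapted derived series, and to use $\sigma=0$ as a convenient intermediate.

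Since $\g$ is unimodular, $\rho=\eta=d\beta$ with $\beta(X)=\frac12\tr(J\ad_X)$. Hence $\rho=0$ is equivalent to $\beta|_{\g'}=0$. By the proof of Proposition \ref{Prop:cs_sigma}, complete solvability already gives $\tr(J\ad_X)=0$ for all $X\in J\f n$. So it would suffice to show $\g'\subseteq J\f n$, that is, $J\g'\subseteq\f n$. This is stronger than what we know in general, so instead I would argue by induction on the solvability step $s$ of $J$.

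\textbf{Base case.} If $s=1$, then $\g'+J\g'=0$, so $\g$ is abelian and $\rho=0$.

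\textbf{Reduction to a smaller-step subalgebra.} For the inductive step, set $\f k:=\g^{[1]}(J)=\g'+J\g'$. This is a $J$-invariant ideal, completely solvable, with $J$-solvability step $s-1$. To apply the induction hypothesis, $\f k$ must be unimodular. Here is the key point: for $X\in\g'$ we have $\tr(\ad_X)=0$ automatically, while for $X\in J\g'$ we only know $\tr(\ad_X|_{\f k})=\tr(\ad_X)-\tr(\ad_X|_{\g/\f k})$. On the quotient $\g/\f k$ (abelian), every $\ad_X$ with $X\in\f k$ acts trivially, so $\tr(\ad_X|_{\g/\f k})=0$. Hence $\f k$ is unimodular, the induction hypothesis applies, and $\rho_{\f k}=0$. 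Then Proposition \ref{Prop:cs_sigma} gives $\sigma_{\f k}=0$, and by \cite[Theorem 3.2]{AT26} $\f k$ carries a closed nowhere-vanishing $(k,0)$-form $\Psi$.

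\textbf{Extension to $\g$.} As in the end of the proof of Proposition \ref{Prop:cs_sigma}, I would pick a $J$-invariant complement of $\f k$ with dual $(1,0)$-forms $\phi^1,\dots,\phi^{n-k}$. Since $\g'\subseteq\f k$, each $\phi^i$ vanishes on $\g'$ and is therefore closed. The form $\phi^1\wedge\dots\wedge\phi^{n-k}\wedge\Psi$ (with $\Psi$ extended by zero on the complement) should then be closed and nowhere vanishing, giving $\sigma=0$ and hence $\rho=d\sigma=0$.

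\textbf{Main obstacle.} The delicate step is checking that the extended $\Psi$ is still closed on $\g$. The differential $d\Psi$ on $\g$ picks up terms $\Psi([X,Y],\dots)$ with $X$ in the complement acting on $\f k$, and these are not controlled by closedness on $\f k$ alone. After wedging with the top form of the complement, those terms involve $\tr$ of $\ad_X|_{\f k}$ on $\f k^{1,0}$, which is exactly $\sigma(X)$ restricted to $\f k$.

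To handle this I would try to show directly that $\sigma$ vanishes on the complement, using unimodularity of $\g$ together with complete solvability: the real trace of $\ad_X^{1,0}|_{\f k}$ equals half the trace of $\ad_X|_{\f k}$, and the complement contributes nothing because $\g/\f k$ is abelian. If this fails, the fallback is to bypass the wedge construction and compute $\beta|_{\g'}$ directly, using an upper-triangular unitary basis adapted to the flag $\f k\subset\g$ as in the proof of Proposition \ref{Prop:cs_sigma}.
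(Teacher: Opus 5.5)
\textbf{Where the proposal matches the paper.} Up to the point $\sigma_{\f k}=0$, your argument is the paper's. You induct on the $J$-solvability step and pass to $\f k=\g'+J\g'$. Your check that $\f k$ is unimodular, completely solvable and of step $s-1$ is correct. Your base case $s=1$ is fine; the paper starts at $s=2$ via Proposition \ref{rhocore}. You then use the inductive hypothesis to get $\rho_{\f k}=0$ and Proposition \ref{Prop:cs_sigma} to get $\sigma_{\f k}=0$.

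\textbf{The gap.} You never actually conclude. Extending $\Psi$ to a closed $(n,0)$-form on $\g$ is equivalent to $\sigma=0$ on all of $\g$, i.e.\ to $\sigma$ vanishing on the complement. That is strictly more than the statement asks for, and your sketch does not prove it. Since $\g$ is unimodular, for $X$ in the complement you have $\sigma(X)=\frac12\tr(J\ad_X|_{\f k})$, which is minus the imaginary part of the trace of $\ad_X-\sqrt{-1}\ad_{JX}$ on $\f k^{1,0}$. The reality argument in the proof of Proposition \ref{Prop:cs_sigma} controls this trace only when $\ad_{JX}$ is nilpotent, so that the eigenvalues of $\ad_X-\sqrt{-1}\ad_{JX}$ are those of $\ad_X$. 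That fails for $X$ outside $J\f n$. The fact that $\g/\f k$ is abelian only says that the diagonal block of $\ad_X$ on the complement vanishes; it gives no information about $\tr(J\ad_X|_{\f k})$.

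\textbf{The missing step, which is all you need.} No extension is required. Since $\rho=d\sigma$, you have $\rho(X,Y)=-\sigma([X,Y])$, which only sees $\sigma|_{\g'}$, and $\g'\subseteq\f k$. Moreover $\sigma|_{\f k}=\sigma_{\f k}$. Indeed, for $Z\in\f k$ the operators $\ad_Z$ and $\ad_{JZ}$ map $\g$ into $\g'\subseteq\f k$, and $J\f k=\f k$. So with respect to a splitting $\g=\f k\oplus\f m$ with $J\f m=\f m$, both $J\ad_Z$ and $\ad_{JZ}$ have zero diagonal block on $\f m$. Hence $\tr(J\ad_Z)=\tr(J\ad_Z|_{\f k})$ and $\tr(\ad_{JZ})=\tr(\ad_{JZ}|_{\f k})$. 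This is the same observation you already used to show that $\f k$ is unimodular.

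Therefore $\sigma_{\f k}=0$ gives $\sigma|_{\g'}=0$, and so $\rho=0$ immediately. This is exactly the paper's last line, $\rho=\sigma|_{\g'}=0$. In your own $\beta$-formulation: $\beta|_{\g'}=\beta_{\f k}|_{\g'}=\sigma_{\f k}|_{\g'}=0$, using unimodularity of $\f k$. With this one-line replacement for your final step the proof is complete. You never need the closed $(n,0)$-form on $\g$ or the vanishing of $\sigma$ on the complement.
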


\begin{proof}
We prove the corollary by induction on the solvability step $s$ of $J$. If $s=2$ then the result follows from Proposition \ref{rhocore}. Let us assume that the result is true for $s-1$ and let us prove it for $s$. If $\g$ is $s$-step $J$-solvable the ideal $\g' + J \g'$ is $(s-1)$-step $J$-solvable, thus, by the inductive assumption, $\rho_{\g'+J\f g'}=0$. But then, from Proposition \ref{Prop:cs_sigma} we infer that $\sigma_{\g'+J\f g'}=0$. In particular $\rho=\sigma \vert_{\g'}=0$.
\end{proof}

The purpose of the next example is to show that the converse of Corollary \ref{Cor:cs_CRF} does not hold, unless the solvability step is at most $2$.

\begin{es}
Let $\g$ be the $6$-dimensional Lie algebra with structure equations:
$$
[e_2, e_4]=-e_1\,,\quad [e_3, e_5]=-e_1\,, \quad [e_2,e_6]=-e_2\,,\quad  [e_3, e_6]=-e_3\,, \quad [e_4, e_6]=e_4\,,\quad  [e_5, e_6]=e_5\,.
$$
The equations above define a $3$-step completely solvable Lie algebra which in \cite{FP223} is denoted by $\f s_{6.162}^{1}$.  We endow $\g$ with the following complex structure:
$$
Je_1=e_6\,, \quad Je_2=e_3\,, \quad Je_4=e_5\,.
$$
It turns out that $J$ is simple and $\rho=0$, showing that the converse of Corollary \ref{Cor:cs_CRF} is not true in higher steps of solvability, even assuming complete solvability.
\end{es}

We can extend  the first statement of Theorem \ref{completelysolvSKT1} to higher steps of solvability with a suitable assumption.
\begin{prop}
Let $\f g$ be a unimodular, SKT, completely solvable Lie algebra with solvable complex structure $J$ such that $\f g_J'$ is abelian. Then, $\f g$ is $2$-step nilpotent. 
\end{prop}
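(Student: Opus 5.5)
The idea is to follow the proof of Theorem \ref{completelysolvSKT1}. The only change is that $2$-step solvability is replaced by the assumption that $\f g'_J$ is abelian, and the normaliser trick of Remark \ref{normaliziamodai} does the work that Lemma \ref{Lem2stepideal} did before.

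First, Corollary \ref{Cor:cs_CRF} gives $\rho=0$, since $\g$ is unimodular, completely solvable and $J$-solvable. Next set $N:=N_{\f g}(\f g'_J)$. Using \eqref{behaviourofJ2step} exactly as in Remark \ref{normaliziamodai}, $N$ is $J$-invariant: if $X\in N$ and $Y\in\f g'_J$, then $[JX,Y]=-[X,JY]+(\text{element of }\f g'_J)\in\f g'_J$. I still have to check that $N$ is an ideal. It contains $\f g'$, so it is an ideal, and hence unimodular, SKT and completely solvable. I would apply Proposition \ref{cazziarmati} to $N$ with $\f a=\f g'_J$; for this I need $\f g'_J\subseteq C(N')$. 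Here I would try to use the abelianity hypothesis together with integrability to show $[\f g'_J,\f g']=0$, or at least to replace $\f a$ by a suitable $J$-invariant abelian ideal. I expect this to be the main obstacle, because in the $2$-step case it came for free.

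Granting this, Proposition \ref{cazziarmati} applied with $\rho_N=0$ gives $\f g'_J\subseteq \f z(N)$. Then the quotient $\f g/\f g'_J$ (if $\f g'_J$ is an ideal of $\f g$, which I would verify via $\f g'_J\subseteq\f z(N)$ and $\f g'\subseteq N$) carries an abelian complex structure. It is solvable by the remarks after Lemma \ref{Lem:basicnilsol} and completely solvable. Corollary \ref{abeliancompletely}, part \emph{(1)}, then shows that $\f g/\f g'_J$ is nilpotent. To lift nilpotency to $\f g$, I would argue as in Theorem \ref{completelysolvSKT1}. Since $\f g'+J\f g'$ is nilpotent and SKT, its centre is $J$-invariant by \cite[Proposition 3.5]{EFV12}. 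Applying Proposition \ref{cazziarmati} again with $\f a=\f z(\f g'+J\f g')$ gives $\f g'_J\subseteq\f z(\f g)$, so $\f g$ is a central extension of a nilpotent algebra and hence nilpotent.

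Finally, a nilpotent Lie algebra admitting an SKT metric is $2$-step nilpotent by \cite{AN22}, which concludes the proof.
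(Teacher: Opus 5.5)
There is a genuine gap, and it is exactly the step you flag as the main obstacle. Without $2$-step solvability you have no control over $[\f g',\f g'_J]\subseteq\f g''$. Abelianity of $\f g'_J$ together with \eqref{behaviourofJ2step} and integrability only tells you that $[X,JY]+[JX,Y]$ and $[X,Y]-[JX,JY]$ lie in $\f g'_J$. So two claims are unavailable: that $N_{\f g}(\f g'_J)\supseteq\f g'$, which you need for $N$ to be an ideal, and that $\f g'_J\subseteq C(N')$, which you need to feed $\f a=\f g'_J$ into Proposition \ref{cazziarmati}. In Remark \ref{normaliziamodai} and Theorem \ref{completelysolvSKT1} both facts came from $\f g'$ being abelian.

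The later steps inherit the problem. You quotient by $\f g'_J$ before knowing it is an ideal of $\f g$, and centrality in $N$ does not give $[\f g,\f g'_J]\subseteq\f g'_J$. Moreover, the nilpotency of $\f g'+J\f g'$ that you need to invoke \cite[Proposition 3.5]{EFV12} came, in the $2$-step case, from Proposition \ref{rhocore}, which has no analogue here.

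The missing idea is induction on the solvability step $s$ of $J$, with Theorem \ref{completelysolvSKT1} as the base case. The ideal $\f k:=\f g'+J\f g'$ inherits the hypotheses: it is unimodular (ideals of unimodular algebras are), SKT, completely solvable, $J$-solvable of step $s-1$, and $\f k'_J\subseteq\f g'_J$ is abelian. So by induction $\f k$ is $2$-step nilpotent. Then $\f z(\f k)$ is $J$-invariant by \cite[Proposition 3.1]{EFV12} and contains $\f g'_J$ by \cite[Lemma 3.4]{AN22}. Hence $\f a:=\f z(\f k)$ is a $J$-invariant abelian ideal of $\f g$ with $\f g'_J\subseteq\f a\subseteq C(\f g')$. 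This is your fallback of replacing $\f a$, but it can only be set up once $\f k$ is already known to be nilpotent, which is what the induction supplies.

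With this, Proposition \ref{cazziarmati} applies to $\f g$ itself, using $\rho=0$ from Corollary \ref{Cor:cs_CRF}, and no normaliser is needed. Complete solvability then gives $\f g'_J\subseteq\f z(\f k)\subseteq\f z(\f g)$. The quotient by the central $J$-invariant ideal $\f z(\f k)$ carries a solvable abelian complex structure and is completely solvable, so it is nilpotent by Corollary \ref{abeliancompletely}. Therefore $\f g$ is nilpotent, and it is $2$-step by \cite{AN22}.
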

\begin{proof}
We will prove the statement using induction on the step of solvability of $J$. Let $J$ be a $s$-step solvable complex structure. If $s=1$, then $\f g'+J\f g' $ is abelian, hence $\f g$ is $2$-step solvable. We can use Theorem  \ref{completelysolvSKT1} to conclude. 
Let us assume that the statement is true for complex structures which are $(s-1)$-step solvable and assume that $J$ is $s$-step solvable. We can then consider $\f k:=\f g'+J\f g'$ which is unimodular, SKT and completely solvable so that $\f k_J'\subseteq \f g_J'$, hence abelian. 
Clearly, the complex structure on $\f k$ is $(s-1)$-step solvable. Hence, by the inductive hypothesis, $\f k$ is $2$-step nilpotent. Now, thanks to \cite[Proposition 3.1]{EFV12} and \cite[Lemma 3.4]{AN22}, we know respectively that $\f z(\f k)$ is a $J$-invariant abelian ideal of $\f g$ and that $\f g_J'\subseteq \f z(\f k)$. Since $J$ is solvable, we can now apply Corollary \ref{Cor:cs_CRF} and infer that $\rho =0$. On the other hand, $\f z(\f k)$ can be chosen as the abelian $J$-invariant ideal of $\f g$ in Proposition \ref{cazziarmati}, which implies that $\f g$ acts skew-symmetrically on $\f z(\f k).$ We are in the position to use complete solvability and infer that $\f g_J'\subseteq \f z(\f k)\subseteq \f z(\f g).$ In order to conclude, we just need to observe that $\f g/ \f z(\f g)$ is a completely solvable Lie algebra with a solvable abelian complex structure. By Corollary \ref{abeliancompletely}, we conclude that $\f g/\f z(\f g)$ is nilpotent and hence $\f g$ is nilpotent as well, as claimed.
\end{proof}
We finally extend to  Theorem \ref{mainfv} under the assumption of complete solvability and a condition on $\f g_J'.$
\begin{prop}
Let $\f g$ be a unimodular, completely solvable Lie algebra endowed with a complex structure $J$ such that $\f g_J'\subseteq {\rm Rad}(\f g)$. If $\f g$ admits a balanced metric, then $J$ is solvable. Furthermore, if an SKT metric also exists, then $\f g$ is K\"ahler.
\end{prop}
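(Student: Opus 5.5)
The plan mirrors the proof of Theorem \ref{mainfv}, with the $2$-step machinery replaced by the hypothesis $\f g_J'\subseteq {\rm Rad}(\f g)$ and by complete solvability. Set $\f r:={\rm Rad}(\f g)$ and consider the holomorphic projection $\pi\colon\f g\to\f g/\f r$. Since $\f g_J'\subseteq\f r$, the quotient $\f g/\f r$ satisfies $(\f g/\f r)'_J=0$: indeed by \eqref{behaviourofJ2step} we get $[JX,JY]-[X,Y]\in\f g_J'\subseteq\f r$, so the induced complex structure on $\f g/\f r$ is abelian. By Lemma \ref{lemss}, part \emph{(3)}, it is also $J$-semisimple. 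Hence Corollary \ref{corsymmetricdecom} gives $\f g/\f r\simeq\f{aff}(\R)^p\oplus\f{aff}(\C)^q$. Moreover, $\f g/\f r$ is completely solvable, being a quotient of a completely solvable Lie algebra. Since $\ad_{JY_{p+j}}$ has eigenvalues $\pm\sqrt{-1}$ by Corollary \ref{lemspettroB}, this forces $q=0$.

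Suppose $p\neq0$. On $\f{aff}(\R)^p$ the form $-\eta_{\f{aff}(\R)^p}$ is an exact K\"ahler form (Example \ref{aff} and Proposition \ref{propKilling}, part \emph{(2)}). Therefore $-\pi^*\eta_{\f{aff}(\R)^p}=d(-\pi^*\beta)$ is a $d$-exact, non-zero, semi-positive $(1,1)$-form on $\f g$. This uses that $\pi$ is a holomorphic Lie algebra homomorphism, so pullback commutes with $d$ and preserves type. Since $\f g$ is unimodular, Corollary \ref{Michelsohn} contradicts the existence of a balanced metric. Hence $p=0$, i.e.\ $\f g=\f r$ and $J$ is solvable.

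For the second statement, $\f g$ is unimodular, completely solvable and $J$ is solvable, so Corollary \ref{Cor:cs_CRF} gives $\rho=0$. Together with the existence of an SKT metric, \cite{FV26} applied to the Chern--Ricci flat setting yields a K\"ahler metric, exactly as in the proof of Theorem \ref{mainfv}. Alternatively, if $\f g_J'$ were abelian, the preceding proposition would make $\f g$ $2$-step nilpotent, and one could invoke \cite{FV16}. However, abelianity is not assumed, so I would rely on \cite{FV26}.

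The main point to check is the first step: that $\f g_J'\subseteq\f r$ really makes the quotient complex structure abelian. This holds because $\f r$ is a $J$-invariant ideal, so $J$ descends, and $[JX,JY]-[X,Y]\in\f g'_J$ by \eqref{behaviourofJ2step}. Here no $2$-step hypothesis or Levi decomposition is needed, since only the quotient is used. The remaining steps are direct transcriptions of earlier arguments.
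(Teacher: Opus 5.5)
Your proof is correct and is essentially the paper's argument. You pass to the $J$-semisimple quotient $\f g/{\rm Rad}(\f g)$, whose complex structure is abelian because $\f g_J'\subseteq{\rm Rad}(\f g)$, pull back the exact semi-positive form $-\eta$ to contradict Corollary \ref{Michelsohn}, and then conclude with Corollary \ref{Cor:cs_CRF} and \cite{FV26}. The only cosmetic difference is that you first remove the $\f{aff}(\C)$ factors via their non-real spectrum, whereas the paper gets $-\eta_{\f h}\ge 0$ directly from complete solvability and reaches the contradiction through the non-degeneracy of $B^{1,1}_{\f h}$ (Theorem \ref{Semisimple}).
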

\begin{proof}
Assume that $J$ is not solvable. Then $\f h:=\f g/ {\rm Rad}(\f g)$ is a completely solvable Lie algebra with abelian semisimple complex structure $J$. 
Using Proposition \ref{propKilling}, part $(2)$, we have that $-\eta_{\f h}$ is $d$-exact and non-negative, thanks to the fact that $\f g $ is completely solvable. Let $\pi\colon \f g \to \f h$ be the canonical projection onto $\f h$. Thus, $-\pi^*\eta_\h$ is a $d$-exact non-negative $(1,1)$-form, hence it must vanish by Corollary \ref{Michelsohn}. This is equivalent to $\eta_\h=0$, and therefore $B_{\f h}^{1,1}=0$, contradicting the $J$-semisimplicity of $\f h$, by Theorem \ref{Semisimple}. Using Corollary \ref{Cor:cs_CRF}, we conclude that $\rho=0$. The final claim follows from \cite{FV26}.
\end{proof}

\bibliographystyle{siam}
\addcontentsline{toc}{section}{Bibliography}
\bibliography{BiblioLevi}

\end{document}